\newtheorem{theorem}{Theorem}[section]
\newtheorem{proposition}[theorem]{Proposition}
\newtheorem{lemma}[theorem]{Lemma}
\newtheorem{corollary}[theorem]{Corollary}
\newtheorem{remark}[theorem]{Remark}
\newtheorem{definition}[theorem]{Definition}
\def\diam{\mathrm{diam}}
 \def\Ex{\mathfrak E}
\def\mcB{\mathcal{B}}
\def\mcE{\mathcal{E}}
\def\mcF{\mathcal{F}}
\def\sB{\mathcal{B}}
\def\sE{\mathcal{E}}
\def\sF{\mathcal{F}}
\def\sH{\mathcal{H}}
\def\sX{\mathcal{X}}
\def\sN{\mathcal{N}}
\def\sL{\mathcal{L}}
\def\tD{D^*}
\def\R{{\mathbbm R}}
\def\H{{\mathbbm H}}
\def\N{{\mathbbm N}}
\def\Z{{\mathbbm Z}}
\def\bP{{\mathbbm P}}
\def\bE{{\mathbbm E}}
 \def\lb{{\llbracket}}
 \def\rb{{\rrbracket}}
 \def\1{{\mathbbm 1}}
\def\eps{\varepsilon}
\def\rf{{\rm ref}}
\def\<{\langle}
\def\>{\rangle}
\numberwithin{equation}{section}
\begin{document}

 \title[Boundary trace theorems for symmetric reflected diffusions]{Boundary trace theorems for symmetric reflected diffusions}

\author{Shiping Cao}
\address{Department of Mathematics, The Chinese University of Hong Kong, Shatin, Hong Kong}
\email{spcao@math.cuhk.edu.hk}
\thanks{}

\author{Zhen-Qing Chen}
\address{Department of Mathematics, University of Washington, Seattle, WA 98195, USA}\email{zqchen@uw.edu}
\thanks{}

\subjclass[2020]{Primary {60J50, 60J45,   31C25; Secondary  60J60, 31E05}}

\date{}

\keywords{Reflected diffusion, boundary trace process, trace theorem,
  Dirichlet form, jump kernel,   uniform domain, harmonic measure
      }

 \begin{abstract}
 Starting  with a transient irreducible diffusion process $X^0$ on a locally compact separable metric space $(D, d)$,
one can construct a canonical symmetric reflected diffusion process $\bar X$ on a completion $D^*$ of $(D, d)$ through the theory of 
reflected Dirichlet spaces. The boundary trace process $\check X$ of $X$ on the boundary  $\partial D:=D^*\setminus D$ is the reflected diffusion process $\bar X$ time-changed by a smooth measure $\nu$ having  full  quasi-support on  $\partial D$.  The Dirichlet form of the trace process $\check X$ is called the trace Dirichlet form.  In the first part of  the paper, we give a Besov space type characterization of   the domain of  the trace Dirichlet form 
     for any good smooth measure   $\nu$ on the boundary $\partial D$.  In the second part of this paper, we study  properties of the harmonic measure 
     of $\bar X$  on the boundary $\partial D$.  In particular, we provide a condition equivalent to the doubling property of the harmonic measure. 
  Finally, we characterize and provide estimates of the jump kernel of the trace Dirichlet form under the doubling condition of the harmonic measure on
  $\partial D$. 
\end{abstract}

\maketitle

\section{Introduction}\label{sec1}

Consider a symmetric reflected Brownian motion $X$ on a smooth domain $D\subset \R^d$, which can be described by the 
following stochastic differential equation:
\begin{equation}\label{e:1.1} 
dX_t =dB_t +{\bf n}(X_t) dL_t,
\end{equation} 
where $X$ is a continuous process taking values in $\overline D$, 
$B$ is a standard Brownian motion in $\R^d$, ${\bf n}$ is the unit inward normal vector field on $\partial D$, and $L$
is a continuous  increasing process that increases only when $X$ is on the boundary $\partial D$.
The process $L$ is called the boundary local time of $X$. For $t\geq 0$, define
$\tau_t:=\inf\{s\geq 0: L_s>t\}$.  The time-changed process $\check X_t=X_{\tau_t}$ is called the
the boundary trace of $X$  on $\partial D$. Heuristically, it is a process obtained from $X$   by erasing all the excursions inside $D$.
The boundary trace process $\check X$ is a pure jump process on $\partial D$. 
It can have infinite lifetime, for instance, when $D$ is bounded or $D$ is an  half space in $\R^d$.
It can also have finite lifetime and  killings on  $\partial D$, 
for instance, when $D$ is an exterior open ball in $\R^d$ with $d\geq 3$.
When $D$ is the upper half space $\H^d$  in $\R^d$ with $d\geq 2$, it is easy to see that the trace process $Y$ is an isotropic Cauchy process
on $\partial \H =\R^{d-1}\times \{0\}$, whose infinitesimal generator is  the fractional Laplacian $-(-\Delta)^{1/2}$ which is also 
the Dirichlet-to-Neumann map on $\partial \H^d$. 
In fact, Molchanov and Ostrovskii \cite{MO} showed  that any isotropic $\alpha$-stable process on $\R^d$ 
with $\alpha \in (0, 2)$ is the boundary trace of some symmetric (possibly degenerate) reflected diffusion on $\H^{d+1}$. 
This fact has later been rediscovered analytically in \cite{CS} by Caffarelli and  Silvestre.  For additional examples  about  boundary traces processes of reflected diffusions in the upper half space $\H^d$, we refer the reader to \cite{CW,KMu} and the references therein. 

The purpose of this paper is to investigate  various properties of the 
boundary  trace processes of symmetric reflected diffusions in a  general context, not only on Euclidean
spaces but also on general metric measure spaces including fractals.  Suppose that  $(D, d)$ is  a locally compact 
separable metric space and $m$ is a Radon measure on $D$ with full support. 
Suppose $X^0$ is an $m$-symmetric  transient irreducible    continuous Hunt process on $D$ that has no killings inside $D$.
Without loss of  generality, we assume $X^0$ is irreducible in the sense of \cite{CF, FOT}.
It is known that the Dirichlet form    $(\sE^0, \sF^0)$ of $X^0$ is    strongly local and quasi-regular   on $L^2(D; m)$.
     Denote by $\sF^0_{\rm loc} $ the collection of functions $f$ 
on $D$ so that for any relatively compact subset $U$ of $D$, there is some $u\in \sF^0$ so that $u=f$ $m$-a.e. on $U$.
For each $f\in  \sF^0_{\rm loc} $, by the strong locality of $(\sE^0, \sF^0)$, the energy measure $\mu_{\<f\>}$ is well defined
so that  $\mu_{\<f\>}(U)=\mu_{\< u\>}(U)$ for the above $U$ and $u$. 
Define 
\begin{equation} \label{e:rf}
\sF^\rf :=\left\{ f\in \sF^0_{\rm loc}:  \mu_{\< f\>} (D)<\infty  \right\} 
\end{equation}
and 
\begin{equation*} 
\sE^\rf (f, f)=\frac12 \mu_{\<f\>}(D) \quad \hbox{for } f\in \sF^\rf.
\end{equation*}
It is shown in \cite{C1} that    $(\bar \sE, \bar \sF):= (\sE^\rf, \sF^\rf\cap L^2(D; m))$ is a Dirichlet form on $L^2(D; m)$, 
which is called an active reflected Dirichlet form on $D$. 
By \cite[Theorems 6.6.3 and 6.6.5]{CF}, there is a locally compact metric measure space $(\hat  D, \hat d)$
so that $ (\bar \sE, \bar \sF)$ is a regular Dirichlet form on $L^2(\hat D; m|_D)$ 
and $\hat D$  has $D$ as a quasi-open subset.  For simplicity, 
we assume that    $(\bar \sE, \bar \sF)$ is a regular Dirichlet form on $L^2(\tD; m_0)$, 
where $(\tD, d)$  is the completion of  $(D, d)$ and   $m_0(A):=m(A\cap D)$.
In particular,  $m_0$ is a Radon measure on $\tD$. 
 Since $(\sE^0, \sF^0)$ of $X^0$ is    strongly local,  
  the regular Dirichlet form $(\bar \sE, \bar \sF)$  is strongly local on  $L^2(\tD; m_0)$ in the sense of \cite[Definition 1.3.17]{CF};
  see \S \ref{S:2.2} for details. 
Thus by \cite[Theorems 1.5.1 and 4.3.4]{CF}, there is a   symmetric  continuous Hunt  process $\bar X =\{\bar X_t, t\geq 0; \bar \bP_x, x\in \tD \setminus \sN\}$   on $\tD$ associated with $(\bar \sE, \bar \sF)$    which admits no killings inside $\tD$,  
 where $\sN$ is a proper exceptional set for $\bar X$. 
We call  $\bar X$ the reflected diffusion on $\overline D$. 
By \cite[Theorem 6.6.5]{CF}, the subprocess of $\bar X$ killed upon leaving 
  $\partial D$ has the same distribution
as $X^0$.  When $D$ is a smooth domain in $\R^d$ and $X^0$ is the absorbing Brownian motion in $D$,   the diffusion process 
$\bar X$ constructed in this way is exactly the classical reflected Brownian motion on $\overline D$ in the sense of \eqref{e:1.1};
see, e.g.,   \cite{C2}. 

Denote by  $\bar \sF_e$   the extended Dirichlet space of $ (\bar \sE , \bar \sF)$; that is, $f\in  \bar \sF_e$ if and only if $f$ is finite $m$-a.e. on $D$
 and there exists an $ \bar \sE$-Cauchy sequence $\{f_n; n\geq 1\}\subset  \bar \sF$ such that $f_n\to f$ $m$-a.e. on $D$. 
 It is known (cf. \cite[Theorem 2.3.4]{CF}) that every $f\in  \bar \sF_e$ has an $  \bar \sE$-quasi-continuous $m$-version.
 We always represent functions in a regular Dirichlet form by their quasi-continuous versions.

 Suppose that $\partial D:=\tD \setminus D$ is of positive $\bar \sE$-capacity. 
There is always a smooth measure   with full $\bar \sE$-quasi-support on $\partial D$. 
 Indeed, take some strictly positive function $\varphi $ on $D$ with $\int_D \varphi (x ) m(dx)=1$. Note that $\partial D$ is closed and $m_0 (\partial D) =0$. It is shown in the proof of \cite[Lemma 5.2.9(i)]{CF}) that the weighted harmonic measure $\omega_0$ defined by
 \begin{equation}\label{e:1.2}
 \omega_0(dz)=\int_{D\setminus\sN}\omega_x(dz)\varphi(x)m(dx)  
 \end{equation} 
 is a finite smooth measure on $\partial D$ whose $\bar \sE$-quasi-support is $\partial D$,
 where $\omega_x$ is the harmonic measure of $D$ with pole at $x\in D \setminus \sN$ defined by
 $$
 \omega_x (A) = \bar \bP_x (\bar X_{\sigma_{\partial D}} \in A;  \sigma_{\partial D}  <\infty ) 
 \quad \hbox{for  every Borel set } A\subset \partial D.
 $$
 Here   $\sigma_{\partial D}:=\inf\{t>0: \bar X_t\in   \partial D\}$.  
If  elliptic Harnack  principle (EHP) holds for $X^0 $ in $D$ 
(see Definition \ref{D:4.5}(i) below),  
then all harmonic measures $\{\omega_x ; x\in D \}$ 
 $x\in D$, are   mutually equivalent. In this case, for each $x\in D$, $\omega_x$ is equivalent to $\omega_0$ 
 and hence is a  finite smooth measure on $\partial D$ having full   $\bar \sE$-quasi-support on $\partial D$.
  Let $\nu $ be a smooth measure with full $\bar \sE$-quasi-support on $\partial D$, and let  $A^\nu$ be the positive continuous additive functional 
  of $\bar X$ having $\nu$ as its Revuz measure.
 Define
 $$
\tau_t:= \inf\{ r>0: A^\nu_r >t\} \quad \hbox{for } t\geq 0. 
$$
Then the time-changed process $\check X_t:=\bar X_{\tau_t}$ is a $\nu$-symmetric strong Markov process taking values on $\partial D$, 
which is called a trace process of the reflected diffusion $\bar X$ on $\partial D$.  

Time changes of symmetric Markov processes have been studied in depth by  Silverstein \cite{S1, S2}.
As a special case of this general theory, 
the Dirichlet form $(\check{\sE},\check{\sF} )$ of the trace process $\check X$ is known to be regular on $L^2(\partial D; \nu)$ and 
is characterized by   (see  \cite[Theorem  5.2.2 and   Corollary 5.2.10]{CF}):  
\begin{eqnarray}
\check{\sF}&:=&\check \sF_e \cap L^2(\partial D;\nu), \label{e:trace1} \\
\check{\sE}(u,v)&:=& \bar \sE  (\mathcal{H}u,\mathcal{H}v)
\quad \hbox{ for each }u,v\in \check \sF_e,   \label{e:trace2} 
\end{eqnarray}
where  $\check \sF_e =\bar \sF_e|_{\partial D}$ and 
$$
\mathcal{H}u(x):=\bar \bE_x[u( \bar X_{\sigma_{\partial D}});\sigma_{\partial D}<\infty], \quad x \in \tD\setminus \sN. 
$$ 
Note that  $(\check \sE, \check  \sF_e )$ is independent of the smooth measure $\nu$ with full quasi-support on $\partial D$ that is used to do the time change. The above characterization  naturally gives   a restriction operator $ f\mapsto f|_{\partial D}$ from $ \bar \sF_e$  to $\check \sF_e$ and an extension operator $u \mapsto \sH u$ from $\check \sF_e$ to $ \bar \sF_e$. 
 
Recall that $(\bar \sE, \bar \sF)$ is a strongly local  regular Dirichlet form on $L^2(\tD; m_0)$. 
As a particular case of a general result established in
  \cite{CF2, CFY}, see also \cite[Theorems 5.5.9 and 5.6.2]{CF}, 
 the trace Dirichlet space $(\check \sE , \check \sF_e )$  admits the following  Beurling-Deny decomposition: 
   \begin{eqnarray*} 
	  		\check \sE(f,g) &=& \check{\sE}^{\rm (c)}(f,g)+ \frac12 \int_{\partial D\times \partial D \setminus \operatorname{diag}}\big(f(x)-f(y)\big)\big(g(x)-g(y)\big)\check J(dx, dy)
		\nonumber \\
		&&+\int_{\partial D}f(x)g(x)\check\kappa(dx) \qquad \hbox{for any } f, g\in \check \sF_e,
	 \end{eqnarray*}
 where $\check{\sE}^{\rm (c)}(f,g) =\frac12 \mu_{\<f, g\>}(\partial D)$, which vanishes by \eqref{e:em} below,  and $\check J$  and $\check\kappa$ are the Feller measure and supplementary Feller measure  for $\partial D$, respectively, defined in terms of the energy functional of the transient diffusion process $X^0$ on $D$ associated with $(\sE^0, \sF^0)$   as  in \cite[(5.5.7)]{CF}. That is, for any Borel subsets $A, B$ of $\partial D$,
 \begin{eqnarray}
  \check J(A, B) &=& L^0(\sH  \1_A ,  \sH \1_B ) :=\lim_{t\to 0} t^{-1} \int_D (\sH \1_A- P^0_t \sH \1_A) (x) \sH \1_B (x) m(dx),  \label{e:1.6}\\
 \check\kappa(A) &=& L^0(\sH \1_A, q) := \lim_{t\to 0} t^{-1} \int_D (\sH \1_A - P^0_t \sH \1_A) (x) q(x) m(dx),  \label{e:1.7}
 \end{eqnarray}
   where  $\{P^0_t; t\geq 0\}$ is the transition semigroup of the Hunt process $X^0$ and $q(x):=1-\sH 1 (x) =\bar \bP_x (\sigma_{\partial D} =\infty)$
   for $x\in D\setminus \sN$.

 The Feller measure $\check J(dx, dy)$ is a symmetric Radon measure on the product space $\partial D\times \partial D\setminus\operatorname{diag}$, where  
$$
\operatorname{diag} :=\{(x,x):\,x\in \partial D\},
$$
and the supplementary Feller measure $\check \kappa$ is a non-negative Radon measure on $\partial D$.   
 The Feller measure $\check J(dx, dy)$ and the supplementary Feller measure $\check \kappa(dx)$  can also be defined  in terms of  the excursions away from $\partial D$ of the reflected diffusion process $\bar X$ on $\tD$; see \cite[Theorem 5.7.6]{CF}. The measures $\check J$ and $\check\kappa$ are called the jumping measure and the killing measure, respectively,  
  of the trace Dirichlet space $(\check \sE , \check \sF_e )$.  
  In summary, the trace Dirichlet space $(\check \sE , \check \sF_e )$ on $\partial D$ is purely non-local which admits the 
  Beurling-Deny decomposition
   \begin{eqnarray}\label{e:2.7}
	  		\check \sE(f,g)  = \int_{\partial D\times \partial D\setminus \operatorname{diag} }\big(f(x)-f(y)\big)\big(g(x)-g(y)\big)\check J(dx, dy)
		 		 +\int_{\partial D}f(x)g(x)\check\kappa(dx)  
	 \end{eqnarray}
 for any $ f, g\in \check \sF_e$.

\begin{remark} \label{R:1.1} \rm
  (i) It follows from \eqref{e:1.6}-\eqref{e:1.7} and \cite[(5.5.14)]{CF}  that 
 \begin{equation} \label{e:Feller}
 \check J(dx, dy)\ll \omega_0 (dx)\omega_0 (dy) \ \hbox{ on } \ \partial D\times \partial D \setminus \hbox{diag} 
   \quad   \hbox{ and } \quad 
  \check \kappa (dx)\ll \omega_0 (dx) \ \hbox{ on } \partial D,
   \end{equation}
        where  $\omega_0$ is  a harmonic measure of $\bar X$ on $\partial D$ defined by \eqref{e:1.2}. 
     Indeed, for $A\in \sB (\partial D) $ having $\omega_0(A)=0$,  by definition we have   $\sH \1_A =0$ on $m$-a.e. on $D$.
   Hence  by \eqref{e:1.7},    $\check\kappa(A)= L^0(\sH \1_A, q)  =0$,
        This shows that $\check\kappa(dx)\ll \omega_0 (dx)$  on  $\partial D$.  For $\alpha >0$ and $x\in D\setminus \sN$,
   define the $\alpha$-order harmonic measure $\omega^{(\alpha)}_x$ on $\partial D$ by 
   $$
   \omega^{(\alpha)}_x (A):= \bar \bE_x \left[ e^{-\alpha \sigma_{\partial D}} \1_A (\bar X_{ \sigma_{\partial D}}); \sigma_{\partial D}<\infty ]\right]
   \quad \hbox{for } A\in \sB (\partial D).
   $$
   Clearly, $ \omega^{(\alpha)}_x \leq  \omega_x$ for each $x\in D\setminus \sN$. We know from \cite[(5.5.13)-(5.5.14)]{CF} that for any
   $F\in \sB (\partial D\times \partial D)$,
   \begin{equation}\label{e:1.9}
   \check J(F) = \,  \uparrow \lim_{\alpha \to \infty} \alpha \int_D (\omega^{(\alpha)}_x \otimes \omega_x ) (F) m(dx),
   \end{equation}
   where the notation $ \uparrow \lim_{\alpha \to \infty}$ means that it is an increasing limit as $\alpha $ increases to infinity.   
   For future reference, we record the following formula for the supplementary Feller measure $\check \kappa$ on $\partial D$,
   which follows from  \cite[(5.5.13)-(5.5.14)]{CF} in an analogous way.  For any $A\in \sB (\partial E)$,
   \begin{equation}\label{e:1.10}
   \check\kappa(A) = \, \uparrow \lim_{\alpha \to \infty} \alpha \int_D \omega^{(\alpha)}_x  (A) q(x) m(dx).
   \end{equation}
   For each $y\in \partial D$   and $F\in \sB (\partial D\times \partial D)$, denote by $F_y$ the $y$-section of $F$, that is,  $F_y :=\{z\in \partial D: (y, z)\in F\}$. 
   For   $F\in \sB (\partial D\times \partial D)$ having $(\omega_0 \otimes \omega_0) (F)=0$, 
   set $A:=\{y\in \partial D: \omega_0 (F_y)=0\}$. By  Fubini's theorem,
   $\omega_0 (\partial D\setminus A)=0$.  Thus 
   \begin{eqnarray*}
   \int_D (\omega^{(\alpha)}_x \otimes \omega_x ) (F) m(dx) 
   &=& \int_D \int_{\partial D} \omega_x (F_y) \omega_x^{(\alpha)} (dy) m(dx) \\
   &=& \int_D \int_{A} \omega_x (F_y) \omega_x^{(\alpha)} (dy) m(dx) =0  
      \end{eqnarray*}
   as for each $y\in A$, $\omega_x(F_y)=0$ for $m$-a.e. $x\in D$.  This proves that 
   $ \int_D (\omega^{(\alpha)}_x \otimes \omega_x )   m(dx) \ll \omega_0 \otimes \omega_0$ on $\partial D\times \partial D$.
   Consequently, by \eqref{e:1.9},  $\check J(dx, dy) \ll \omega_0 (dx) \omega_0( dy)$. This establishes the claim \eqref{e:Feller}.  
   
   We  point out that the above argument works for any symmetric Hunt process, not just for symmetric diffusions.
  That the trace Dirichlet space is of  jump type \eqref{e:2.7}, the formulas \eqref{e:1.9}-\eqref{e:1.10} for $\check J$ and $\check \kappa$,  and the absolute continuity property   \eqref{e:Feller} for the Feller measure and supplementary Feller measure    is a general fact, true  for any reflected process $\bar X$ of a symmetric Hunt process $X^0$ on $D$. As mentioned earlier, when  (EHP)  holds for $X^0$, which is weaker than (EHP)   for the reflected process $\bar X$,   all the harmonic measures $\{\omega_x; x\in D\}$ are mutually absolutely continuous and thus are equivalent to the weighted 
   harmonic measure $\omega_0$. In this case, we have from \eqref{e:Feller} that for any $z\in D$, 
    \begin{equation} \label{e:Feller2}
  \check J(dx, dy)\ll \omega_z (dx)\omega_z (dy) \ \hbox{ on } \ \partial D\times \partial D \setminus \hbox{diag} 
   \quad   \hbox{ and } \quad 
   \check\kappa(dx)\ll \omega_z (dx) \ \hbox{ on } \partial D,
   \end{equation}
    In this paper, we will not use the fundamental property \eqref{e:Feller} in our proofs. It only serves as a motivation for our investigation. 
   In Theorem \ref{thm71} below, under a heat kernel bound condition, we establish these absolute continuity property \eqref{e:Feller} 
   by another  way     together with a two-sided bounds on their Radon-Nikodym derivatives. 
    
    \medskip
    
         (ii) When $X^0$ is an absorbing Brownian motion in a bounded smooth domain $D$ in $\R^d$,   it is easy to see (cf. \cite[\S 7]{Doob})
    that  the harmonic measure $\omega_z$ with pole at $z\in D$ is given by 
    $$
    \omega_z(dy)= \frac{\partial G_D (z, \cdot) }{ \partial n_y}  (y) \sigma (dy) \quad \hbox{on } \partial D, 
    $$
    and 
    the jump kernel for the boundary trace of the reflected Brownian motion on $\overline D$ is given by
    $$ 
    \check J(dx, dy) =\frac{\partial^2 G_D  }{\partial n_x \partial n_y}   (x, y) \sigma (dx) \sigma (dy)
    \quad \hbox{on }   \partial D \times \partial D \setminus {\rm diag}, 
    $$
    where $G_D$ is the Green function for the Brownian motion in $D$, $\frac{\partial}{\partial n_y}$ is the directional derivative
    along the inward unit normal vector $n_y$ at $y\in \partial D$ and $\sigma (dy)$ the Lebeuge surface measure on $\partial D$.
    Fix some $z \in D$. 
    When $D$ is smooth, it is easy to show  that  for any $ x\not=y \in \partial D$, 
    \begin{equation} \label{e:1.11}
    \theta (x, y) =\lim_{x', y'\in D \atop x'\to x, y'\to y} \frac{G_D(x', y')}{G_D(z, x') G_D(z, y')}  \,  \hbox{ exists}
   \end{equation} 
    and that
    $$
    \theta (x, y) \omega _z (dx) \omega_z (dy)=\check J(dx, dy)= \frac{\partial^2 G_D  }{\partial n_x \partial n_y}   (x, y) \sigma (dx) \sigma (dy).
    $$
    When $D$ is a   connected open set in $\R^d$ so that $\R^d\setminus D$ is non-polar  (or, more generally, a Green space in the Brelot-Choquet terminology),
    Na\"im   \cite{Naim} showed that $ \frac{G_D(x, y)}{G_D(x_0, y)}$ extends continuously to $D\times \partial_M D$ as a Martin kernel, 
      and that $ \theta (x, y):=   \frac{G_D(x, y)}{G_D(z, x) G_D(z, y)}$ extends  to $\partial_M  D \times \partial_M D 
    \setminus {\rm diag}$ as a positive lower semicontinuous symmetric function.  
    Here   $\partial_M D$ is the Martin boundary of $D$, that is, $\partial_M D=\hat D \setminus D$ where  $\hat D$ is  the
    Martin compactification of $D$. 
    The  symmetric function $\theta (x, y)$ is nowadays called  Na\"im kernel.
      Doob   \cite[Theorem 9.2]{Doob}  showed that  the Dirichlet energy $\int_D | \nabla u  (x)|^2 dx$ of a harmonic function  in $D$ having finite energy
       can be represented
     as a Douglas integral over the Martin boundary $\partial_M D $ in terms of the Na\"im kernel $\theta$; that is,  in the  terminology of this paper, 
     \begin{equation}  \label{e:1.13}
       \theta (x, y) \omega_z (dx) \omega_z (dy)   = \check J(dx, dy)    \quad \hbox{on } \partial_M  D \times \partial_M D 
    \setminus {\rm diag}
     \end{equation}
     is the jump measure for the trace Dirichlet form $(\check \sE, \check \sF)$. 
       Fukushima     \cite{Fu} showed for one-dimensional Brownian motion, 
    the Na\"im kernel coincides with the Feller kernel, which is the Radon-Nikodym derivative
    of the Feller measure $\check J(dx, dy)$ with respect to $\omega_{z}(dx)\otimes \omega_z (dy)$. 
    In a very recent paper, Kajino and Murugan extended the results of Na\"im and Doob
    to the part process $X^0$ of a symmetric diffusion $\tilde X$ killed open leaving a uniform domain $D\subset \sX$,
    where $\tilde X$ is the diffusion process on $\sX$ associated with a  strong local regular Dirichlet space $(\sX, \tilde d,\tilde m, \tilde \sE, \tilde \sF)$ that enjoys a two-sided heat kernel estimates {\rm HK($\Psi$)} (see \S \ref{S:4} below for its definition).
    In this case, the Martin boundary coincides with the    topological boundary $\partial D$.    They  showed in \cite[Proposition 3.14 and Theorem 5.8]{KM}
    that  the Na\"im kernel $\theta (x, y)$ exists on $\bar D \setminus D \setminus {\rm diag}$  as a  continuous limit of 
    $ \frac{G_D(x, y)}{G_D(z, x) G_D(z, y)}$ and that the Doob-Na\"im formula \eqref{e:1.13} 
    holds. \qed
    \end{remark}

 \medskip
 
  In this paper, we  aim at  obtaining 
   explicit characterizations of the trace Dirichlet space  $(\check \sE , \check \sF_e )$ on $\partial D$ and on   explicit bounds for  the jumping measure $\check J(dx, dy)$ and the killing measure $\check \kappa(dx)$, in a form  as illustrated by the following simple  example. 
  As mentioned previously, when  $\bar X$ is the reflected Brownian motion on  the upper half space $\H^{d+1}$, its trace process
 on $\partial \H^{d+1} \cong \R^d$ is the isotropic Cauchy process. In this case, 
 $\bar \sE (f, f )=\frac12 \int_{\H^{d+1}} |\nabla f(x)|^2 dx$,
 $\bar \sF = \{f\in L^2  (\H^{d+1}; dx): \bar \sE (f, f) <\infty\}$, 
  $\bar \sF_e= \{f\in L^2_{loc}(\H^{d+1}; dx): \bar \sE (f, f) <\infty\}$ is the Beppo-Levi space on $\H^{d+1}$,
  $$
  \check \sE (u, u)= \frac12 \int_{\R^d\times \R^d} (u(x)-u(y))^2 \frac{c}{|x-y|^{d +1}} dx dy,
  $$
  and $\check \sF_e= \{u\in {\mathcal B}(\R^d): \check \sE (u, u)<\infty\}$;
  see \cite[Examples $1^o$ and $5^o$ in \S 6.5]{CF}.  Denote by $m$ and $\nu$ the Lebesgue measures on $\H^{d+1}$ and $\partial \H^{d+1}\cong  \R^d$, respectively.  Note that $\check \sF_e\cap L^2(\R^d; \nu)$ is the Sobolev space $W^{1/2, 2} (\R^d)$ of fractional order.  Observe that the density $\frac{c}{|x-y|^{d+\alpha}}$ of the jump kernel of $(\check \sE, \check \sF_e)$  can be expressed,
    as  
    $     \frac{c_1 m(B(x, r))} { r^2 \nu (B(x, r))^2} $ with $r=|x-y|$.  
Observe also that the scale function
    $ r^2$, up to a constant multiple, is the constant in the Poincar\'e inequality 
   on  balls with radius $r$ in $\H^{d+1}$ for Brownian motion. 
     
     We assume $(D, d)$ is a uniform domain;  see Definition \ref{D:2.1} below.
In the first part of this paper, we  derive explicit characterizations of the trace Dirichlet space  $(\check \sE , \check \sF_e )$ on $\partial D$,
and establish restriction and extension theorems. 
As we saw from \eqref{e:Feller}, the weighted harmonic measure $\omega_0$ defined by \eqref{e:1.2} plays a special role.    
In the second part of this paper, we study doubling property of a renormalized  harmonic measure $\omega$ that is equivalent to $\omega_0$,
and  derive explicit two-sided bounds on $\check J(dx, dy)$ and $\check\kappa(dx)$ with respect to   $\omega$ on $\partial D$. 
This renormalized harmonic measure $\omega$ is a usual 
harmonic measure $\omega_x$  when $\partial D$ is bounded and is the elliptic measure from infinity when $\partial D$ is unbounded.

Suppose that     $\Psi$ is  a continuous bijection from $(0,\infty)$ to $(0,\infty)$ such that there are  constants $C_\Psi\in [1,\infty)$ and $0< \beta_1\leq \beta_2<\infty$ so that 
\begin{equation}\label{eqnpsi} 
C_\Psi^{-1}\Big(\frac{R}{r}\Big)^{\beta_1}\leq\frac{\Psi(R)}{\Psi(r)}\leq C_\Psi \Big(\frac{R}{r} \Big)^{\beta_2}
\quad \hbox{ for every } 0<r<R<\infty . 
\end{equation}
  For a Radon measure $\nu$ with full support on $\partial D$, define  a state-dependent scale function on $\partial D\times (0, \infty)$ by 
\begin{equation}\label{e:1.12}
\Theta_{\Psi, \nu } (x,r):=\Psi(r)\frac{\nu \big(B(x,r)\big)}{m_0\big(B(x,r\big)\big)}\quad 
 \hbox{ for }x\in \partial D \hbox{ and } r>0.
\end{equation} 
For notational convenience, in the sequel,  we denote $\mu (B(x, r))$ by $V_\mu (x, r)$  for any Radon measure $\mu$ on $\tD$.
When $\mu=m_0$, we write $V(x, r)$ for $V_{m_0}(x, r)$.  In this notation,
$$
\Theta_{\Psi, \nu } (x,r)=\Psi(r)\frac{V_\nu  (x,r)}{V(x,r) }\quad 
 \hbox{ for }x\in \partial D \hbox{ and } r>0.
 $$

We  introduce  a corresponding  Beppo-Levi type space $\dot \Lambda_{\Psi,  \sigma}$ and Besov type space 
$ \Lambda_{\Psi,  \sigma}$ on $\partial D$ as follows.

\begin{definition}\label{def34}
Suppose that  $\sigma$ is  a  Radon measure  with full support on $\partial D$.
 For each $f\in \mcB (\partial D)$, we define a Besov-type  semi-norm associated with $\sigma$  by 
\begin{align*}	  
\lb  f  \rb_{\Lambda_{\Psi, \sigma}}&:=\Big(\int_{x\in \partial D}\int_{y\in \partial D}
\frac{\big(f(x)-f(y)\big)^2}{V_\sigma (x,d(x,y))  \, \Theta_{\Psi, \sigma} \big(x,d(x,y)\big)} \, \sigma(dy)\sigma(dx)\Big)^{1/2}.
\end{align*}
We define the Beppo-Levi type space $\dot \Lambda_{\Psi,  \sigma}$
and the   Besov type space  $\Lambda_{\Psi,  \sigma}$ on $\partial D$  by 
  \begin{eqnarray*}
  \dot \Lambda_{\Psi,  \sigma}  &:= &
  \left\{f \in \mcB (\partial D): \,\lb f \rb_{\Lambda_{\Psi,  \sigma}}<\infty\right\}= 
  \left\{f\in L_{\rm loc}^2 (\partial D; \sigma): \,\lb f \rb_{\Lambda_{\Psi,  \sigma}}<\infty\right\}, \medskip   \\
  	 \Lambda_{\Psi,  \sigma} &:=& \dot \Lambda_{\Psi,  \sigma}   \cap L^2(\partial D; \sigma)=
	 \left\{f\in L^2(\partial D;\sigma):\,\lb f \rb_{\Lambda_{\Psi,  \sigma}}<\infty\right\}.
\end{eqnarray*}
Note that  $\lb  f  \rb_{\Lambda_{\Psi,  \sigma} }$ is a semi-norm on  $\dot \Lambda_{\Psi,  \sigma}  $ with $\lb  f  \rb_{\Lambda_{\Psi,  \sigma} }=0$
if and only if $f$ is constant $\sigma$-a.e. on $\partial D$. 
Define  
$$
\|f\|_{\Lambda_{\Psi,  \sigma} }:=\sqrt{\lb f \rb^2_{\Lambda_{\Psi,  \sigma} }+\|f\|_{L^2(\partial D;\sigma)}^2}.
$$
Then $\Lambda_{\Psi,  \sigma} $ is a Hilbert space with   norm $\| \cdot \|_{\Lambda_{\Psi,  \sigma} }$. 
 \end{definition}

 \begin{remark} \rm 
The second equality in the definition of $\dot\Lambda_{\Psi,\sigma}$  is due to the fact 
 that $f\in L^2_{loc}(\partial D;\sigma)$ for any $f\in\mcB(\partial D)$ having  $\lb f\rb_{\Lambda_{\Psi,\sigma}}<\infty$. 
 Indeed, suppose $f\in\mcB(\partial D)$ having  $\lb f\rb_{\Lambda_{\Psi,\sigma}}<\infty$. Then  for  every $x\in \partial D$ and $0< r<\diam(\partial D)/3$,   
 \begin{align*}
\int_{y\in B(x,r)\cap\partial D}\int_{z\in \partial D\setminus B(x, 3 r)}
 \frac{\big(f(y)-f(z)\big)^2}{V_\sigma (y,d(y, z))\, \Theta_{\Psi, \sigma} \big(y,d(y,z)\big)} \, \sigma(dz)\sigma(dy)<\infty. 
 \end{align*} 
 So by Fubini's Theorem, there exists $z\in \partial D\setminus B(x, 3 r)$ such that
 \[
 \int_{B(x,r)\cap\partial D}\frac{(f(y)-f(z))^2}{V_\sigma (y,d(y, z))\,\Theta_{\Psi, \sigma}\big(y,d(y,z)\big)}\sigma(dy)<\infty.
 \]
 This implies that $f\in L^2(B(x,r)\cap \partial D;\sigma)$, as 
 \begin{align*}
 \frac{V_\sigma (x,r)^2\Psi( 2 r)}{V (x,2r+d(x,z))}\leq  {V_\sigma (y,d(y, z))}  \Theta_{\Psi, \sigma} \big(y,d(y,z)\big)
\leq \frac{V_\sigma  (x,2r+d(x,z))^2\Psi(r+d(x,z))}{V(x,r)}.
 \end{align*}
 \end{remark}

Suppose now  $\sigma$ is  a     Radon measure  with full support on $\partial D$  satisfying {\rm (VD)} property and  that 
    the scale function    $\Theta_{\Psi, \sigma}$ satisfies the following lower scaling property (LS):
    there are positive constants $C$ and $\beta$ so that 
    \begin{equation}\label{assum3}
  \frac{\Theta_{\Psi, \sigma} (x, R)}{\Theta_{\Psi, \sigma} (x,r)}\geq C \Big(\frac{R}{r} \Big)^\beta
 \quad \hbox{for each }x\in \partial D \hbox{ and } 0<r<R\leq\diam(\partial D). 
\end{equation} 
    The (LS)  property for $\Theta_{\Psi, \sigma}$  plays an important role in our approach in this paper.   We also remark that when $\sigma$ and $m_0$ are doubling measures, $\Theta_{\Psi, \sigma}(x,r)$ has the doubling property, and hence satisfies 
    an 
    upper scaling property, that is, for some positive constants $C'$ and $\beta'$
    	\[
    	\frac{\Theta_{\Psi, \sigma} (x, R)}{\Theta_{\Psi, \sigma} (x,r)}\leq C'\Big(\frac{R}{r} \Big)^{\beta'}
    	\quad \hbox{for each }x\in \partial D \hbox{ and } 0<r<R<\infty. 
    	\]
    See the discussion below Definition \ref{D:3.1}. 
    
    The main results of this paper are   as follows.

\begin{enumerate}[(i)]
\item  (Restriction Theorem.) Theorems  \ref{thmrestriction1} and \ref{thmrestriction2}
 on the embedding of $\check \sF_e$ into $\dot \Lambda_{\Psi, \sigma}$
under the assumption  that the Poincar\'e inequality \rm PI$(\Psi;D)$ holds for  $(D,d,m, \sE^0, \sF^0)$.
Moreover, under these assumptions,  $\sigma$ is a smooth measure that does not charge zero $\bar \sE$-polar sets. 

\item (Extension Theorem.)  Under the condition that {\rm PI}$(\Psi;D)$ as well as a capacity upper bound condition 
  {\rm Cap}$_\leq(\Psi; D)$  hold  for  $(D, d, m, \sE^0, \sF^0)$, it is shown in Proposition \ref{P:3.15} and Theorem \ref{T:3.16}
  that $\sigma$ is a smooth measure having full $\bar \sE$-quasi-support on $\partial D$ and that 
  $ \dot \Lambda_{\Psi, \sigma} \cap C_c(\partial D)$   is a core in $\check \sF$. 
In particular, it implies that  for  $u\in \check \sF_e $, 
  if either $\partial D$ is  unbounded or  $(\bar \sE , \bar \sF )$ is recurrent, then   
  \begin{equation}\label{eqn11}
\check \sE(u,u)\asymp \int_{\partial D\times\partial D}\frac{\big(u(x)-u(y)\big)^2}{\Theta_{\Psi, \sigma} (x,d(x,y))V_\sigma (x,d(x, y))}\sigma(dx)\sigma(dy);
\end{equation}
 if $\partial D$ is bounded and $(\bar \sE , \bar \sF )$ is transient, then
\begin{align}\label{eqn1.1}
\begin{split} 
	\check \sE(u,u)
	&\asymp  \int_{\partial D\times\partial D}\frac{\big(u(x)-u(y)\big)^2}{\Theta_{\Psi, \sigma} (x,d(x,y)) V_\sigma (x,d(x, y))}\sigma(dx)\sigma(dy)\\
	&\quad\ +\int_{\partial D}u (x)^2d\sigma (dx).
\end{split} 
\end{align}
 Here $\asymp$ means the ratio of both sides is bounded between two positive constants.

\item (Doubling property of harmonic measure.) 
 Under a heat kernel estimate condition {\rm HK($\Psi$)} for  the reflected Dirichlet space $(\tD, d, m_0, \bar \sE, \bar \sF)$, 
Theorem \ref{thm41} and Theorem \ref{thm5+7}  give the characterization of the doubling property of harmonic measures
and renormalized harmonic measure in terms of the relative boundary capacity doubling property. 

\item (Equivalent conditions for (LS) property of the scale function $\Theta_{\Psi, \omega}$.) 
Equivalent conditions are given in Theorem \ref{T:6.1}  
for the renormalized harmonic measure $\omega$ being a doubling measure with full quasi-support on
$\partial D$ and $\Theta_{\Psi, \omega}$ having (LS) property 
 under a heat kernel estimate condition {\rm HK($\Psi$)} for 
 the reflected Dirichlet space  $(\tD, d, m_0, \bar \sE, \bar \sF)$, including a capacity density condition. It is also equivalent to existence of any   doubling Radon measure $\nu$ with full quasi-support on $\partial D$ so that $\Theta_{\Psi, \nu}$ having (LS) property. The latter gives an effective way to verify the  boundary capacity density condition
  in concrete cases; see the snowflake example in \S \ref{S:9.3}.

\item (Two-sided estimates on $\check J(dx, dy)$ and $\check \kappa(dx)$.) 
Under a heat kernel estimate condition {\rm HK($\Psi$)} for the reflected Dirichlet space  $(\tD, d, m_0, \bar \sE, \bar \sF)$ and any of the equivalent conditions in (iv), 
we show in Theorem \ref{thm71} that 
\begin{equation}\label{eqn12}
\check J(dx,dy)\asymp \frac{\omega(dx)\omega(dy)}{V_\omega (x,d(x, y)) \Theta_{\Psi, \omega} (x,d(x,y)) },
\end{equation}
and 
\begin{equation}\label{e:1.18}
\check\kappa(dx) \asymp \omega (dx)
\end{equation}
when $\partial D$ is bounded and $(\bar \sE, \bar \sF)$ is transient, and $\check\kappa=0$ otherwise. 
 From which, we conclude that the trace processes are of the mixed stable-like and one can derive the two-sided heat kernel estimates
from these estimates and the results from  \cite{CKW}; see Theorem \ref{T:8.2}.
\end{enumerate}

\medskip

These results are new even on Euclidean spaces for reflected Brownian motions and symmetric reflected diffusions  
in inner uniform domains in $\R^d$; see \S \ref{S:9.5} below. 
We point out that although the weighted harmonic measure $\omega_0$ is a natural smooth measure with full
$\bar \sE$-support on $\partial D$ to use   for the trace process,    
 it typically does have not a concrete expression. So it is important in (i) and (ii) above that we have the freedom
 to choose other smooth measures on the boundary $\partial D$ to characterize the domain of the trace Dirichlet spaces. For instance,
 when $D$ is the Koch snowflake   domain in $\R^2$, its harmonic measure does not have a good concrete expression.
On the other hand,  the Hausdorff measure   $\mu$ on 
$\partial D$, which is a smooth measure,  is  Ahlfors $d$-regular with $d=\frac{\log4}{\log3}$. 
Using it  one can easily characterize the domain of the trace Dirichlet form on $\partial D$ via 
\eqref{eqn11}-\eqref {eqn1.1};  see \S \ref{S:9.3} for details.  

Under   a heat kernel estimate condition {\rm HK($\Psi$)} for 
 an ambient Dirichlet space   $(\sX, \tilde d, \tilde m, \tilde \sE, \tilde \sF)$,
the two-sided jump kernel estimates in \eqref{eqn12} has also been independently obtained 
in \cite[Proposition 5.8]{KM} under a slightly stronger condition  (CDC) than our capacity density condition \eqref{e:6.2}, 
one of  the equivalent conditions mentioned in (iv).
In a recent updated version,   the authors  outlined in  \cite[\S 5.4]{KM} how their arguments can be modified to establish 
the estimates \eqref{eqn12}  as well as \eqref{e:1.18} under the same   condition as ours. 

\medskip
 
The first part of this paper can be regarded as boundary trace theorems   for reflected diffusions on metric measure spaces.
Boundary trace theorems for Sobolev and Besov spaces on Euclidean spaces have been extensively investigated;
see, e.g., \cite{A, AdH, JW, W}.  
For instance, for a uniform domain $D$ in $\R^n$ whose boundary $\partial D$ is Ahlfors $d$-regular 
with $d\in [n-1, n)$, it is shown in \cite[Chapter VII]{JW} that the trace of the Sobolev space $W^{1,2}(D)$ on $\partial D$ is the Besov space $B^{2,2}_\beta (\partial D)$ with $\beta =1 -(n-d)/2$ and there are bounded linear restriction and extension operators between these two spaces. The Besov space  $B^{2,2}_\beta (\partial D)$ can be represented by in terms of the Hausdorff measure $\sigma$ on $\partial D$: 
$$
B^{2,2}_\beta (\partial D)
=\left\{ u\in L^2(\partial D; \sigma):  \int_{\partial D\times \partial D} \frac{(u(x)-u(y))^2}{|x-y|^{d+2\beta}} \sigma (dx) \sigma (dy)
<\infty \right\}.
$$
Observe that in this context, $\Psi (r)=r^2$, $m_0$ is the Lebesgue measure on $D$, $V_\sigma(x, r) \asymp   r^d$. So
$\Theta_{\Psi , \sigma}(x, r)\asymp r^{2+d-n}$  and $\Theta_{\Psi , \sigma}(x, r)V_\sigma (x, r) \asymp r^{d+2\beta}$. 
Thus the Besov space $B^{2,2}_\beta (\partial D)$ 
is exact the space $\Lambda_{\Psi, \sigma}$ defined in Definition \ref{def34}. 
 In the context of fractals,  Jonsson \cite{Jo} studied the trace of the standard self-similar Dirichlet form on the Sierpinski gasket onto the bottom line (with respect to the $1$ dimensional Lebesgue measure), and  showed that the trace Dirichlet space is the Besov space $B^{1,1}_\alpha([0,1])$, with $\alpha=\frac{\log 5}{\log 4}-(\frac{\log 3}{\log 2}-1)/2$. It has been further investigated recently in \cite{KTa}. The  result of \cite{Jo} is extended  in \cite{HK} 
 to  a class of self-similar sets, with an application to the penetrating processes on fractal fields \cite{HaK, HK, Ku}. Extension and restriction theorems have also been studied recently for Newton-Sobolev functions in metric measure spaces; see  \cite{BS,GS}  and the   references therein.
   Boundary trace theorem also plays a central role in our recent work \cite{CC}   in solving an open question of Barlow-Bass about the convergence of resistances on Sierpi\'nski carpets.

The second part is on two-sided estimates of the jump kernel $\check J(dx, dy)$ and the killing measure $\check\kappa(dx)$ with respect to 
the renormalized harmonic measure $\omega$ under a heat kernel estimates condition {\rm HK$(\Psi)$} for the reflected Dirichlet space
$(\bar \sE, \bar \sF)$, or equivalently, for the reflected diffusion $\bar X$ on $\tD$. 
We emphasize that the Beuring-Deny decomposition \eqref{e:2.7} for the trace Dirichlet form $(\check \sE, \check \sF)$, and the formulas \eqref{e:1.6}-\eqref{e:1.7} and 
\eqref{e:1.9}-\eqref{e:1.10}  for the Feller measure $\check J$ and supplementary Feller measure $\check\kappa$ hold for any strongly local regular Dirichlet form
$(\sE^0, \sF^0)$ whose actively reflected Dirichlet form $(\bar \sE, \bar \sF)$ is regular on $L^2(\tD; m_0)$.  
No uniform domain assumption on the metric space $(D, d)$ nor heat kernel estimate condition {\rm HK$(\Psi)$}  is needed. 
This is in contrast with the  Na\"im kernel for the boundary trace of reflected diffusion $\bar X$,   whose existence is established in   \cite{KM},
under a heat kernel estimates condition {\rm HK$(\Psi)$} and the condition that $D$ is a uniform domain.  
  In view of \cite{BCM},  the  {\rm HK$(\Psi)$}  condition for $(\bar \sE, \bar \sF)$  is essentially 
  equivalent to that a scale-invariant elliptic Harnack inequality holds
for $(\bar \sE, \bar \sF)$. Thus all the harmonic measures $\{\omega_x ; x\in D\}$ are equivalent
to the weighted harmonic measure $\omega_0$.  However, the weighted harmonic measure $\omega_0$
may not have the doubling property in general. The renormalized harmonic measure is a measure that is equivalent to
$\omega_0$ and has the doubling property.  
Consequently, we know from \eqref{e:Feller} that $\check J(dx, dy) \ll \omega (dx) \omega (dy)$ and $\check \kappa(dx) \ll \omega (dx)$. 
We focus on the two-sided estimates of $\frac{\check J(dx, dy)}{ \omega (dx) \omega (dy)}$ and $\frac{\check \kappa(dx)}{\omega (dx)}$
rather than on their exact expressions. It is established in \cite{CKW, CKW2}    that 
 many important objects such as heat kernel estimates and parabolic Harnack inequalities  
for symmetric jump diffusions are invariant under bounded perturbations of the jump kernels.

In a very recent paper \cite{KM} by Kajino and Murugan, under a slightly stronger condition (under which the killing measure $\check\kappa$ for the boundary trace process has to vanish), they also obtained estimate \eqref{eqn12}, independently, by showing the existence of Na\"im kernel and deriving the Doob-Na\"im formula for the trace Dirichlet form $(\check \sE, \check \sF)$.  
They assumed that there is  an  ambient complete strongly local MMD space $(\sX, d, m, \tilde \sE, \tilde \sF)$  
that satisfies   (VD) and {\rm HK$(\Psi)$} so that $(\sE^0, \sF^0)$ is its  part Dirichlet form on  an uniform domain $D$ in $(\sX, d)$.
Their approach is different from ours.  See Remark \ref{R:8.4} for more information.

We do not use the Doob-Na\"im approach as described in Remark \ref{R:1.1}(ii), nor do we directly use the Feller measure formulas \eqref{e:1.6}-\eqref{e:1.7} and \eqref{e:1.9}-\eqref{e:1.10}. 
The idea of  our study  of the jump kernel $\check J(dx, dy)$   can be illustrated by  the following observation of a toy model, for which we can extract the jump kernel information directly from the active  reflected Dirichlet form
  $(\bar \sE, \bar \sF)$. 
  Consider a star shaped electrical network on $V=\{o\}\cup \partial V$, where $o$ is a central node, and $\partial V=\{x_1,x_2,\cdots,x_n\}$ are viewed as the boundary. Let $m$ be the counting measure on $V$. The Dirichlet form for the  continuous time reflected random walk $\bar X$ is 
$(\bar \sE, \sB (V))$, where $\sB (V)$ is the space of all measurable functions on the vertex set $V$ and 
\[
  \bar \sE (f,g):=    \sum_{i=1}^nc_i\big(f(o)-f(x_i)\big)\big(g(o)-g(x_i)\big)
  \quad \hbox{ for every }f,g\in \mcB (V).
\]
It is easy to see that the discrete harmonic measure on $\partial V$ is given by 
$$
\omega(x_j):= \bP_o ( \bar X_{\sigma_{\partial V}}=x_j ) = \frac{c_j}{ \sum_{i=1}^n c_i }  
\quad \hbox{for } j=1, \cdots, n.
$$
 We use the notation ${\mathbbm 1}_A$  to denote the indicator function of $A$ on $V$, that is,   
  ${\mathbbm 1}_A(x)=1$ if $x\in A$ and ${\mathbbm 1}_A(x)=0$ if $x\in V\setminus A$. For $f\in \mcB (\partial V)$, 
   let $\mathcal{H}f$ be the harmonic extension of $f$, that is,  
$$
\mathcal{H}f(x) =
\begin{cases}
f(x_i)   \quad  \quad &\hbox{when } x=x_i,  \, i=1,2,\cdots,n , \\
 \sum_{i=1}^n  \omega(x_i) f(x_i)  &\hbox{when  } x=o. 
 \end{cases}
 $$
   For $f,g\in \mcB (\partial V)$,  
  \begin{eqnarray*}
   \check  \sE (f,g)  &=&\bar  \sE (\sH f, \sH g) = \sum_{i,j=1}^n f(x_i)g(x_j) \bar \sE (\sH \1_{\{x_i\}}, \sH \1_{\{x_j\}})  
       \\
      &=& \frac12 \sum_{i, j=1}^n c_{i,j} (f(x_i)-f(x_j))( g(x_i)-g(x_j)),
     \end{eqnarray*}
where $c_{i,j}:= - \bar \sE (\sH \1_{\{x_i\}}, \sH \1_{\{x_j\}})$ and the last identity is due to the fact that for every $1\leq j\leq n$, 
$$
\sum_{i=1}^n c_{i, j}= \bar \sE (\sH \1_{\partial V}, \sH \1_{\{x_j\}})  = \bar \sE (1, \sH \1_{\{x_j\}}) = 0.
$$ 
The constant $c_{i,j}$ gives the discrete jump intensity for the boundary trace process $\check X$  to jump from $x_i$ to $x_j$.
By definition,  for $i\not= j$, 
\begin{eqnarray}\label{eqn13}
 c_{i,j} &=& - \sum_{k=1}^n c_k  \left( \omega (x_i)  - \1_{\{x_i\}}(x_k)  \right) \left(\omega (x_j) - \1_{\{x_j\}}(x_k) \right) \nonumber  \\
&=&-     \sum_{k=1}^n   c_k  \omega (x_i) \left(\omega (x_j) - \1_{\{x_j\}}(x_k) \right) 
 +  \sum_{k=1}^n c_k  \1_{\{x_i\}}(x_k) \left(\omega (x_j) - \1_{\{x_j\}}(x_k) \right)  \nonumber \\
&= & - \omega (x_i)  \omega (x_j)   \sum_{k=1}^n c_k    + c_j  \omega (x_i) + c_i  \omega (x_j) \nonumber \\
&= &  \omega (x_i)  \omega (x_j)   \sum_{k=1}^n c_k     \nonumber \\
  &=& \omega(x_i)\omega(x_j) \bar \sE({\mathbbm 1}_{\{o\}},{\mathbbm 1}_{\{o\}})   . 
 \end{eqnarray}
We will use a similar strategy in the general setting of strongly local Dirichlet forms. For non-negative $f,g\in C_c(\partial D)\cap \check \sF$ with small, non-intersecting supports (meaning  $\operatorname{supp}[f]$ and $\operatorname{supp}[g]$  have smaller diameters  than $d(\operatorname{supp}[f],\operatorname{supp}[g]$), we choose a suitable compact set $K$ such that 
$$
\diam(K)\asymp 
d(K,\partial D)\asymp d(K,\operatorname{supp}[f])\asymp d(\operatorname{supp}[f],\operatorname{supp}[g]).
$$
The compact  $K$  plays the same role as the central point $o$ in the discrete setting. Let $e_K$ be the condenser potential of $K$ in $D$, that is, $e_K(x)= \bP_x (\sigma_K <\tau_D)$ where $\tau_D:=\inf\{ t\geq 0: X_t\notin D\}$. Then 
  by a strategy similar to \eqref{eqn13} and by careful estimates of the error terms, we can show   
\[
\check  \sE  (f,g)\asymp \mathcal{H}f(y)\mathcal{H}g(y)\,\bar \sE (e_K,e_K) \quad \hbox{ for }y\in K. 
\]
Then, \eqref{eqn12}  
follows from the observations that $\omega_x\asymp \frac{\omega}{\omega(E)}$ on $E$, where $E$ is a neighborhood of the support of $f,g$ with radius about $r$, and $ \bar\sE(e_K,e_K) \asymp \frac{m(B(x,r))}{\Psi(r)}$ with $x$ being a point in the support of $f$. \medskip
Of course, carrying this strategy out rigorously in the general setting of   strongly local Dirichlet forms on metric measure spaces 
 requires much more  efforts and careful analysis in depth, and needs a two-sided heat kernel estimates, or equivalently,
 the elliptic Harnack inequality assumption for the reflected Dirichlet space $(\bar \sE,  \bar\sF )$ on $L^2(\tD; m_0)$. 
 A similar approach has also been used in \cite{KM}, see, e.g., the proof of  Theorem 5.8 there.
 
\medskip

The rest of the paper is organized as follows. In Section \ref{S:2},
we carefully lay out the settings of this paper and present some basic properties  of uniform domains. 
Trace  theorems are studied in Section \ref{S:3}. 
Volume doubling property and local comparability of harmonic measures are investigated in Sections \ref{S:4} and \ref{S:5},
respectively.  In Section \ref{S:6}, equivalence between the lower scaling property (LS) of the scale function
$\Theta_{\Psi, \sigma}$ and the capacity density condition is given. In Section \ref{S:7}, we derive estimates on the jump kernel and killing measure
for the boundary trace process,  while the two-sided heat estimates for the trace process  are given in Section \ref{S:8}. 
Several examples are given Section \ref{S:9} to illustrate the scope of the main results of this paper. 
 
We mention that the approach developed in this paper is quite robust. It works for the trace process of reflected jump diffusions as well. This will be carried out in a forthcoming paper.

\smallskip

In this paper,  we use $:=$ as a way of definition.  
For $a, b\in \R$, $a\wedge b:=\min\{a, b\}$, $a\vee b:=\max\{a, b\}$, and $a^+ := a\vee 0$. 
We denote by $[a]$ the largest integer not exceeding $a\in \R$. 
The notation $f\lesssim g$ means that  there exists $C\in (0,\infty)$ such that $f\leq Cg$, and $f\asymp g$ means that $f\lesssim g\lesssim f$ on the common domain of definitions of $f$ and $g$.  For a subset $A$, $\1_A$ denotes the indicator function of $A$.

\section{Basic settings}\label{S:2}

In this section, we introduce the basic settings for this paper, including the geometric assumptions about the state space,  Dirichlet forms and the associated diffusion processes.

\subsection{State space}\label{S:2.1}

Let $(D,d)$ be a locally compact separable metric measure space and $m$ a Radon measure on $D$ with full support. 
 We denote by $(\tD,d)$ the completion of $(D,d)$, and extend the measure $m$ to a measure $m_0$ on $\tD$ by 
 setting $m_0(E)=m(E\cap D)$ for  $E\subset \tD$.  Note that  $(\tD,d)$ is also a  separable metric   space and $m_0$ has full support on $\tD$.  Moreover, $D$ is an open subset of $(\tD,d)$ as every point $x$ in $D$ has a compact neighborhood in $D$ by the local compactness of $(D,d)$.  
 
 We write 
\[
B(x,r):=\{y\in \tD:\,d(x,y)<r\}
\]
for the open ball in $\tD$. For each $E\subset \tD$, we denote by $\bar{E}$ the closure of $E$ in $(\tD,d)$, and 
$\partial E :=\bar{E}\cap \overline{(\tD\setminus E)}$ is the boundary of $E$.  In particular, $\partial D:=\tD \setminus D$, and  $\overline{B(x,r)}$ is the closure of an open ball centered at $x$, which may not equal to the closed ball $\overline{B}(x,r):=\{y\in \tD:\,d(x,y)\leq r\}$ of radius $r$ centered at $x$. 

For two subset $A, B\subset \tD$,   
 $$
 d(A,B):=\inf\limits_{x\in A,  \, y\in B}   d(x,y)
 $$ 
  is the distance between $A$ and $B$, 
 and    $d(x,A)=d(\{x\}, A)$ is the distance between $x\in \tD$ and $A$.
  For $A\subset \tD$, $\diam(A) :=\sup_{x,y\in A}d(x,y)$ is the diameter of $A\subset \tD$. 

\medskip

For a Borel measurable $E\subset \tD$, denote by $\mcB(E)$   the space of Borel measurable functions on $E$; $C(E)$ the space of continuous functions on $(E,d)$; and $C_c(E)$ the space of continuous functions on $E$ with compact support, i.e. $\overline{\{x\in E:f(x)\neq 0\}}\cap E$ is compact  for every $f\in C_c(E)$. Let  $C_0(E)$  be the  closure of $C_c(E)$ with respect to the supremum norm $\| f \|_\infty:= \sup_{x\in E} |f(x)|$. 
We denote by $C_b (E)$ the space  bounded continuous functions $E$. 

  For $f\in C(E)$ on a closed subset $E\subset\tD$, we denote the support of $f$ by $\operatorname{supp}[f]$, 
  i.e.,  $\operatorname{supp}[f]:=\overline{\{x\in E:f(x)\neq 0\}}$. 

\smallskip 

 Throughout this paper, we always assume that $m_0$ is Radon on $\tD$ and is  volume doubling (VD),
that is,  there is $C\in (1,\infty)$ such that
\[
  V(x,2r) \leq C\,  V(x,r)     \qquad \hbox{ for every }x\in \tD   \hbox{ and }   r\in (0,\infty). 
\]
This is equivalent to the existence of positive constants $c_1$ and $d_1$ so that 
\begin{equation}\label{e:VD}
 \frac{V(x, R)}{V(x, r)}  \leq c_1  \Big(\frac{R}{r} \Big)^{d_1}  \quad 
\hbox{ for every }x\in \tD   \hbox{ and }   0<r \leq R<\infty.
\end{equation}
 We say that reverse volume doubling property (RVD) holds if there are positive constants 
positive constants $c_2$ and $d_2$ so that 
$$
 \frac{V(x, R)}{V(x, r)}  \geq  c_2 \Big(\frac{R}{r} \Big)^{d_2}  \quad 
\hbox{ for every }x\in \tD   \hbox{ and }   0<r \leq R  \leq {\rm diam}(\tD).
$$
It is known that (VD) implies (RVD) if $\tD$ is connected; See \cite[Proposition 2.1 and  the  paragraph before Remark 2.1]{YZ}. Moreover, since $(\tD, d)$ is complete, (VD) implies that each open ball is relatively compact in $(\tD, d)$  in view of Lemma \ref{lemma2path}(a), which shows every closed ball is totally bounded.

 \medskip

\begin{lemma}\label{lemma2path}
\begin{enumerate}  [\rm (a)]
\item Let $E\subset\tD$ be a bounded set. Then, there is an integer  $N^* \geq 1$ depending only on $\diam(E)/r$ and the parameter in {\rm (VD)} for $m_0$ such  that one can find $\{z_i\}_{i=1}^N\subset E$ 
 so that $N\leq N^*$ and $E\subset\bigcup_{i=1}^NB(z_i,r/2)$.
	
\item Let  $\gamma$ be a path in $\tD$ and $ r>0$. Then, there is   an upper bound  $L>0$  depending only on $\diam(\gamma)/r$ and the parameter in {\rm (VD)} for $m_0$ such that we can find a sequence $\gamma(0)=z_0,z_1,\cdots,z_l=\gamma(1)$ in $\gamma$ such that $l\leq L$ and $d(z_i,z_{i+1})<r$ for $i=0,1,\cdots,l-1$. 
\end{enumerate}
\end{lemma}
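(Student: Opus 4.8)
The plan is to derive both statements from a standard volume-counting argument using (VD) together with the relative compactness of balls. First I would prove part (a). Fix a bounded set $E$ and $r>0$, and let $D_E := \diam(E)$. Choose a maximal $r/2$-separated subset $\{z_1,\dots,z_N\}$ of $E$; such a set exists because closed balls are relatively compact (so $E$ is totally bounded), and maximality forces $E\subset\bigcup_{i=1}^N B(z_i,r/2)$. To bound $N$, pick any $z\in E$. The balls $\{B(z_i,r/4)\}_{i=1}^N$ are pairwise disjoint and all contained in $B(z,D_E+r)$. Applying (VD) iteratively (in the form \eqref{e:VD}) to compare $m_0(B(z,D_E+r))$ with $m_0(B(z_i,r/4))$, and using the fact that each $z_i$ lies within $D_E$ of $z$ so that $B(z,D_E+r)\subset B(z_i, 2D_E+2r)$, one gets $m_0(B(z_i,r/4)) \geq c\, (r/(D_E+r))^{d_1} m_0(B(z,D_E+r))$ with constants depending only on the (VD) parameters. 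Summing over $i$ and using disjointness gives $N \leq c^{-1}\bigl((D_E+r)/r\bigr)^{d_1} =: N^*$, which depends only on $D_E/r$ and the (VD) constants, as required.

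Next I would prove part (b). Apply part (a) to the set $E=\gamma$ (the image of the path), which is compact hence bounded, obtaining a cover $\gamma\subset\bigcup_{i=1}^N B(z_i,r/2)$ with $N\leq N^*$ depending only on $\diam(\gamma)/r$ and the (VD) constants. Now I would use the connectedness of $\gamma$ to extract an ordered chain: parametrize $\gamma:[0,1]\to\tD$, and greedily build $0=t_0<t_1<\cdots<t_l=1$ where, having reached $t_j$, set $t_{j+1}$ to be the last time the path is still within $r/2$ of $\gamma(t_j)$ in the sense of staying in whichever ball $B(z_i,r/2)$ contains $\gamma(t_j)$ — more precisely, let $i(j)$ be an index with $\gamma(t_j)\in B(z_{i(j)},r/2)$ and let $t_{j+1}=\sup\{t>t_j: \gamma([t_j,t])\subset B(z_{i(j)},r/2)\}\wedge 1$; then $\gamma(t_{j+1})\in \overline{B(z_{i(j)},r/2)}$, so $d(\gamma(t_j),\gamma(t_{j+1}))< r$. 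The key point is that each ball $B(z_i,r/2)$ can be used as the "current" ball $B(z_{i(j)},r/2)$ at most twice along this construction: once $\gamma$ exits $B(z_i,r/2)$ it cannot be the current ball again until it has been the current ball of a different index in between, and a short argument shows it can return at most once more, so $l\leq 2N\leq 2N^* =: L$, depending only on $\diam(\gamma)/r$ and the (VD) constants. Setting $z_j:=\gamma(t_j)$ completes the proof.

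The main obstacle is part (b): the counting argument for the length of the chain requires some care to make rigorous, because a path may re-enter a ball it has previously left, so one cannot naively equate the number of chain steps with the number of covering balls. The cleanest fix is the one sketched above — track for each covering ball how many times it serves as the "current" ball and bound this by an absolute constant — but one should double-check the edge cases (the path lingering at a point, several balls containing $\gamma(t_j)$ simultaneously), which is why I would choose the index $i(j)$ by a fixed rule (say the smallest such $i$) to avoid ambiguity. Everything else is routine iteration of \eqref{e:VD}.
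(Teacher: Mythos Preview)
Your argument for (a) is essentially the same as the paper's: a maximal $r/2$-separated set together with the volume-doubling bound \eqref{e:VD} gives the required upper bound on $N$.

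For (b), however, your greedy traversal has a genuine gap. With $t_{j+1}$ defined as the \emph{first exit time} from the current ball $B(z_{i(j)},r/2)$, the claim that each covering ball serves as ``current'' at most twice is false, and there is no short argument that rescues it. A path of small diameter can oscillate arbitrarily many times: e.g.\ in $\R$, take two overlapping balls $B_1,B_2$ of radius $r/2$ covering $[0,1]$ and a path that goes $0\to 1\to 0\to 1\to\cdots$; with your rule the current ball alternates $B_1,B_2,B_1,B_2,\dots$ once per oscillation, so $l$ is unbounded while $N=2$. The ``smallest index'' tie-breaking rule does not help.

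There is an easy fix: replace the first exit time by the \emph{last visit time}, i.e.\ set $t_{j+1}=\sup\{t\in[t_j,1]:\gamma(t)\in B(z_{i(j)},r/2)\}$. Then $\gamma(t)\notin B(z_{i(j)},r/2)$ for all $t>t_{j+1}$, so ball $i(j)$ can never be current again; hence $l\le N$. One still has $\gamma(t_{j+1})\in\overline{B(z_{i(j)},r/2)}$, so $d(\gamma(t_j),\gamma(t_{j+1}))<r$, and $\gamma(t_{j+1})$ lies in some open ball of the cover, so the recursion continues.

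The paper takes a different route for (b): it adjoins the endpoints $\gamma(0),\gamma(1)$ to the set of centers $\{z_i\}$, forms the graph with edges $\{z_i,z_j\}$ whenever $d(z_i,z_j)<r$, observes that this graph is connected (since the open balls $B(z_i,r/2)$ cover the connected set $\gamma$), and then takes a simple path in the graph from $\gamma(0)$ to $\gamma(1)$. This gives $l\le N+2$ immediately, with no need to track exit or visit times.
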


\begin{proof}
(a).  This is a standard statement, as the (VD) doubling property of $m_0$ implies that $(\tD,d)$ is a doubling space, see \cite[Section 10.13,  Exercise 10.17]{Hei}. For the convenience of readers, we provide a detailed proof here. 

We find the finite set of points $\{z_i\}_{i=1}^{N}$ by the following procedure. First, we pick $z_1\in E$. Next, if $E\subset B(z_1,r/2)$, we do nothing and end the process with $\{z_1\}$; otherwise we pick $z_2 \in \gamma\setminus B( z_1, r/2)$ to form a larger set $\{z_i\}_{i=1}^2$. Next, we repeat the procedure for the set $\{z_i\}_{i=1}^2$. If $\gamma\subset \bigcup_{i=1}^2B(z_i,r/2)$, we end the process; otherwise, we pick $z_3\in\gamma\setminus\bigcup_{i=1}^2B(z_i,r/2)$ to form  $\{z_i\}_{i=1}^3$. We keep doing this until $E\subset \bigcup_{i=1}^{N}B(z_i,r/2)$. This process has to stop after finitely many steps and $N$ has an upper bound that depends only on $\diam(E)/r$ and the parameter in (VD) for $m_0$. This is because  $\{B(z_i,r/4);i\geq 1\}$ are pairwise disjoint, and for $i\geq 1$, 
	\[
	B(z_i,r/4)\subset B(z_1,\diam(E)+r/4)\subset 
	B(z_i,2\diam(E)+r/4),
	\] 
	so by \eqref{e:VD} (VD) of $m_0$, 
	\begin{align*}
		&\quad\ N\cdot  V (z_1,\diam(E)+r/4)
		\leq \sum_{i=1}^{N} V(z_i,2\diam(E)+r/4)   \\
		&\leq c (1+8\diam(E)/r)^{d_1}\sum_{i=1}^{N} V (z_i,r/4)
		\leq c(1+8\diam(E)/r)^{d_1} V (z_1,\diam(E) +r/4 ). 
	\end{align*} 
	It follows that $N\leq N^*:= [ c(1+8\diam(E)/r)^{d_1}]+1$.

(b). Let $z'_0=x$ and $z'_1=y$. By (a), we can find $\{z'_i\}_{i=2}^{L'}$ such that $L'\leq c(1+8\diam(\gamma)/r)^{d_1}+1$ 
and $\gamma\subset\bigcup_{i=2}^{L'} B(z'_i,r/2)$.
	Next, we define the set of edges 
	\begin{eqnarray*}
		E & =& \left\{ \{z'_i,z'_j\}:\,B(z'_i,r/2)\cap B(z'_j,r/2)\neq\emptyset,0\leq i,j\leq L'  \right\} \\
		&=& \left \{\{z'_i,z'_j\}:\,d(z'_i,z'_j)<r,0\leq i,j\leq L' \right\}.
	\end{eqnarray*}
	Then, $(\{z'_i\}_{i=0}^{L'},E)$ is a connected graph  as  $B(z'_i,r/2),0\leq i\leq L'$ is an open cover of the connected set $\gamma$. We can therefore find a path $\gamma(0)=z_0,z_1,z_2,\cdots,z_l=\gamma(1)$ in $\{z'_i\}_{i=0}^{L'}\subset\gamma$ such that $d(z_i,z_{i+1})<r/2$ for $0\leq i<l$ with $l \leq L' $.
\end{proof}

 Define the distance to the boundary function $d_D(x)$ on $D$ by 
\begin{equation} \label{e:dstf} 
	d_D(x) := \inf\{d(x, z): z\in D^*\setminus D \}   \quad \hbox{for } x\in D.
\end{equation}
For each $0\leq r<s<\infty$, we define
\[
D_{r, s}:=\{x\in \tD:\, r\leq d_D(x)<s\}
\]
and
\[D_{r}:=\{x\in \tD:\,  d_D(x)\geq r\}.\] 

\begin{definition} \label{D:2.1}  We say 
 $(D,d)$ is    $A$-uniform   for some positive constant $A>1$ if 
 for every $x,y\in D$, there exists a continuous curve $\gamma\subset D$ 
 so that $\gamma(0)=x$, $\gamma(1)=y$,  $\diam(\gamma) \leq A\,d(x,y)$ and 
\begin{align*}
  d_D(z)\geq A^{-1}\min\{d(x,z),d(y,z)\}\quad\hbox{ for every }z\in \gamma.
\end{align*}
We say $(D, d)$ is uniform if it is $A$-uniform for some $A>1$. 
\end{definition}

\begin{remark}\rm  
\begin{enumerate}
	\item Note that if $(D, d)$ is $A$-uniform,  then  $D$ is  path connected   in $(\tD,d)$. Consequently, $(\tD, d)$ is connected. We also note that 
	\[
	D\cap \partial B(x,r)\neq\emptyset\quad \hbox{ for }x\in\tD\hbox{ and } 0< r<\diam(D)
	\] 
	as we can find a path in $D$ that connects $D\cap B(x,r)$ and  $D\setminus\overline{B(x,r)}$.

	\item This definition of uniform domain is due to  \cite[Definition 2.9]{Vai}. This is also the same definition used in \cite[Definition 2.3]{Mathav} and in \cite[Definition 2.5]{KM}.
	See \cite[Theorem 2.10]{Vai} for various equivalent definitions of uniform domains in the Euclidean spaces,
	however some of which may not be equivalent in general metric measure spaces as discussed 
	in  \cite[Section 2.2]{Mathav}. 
\end{enumerate}

\end{remark}

\begin{lemma}\label{lemma21} Suppose that $(D,d)$ is an $A$-uniform domain for some $A>1$. 
\begin{enumerate} [\rm (a)]
\item For each $x\in \partial D$ and $r\in (0,\diam(D)/2)$, there is some $y\in D$ such that $B(y,r/(12A))\subset B(x,r)\cap D_{r/(4A)}$.
So  there is $C_1\in (0,1)$ such that 
\[
m_0(B(x,r))\geq m_0\big(B(x,r)\cap D_{r/(4A)}\big)\geq C_1m_0\big(B(x,r)\big).
\]

\item  Let $x,y\in D$, then there is a path $\gamma$ in $D$ connecting $x,y$ such that $\diam(\gamma)\leq A\,d(x,y)$ and 
\[
d_D(z)\geq \frac{d_D(x)\wedge d_D(y)}{1+A}\ \hbox{ for every }z\in \gamma.
\]
	
\item  Let $x\in \partial D$ and $r>s>0$. Suppose  that $B(x,r)\cap D_{s,r}\neq\emptyset$.
Then  there exists a path connected set $E$ such that
\[
B(x,r)\cap D_s\subset E\subset B(x,2Ar+r)\cap D_{s/(1+A)}.
\] 
\end{enumerate} 
\end{lemma}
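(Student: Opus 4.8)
The plan is to prove the three parts of Lemma~\ref{lemma21} using only the $A$-uniformity of $D$ together with (VD) for $m_0$; the geometric constructions are elementary but the bookkeeping on radii must be done carefully.

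For part (a), fix $x\in\partial D$ and $r\in(0,\diam(D)/2)$. The point is to find an interior point not too far from $x$ but uniformly away from $\partial D$ at scale $r$. Since $x\in\partial D$ and $\diam(D)>2r$, there exist points of $D$ arbitrarily close to $x$ and also a point $w\in D$ with $d(x,w)\ge r/2$ (otherwise $D\subset B(x,r/2)$ forcing $\diam(D)\le r$); pick $x'\in D$ with $d(x,x')<r/(100A)$, say, and apply the uniform curve condition to the pair $(x',w)$. Along this curve $\gamma$ one travels from a point very near $\partial D$ to a point at distance $\ge r/2$ from $x$; by continuity of $z\mapsto d(x,z)$ there is a point $y\in\gamma$ with $d(x,y)$ comparable to $r$, say $d(x,y)\in[r/(8A),\,r/(4A)]$. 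For such $y$ the uniformity inequality $d_D(z)\ge A^{-1}\min\{d(x',z),d(w,z)\}$ gives a lower bound $d_D(y)\gtrsim d(x,y)/A\gtrsim r/A^2$; adjusting constants one arranges $d_D(y)\ge r/(4A)$ and $B(y,r/(12A))\subset B(x,r)\cap D_{r/(4A)}$ (the inclusion in $B(x,r)$ is by the triangle inequality since $d(x,y)<r$ and $r/(12A)<r$; the inclusion in $D_{r/(4A)}$ is because $d_D(\cdot)$ is $1$-Lipschitz). The measure comparison then follows from (VD): $m_0(B(x,r))\ge m_0(B(y,r/(12A)))\ge c\,m_0(B(y,r))\ge c'\,m_0(B(x,r))$ using $B(x,r)\subset B(y,2r)$ and the doubling inequality \eqref{e:VD} with a ratio depending only on $A$.

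For part (b), start from the uniform curve $\gamma$ for $(x,y)$ with $\diam(\gamma)\le A\,d(x,y)$ and $d_D(z)\ge A^{-1}\min\{d(x,z),d(y,z)\}$. Let $\delta:=d_D(x)\wedge d_D(y)$. If $z\in\gamma$ has $\min\{d(x,z),d(y,z)\}\ge \delta$, then $d_D(z)\ge \delta/A\ge \delta/(1+A)$ directly. Otherwise, say $d(x,z)<\delta$, and then $d_D(z)\ge d_D(x)-d(x,z)>\delta-\delta\ge 0$; this crude bound is not enough, so instead one argues $d_D(z)\ge d_D(x)-d(x,z)\ge \delta - \min\{d(x,z),d(y,z)\}$, and combining with $d_D(z)\ge A^{-1}\min\{d(x,z),d(y,z)\}$ and optimizing over the value $t:=\min\{d(x,z),d(y,z)\}\in[0,\delta]$: the worst case is when $\delta-t = A^{-1}t$, i.e. $t=\delta A/(1+A)$, giving $d_D(z)\ge \delta/(1+A)$. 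Taking the minimum of the two cases yields the claimed bound $d_D(z)\ge (d_D(x)\wedge d_D(y))/(1+A)$ for all $z\in\gamma$, with the same curve, so $\diam(\gamma)\le A\,d(x,y)$ is retained.

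For part (c), we want a path-connected "collar at scale between $s$ and $r$'' around $x\in\partial D$. Take $E:=\bigcup\{\gamma_{p,q}: p,q\in B(x,r)\cap D_s\}$ where $\gamma_{p,q}$ is the curve from part (b) joining $p$ and $q$. This $E$ is path connected (all curves pass near, e.g., a fixed reference point, or more simply any two points of $E$ lie on curves meeting $B(x,r)\cap D_s$ which is itself path connected once nonempty — here is where the hypothesis $B(x,r)\cap D_{s,r}\ne\emptyset$ is used to guarantee $B(x,r)\cap D_s\ne\emptyset$ and, via part (a)-type reasoning, connected at scale $r$). Clearly $B(x,r)\cap D_s\subset E$. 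For the outer inclusion: if $z\in\gamma_{p,q}$ then $d(x,z)\le d(x,p)+\diam(\gamma_{p,q})\le r + A\,d(p,q)\le r+2Ar$, so $z\in B(x,(2A+1)r)$; and $d_D(z)\ge (d_D(p)\wedge d_D(q))/(1+A)\ge s/(1+A)$, so $z\in D_{s/(1+A)}$. Hence $E\subset B(x,(2A+1)r)\cap D_{s/(1+A)}$, as required.

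The main obstacle is part~(a): one must extract from the uniform curve a point $y$ whose distance to $x$ is pinned between two explicit multiples of $r$, and simultaneously control $d_D(y)$ from below by a multiple of $r$ with the \emph{right} constants so that the stated inclusion $B(y,r/(12A))\subset B(x,r)\cap D_{r/(4A)}$ holds verbatim; this requires choosing the auxiliary interior point $x'$ close enough to $x$ and the far point $w$ at the right scale, then chasing the intermediate-value argument and the two uniformity inequalities through with care. Parts (b) and (c) are then short consequences, with (c) essentially a packaging of (b) plus the triangle inequality.
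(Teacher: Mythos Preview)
Your approach is essentially the same as the paper's for all three parts, but part~(a) has a genuine constant-tracking error that you yourself flag as the ``main obstacle'' without resolving. By choosing $y$ with $d(x,y)\in[r/(8A),r/(4A)]$, you force $d(x',y)\asymp r/A$ as well, so the uniform bound only yields
\[
d_D(y)\ge A^{-1}\min\{d(x',y),d(w,y)\}\gtrsim r/A^2,
\]
which is \emph{not} enough to get $B(y,r/(12A))\subset D_{r/(4A)}$; you need $d_D(y)\ge r/(3A)$. ``Adjusting constants'' will not rescue this, because the power of $A$ is wrong. The paper's fix is simple but essential: pick $x'\in D\cap B(x,r/3)$, $z\in D\setminus B(x,r)$, and choose $y$ on the uniform curve with $d(x',y)=r/3$ (scale $r$, not $r/A$). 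Then $d(y,z)\ge d(x,z)-d(x,y)\ge r-2r/3=r/3$, so $d_D(y)\ge A^{-1}\cdot r/3=r/(3A)$ exactly, and $d(x,y)\le 2r/3$ gives the inclusion in $B(x,r)$.

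Part~(b) is correct and matches the paper. In part~(c) your construction and the outer inclusion are fine, but your path-connectedness argument is faulty: $B(x,r)\cap D_s$ need not itself be path connected, and your curves do not all pass near a fixed reference point. The correct argument (as in the paper) is that for $w_1\in\gamma_{p_1,q_1}$ and $w_2\in\gamma_{p_2,q_2}$, the concatenation $\gamma_{p_1,q_1}\cup\gamma_{q_1,p_2}\cup\gamma_{p_2,q_2}\subset E$ contains a path from $w_1$ to $w_2$.
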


\begin{proof}
(a). The first statement is known as the corkscrew condition, and it follows from a similar proof as \cite[Lemma 4.2]{BS}. 
We fix $x'\in D\cap B(x,r/3)$, $z\in D\setminus B(x,r)$ and pick a path $\gamma$ that connects $x',z$ in $D$ that has the properties in the definition of an $A$-uniform domain. There is some  $y\in \gamma$ so that $d(x',y)=r/3$. Note that  $d(x,y)\leq d(x,x')+d(x',y)\leq 2r/3$ and $d_D(y)\geq A^{-1}\min\{d(x',y),d(y,z)\}=r/(3A)$. This implies that $B(y,r/(3A))\subset D\cap B(x,r)$, and thus $B(y, r/{(12A)})\subset B(x,r)\cap D_{r/(4A)}$. Choose $k\geq 2$ so that $2^k\geq 24A$. Then
\[
B(y,2^kr/(12A))\supset B(y,2r)\supset B(x,r). 
\]
Hence, by (VD) property of $m_0$, we have 
\[
C_D^km_0\big(B(x,r)\cap D_{r/(4A)}\big)\geq C_D^k\,m_0\big(B(y,r/{(12A)})\big)\geq m_0\big(B(y,2^kr/(12A))\big)\geq m_0\big(B(x,r)\big),
\]
where $C_D$ is the constant of (VD). Hence (a) holds with $C_1:=C_D^{-k}$.

\smallskip

 (b). Let $\gamma$ be the path connecting $x,y$ as described in the definition of the uniform domain $D$. Then, $\diam(\gamma)\leq A\,d(x,y)$ and
\begin{align*}
d_D(z)&\geq \max\big\{A^{-1}\min\{d(z,x),d(y,z)\},\ d_D(x)-d(z,x),\ d_D(y)-d(z,y)\big\}\\
&\geq\max\big\{A^{-1}\min\{d(z,x),d(z,y)\},\ d_D(x)\wedge d_D(y)-d(z,x),\ d_D(x)\wedge d_D(y)-d(z,y)\big\}\\
&=\max\big\{A^{-1}\min\{d(z,y),d(w,z)\},\ d_D(x)\wedge d_D(y)-\min\{d(z,x),d(z,y)\}\big\}\\
&\geq\sup_{t>0}\max\{A^{-1}\,t,d_D(x)\wedge d_D(y)-t\}\\
&=\frac{d_D(x)\wedge d_D(y)}{1+A}.
\end{align*}

\smallskip 

(c). For each $y,z\in D$, we choose a path, denoted by $\gamma_{y,z}$, that connects $y$ to $z$ in $D$ as described in the definition of the 
uniform domain $D$. We define 
\[
E=\bigcup_{y,z\in B(x,r)\cap D_{s,r}}\gamma_{y,z}.
\] 
It follows immediately from the definition that $B(x,r)\cap D_{s,r}\subset E$.  Note that for each $y,z\in B(x,r)\cap D_s$,  $\diam(\gamma_{y,z})\leq 2Ar$ and so 
 $d(w,x)\leq d(w,y)+d(y,x)<2Ar+r$ for each  $w\in \gamma_{y,z}$. 
  This  implies that $E\subset B(x,2Ar+r)$. Moreover, by (b), $d_D(w)\geq (d_D(x)\wedge d_D(y))/(1+A)\geq s/(1+A)$ for every $y,z\in B(x,r)\cap D_s$ and hence $E\subset D_{s/(2A+1)}$. Combining the above two parts, $E\subset B(x,2Ar+r)\setminus D_{s/(2A+1)}$.

Finally, we show that $E$ is path connected. Indeed, for $w_1,w_2\in E$, we can find $y_1,z_1,y_2,z_2\in B(x,r)\cap D_{s,r}$ so that $w_1\in \gamma_{y_1,z_1},w_2\in\gamma_{y_2,z_2}$, and we can find a path contained in $\gamma_{y_1,z_1}\cup \gamma_{z_1,y_2}\cup\gamma_{y_2,z_2}$ that connects $w_1,w_2$.
\end{proof}

\subsection{Reflected Dirichlet space}\label{S:2.2} 

Suppose that $(\sE^0,\sF^0)$ is a transient  strongly local regular Dirichlet form on $L^2(D;m)$. We call $(D, d, m, \sE^0, \sF^0)$ a metric measure Dirichlet  (MMD) space. 
 We assume it is irreducible in the sense of \cite[p.43]{CF}. Denote by $\sF^0_e$ the extended Dirichlet space of $(\sE^0,\sF^0)$,
that is, $f\in \sF^0_e$ if and only if $f$ is finite $m$-a.e. on $D$
 and there exists an $ \sE^0$-Cauchy sequence $\{f_n; n\geq 1\}\subset   \sF^0$ such that $f_n\to f$ $m$-a.e. on $D$. 
 It is well known (see, e.g., \cite[Theorem 2.3.4]{CF})
 that $\sF^0\subset \sF^0_e$ and every $u\in \sF^0_e$ has a $\sE^0$-quasi-continuous $m$-version. 
 Throughout this paper, we always represent 
functions in $\sF^0_e$ by its $\sE^0$-quasi-continuous version. Denote by $\sF^0_b$ the space of bounded functions in $\sF^0$. 
For any $u\in. \sF_b^0$, there is a unique Radon measure $\mu_{\<u\>} $ on  $D$  so that 
\begin{equation}\label{e:2.1}
\int_D f (x) \mu_{\<u\>} (dx) = 2 \sE^0 ( u, uf)-\sE^0 (u^2, f) \quad \hbox{for every bounded } f\in \sF^0_e.
\end{equation}
For a general $u\in \sF^0_e$, 
take $u_n=((-n)\vee u)\wedge n$, which is in $\sF^0_e$. 
It is known   that the measure $ \mu^0_{\<u_n\>} $ is increasing in $n$.
Define. $\mu_{\<u\>} := \lim_{n\to \infty}  \mu_{\<u_n\>}$.
The measure $\mu_{\<u\>}$ is called the energy measure of $u$ and $\mu_{\<u\>}(D)=2\sE^0(u, u)$ for $u\in \sF^0_e$.
See, e.g., \cite[(4.3.15), Theorems 4.3.10, 4.3.11]{CF} for the above facts. 
Since $(\sE^0, \sF^0)$ is strongly local, 
  by \cite[Proposition 4.3.1(ii)]{CF}, the energy measure  $\mu_{\<u\>}$ has  the strong local property that 
for any  $\sE^0$-quasi-open subset $U\subset D$,
\begin{equation}\label{e:2.4}
\mu_{\<u\>}(U)=0 \quad \hbox{for any } u\in  \sF^0_{\rm loc} \hbox{ that is constant } \sE^0 \hbox{-q.e. on } U.
\end{equation}
  In particular,  
  $\mu_{\<u\>}$ has a local property in the sense for any $u, v\in \sF^0_e$, if $u=v$ $m$-a.e. on an open set $O\subset D$, then $\mu_{\<u\>}(O)=\mu_{\<v\>} (O)$.  
   For each open subset $U\subset D$, define 
\begin{eqnarray*}
\sF_{\rm loc}(U)&:=& \{f\in \mcB(U):\ \hbox{ for each relatively compact open  set $O\subset U$, there is some $u\in \sF$} \\
&& \hskip 0.9truein \hbox{ so that  $f=u$ $m$-a.e. on } O\}.
\end{eqnarray*}
 Then for any $f\in \sF_{\rm loc}(U)$,  by the strong locality of $(\sE^0, \sF^0)$, the energy measure  $\mu_{\<f\>}$ is well defined on $U$ so that for each relatively compact open $O\subset U$, 
$\mu_{\<f\>}(O)=\mu_{\<v\>}(O)$ for any $v\in \sF$ such  that $v=f$ $m$-a.e. on $O$.
Every $f\in \sF_{\rm loc}(U)$ admits an $\sE^0$-quasi-continuous version on $O$.
We always represent $f\in \sF_{\rm loc}(U)$ by its  $\sE^0$-quasi-continuous version. 
When $O=D$, we simply denote $\sF_{\rm loc}(D)$ by $\sF_{\rm loc}$.

Define  
\begin{eqnarray}
\bar \sF &:=& \left\{ f\in \sF_{\rm loc} \cap L^2(D; m):\   \mu_{\<f\>} (D)  <\infty \right \},  \label{e:RD1}\\
\bar \sE (f,f)&:=& \frac12  \mu_{\<f\> } (D) \quad\hbox{ for every }f\in \bar \sF .   \label{e:RD2} 
\end{eqnarray}
As mentioned earlier,  $(\bar \sE, \bar \sF)$  is always a Dirichlet form on $L^2(D; m)$; see \cite[Theorems 3.9 and 3.10]{C1} and \cite[Theorems 6.2.14 and 6.4.2]{CF}. It is called an active reflected Dirichlet form of $(\sF, \sE)$. In the rest of this paper, we always assume $(\bar \sE, \bar \sF)$ is regular on $L^2(\tD; m_0)$. It contains $(\sE^0, \sF^0)$ as its part Dirichlet form on $D$ by \cite[Theorem 6.6.5]{CF}. 
  Note that it follows from \eqref{e:2.4} that for any $u\in \bar \sF $ that is constant in an open set $U\subset \tD$, 
  $  \mu_{\<u\>}(U\cap D) =0$. 
Hence the regular Dirichlet form $(\bar \sE, \bar \sF)$  is strongly local on  $L^2(\tD; m_0)$ in the sense of \cite[Definition 1.3.17]{CF}. 
 Since $(\sE^0, \sF^0)$ is   irreducible and   $m_0(\partial D)=0$, 
$(\bar \sE, \bar \sF)$ is also   irreducible. 
For $f\in \bar \sF$, 
its energy measure defined by \eqref{e:2.1} with $(\bar \sE, \bar \sF)$ in place of $(\sE^0, \sF^0)$ is exactly the measure $\mu_{\< f\>}$ defined above 
for  $f$ as an element in $\sF_{\rm loc}$.  Hence we use  the same notation $\mu_{\<f\>}$ to denote the energy measure of $f\in \bar \sF$ with respect to the active  reflected Dirichlet form $(\bar \sE, \bar \sF)$.    For $f, g\in \bar \sF$, define $\mu_{\<f, g\>}:=\frac14 \left( \mu_{\<f+g\>} - \mu_{\<f-g\>}  \right)$.   We  conclude immediately from the definition \eqref{e:RD2} of $\bar \sE $ that  
\begin{equation}\label{e:em}
\mu_{\<f\>} (\partial D)=0  \quad \hbox{ for  every } f\in \bar \sF.
\end{equation}

 \begin{remark}  \label{R:2.4} \rm 
 Suppose that $(\sE^0,\sF^0)$ is a transient  strongly local regular Dirichlet form on $L^2(D;m)$.
In view of \eqref{e:2.4} and \cite[Theorems 6.6.3 and 6.6.5]{CF}, by the same reason as above, every regular representation of its active reflected Dirichlet form $(\bar \sE, \bar \sF)$ is strongly local.   \qed
\end{remark}

\subsection{Trace Dirichlet form}\label{S:2.3} 
Recall that $(D,d,m)$ is a locally compact separable metric  measure  space
and $(\sE^0,\sF^0)$ is a transient irreducible strongly local regular Dirichlet form on $L^2(D;m)$. 
Throughout this paper, we assume $(D,d)$ is an $A$-uniform for some $A>1$, 
the measure $m_0$ is a Radon measure on $(\tD, d)$ and  volume doubling, and 
the  active reflected Dirichlet form $(\bar \sE ,\bar \sF )$ of  $(\sE^0,\sF^0)$  defined by \eqref{e:RD1}-\eqref{e:RD2} is a regular Dirichlet form on $L^2(\tD;m_0)$.

For each open $O\subset \tD$, we define 
\begin{eqnarray*}
\bar{\sF}_{\rm loc}(O)&:=& \{f\in \mcB(O):\ \hbox{ for each relatively compact open  set $U\subset O$, there is some $u\in \bar\sF$} \\
&& \hskip 0.9truein \hbox{ so that  $f=u$ $m$-a.e. on } U \}.
\end{eqnarray*} 
Denote by $\bar\sF_e$ the extended Dirichlet form of $(\bar \sE, \bar \sF)$
and set 
$$
\check \sF_e:= \bar{\sF}_e |_{\partial D}.
$$ 
We always represent functions in $\bar \sF$ by their $\bar \sE$-quasi-continuous version. 

 It is well known   (see, e.g.,   \cite[Theorems 1.5.1 and 4.3.4]{CF})
that there is a continuous transient irreducible $m$-symmetric Hunt process $X^0=\{X^0_t, t\geq 0;  \bP_x, x\in D\setminus \sN\}$ on $D$ that admits no killings inside $D$ associated with the transient irreducible 
strongly local Dirichlet form  $ (\sE^0,\sF^0)$ on $L^2( D;m)$, where $\sN$ is a proper exceptional subset of $X^0$.
Similarly, there is a continuous   irreducible $m_0$-symmetric  Hunt process $\bar X=\{\bar X_t, t\geq 0; \bar \bP_x, x\in \tD \setminus \sN_1\}$  
on $\tD$  that admits no killings inside $\tD$  associated with the strongly
local irreducible
 regular Dirichlet form on $(\bar \sE ,\bar \sF )$ on $L^2(\tD;m_0)$, where $\sN_1\subset \tD$ is a a proper exceptional set  of $\bar X$.
 We call $\bar X$ the reflected diffusion process on $\tD$.  Since $(\bar \sE, \bar \sF)$ is irreducible,  $\bar X$ is either transient or recurrent by \cite[Proposition 2.1.3]{CF}.  When there is no danger of confusions, we simply denote  $\bar{\bP}_x$ by $\bP_x$. The part process of $\bar X$ killed upon hitting $\partial D:=D^*\setminus D$ has the same distribution as $X$;
see \cite[Theorem 6.6.5]{CF}.    

For a Hunt process $Y$ on a state space $E$ and $A\subset E$ a Borel subset, we define the hitting time and exit time of $A$ by $Y$ as
follows:
\[
\sigma_A:=\inf\{t\geq 0: Y_t\in A\}
\quad \hbox{and} \quad 
\tau_A:=\inf\{t\geq 0:\,Y_t\notin A\}.
\]

Suppose that $\nu$ is a smooth measure on $\partial D$ with respect to the regular Dirichlet form $(\bar \sE, \bar\sF)$ with quasi-support $\partial D$. Let $A^\nu$ be the positive continuous additive functional of $\bar X$ with Revuz measure $\nu$. 
Define $\tau_t:=\inf\{r\geq 0: A^\nu_t > t\}$. The time-changed process $\{\check X_t:=\bar X_{\tau_t}; t\geq 0\}$, which takes values in $\partial D$,
 is called a trace process of $\bar X$  on $\partial D$.  As mentioned in the Introduction, 
 the process $\check X$ is $\nu$-symmetric and its associated Dirichlet form 
 $(\check \sE, \check \sF)$ is regular on $L^2(\partial D; \nu)$, where   $(\check \sE, \check \sF)$ is given by \eqref{e:trace1}-\eqref{e:trace2}. 
It is known that  $|\mathcal{H} u (x) |=|\mathbb{E}_x [u (\bar X_{\tau_D});\tau_D<\infty]| 
 <\infty$  for $\bar \sE$-q.e. $x\in \tD$ 
 and $\mathcal{H}u \in \bar \sF_e$ for any $u\in 
\check \sF_e  $; see, e.g.,  \cite[Theorem 3.4.8]{CF}.
  We call $(\check \sE, \check \sF)$ the {\it trace Dirichlet form of $(\bar \sE, \bar \sF)$ on $L^2(\partial D; \nu)$.} 
 Denote by $(\check \sF)_e$ the extended Dirichlet space of  $(\check \sE, \check \sF)$. 
By \cite[Lemma 5.2.4]{CF}, we have 
$$
\check \sF\subset \check \sF_e \subset (\check \sF)_e.
$$  
For convenience,   we call $(\check \sE, \check \sF_e)$ the {\it trace Dirichlet space of $(\bar \sE, \bar \sF)$ on $\partial D$.}
While the trace Dirichlet form  $(\check \sE, \check \sF)$ on $L^2(\partial D; \nu )$ is dependent 
on the smooth measure $\nu$ on $\partial D$ used in the time change,
 the trace Dirichlet space $(\check \sE, \check \sF_e)$ depends only on $(\bar \sE, \bar \sF)$. 

 \medskip

\section{Trace theorems}\label{S:3} 

Recall that the metric space $(D, d)$ is an $A$-uniform for some $A>1$,  $m_0$ is a Radon measure with full support on $\tD$ with $m_0(\partial D)=0$ and is volume doubling, and the active reflected Dirichlet form   $(\bar \sE, \bar \sF)$ of \eqref{e:RD1}-\eqref{e:RD2} is   regular  on $L^2(\tD; m_0)$, which is strongly local. 

Let $\Psi$  be  a continuous bijection from $(0,\infty)$ to $(0,\infty)$  that satisfies \eqref{eqnpsi}.

\begin{definition}\label{D:3.1} 
 Let $\sigma$ be a Radon measure on $\partial D$ and $\Theta_{\Psi, \sigma} (x,r) $ be the scale function   on
 $\partial D\times (0, \infty)$    defined in \eqref{e:1.12}.
 We say $\Theta_{\Psi, \sigma} (x,r) $  satisfies the lower scaling condition  {\rm (LS)} 
   if   there are  constants $C,\beta\in (0,\infty)$ so that 
\begin{equation}\label{e:3.1}
\frac{\Theta_{\Psi, \sigma} (x,R)}{\Theta_{\Psi, \sigma} (x,r)}\geq C \Big(\frac{R}{r} \Big)^\beta\quad  \hbox{ for  every } x\in \partial D \hbox{ and }  0<r<R<\diam(\partial D).
\end{equation} 
\end{definition}
\medskip

Note that condition \eqref{e:3.1} is equivalent to that $r\to \Theta_{\Psi, \sigma} (x,r)$ satisfies uniform reverse doubling condition in the sense that there are some constants $\lambda_0 >1$ and $c_0>1$ so that 
\begin{equation}\label{e:3.2}
\Theta_{\Psi, \sigma} (x,  \lambda_0  r) \geq c_0 \Theta_{\Psi, \sigma} (x,r)\quad \hbox{ for  every } x\in \partial D \hbox{ and }0<r< \diam(\partial D)/\lambda_0.
\end{equation}
Note that under the (VD) assumption on $\sigma$, then for any $x\in \partial D$ and $ r>0$,  
 $$
\frac{\Theta_{\Psi, \sigma} (x,2r)}{\Theta_{\Psi, \sigma} (x,r)}=\frac{\Psi (2r)}{\Psi (r)} \frac{ V_\sigma  (x, 2r)}{ V_\sigma (x, r)} \frac{V(x, r)}{V(x, 2r)} 
  \leq C_\Psi 2^{\beta}  C;
  $$
  that is, $\Theta_{\Psi, \sigma}  (x, r)$ has doubling property in $r>0$ uniformly in $x\in \partial D$.

\medskip

Recall that for  a  Radon measure  $\sigma$ with full support on $\partial D$, the  Beppo-Levi type space $\dot \Lambda_{\Psi,  \sigma} $  and 
the Besov type space $\Lambda_{\Psi,  \sigma} $ are defined in Definition \ref{def34}. 
In this section, we  focus on characterizing the trace Dirichlet space $(\check \sE, \check \sF_e)$ in terms of 
the Beppo-Levi type space $\dot \Lambda_{\Psi,  \sigma} $  and 
the Besov type space $\Lambda_{\Psi,  \sigma} $ for some suitable Radon measure $\sigma$ on $\partial D$.

\subsection{Restriction theorems}\label{sec32}

In this subsection, we establish some restriction theorems. 

 \begin{definition}\rm 
\begin{enumerate} [\rm (i)]
 \item  We say  the MMD space  $(D, d, m,  \sE^0, \sF^0)$ satisfies the Poincar\'e inequality PI$(\Psi;D)$
  if  there are constants $C_p>0$ and  $A_p\geq 1$ so that for all  $x\in D$, $r\in (0, d_D(x)/A_p)$ and all $f\in \sF_{\rm loc}\big(B(x,A_pr)\big)$, 
 \begin{equation}\label{e:PI}
\int_{B(x,r)}\big(f-[f]_{B(x,r)}\big)^2dm_0\leq C_p\Psi(r)\, \mu_{\<f\>}  ( B(x,A_pr)),
\end{equation}
where for each Borel $E\subset \tD$, we write $[f]_E=\frac{1}{m_0(E)}\int_E fdm_0$.

 \item We say that the MMD space  $(\tD,d,m_0,\bar{\sE},\bar{\sF})$ satisfies the Poincar\'e inequality PI($\Psi$), if there exist constants $C_P$ and  $A_p\geq 1$ so that \eqref{e:PI} holds for all  $x\in \tD$, $r>0$ and all $f\in \bar\sF_{\rm loc}\big(B(x,A_pr)\big)$.
  \end{enumerate}
   \end{definition}

 \begin{remark}\label{remark34}  \rm 
We can show by the same proof as that for   \cite[Theorem 5.3]{Mathav} that 
if $(D, d, m, \sE^0, \sF^0)$ satisfies the Poincar\'e inequality PI$(\Psi;D)$, then $(\tD,d,m_0,\bar{\sE},\bar{\sF})$ satisfies the Poincar\'e inequality PI($\Psi$).   \qed
\end{remark} 

\begin{theorem}\label{thmrestriction1}
Suppose that \rm PI$(\Psi;D)$ holds for  $(D,d,m,  \sE^0, \sF^0)$,  and 
 $\sigma$ is  a     Radon measure  with full support on $\partial D$  satisfying {\rm (VD)} property so that  {\rm (LS)} holds for     $\Theta_{\Psi, \sigma}$.
There exists a constant $C\in (0,\infty)$ such that 
\begin{equation}\label{e:3.4}
\lb f|_{\partial D} \rb_{\Lambda_{\Psi,  \sigma} }^2\leq C\bar \sE (f,f)\quad\hbox{ for each }f\in C(\tD)\cap \bar \sF.
\end{equation}
Consequently,  
\begin{equation}\label{e:3.5}
\check \sF_e \subset \dot \Lambda_{\Psi,  \sigma}  \quad \hbox{ and } \quad 
\lb f|_{\partial D} \rb_{\Lambda_{\Psi,  \sigma} }^2\leq C\check  \sE  (f,f)\quad\hbox{ for each } f \in \check \sF_e.
\end{equation}
\end{theorem}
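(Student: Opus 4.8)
\textbf{Proof proposal for Theorem \ref{thmrestriction1}.} The plan is to reduce the global estimate \eqref{e:3.4} to a local one, use the Poincaré inequality PI$(\Psi;D)$ inside $D$ together with the uniformity of $D$ to control oscillations of $f$ near the boundary, and then reassemble via a Whitney-type decomposition of $\partial D\times\partial D\setminus\operatorname{diag}$. First I would dyadically decompose the integral defining $\lb f|_{\partial D}\rb_{\Lambda_{\Psi,\sigma}}^2$ according to the scale $d(x,y)$: writing $\partial D\times\partial D\setminus\operatorname{diag}=\bigcup_{k\in\Z}\{(x,y):2^{-k-1}\le d(x,y)<2^{-k}\}$ (truncated at $\diam(\partial D)$), and on each annulus covering $\partial D$ by balls $B(z_i^k,2^{-k})$ via Lemma \ref{lemma2path}(a), reduce to estimating, for a fixed $x\in\partial D$ and radius $r\sim d(x,y)$, the average of $(f(x)-f(y))^2$ over $y\in B(x,r)\cap\partial D$ against $\sigma(B(x,r))\,\Theta_{\Psi,\sigma}(x,r)$.

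The heart of the argument is to compare boundary values of $f$ at scale $r$ with interior averages of $f$. Using Lemma \ref{lemma21}(a), pick an interior corkscrew point $y_x\in D$ with $B(y_x,r/(12A))\subset B(x,r)\cap D_{r/(4A)}$, and set $a_{x,r}:=[f]_{B(y_x,r/(12A))}$, the $m_0$-average over that interior ball. The claim I would establish is a "boundary oscillation estimate": $f(x)=\lim_{r\to0}a_{x,r}$ for $\sigma$-a.e. $x$ (after passing to the quasi-continuous version and recalling that $\sigma$ must be shown not to charge $\bar\sE$-polar sets, which follows a posteriori as stated in (i)), together with the telescoping bound controlling $|a_{x,2^{-k}}-a_{x,2^{-k-1}}|$. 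For the latter, I would use the $A$-uniform chain of interior balls along a curve $\gamma$ from $y_{x,r}$ to $y_{x,r/2}$ (Lemma \ref{lemma21}(b) and Lemma \ref{lemma2path}(b), giving a bounded-length chain of balls of comparable radii, all at distance $\gtrsim r$ from $\partial D$), apply PI$(\Psi;D)$ on each ball in the chain — legitimate since these balls sit deep inside $D$ where the Poincaré inequality is available — and sum, yielding $|a_{x,2^{-k}}-a_{x,2^{-k-1}}|^2\lesssim \frac{\Psi(2^{-k})}{m_0(B(x,2^{-k}))}\,\mu_{\<f\>}(U_k)$ for a controlled enlargement $U_k\subset D$ of the chain, using (VD) to pass between volumes of the chain balls and $m_0(B(x,2^{-k}))$.

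Next I would assemble: for $x,y\in\partial D$ with $2^{-k-1}\le d(x,y)<2^{-k}$, write $f(x)-f(y)$ as a telescoping sum over scales $2^{-j}$, $j\ge k$, of $(a_{x,2^{-j}}-a_{x,2^{-j-1}})$ plus the analogous sum for $y$ plus a term comparing $a_{x,2^{-k}}$ and $a_{y,2^{-k}}$ (these last two interior balls are within a bounded uniform chain of each other, so again handled by PI along a chain). Squaring, using Cauchy–Schwarz with the (LS) property of $\Theta_{\Psi,\sigma}$ — this is exactly where (LS) \eqref{e:3.1} is needed, to make the geometric series $\sum_{j\ge k}(\Theta_{\Psi,\sigma}(x,2^{-k})/\Theta_{\Psi,\sigma}(x,2^{-j}))^{\text{(something)}}$ summable and absorb the weights — and then integrating $dy\,\sigma$ over $B(x,2^{-k})\cap\partial D$ and $dx\,\sigma$ over $\partial D$, I would get $\lb f|_{\partial D}\rb^2_{\Lambda_{\Psi,\sigma}}\lesssim\sum_j\mu_{\<f\>}(\text{bounded-overlap enlargements in }D)\lesssim\mu_{\<f\>}(D)=2\bar\sE(f,f)$, the bounded overlap being guaranteed by (VD). This proves \eqref{e:3.4} for $f\in C(\tD)\cap\bar\sF$. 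Finally, \eqref{e:3.5} follows by density: $C(\tD)\cap\bar\sF$ is $\bar\sE_1$-dense in $\bar\sF$ (regularity), $\bar\sF$ is dense in $\bar\sF_e$, the restriction map $f\mapsto f|_{\partial D}$ composed with $\sH$ is a contraction, and $\lb\cdot\rb_{\Lambda_{\Psi,\sigma}}$ is lower semicontinuous along $\bar\sE$-convergent (hence $\sigma$-a.e. convergent along a subsequence, using Fatou) sequences; combined with $\check\sE(f,f)=\bar\sE(\sH f,\sH f)$ this gives the embedding $\check\sF_e\subset\dot\Lambda_{\Psi,\sigma}$ with the stated bound.

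\textbf{Main obstacle.} The delicate point is the boundary oscillation estimate: controlling $|a_{x,2^{-k}}-a_{x,2^{-k-1}}|$ and, more subtly, the cross-term $|a_{x,2^{-k}}-a_{y,2^{-k}}|$ for $x\ne y$ on $\partial D$ at comparable scales, purely by iterating the interior Poincaré inequality PI$(\Psi;D)$ along uniform chains while keeping every ball in the chain at distance comparable to $2^{-k}$ from $\partial D$ (so PI genuinely applies) and keeping the accumulated multiplicative constants and the number of chain links uniformly bounded — this is where the $A$-uniformity of $D$, the (VD) of $m_0$, and the scaling \eqref{eqnpsi} of $\Psi$ all have to be orchestrated carefully, and where the (LS) hypothesis on $\Theta_{\Psi,\sigma}$ is indispensable for the final summation over scales.
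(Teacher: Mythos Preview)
Your proposal is correct and follows essentially the same approach as the paper: interior averages at corkscrew points, Poincar\'e inequality along uniform chains of interior balls (the paper packages this as Lemma~\ref{lemma36}), telescoping across dyadic scales, the (LS) condition for summability, and bounded overlap to recover $\mu_{\langle f\rangle}(D)$. The only difference is organizational---the paper first defines the boundary function $f_r(x):=\fint_{B(x,r)\cap D_{\kappa r}}f\,dm_0$ and proves integrated estimates (Lemma~\ref{lemma37}) for $\lb f_r\rb_{\Lambda_{\Psi,\sigma}}$ and for $\int_{\partial D}(f_{\theta^k r}-f_{\theta^{k+1}r})^2/\Theta_{\Psi,\sigma}(x,r)\,d\sigma$, then combines them via an $\ell^2$-Minkowski argument over scales, whereas you telescope pointwise and integrate afterwards; both routes work and the ingredients are the same.
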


For its proof,  we need some lemmas.

\begin{lemma}\label{lemma36}
Suppose that {\rm PI$(\Psi;D)$} holds for $(D, d, m,  \sE^0, \sF^0)$. For each $\eta\in [1,\infty)$ and $\delta\in(0,\frac1{4A}]$, there is a constant $C\in (0,\infty)$ such that  
\[
\int_{x\in D_{\delta r,r}}\frac{1}{V(x,r)}\int_{y\in D_{\delta r,r}\atop d(x,y)<\eta r}\big(f(x)-f(y)\big)^2m_0(dy)m_0(dx)\leq C\,\Psi(r)\mu_{\<f\>}(D_{\eta_1r,\eta_2r}),
\]
for each $f\in \sF_{\rm loc}(D)$ and $r\in \big(0,\diam(D)/2\big)$, where $
\eta_1=\frac{\delta}{2(1+A)}$ and $\eta_2=2(1+A)(\eta+2)$.
\end{lemma}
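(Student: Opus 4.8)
The statement is a local Poincaré-type inequality on annular shells $D_{\kappa r, r}$, and the plan is to reduce it to the hypothesis PI$(\Psi;D)$ via a covering argument using the uniform domain structure. First I would cover the shell $D_{\kappa r, r}$ by a controlled number of balls $B(z_i, \kappa r/(8A))$ with $z_i \in D_{\kappa r, r}$; by Lemma~\ref{lemma2path}(a) the number $N$ of such balls is bounded by a constant depending only on $\eta$, $\kappa$, $A$ and the (VD) constant of $m_0$ (note $\diam(D_{\kappa r,r}) \lesssim r$ and the radius is $\asymp r$). Each $z_i$ satisfies $d_D(z_i) \geq \kappa r$, so the larger ball $B(z_i, A_p \rho_i)$ with $\rho_i \asymp \kappa r$ stays inside $D$ provided the comparability constants are chosen small relative to $1/A_p$; on each such ball I can apply PI$(\Psi;D)$ to get
\[
\int_{B(z_i,\rho_i)} \big(f - [f]_{B(z_i,\rho_i)}\big)^2 \, dm_0 \le C_p \Psi(\rho_i)\, \mu_{\<f\>}\big(B(z_i, A_p\rho_i)\big) \le C\,\Psi(r)\,\mu_{\<f\>}\big(B(z_i,A_p\rho_i)\big),
\]
where the last step uses \eqref{eqnpsi} since $\rho_i \asymp r$.

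\textbf{Second step: controlling the double integral by the local averages.} For $x, y$ in the shell with $d(x,y) < \eta r$, I want to chain the balls $B(z_i, \kappa r/(8A))$. The key is that if $x \in B(z_i, \cdot)$ and $y \in B(z_j, \cdot)$ with $d(x,y) < \eta r$, then $z_i$ and $z_j$ can be joined by a chain of overlapping balls from the cover, of length bounded by a constant $M$ (again depending only on $\eta, \kappa, A$, (VD)); this follows because the union of the cover is connected in the relevant region — here Lemma~\ref{lemma21}(c), which produces a path-connected set $E$ with $B(x,r)\cap D_s \subset E \subset B(x, 2Ar+r)\cap D_{s/(1+A)}$, is what guarantees one can move between any two shell balls staying in a slightly enlarged shell. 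Then by the standard telescoping estimate
\[
(f(x)-f(y))^2 \lesssim (f(x) - [f]_{B(z_i,\rho_i)})^2 + \sum_{\text{chain}} ([f]_{B_k} - [f]_{B_{k+1}})^2 + ([f]_{B(z_j,\rho_j)} - f(y))^2,
\]
and for overlapping balls $B_k, B_{k+1}$ one has $([f]_{B_k}-[f]_{B_{k+1}})^2 \lesssim \frac{1}{m_0(B_k)}\int_{B_k}(f - [f]_{B_k})^2\,dm_0 + (\text{same for }B_{k+1})$, using (VD) to compare $m_0(B_k)$, $m_0(B_{k+1})$ and $m_0(B_k \cap B_{k+1})$. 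Integrating in $x$ over $B(z_i,\cdot)$ and $y$ over $B(z_j,\cdot)$, dividing by $m_0(B(x,r)) \asymp m_0(B(z_i, r))$, and summing over the $O(N^2)$ pairs $(i,j)$ that can arise (only pairs with $d(z_i,z_j) \lesssim \eta r$ contribute), the whole left-hand side is bounded by $C \sum_i \frac{1}{m_0(B(z_i,\rho_i))}\int_{B(z_i,\rho_i)}(f - [f]_{B(z_i,\rho_i)})^2\,dm_0$.

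\textbf{Third step: assembling.} Plugging in the Poincaré bound from the first step, each term becomes $\le C\,\Psi(r)\,\mu_{\<f\>}(B(z_i, A_p\rho_i))$. Since the $z_i$ all lie in $D_{\kappa r, r}$ and $A_p \rho_i \asymp r$, each ball $B(z_i, A_p\rho_i)$ is contained in $D_{\eta_1 r, \eta_2 r}$ for the stated $\eta_1 = \frac{\kappa}{2(1+A)}$, $\eta_2 = 2(1+A)(\eta+2)$ (the lower bound on $d_D$ uses Lemma~\ref{lemma21}(b) to see points near $z_i$ stay at distance $\gtrsim \kappa r$ from $\partial D$, the upper bound is a triangle inequality), and these balls have bounded overlap by (VD), so $\sum_i \mu_{\<f\>}(B(z_i, A_p\rho_i)) \le C\, \mu_{\<f\>}(D_{\eta_1 r, \eta_2 r})$. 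This gives the claim.

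\textbf{Main obstacle.} The routine parts are the (VD) volume comparisons and the telescoping inequality; the genuine difficulty is the chaining/connectivity argument — making precise that any two shell points at distance $< \eta r$ can be joined by a chain of cover balls of uniformly bounded length that stays inside a controlled enlargement of the shell (so that PI$(\Psi;D)$ applies on each chain ball and the energy measures land in $D_{\eta_1 r, \eta_2 r}$). This is exactly where the uniform domain hypothesis enters, via Lemma~\ref{lemma21}(b)--(c): without it the shell could be disconnected or pinched and no such chain would exist. Care is also needed to verify the constants $\eta_1, \eta_2$ in the statement come out precisely, and that all intermediate radii ($\rho_i$, $A_p\rho_i$, chain ball radii) stay simultaneously $\asymp r$ and admit the required containments; I would track these explicitly but expect no conceptual trouble there.
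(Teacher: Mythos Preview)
Your overall strategy---cover, apply PI on each ball, chain via telescoping, sum with bounded overlap---is the right shape, and it matches the paper's method. But there is a concrete error in your first step that breaks the argument as written.

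You claim $\diam(D_{\kappa r,r}) \lesssim r$, and therefore that the cover has a bounded number $N$ of balls. This is false. Recall $D_{\kappa r,r}=\{x:\kappa r\le d_D(x)<r\}$ is a tubular shell around the entire boundary $\partial D$; its diameter is comparable to $\diam(\partial D)$ (possibly infinite), not to $r$. So the cover by balls of radius $\asymp r$ is in general infinite, and your later ``sum over $O(N^2)$ pairs'' has no meaning. Lemma~\ref{lemma2path}(a) only gives a bounded $N$ when applied to a set of diameter $\lesssim r$.

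The paper fixes exactly this by inserting a localization layer: it first proves the estimate with the outer integral restricted to $B(p,2r)\cap D_{\kappa r}$ for a fixed $p\in\partial D$. That region \emph{does} have diameter $\lesssim r$, and it is here that Lemma~\ref{lemma21}(c) is used to produce a path-connected set $E$ with
\[
B(p,(\eta+2)r)\cap D_{\kappa r}\subset E\subset B(p,(2A+1)(\eta+2)r)\cap D_{2\eta_1 r},
\]
which is then covered by a genuinely bounded number of balls on which PI$(\Psi;D)$ applies and the chaining goes through (this is also where the specific constants $\eta_1=\frac{\kappa}{2(1+A)}$ and $\eta_2=2(1+A)(\eta+2)$ arise). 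Only afterwards does one take a minimal $r$-net $\{p_i\}$ of $\partial D$, observe $D_{\kappa r,r}\subset\bigcup_i B(p_i,2r)$, and sum the local estimates using bounded overlap of the $B(p_i,\eta_2 r)$. Your sketch conflates these two levels of covering into one; once you separate them, the rest of your plan is essentially the paper's proof.
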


\begin{proof}
First, we claim that for each $f\in \sF_{\rm loc}(D),r\in \big(0,\diam(D)/2\big)$ and $p\in\partial D$, 
\begin{eqnarray}
		&&\int_{x\in B(p,2r)\cap D_{\delta r}}\frac{1}{V(x, r)}\int_{y\in D_{\delta r,r}\atop d(x,y)<\eta r}\big(f(x)-f(y)\big)^2m_0(dy)m_0(dx)  \nonumber \\
		&\leq &C_1\Psi(r)\mu_{\<f\>}\big(B(p,\eta_2r)\cap D_{\eta_1r} \big).   \label{e:3.6}
\end{eqnarray}
	
	\medskip
 	 
Indeed,  by Lemma \ref{lemma21} (b), we can find a connected subset $E\subset D$ such that 
		\begin{equation}\label{eqn31}
		\begin{aligned}
				B\big(p,(\eta+2)r\big)\cap D_{\delta r}\subset E&\subset B\big(p,(2A+1)(\eta+2)r\big)\cap D_{2\eta_1r}\\
				&\subset B\big(p,(\eta_2-\eta_1)r\big)\cap D_{2\eta_1r}.
		\end{aligned}
		\end{equation}
Next,  by Lemma \ref{lemma2path}(a), there is  a finite subset $\{x_i\}_{i=1}^N\subset E$ such that  $E\subset\bigcup_{i=1}^NB(x_i,\eta_1r/(3A_p))$, and $N$ has an upper bound depending only on $\eta_1,\eta_2,A_p$ and the parameter of (VD).
 We define 
\[
E_* :=\bigcup_{i=1}^N B(x_i,\eta_1 r / A_p ) \quad\hbox{ and } \quad E^*:=\bigcup_{i=1}^N B(x_i,\eta_1r),
\]
so that by \eqref{eqn31}, 
\begin{equation}\label{eqn32}
	B\big(p,(\eta+2)r\big)\cap D_{\delta r}\subset E_* \subset E^*\subset B(p,\eta_2r)\cap D_{\eta_1r}.
\end{equation}
For $x_i$ and $x_j$ with  $d(x_i,x_j)<\frac{2\eta_1}{3A_p}r$,  $B(x_i,\eta_1r/(3A_p))\subset B(x_j,\eta_1 r / A_p )$, and so 
\begin{equation}\label{eqn33}
	m_0\big(B(x_i,\eta_1 r / A_p )\cap B(x_j,\eta_1 r / A_p )\big)\geq m_0\big(B(x_i,\eta_1r/(3A_p))\big)\geq C_2'\,m_0\big(B(x_i,\eta_1 r / A_p )\big),
		\end{equation}
		where $C_2'$ depends only on  the bound in  (VD) for the measure $m_0$. 
		Using PI($\Psi$;D) and \eqref{eqn33}, for $x_i,x_j$ such that $d(x_i,x_j)<\frac{2\eta_1}{3A_p}r$, we have 
		\begin{align*}
&\quad\ \big|[f]_{B(x_j,\eta_1 r / A_p )}-[f]_{B(x_i,\eta_1 r / A_p )}\big|\\
&\leq m_0\Big(B(x_i,\eta_1 r / A_p )\cap B(x_j,\eta_1 r / A_p  \Big)^{-1/2}
       \left(     \int_{B(x_i,\eta_1 r / A_p )\cap B(x_j,\eta_1 r / A_p )}\big([f]_{B(x_j,\eta_1 r / A_p )}-f(x)\big)^2m_0(dx)\right)^{1/2}\\
	&\qquad\ + m_0\Big(B(x_i,\eta_1 r / A_p )\cap B(x_j,\eta_1 r / A_p  \Big)^{-1/2}
	\left( \int_{B(x_i,\eta_1 r / A_p )\cap B(x_j,\eta_1 r / A_p )}\big([f]_{B(x_i,\eta_1 r / A_p )}-f(x)\big)^2m_0(dx) \right)^{1/2}\\
&\leq  \Big(C_2' V (x_j,\eta_1 r / A_p)\Big)^{-1/2}
              \left(  \int_{ B(x_j,\eta_1 r / A_p )}\big([f]_{B(x_j,\eta_1 r / A_p )}-f(x)\big)^2m_0(dx) \right)^{1/2}\\
			&\quad\ + \Big( C_2'V (x_i,\eta_1 r / A_p )\Big)^{-1/2}
			\left( \int_{ B(x_i,\eta_1 r / A_p )}\big([f]_{B(x_i,\eta_1 r / A_p )}-f(x)\big)^2m_0(dx) \right)^{1/2}\\
&\lesssim \sqrt{\frac{\Psi(r)\mu_{\<f\>}\big(B(x_i,\eta_1r)\big)}{ V (x_i,\eta_1 r / A_p ) }}+\sqrt{\frac{\Psi(r)\mu_{\<f\>}\big(B(x_j,\eta_1r)\big)}
 {V (x_j,\eta_1 r / A_p )}},
\end{align*}
		where in the last inequality we use \eqref{eqnpsi}. Noticing that $E$ is connected and $\{B(x_i,\eta_1r/(3A_p))\}_{i=1}^N$ is an open cover of $E$, for each $i=1,2,\cdots,N$, we can find a path $i_0=1,i_1,\cdots,i_L=i$ such that $L\leq N$ and $d(x_{i_k},x_{i_{k+1}})<\frac{2\eta_1r}{3A_p}$ for each $k=1,2,\cdots,L-1$. Thus  
		\begin{equation}\label{eqn34}
			\big|[f]_{B(x_i,\eta_1 r / A_p )}-[f]_{B(x_1,\eta_1 r / A_p )}\big|\lesssim \sqrt{\frac{\Psi(r)\mu_{\<f\>}(E^*)}{V (x_i,\eta_1 r / A_p ) }}\quad\hbox{ for each }i=1,2,\cdots,N,
		\end{equation} 
where we used the fact that $m_0\big(B(x_i,\eta_1 r / A_p )\big)\asymp m_0\big(B(x_j,\eta_1 r / A_p )\big)$ due to (VD) of $m_0$.

Finally, by using (VD), PI($\Psi;D$), \eqref{eqn32} and \eqref{eqn34}, we get
\begin{eqnarray*}
&&\int_{x\in B(p,2r)\cap D_{\delta r}}\frac{1}{V(x,r)}\int_{y\in D_{\delta r,r}\atop d(x,y)<\eta r}\big(f(x)-f(y)\big)^2m_0(dy)m_0(dx)\\
&\lesssim &\frac{1}{m_0(E_*)}\int\int_{x,y\in E_*}\big(f(x)-f(y)\big)^2m_0(dy)m_0(dx)\\
&\leq&2\int_{E_*}\big([f]_{B(x_1,\eta_1 r / A_p )}-f(x)\big)^2m_0(dx)\\
&\leq&4\sum_{i=1}^N \Big(\int_{B(x_i,\eta_1 r / A_p )}\big([f]_{B(x_i,\eta_1 r / A_p )}-f(x)\big)^2m_0(dx) \\
&& +m_0\big(B(x_i,\eta_1 r / A_p )\big)\big([f]_{B(x_i,\eta_1 r / A_p )}-[f]_{B(x_1,\eta_1 r / A_p )}\big)^2\Big)\\
& \lesssim&\Psi(r)\mu_{\<f\>}(E^*).
\end{eqnarray*}
This proves the  Claim   \eqref{e:3.6} as $E^*\subset B(p,\eta_2r)\cap D_{\eta_1r}$ by \eqref{eqn32}. 

\medskip
	
To finish the proof of the lemma, we pick a minimal $r$-net of $\partial D$, i.e., we pick $\{p_i\}_{i=1}^N\subset \partial D$ ($N$ can be infinity) so that $d(p_i,p_j)\geq r$ for $i\neq j$ and $d(x,\{p_i\}_{i=1}^N)<r$ for each $x\in \partial D$. One can see that $D_{\delta r,r}\subset \bigcup_{i=1}^N B(p_i,2r)$, and in addition, since $(\tD,d)$ is metric doubling, each $x\in D_{\delta r,r}$ is covered by finitely many balls of the form $B(p_i,\eta_2r)$, hence
 	\begin{eqnarray*}
		&&\int_{x\in D_{\delta r,r}}\frac{1}{V(x,r)}\int_{y\in D_{\delta r,r}\atop d(x,y)<\eta r}\big(f(x)-f(y)\big)^2m_0(dy)m_0(dx)\\
		&\leq &\sum_{i=1}^N\int_{x\in D_{\delta r,r}\cap B(p_i,2r)}\frac{1}{V  (x, r)}\int_{y\in D_{\delta r,r}\atop d(x,y)<\eta r}\big(f(x)-f(y)\big)^2m_0(dy)m_0(dx)\\
		&\lesssim &\sum_{i=1}^N C_1\Psi(r)\mu_{\<f\>}\big(B(p_i,\eta_2r)\cap D_{\eta_1r}\big)\\
		&\lesssim &\Psi(r)\mu_{\<f\>}(D_{\eta_1r,\eta_2r}),
	\end{eqnarray*}
 	where all the constants of $\lesssim$ are independent of $f$ and $r$. 
\end{proof}

\begin{lemma}\label{lemma37}
Suppose that \rm PI$(\Psi;D)$ holds for  $(D,d,m, \sE^0, \sF^0)$,  and 
$\sigma$ is a Radon measure  with full support on $\partial D$ having  {\rm (VD)}  property so that {\rm (LS)} holds for $\Theta_{\Psi, \sigma}$. For $0<r<\diam(\partial D)/2$ and $f\in L^1(\tD;m_0)$, define a function $f_r\in \mcB (\partial D)$ by
\[
f_r(x):=\frac{1}{m_0\big(B(x,r)\cap D_{\theta  r}\big)}\int_{B(x,r)\cap D_{\theta  r}}f(y)m_0(dy)\quad\hbox{ for every }x\in \partial D,
\]
 where $\theta=\frac1{4A}$.
Then, there is $C\in (0,\infty)$, which is independent of $f$ and $r>0$ such that for every $f\in C(\tD)\cap\bar\sF$  
\begin{equation} \label{eqn35}
\lb f_r\rb_{\Lambda_{\Psi,  \sigma} }^2  \leq C\mu_{\<f\>}(D_{\eta_1r,\eta_2r}),
\end{equation}
and for every $k\geq 1$, 
\begin{equation} \label{eqn36}
\int_{x\in\partial D}\frac{\big(f_{\theta^kr}(x)-f_{\theta^{k+1}r}(x)\big)^2}{\Theta_{\Psi, \sigma} (x,r)} 
\sigma(dx)\leq C\,\theta^{\beta k}\,\mu_{\<f\>}\big(D_{\theta^{k+1}\eta_1r,\theta^k\eta_2r}\big), 
\end{equation}
where $\eta_1=\frac{1}{8A(1+A)}$, $\eta_2=10(1+A)$ and $\beta >0 $ is the exponent  in condition {\rm (LS)}  for 
$\Theta_{\Psi, \sigma}$. 

\end{lemma}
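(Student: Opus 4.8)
The plan is to deduce both bounds from the Poincar\'e-in-an-annulus estimate of Lemma~\ref{lemma36}, combined with a dyadic summation in which the lower scaling hypothesis (LS) for $\Theta_{\Psi,\sigma}$ supplies geometric decay and a Fubini exchange converts $\sigma$-integrals of local energy into energy over a single annulus in $D$. The reduction that makes Lemma~\ref{lemma36} applicable is the observation that for $x\in\partial D$ and $\rho>0$ one has $B(x,\rho)\cap D_{\kappa\rho}=B(x,\rho)\cap D_{\kappa\rho,\rho}$, since $d_D(y)\le d(x,y)<\rho$ whenever $x\in\partial D$; thus $f_\rho(x)$ is an average of $f$ over the annular piece $D_{\kappa\rho,\rho}\cap B(x,\rho)$, whose $m_0$-measure is comparable to $m_0(B(x,\rho))$ by Lemma~\ref{lemma21}(a) and (VD). I will use this throughout, together with (VD) for $\sigma$ (hence doubling of $r\mapsto\Theta_{\Psi,\sigma}(x,r)$, see \eqref{e:3.2}) and the two-sided scaling \eqref{eqnpsi} of $\Psi$.

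For \eqref{eqn36}, fix $k\ge 1$ and put $\rho=\theta^k r$. I would first bound $\big(f_\rho(x)-f_{\theta\rho}(x)\big)^2$ pointwise in $x\in\partial D$. Both averaging sets lie in $B(x,\rho)\cap D_{\kappa\theta\rho}$, which by Lemma~\ref{lemma21}(c) is contained in a path-connected set $E_x$ with $E_x\subset B(x,(2A+1)\rho)\cap D_{\kappa\theta\rho/(1+A),\,(2A+1)\rho}$; covering $E_x$ by a bounded number of balls of radius $\asymp\rho/A_p$ in $D$ (Lemma~\ref{lemma2path}(a)), chaining the two averages through overlapping such balls, and applying PI$(\Psi;D)$ ball by ball---exactly the computation in the proof of Lemma~\ref{lemma36}, localized to $B(x,\rho)$---yields
\[
\big(f_{\theta^k r}(x)-f_{\theta^{k+1}r}(x)\big)^2\;\lesssim\;\frac{\Psi(\theta^k r)}{m_0\big(B(x,\theta^k r)\big)}\;\mu_{\<f\>}\big(D_{\theta^{k+1}\eta_1 r,\,\theta^k\eta_2 r}\cap B(x,\theta^k\eta_2 r)\big).
\]
Dividing by $\Theta_{\Psi,\sigma}(x,r)$ and integrating $\sigma(dx)$, I would rewrite $\Psi(\theta^k r)\big/\big(m_0(B(x,\theta^k r))\,\Theta_{\Psi,\sigma}(x,r)\big)=\Theta_{\Psi,\sigma}(x,\theta^k r)\big/\big(\sigma(B(x,\theta^k r))\,\Theta_{\Psi,\sigma}(x,r)\big)$ and apply the (LS) bound $\Theta_{\Psi,\sigma}(x,\theta^k r)\le C^{-1}\theta^{\beta k}\Theta_{\Psi,\sigma}(x,r)$, so the whole prefactor collapses to $\lesssim\theta^{\beta k}\big/\sigma(B(x,\theta^k r))$. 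It then remains to prove
\[
\int_{\partial D}\frac{\mu_{\<f\>}\big(D_{\theta^{k+1}\eta_1 r,\theta^k\eta_2 r}\cap B(x,\theta^k\eta_2 r)\big)}{\sigma\big(B(x,\theta^k r)\big)}\,\sigma(dx)\;\lesssim\;\mu_{\<f\>}\big(D_{\theta^{k+1}\eta_1 r,\theta^k\eta_2 r}\big),
\]
which follows by Fubini: exchanging $\sigma(dx)$ with $\mu_{\<f\>}(dz)$, the inner integral over $\{x\in\partial D:\,d(x,z)<\theta^k\eta_2 r\}$ of $\sigma(B(x,\theta^k r))^{-1}$ is $\lesssim 1$ because $\sigma(B(x,\theta^k r))\asymp\sigma(B(z,\theta^k\eta_2 r))$ there by (VD) of $\sigma$. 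Tracking the chaining constants through Lemmas~\ref{lemma21} and \ref{lemma2path} is what forces the explicit exponents $\eta_1=\tfrac{1}{8A(1+A)}$, $\eta_2=10(1+A)$.

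For \eqref{eqn35} I would split the double integral defining $\lb f_r\rb_{\Lambda_{\Psi,\sigma}}^2$ according to the dyadic size of $d(x,y)$. On the range $d(x,y)\asymp r$ the weights $\sigma(B(x,d(x,y)))$ and $\Theta_{\Psi,\sigma}(x,d(x,y))$ are comparable to $\sigma(B(x,r))$ and $\Theta_{\Psi,\sigma}(x,r)$; estimating $\int_{d(x,y)<Cr}(f_r(x)-f_r(y))^2\,\sigma(dy)$ by Lemma~\ref{lemma36} (with $\eta$ a fixed constant and the constant $\kappa$ from the definition of $f_r$) and running the same computation and Fubini exchange as above gives a contribution $\lesssim\mu_{\<f\>}(D_{\eta_1 r,\eta_2 r})$. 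On the scales $d(x,y)\ll r$ one uses the regularity of the scale-$r$ average $f_r$ (its oscillation at a smaller scale $s$ gains a positive power of $s/r$, again controlled by a Lemma~\ref{lemma36}-type estimate), so that these contributions form a geometrically convergent series whose convergence is ensured precisely by the reverse doubling \eqref{e:3.2} of $\Theta_{\Psi,\sigma}$; the scales $d(x,y)\gg r$ are treated analogously, using (LS)-summability together with the cross-scale oscillation bounds already obtained for \eqref{eqn36}. Summing the three ranges gives \eqref{eqn35}, with all implied constants depending only on $A$, $A_p$, $C_p$, $C_\Psi$, $\beta_1$, $\beta_2$, the (VD) constants of $m_0$ and $\sigma$, and the (LS) constants, not on $f$ or $r$.

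The step I expect to be the main obstacle is the chaining that passes from a pointwise oscillation of the averages $f_\rho$ to energy in a \emph{single} annulus: one must connect the two ball-averages through a controlled number of overlapping balls that stay inside an annular neighbourhood of $\partial D$ whose inner and outer radii shrink in step with $\rho$, so that PI$(\Psi;D)$ applies ball by ball and the accumulated error localizes to one annulus $D_{c_1\rho,c_2\rho}$ rather than to a growing union of annuli. Pinning down the exponents $\eta_1,\eta_2$ and inserting the (LS) inequality at exactly the point where $\Psi(\theta^k r)/m_0(B(x,\theta^k r))$ is traded for $\theta^{\beta k}/\sigma(B(x,\theta^k r))$ are the two places where the argument is least routine.
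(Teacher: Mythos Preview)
For \eqref{eqn36} your plan is correct and in substance identical to the paper's; only the order of operations differs. The paper does \emph{not} first prove a pointwise chaining bound: it expands $(f_{\theta^k r}(x)-f_{\theta^{k+1}r}(x))^2$ by Jensen as an average of $(f(z)-f(w))^2$ over $(B(x,\theta^k r)\cap D_{\theta^{k+1}r})\times(B(x,\theta^{k+1}r)\cap D_{\theta^{k+2}r})$, swaps the $\sigma(dx)$-integral inside by Fubini, inserts the (LS) bound $\Theta_{\Psi,\sigma}(x,\theta^k r)/\Theta_{\Psi,\sigma}(x,r)\lesssim\theta^{\beta k}$ exactly as you do, and only then applies Lemma~\ref{lemma36} \emph{globally} to the resulting $m_0\times m_0$-double integral over $D_{\theta^{k+2}r,\theta^k r}$. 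This saves you the work of localizing Lemma~\ref{lemma36} to a single ball $B(x,\rho)$, but is otherwise the same argument.

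For \eqref{eqn35} your three-range decomposition has a real gap: the far range $d(x,y)\gg r$ \emph{cannot} be controlled by $\mu_{\<f\>}(D_{\eta_1 r,\eta_2 r})$ alone. Since $f_r(x)$ depends only on the values of $f$ in $D_{\kappa r,r}\subset D_{\eta_1 r,\eta_2 r}$, you can take $f$ with $\mu_{\<f\>}(D_{\eta_1 r,\eta_2 r})=0$ (hence locally constant there) while $f_r$ takes different constant values on distant pieces of $\partial D$, so $\lb f_r\rb_{\Lambda_{\Psi,\sigma}}^2>0$; the cross-scale bounds from \eqref{eqn36} do not help because they run to \emph{smaller} scales $\theta^k r$, not larger ones. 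What the paper actually establishes (and all that is used in Theorem~\ref{thmrestriction1}) is the fixed-scale near-diagonal inequality
\[
\int_{\partial D}\int_{d(x,y)<r}\frac{(f_r(x)-f_r(y))^2}{\Theta_{\Psi,\sigma}(x,r)\,\sigma(B(x,r))}\,\sigma(dy)\,\sigma(dx)\;\lesssim\;\mu_{\<f\>}(D_{\eta_1 r,\eta_2 r}),
\]
obtained by the same Jensen--Fubini--Lemma~\ref{lemma36} route as for \eqref{eqn36}. Your near-diagonal paragraph is therefore the correct and complete argument; the small-scale and large-scale paragraphs should be dropped.
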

\begin{proof}
\eqref{eqn35} follows from 
\begin{align*}
	&\int_{x\in\partial D}\int_{y\in \partial D\atop d(x,y)<r}\frac{\big(f_r(x)-f_r(y)\big)^2}{\Theta_{\Psi, \sigma} (x,r)V_\sigma (x,r)}\sigma(dy)\sigma(dx)\\
	\lesssim &\int_{x\in\partial D}\int_{y\in \partial D\atop d(x,y)<r}\int_{z\in B(x,r)\cap D_{\theta r}}\int_{w\in B(y,r)\cap D_{\theta r}}\frac{\big(f(w)-f(z)\big)^2}{\Theta_{\Psi, \sigma} (x,r)V_\sigma (x,r)V (x,r)^2}m_0(dw)m_0(dz)\sigma(dy)\sigma(dx)\\
	=&\int_{x\in\partial D}\int_{y\in \partial D\atop d(x,y)<r}\int_{z\in B(x,r)\cap D_{\theta r}}\int_{w\in B(y,r)\cap D_{\theta r}}\frac{\big(f(w)-f(z)\big)^2}{\Psi(r){ V_\sigma (x,r) }^2{ V (x,r) }}m_0(dw)m_0(dz)\sigma(dy)\sigma(dx)\\
	\leq &\int_{z\in D_{\theta r,r}}\int_{w\in D_{\theta r,r}\atop d(z,w)<3r}\big(f(w)-f(z)\big)^2\Big(\int_{x\in \partial D\atop d(x,z)<r}\int_{y\in \partial D\atop d(x,y)<r}\frac1{\Psi(r)V_\sigma (x,r)^2V(x,r)}\sigma(dy)\sigma(dx)\Big)m_0(dz)m_0(dw)\\
	\lesssim &\int_{z\in D_{\theta r,r}}\int_{w\in D_{\theta r,r}\atop d(z,w)<3r}\frac{\big(f(z)-f(w)\big)^2}{\Psi(r)V(z,r)}m_0(dz)m_0(dw)\\		
	\lesssim&\,\mu_{\<f\>}(D_{\eta_1r,\eta_2r}),
\end{align*}
	where we use Lemma \ref{lemma36} in the last inequality, and we use (VD) for $\sigma$, $m_0$ and Lemma \ref{lemma21}(a) a few times.

Next, \eqref{eqn36} follows from the following estimates
\begin{align*}
	&\int_{x\in\partial D}\frac{\big(f_{\theta^kr}(x)-f_{\theta^{k+1}r}(x)\big)^2}{\Theta_{\Psi, \sigma} (x,r)}\sigma(dx)\\
	\lesssim&\int_{x\in\partial D}\int_{z\in B(x,\theta^kr)\cap D_{\theta^{k+1}r}}\int_{w\in B(x,\theta^{k+1}r)\cap D_{\theta^{k+2}r}}\frac{\big(f(w)-f(z)\big)^2}{\Theta_{\Psi, \sigma} (x,r)V(x,\theta^kr)^2}m_0(dw)m_0(dz)\sigma(dx)\\
	\lesssim&\int_{z\in D_{\theta^kr,\theta^{k+2}r}}\int_{w\in D_{\theta^kr,\theta^{k+2}r}\atop d(w,z)\leq 2\theta^kr}\int_{x\in \partial D\atop d(x,z)<\theta^kr}\frac{\big(f(w)-f(z)\big)^2}{\Theta_{\Psi, \sigma}(x,r)V(x,\theta^kr)^2}\sigma(dx)m_0(dw)m_0(dz)\\
	\lesssim&\sup_{x\in \partial D}\frac{\Theta_{\Psi, \sigma} (x,\theta^kr)}{\Theta_{\Psi, \sigma}(x,r)}\,\int_{z\in D_{\theta^kr,\theta^{k+2}r}}\int_{w\in D_{\theta^kr,\theta^{k+2}r}\atop d(w,z)\leq 2\theta^kr}\frac{\big(f(w)-f(z)\big)^2}{V (x,\theta^kr) \Psi(\theta^kr)}m_0(dw)m_0(dz)\\
	\lesssim&\theta^{\beta k}\,\mu_{\<f\>}\big(D_{\theta^{k+1}\eta_1r,\theta^k\eta_2r}\big),
\end{align*}
where $\beta$ is the constant of condition {\rm (LS)}, and we use Lemma \ref{lemma36} in the last inequality. 
\end{proof}

\begin{theorem}\label{thmrestriction2}
Suppose that  $\sigma$ is  a  Radon measure on $\partial D$ that satisfies condition {\rm (LS)}, and {\rm PI$(\Psi;D)$} holds for  $(D, d, m,  \sE^0, \sF^0)$. 
Then  measure $\sigma$ does not charge sets of zero $\bar \sE$-capacity.  
    Moreover,  if  $\sup_{x\in \partial D}\Theta_{\Psi, \sigma} (x,r_0)<\infty$ for some $r_0\in (0,\diam(\partial D)/2)$, then there is $C>0$ such that
\[
\|f|_{\partial D}\|_{L^2(\partial D;\sigma)}\leq C \bar \sE_1 (f, f)^{1/2} \quad\hbox{ for each }f\in \bar \sF \cap C(\tD),
\]
where $\bar \sE_1 (f, f):=  \bar\sE (f,f)+\|f\|^2_{L^2(\tD;m_0)}$.
Consequently, 
$$
\|f|_{\partial D}\|_{L^2(\partial D;\sigma)}\leq C \bar \sE_1 (f, f)^{1/2} \quad\hbox{ for each }f\in \bar \sF .
$$
\end{theorem}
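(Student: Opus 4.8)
The plan is to derive both statements from a single dyadic telescoping estimate built on the averaging operators $f\mapsto f_r$ of Lemma~\ref{lemma37}. Put $\theta:=\tfrac1{4A}$ and, for the second statement, fix the reference scale $r_1:=r_0/2$ and set $M:=\sup_{x\in\partial D}\Theta_{\Psi,\sigma}(x,r_0)<\infty$; by the lower scaling property {\rm (LS)} this forces $\Theta_{\Psi,\sigma}(x,r)\le C^{-1}M(r/r_0)^{\beta}$ for all $x\in\partial D$ and $0<r\le r_0$, so every scale below $r_0$ is controlled by $M$. For $f\in\bar\sF\cap C(\tD)$ and $x\in\partial D$, continuity of $f$ together with $m_0\big(B(x,r)\cap D_{r/(4A)}\big)\asymp m_0(B(x,r))>0$ (Lemma~\ref{lemma21}(a)) gives $f_r(x)\to f(x)$ as $r\downarrow0$, hence on $\partial D$ one has the telescoping identity $f=f_{r_1}+\sum_{k\ge0}\big(f_{\theta^{k+1}r_1}-f_{\theta^{k}r_1}\big)$. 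It then suffices to bound the $L^2(\partial D;\sigma)$-norm of $f_{r_1}$ and of each increment; the pointwise and $L^2(\partial D;\sigma)$ limits of the series will agree $\sigma$-a.e. once the series is shown to converge in $L^2(\partial D;\sigma)$.

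\emph{Top term.} One chooses $\{x_i\}\subset\partial D$ with $\{B(x_i,r_1/5)\}$ pairwise disjoint and $\{B(x_i,r_1)\}$ covering $\partial D$; by {\rm (VD)} for $m_0$ the balls $\{B(x_i,2r_1)\}$ have bounded overlap. Cauchy--Schwarz and Lemma~\ref{lemma21}(a) give $f_{r_1}(x)^2\lesssim m_0(B(x,r_1))^{-1}\int_{B(x,r_1)}f^2\,dm_0$; integrating against $\sigma$, splitting over the $B(x_i,r_1)$, and using $m_0(B(x,r_1))\asymp m_0(B(x_i,r_1))$ on $B(x_i,r_1)$ together with $\sigma(B(x_i,r_1))=\Psi(r_1)^{-1}\Theta_{\Psi,\sigma}(x_i,r_1)m_0(B(x_i,r_1))\le C^{-1}2^{-\beta}M\Psi(r_1)^{-1}m_0(B(x_i,r_1))$, one gets $\|f_{r_1}\|_{L^2(\partial D;\sigma)}^2\le C(r_0,M)\|f\|_{L^2(\tD;m_0)}^2$. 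No doubling of $\sigma$ is used here.

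\emph{Increments.} Fix $k\ge0$, put $\rho:=\theta^{k}r_1$ and $S_x:=B(x,\rho)\cap D_{\theta^2\rho}$. The sets $B(x,\rho)\cap D_{\theta\rho}$ and $B(x,\theta\rho)\cap D_{\theta^2\rho}$ over which $f_\rho(x),f_{\theta\rho}(x)$ average lie in $S_x$ and, by Lemma~\ref{lemma21}(a) and {\rm (VD)}, each has $m_0$-measure $\asymp m_0(B(x,\rho))\asymp m_0(S_x)$; comparing both averages to the $S_x$-average of $f$ gives $|f_\rho(x)-f_{\theta\rho}(x)|^2\lesssim m_0(B(x,\rho))^{-2}\iint_{S_x\times S_x}\big(f(z)-f(w)\big)^2 m_0(dz)m_0(dw)$. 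Integrating this against $\sigma(dx)$ and applying Tonelli, the inner $\sigma$-integral runs over $x\in B(z,\rho)\cap\partial D$ where $m_0(B(x,\rho))\asymp m_0(B(z,\rho))$; choosing $p_z\in\partial D$ with $d(z,p_z)<\rho$ (possible as $z\in D_{\theta^2\rho,\rho}$) and using $\sigma\big(B(z,\rho)\cap\partial D\big)\le\sigma\big(B(p_z,2\rho)\big)=\Psi(2\rho)^{-1}\Theta_{\Psi,\sigma}(p_z,2\rho)m_0(B(p_z,2\rho))$ with $m_0(B(p_z,2\rho))\asymp m_0(B(z,\rho))$, $\Psi(2\rho)\asymp\Psi(\rho)$ and $\Theta_{\Psi,\sigma}(p_z,2\rho)\lesssim M\theta^{k\beta}$ (from {\rm (LS)}, valid since $2\rho=\theta^{k}r_0\le r_0<\diam(\partial D)$), one arrives at
\[
\int_{\partial D}\big(f_\rho-f_{\theta\rho}\big)^2\,d\sigma\ \lesssim\ M\theta^{k\beta}\int_{z\in D_{\theta^2\rho,\rho}}\frac{1}{\Psi(\rho)\,m_0(B(z,\rho))}\int_{\{w\in D_{\theta^2\rho,\rho}:\,d(z,w)<2\rho\}}\big(f(z)-f(w)\big)^2\,m_0(dw)\,m_0(dz).
\]
By Lemma~\ref{lemma36} (with $r=\rho$, $\kappa=\theta^2$, $\eta=2$) the last double integral is $\le C\Psi(\rho)\mu_{\<f\>}\big(D_{\eta_1\rho,\eta_2\rho}\big)$ for fixed $0<\eta_1<\eta_2$, so $\|f_\rho-f_{\theta\rho}\|_{L^2(\partial D;\sigma)}^2\lesssim M\theta^{k\beta}\mu_{\<f\>}\big(D_{\eta_1\theta^{k}r_1,\eta_2\theta^{k}r_1}\big)$. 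Since the annuli $D_{\eta_1\theta^{k}r_1,\eta_2\theta^{k}r_1}$, $k\ge0$, have bounded overlap (depending only on $\eta_2/\eta_1$ and $\theta$), summing in $k$ by Cauchy--Schwarz and $\mu_{\<f\>}(D)=2\bar\sE(f,f)$ gives $\sum_{k\ge0}\|f_{\theta^{k}r_1}-f_{\theta^{k+1}r_1}\|_{L^2(\partial D;\sigma)}\le C(M)\bar\sE(f,f)^{1/2}$. Combined with the top-term bound, $\|f|_{\partial D}\|_{L^2(\partial D;\sigma)}\le C(r_0,M)\big(\|f\|_{L^2(\tD;m_0)}+\bar\sE(f,f)^{1/2}\big)\le C'(r_0,M)\bar\sE_1(f,f)^{1/2}$, which is the second assertion.

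\emph{First assertion.} One runs exactly the same computation with the outer $x$-integral restricted to the open $r_1$-neighborhood $W$ in $\partial D$ of an arbitrary compact $K\subset\partial D$ (for a fixed small $r_1$): all points and balls appearing then stay inside a fixed compact $W'\subset\partial D$, so the only finiteness needed is $\sup_{x\in W'}\Theta_{\Psi,\sigma}(x,r_1)<\infty$, which holds automatically because $W'$ is compact, $\sigma$ is Radon and $x\mapsto m_0(B(x,r_1))$ is lower semicontinuous and strictly positive. This yields $\int_W|f|^2\,d\sigma\le C_W\bar\sE_1(f,f)$ for all $f\in\bar\sF\cap C(\tD)$, hence, since $\sigma(W)<\infty$, $\int|f|\,\1_W\,d\sigma\le(\sigma(W)C_W)^{1/2}\bar\sE_1(f,f)^{1/2}$ for $f\in\bar\sF\cap C_0(\tD)$; by the standard criterion (e.g.\ \cite[Theorem~2.2.3]{FOT}) the finite Radon measure $\1_W\sigma$ has finite energy integral, and therefore charges no set of zero $\bar\sE$-capacity. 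As $(\tD,d)$ is proper, $\partial D$ is $\sigma$-compact; writing $\partial D=\bigcup_nK_n$ with $K_n$ compact and choosing such $W_n\supset K_n$, every $\bar\sE$-polar set $N$ satisfies $\sigma(N)\le\sum_n(\1_{W_n}\sigma)(N)=0$. The main obstacle is the increment estimate, where one must simultaneously invoke the Poincar\'e inequality through Lemma~\ref{lemma36} to replace the $m_0$-double integral of $\big(f(z)-f(w)\big)^2$ by $\Psi(\rho)\mu_{\<f\>}$ of an annulus, and {\rm (LS)} to produce the summability factor $\theta^{k\beta}$ — all without any doubling hypothesis on $\sigma$, which is precisely what forces the bounded-overlap treatment of the top scale in place of a direct pointwise bound.
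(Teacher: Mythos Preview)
Your proof is correct and follows essentially the same route as the paper: the telescoping decomposition $f=f_{r_1}+\sum_{k\ge0}(f_{\theta^{k+1}r_1}-f_{\theta^{k}r_1})$, a Cauchy--Schwarz/Fubini bound on the top term against $\|f\|_{L^2(m_0)}$, the increment bound via Lemma~\ref{lemma36} combined with {\rm (LS)} to produce the geometric factor $\theta^{k\beta}$, and localization plus the finite-energy-integral criterion from \cite{FOT} for the first assertion. The only notable difference is that the paper obtains the increment bound by quoting \eqref{eqn36} of Lemma~\ref{lemma37} (whose hypotheses include {\rm (VD)} for $\sigma$) and then multiplying through by the local supremum of $\Theta_{\Psi,\sigma}(\cdot,r_0)$, whereas you redo that computation directly and thereby avoid ever dividing by $\Theta_{\Psi,\sigma}(x,r)$; this makes it transparent that no doubling of $\sigma$ is needed, matching the hypotheses of the theorem. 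For the localization, the paper uses a cutoff $\psi\in C_c(\tD)\cap\bar\sF$ and shows each $f\sigma$ (with $f\in C_c\cap\bar\sF$) has finite energy integral, while you restrict to neighborhoods of compacts and show $\1_W\sigma$ has finite energy integral; both are standard and equivalent here.
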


\begin{proof}
Note that by the volume doubling property of $m_0$ and $\sigma$, for every $r_0\in (0,\diam(\partial D)/2)$, $x\mapsto \Theta_{\Psi, \sigma} (x,r_0)$ is locally bounded on $\partial D$.

We choose $\theta,\eta_1,\eta_2$ and $f_r$ the same as in Lemma \ref{lemma37}. For $f\in \mcB(E)$ and $\mu$ being a Radon measure on $E$, we let $f \mu$ be the measure defined by $(f \mu) (F) :=\int_Ff(x)\mu(dx)$. Let $\psi$ be an arbitrary Borel measurable function on $ \partial D$ with compact  support such that $0\leq \psi\leq 1$. We have  for $f\in \bar \sF \cap C(\tD)$, 
\begin{align}\label{eqn37}
\begin{split}
&\quad\ \|f_{r_0}\|_{L^2(\partial D;\psi\sigma)}^2\\
&=\int_{x\in\partial D}\Big(\frac{1}{m_0\big(B(x,r_0)\cap D_{\theta r_0}\big)}\int_{z\in B(x,r_0)\cap D_{\theta r_0}}f(z)m_0(dz)\Big)^2\psi(x)\sigma(dx)\\
			&\leq \int_{x\in\partial D}\frac{1}{m_0\big(B(x,r_0)\cap D_{\theta r_0}\big)}\int_{z\in B(x,r_0)\cap D_{\theta r_0}}|f(z)|^2m_0(dz)\psi(x)\sigma(dx)\\
			&\lesssim \int_{z\in D_{\theta r_0,r_0}}f^2(z)\Big(\int_{x\in \partial D\atop d(x,z)<r_0}\frac{\psi(x)}{V(x,r_0)}\sigma(dx)\Big)m_0(dz)\\
			&\lesssim  \int_{z\in D_{\theta r_0,r_0}}f^2(z) \, \sup_{x\in \partial D\atop d(x,\operatorname{supp}(\psi))<r_0}\frac{V_\sigma (x,r_0)}{V (x,r_0)}\, m_0(dz)\\
			&\leq\|f\|^2_{L^2(\tD;m_0)}\cdot \frac{1}{\Psi(r_0)}\cdot \sup_{x\in \partial D\atop d(x,\operatorname{supp}(\psi))<r_0}\Theta_{\Psi, \sigma} (x,r_0),
		\end{split}
	\end{align}
    where all constants of `$\lesssim$' are independent of $f$ and $\psi$. Moreover, by \eqref{eqn36},
\begin{equation}\label{eqn38}
\begin{aligned}
\|f_{\theta^kr_0}-f_{\theta^{k+1}r_0}\|^2_{L^2(\partial D;\psi\sigma)}&\lesssim \Big(\sup_{x\in \partial D\atop d(x,\operatorname{supp}(\psi)<r_0)}\Theta_{\Psi, \sigma} (x,r_0)\Big)\,\theta^{\beta k}\,\mu_{\<f\>}(D_{\theta ^{k+1}\eta_1r,\theta ^k\eta_2r})\\
&\leq\Big(\sup_{x\in \partial D\atop d(x,\operatorname{supp}(\psi)<r_0)}\Theta_{\Psi, \sigma} (x,r_0)\Big)\,\theta ^{\beta k}\bar \sE (f,f),
\end{aligned}
\end{equation}
where all constants of `$\lesssim$' are independent of $f$ and $\psi$. Combining \eqref{eqn37} and \ref{eqn38}, we get 
\begin{equation}\label{eqn39}
\begin{aligned}
\|f|_{\partial D}\|_{L^2(\partial D;\psi\sigma)}&\leq\|f_{r_0}\|_{L^2(\partial D;\psi\sigma)}+\sum_{k=0}^\infty\|f_{\theta^kr_0}-f_{\theta^{k+1}r_0}\|_{L^2(\partial D;\psi\sigma)}\\
&\lesssim \Big(\sup_{x\in \partial D\atop d(x,\operatorname{supp}(\psi)<r_0)}\Theta_{\Psi, \sigma} (x,r_0)\Big)^{1/2}
\bar \sE_1 (f, f)^{1/2}.
\end{aligned}
\end{equation}
In particular, if $\sup_{x\in \partial D}\Theta(x,r_0)<\infty$, we can take $\psi\equiv 1$ so that $\|f|_{\partial D}\|_{L^2(\partial D;\sigma)}
	\lesssim \bar \sE_1 (f, f)^{1/2}$. 
	
	Next, for each $f$ with compact support, we choose $\psi$ so that $\psi\equiv 1$ on the support of $f$. Then, by \eqref{eqn39}
	\begin{align*}
	\int_{\partial D} g(x)f(x)\sigma(dx)&=\int_{\partial D} g(x)f(x)\psi(x)\sigma(dx)\\
	&\leq \|f\|_{L^2(\partial D;\psi\sigma)}\|g\|_{L^2(\partial D;\psi\sigma)}
	\lesssim \bar \sE_1 (f, f)^{1/2} \, \bar \sE_1 (g, g)^{1/2}
	\end{align*}
	for each $g\in C_c(\tD)\cap\bar \sF $. Hence, by \cite[Lemma 2.2.3]{FOT}, $f(x)\sigma(dx)$ does not charge sets of zero capacity. Since $C_c(\tD)\cap \bar \sF $ is dense in $C_c(\tD)$, we see that $\sigma$ does not charge sets of zero capacity. 
\end{proof}

\begin{proof}[Proof of Theorem \ref{thmrestriction1}]
Combining \eqref{eqn35} and \eqref{eqn36} from Lemma \ref{lemma37}, and noticing that $f_r(x)\to f(x)$ pointwise on $\partial D$, we have 
for $r<\diam(\partial D)/2$, 
\[
\sqrt{\int_{x\in\partial D}\int_{y\in \partial D\atop d(x,y)<r}\frac{\big(f(x)-f(y)\big)^2}{\Theta_{\Psi, \sigma} (x,r)V_\sigma (x,r)}\sigma(dy)\sigma(dx)}\leq C\sum_{k=0}^\infty \theta^{\beta k/2}\sqrt{\mu_{\<f\>}\big( D_{\theta^{k+1}\eta_1r,  \theta^k\eta_2r}\big)},
\]
where $C,\theta,\eta_1,\eta_2$ are the same as in Lemma \ref{lemma37}. Moreover, by (VD) property of $\sigma$,
\[
\frac{1}{\Theta_{\Psi,\sigma}(x,r)V_\sigma (x,r)}\lesssim \sum_{i\in\mathbb{Z}:\theta^i>r} \frac{1}{\Theta_{\Psi,\sigma}(x,\theta^i)V_\sigma (x, \theta^i)}
\] 
Then, the desired estimate follows from the following inequality, where we assume that $\partial D$ is unbounded.
\begin{eqnarray*}
	\lb f|_{\partial D} \rb_{\Lambda_{\Psi,  \sigma} }^2
	&=&\int_{x\in \partial D}\int_{y\in \partial D}\frac{\big(f(x)-f(y)\big)^2}{\Theta_{\Psi, \sigma} \big(x,d(x,y)\big) V_\sigma (x,d(x,y))}\sigma(dy)\sigma(dx)\\
	&\lesssim& \sum_{i=-\infty}^\infty  \int_{x\in\partial D}\int_{y\in \partial D\atop d(x,y)<\theta^i}\frac{\big(f(x)-f(y)\big)^2}{\Theta_{\Psi, \sigma} (x,  \theta^i  ){V_\sigma (x,\theta^i ) }}\sigma(dy)\sigma(dx)\\
	&=& \left\|  \, \sqrt{\int_{x\in\partial D}\int_{y\in \partial D\atop d(x,y)<\theta^i}\frac{\big(f(x)-f(y)\big)^2}{\Theta_{\Psi, \sigma} (x, \theta^i)V_\sigma (x,\theta^i ) }\sigma(dy)\sigma(dx)} \,  \right\|^2_{l^2(   \Z) \,  \hbox{\tiny in } i} \\
	& \lesssim& \Big\|\sum_{k=0}^\infty \theta^{\beta k/2}\sqrt{\mu_{\<f\>}\big(D_{\theta^{k+i+1}\eta_1, \, \theta^{k+i}\eta_2}\big)}\Big\|^2_{l^2(   \Z) \,  \hbox{\tiny in } i} \\
	& \lesssim& \Big(\sum_{k=0}^\infty \theta^{\beta k/2}\Big\| \sqrt{\mu_{\<f\>}\big(D_{\theta^{k+i+1}\eta_1, \, \theta^{k+i}\eta_2}\big)}\Big\|^2_{l^2(   \Z) \,  \hbox{\tiny in } i}\\
	& =& \big(\frac{1} { 1-\theta^{\beta/2}} \big)^2
	 \Big\| \sqrt{\mu_{\<f\>}\big(D_{\theta^{i+1}\eta_1 ,\theta^{i}\eta_2 }\big)}\Big\|^2_{l^2(\Z) \,  \hbox{\tiny in } i}\\
& \lesssim & \mu_{\<f\>}(D)\leq \bar\sE(f,f),
\end{eqnarray*}
where we use the notation $\|c_i\|_{l^2(-\infty<i<\infty)}$ to denote the norm $\big(\sum_{i=-\infty}^\infty c_i^2\big)^{1/2}$. For the case that $\partial D$ is bounded, we use summation $\sum_{i=i_0}^\infty$ instead for some suitable $i_0\in \Z$ so that $\theta^{i_0}<\diam(\partial D)/2\leq \theta^{i_0-1}$ and the same estimate holds. 

Under the condition of this theorem,  $\sigma$ is a smooth measure on $\partial D$
with respect to $(\bar \sE, \bar \sF)$ by Theorem \ref{thmrestriction2}. 
 Since $C(\tD)\cap \bar \sF $ is $\bar \sE_1^{1/2}$-dense in $\bar \sF$,  it follows that  \eqref{e:3.4} holds for all $f\in \bar \sF$.
 By \cite[Theorem 2.3.4]{CF}, for every $f\in \bar \sF_e$, for any $\bar\sE$-Cauchy sequence $\{f_n; n\geq 1\}\subset \bar \sF$
 so that $f_n \to f$ $m_0$-a.e. on $\tD$, there is subsequence $\{n_l; l\geq 1\}$ so that $ f_{n_l}$ converges to $f$ $\bar \sE$-quasi-everywhere on $\tD$
 to $f$. It follows that $ f_{n_l}$ converges to $f$ $\sigma$-everywhere on $\tD$ and $ f_{n_l}|_{\partial D}$ is $\lb \cdot \rb_{\Lambda_{\Psi,  \sigma} }$-Cauchy.
Hence $f|_{\partial D}\in \dot \Lambda_{\Psi,  \sigma} $ and \eqref{e:3.4} holds for $f|_{\partial D}$. For $u\in \check \sF_e$,
$$
 \check \sE(u, u) =\bar \sE ({\mathcal H}  u,  {\mathcal H}  u) = \inf\{ \bar \sE (f, f): f\in \sF_e, f=u \hbox{ $\bar \sE$-q.e. on } \partial D\}.
$$
 We conclude that \eqref{e:3.5} holds. We  emphasize that property \eqref{e:3.5} is not used in the proof of Theorem \ref{thmrestriction2}.
\end{proof}

It follows from Theorems \ref{thmrestriction1} and \ref{thmrestriction2}  that under the assumption of Theorem \ref{thmrestriction2}, 
\begin{equation} \label{e:3.16}
\bar \sF|_{\partial D}
\subset \dot \Lambda_{\Psi,  \sigma}  \cap L^2(\partial D; \sigma)= \Lambda_{\Psi,  \sigma}  .
\end{equation}

\subsection{Extension theorem}\label{S:3.2}
We consider an extension theorem.  The approach is based on the Whitney cover. 

\medskip

For $f\in L^1(\tD;m_0)$, the support ${\rm supp} [f] $ of $f$ is defined to be the smallest closed subset $K$ of $\tD$ such that $\int_{\tD\setminus K}|f(x)|m_0(dx)=0$.

\begin{definition}\rm 
\begin{enumerate} [\rm (i)]
\item We say that the  MMD space  $(D, d, m, \sE^0, \sF^0)$ satisfies Cap$_\leq$($\Psi;D$) 
if there exist constants $C,A_1>1$ such that for all  $x\in D$ and $ 0<r<d_D(x)/A_1$, 
$$
 {\operatorname{Cap}}\big(B(x,r), B(x,A_1r)\big)\leq C\,\frac{V(x,r)}{\Psi(r)},
$$
where for open sets $O_1, O_2\subset  D$ with $\overline O_1\subset  O_2$, 
\[
 {\operatorname{Cap}}(O_1, O_2):=\inf\{ \sE(f,f):\, f\in \sF \hbox{ with }  f=1\text{ on   } O_1 \hbox{ and }  \operatorname{supp}[f]\subset O_2\}.
\]

\item We say that the  MMD space  $(\tD,d,m_0,\bar\sE,\bar\sF)$ satisfies the (relative) capacity upper bound estimate  Cap$_\leq$($\Psi$)
if there exist constants $C,A_1,A_2>1$ such that for all  $x\in \tD$ and $0<r<\diam( D)/A_2$, 
\[
\overline{\operatorname{Cap}}\big(B(x,r), B(x,A_1r)\big)\leq C\,\frac{V (x,r)}{\Psi(r)},
\] 
where for open sets $O_1, O_2\subset \tD$ with $\overline O_1\subset  O_2$, 
\[
\overline{\operatorname{Cap}}(O_1, O_2):=\inf\{\bar\sE(f,f):\, f\in \sF \hbox{ with }  f=1\text{ on   } O_1 \hbox{ and }  \operatorname{supp}[f]\subset O_2\}.
\] 

\item We say that the  MMD space  $(\tD,d,m_0,\bar\sE,\bar\sF)$ satisfies the (relative) capacity  estimate Cap($\Psi$) if there exist constants $C,A_1,A_2>1$ such that for all  $x\in \tD$ and $0<r<\diam(D)/A_2$, 
\[
 C^{-1}\frac{V(x,r)}{\Psi(r)} \leq 
 \overline{\operatorname{Cap}}\big(B(x,r), B(x,A_1r)\big)\leq C\frac{V(x,r)}{\Psi(r)}.
\] 
\end{enumerate}
\end{definition}

 \medskip

\begin{definition}\label{def39}
Let $\varepsilon\in (0,1/2)$. A countable collection $\mathscr{R}:=\{B(x_i,r_i):x_i\in D,r_i>0,i\in I\}$ of open balls is said to be 
an $\varepsilon$-Whitney cover of $D$ if it satisfies the following properties:
\begin{enumerate}[\rm (i)]
\item   $r_i=\frac{\varepsilon}{1+\varepsilon}d_D(x_i)$ for each  $i\in I$. 

\smallskip

\item   $\{B(x_i,r_i),\,i\in I\}$ are pairwise disjoint.

\smallskip
			
\item $\bigcup_{i\in I}B (x_i,2(1+\varepsilon)r_i )=D$.  
\end{enumerate}
\end{definition}

\smallskip

The existence of   an $\eps$-Whitney cover follows from Zorn's lemma; see \cite[Proposition 3.2]{Mathav} for a proof.  
  The following lemma shows some basic properties of a Whitney cover. 

\begin{lemma}\label{lemma311+}
Let $\varepsilon\in (0,1/2)$ and let $\{B(x_i,r_i):x_i\in D,r_i>0,i\in I\}$ be an $\varepsilon$-Whitney cover of $D$. 
\begin{enumerate}[\rm (a)]
\item If $B(x_i,\lambda r_i)\cap B(x_i,\lambda r_j)\neq \emptyset$ for some $i,j\in I$ and $0<\lambda<\frac{1+\varepsilon}{\varepsilon}$, then 
\[
\frac{1+\varepsilon-\varepsilon\lambda}{1+\varepsilon+\varepsilon\lambda}r_i\leq r_j\leq \frac{1+\varepsilon+\varepsilon\lambda}{1+\varepsilon-\varepsilon\lambda}r_i.
\] 
		
\item If $x\in B(x_i,\lambda r_i)$ for some $i\in I$ and $0< \lambda<\frac{1+\varepsilon}{\varepsilon}$, then 
\begin{equation}\label{e:3.18a}
\frac\varepsilon{1+\varepsilon+\varepsilon\lambda}d_D(x)\leq r_i\leq \frac\varepsilon{1+\varepsilon-\varepsilon\lambda}d_D(x).
\end{equation}

\item For $0<\lambda<\frac{1+\varepsilon}{\varepsilon}$, there is a positive constant $C(\eps,\lambda)>0$ that depends also on the  parameters in (VD) of $m_0$ so that
\[
\#\{j\in I:B(x_j,\lambda r_j)\cap  B(x_i,\lambda r_i)\neq \emptyset\}\leq  C(\eps, \lambda) 
\quad  \hbox{ for every }i\in I.
\]

\item For $0<\lambda<\frac{1+\varepsilon}{\varepsilon}$, there is a positive constant $C(\eps,\lambda)>0$ that depends also on the  parameters in (VD) of $m_0$ so that 
\[
\#\{i\in I:x\in B(x_i,\lambda r_i)\}\leq C(\eps,\lambda)\quad  \hbox{ for every }  x\in D.
\]
\end{enumerate}
\end{lemma}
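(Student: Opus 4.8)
The plan is to derive all four statements from two elementary ingredients: first, the distance-to-boundary function $d_D$ defined in \eqref{e:dstf} is $1$-Lipschitz on $D$ (an immediate consequence of the triangle inequality for $d$ and the nonemptiness of $D^*\setminus D$); second, the defining relation (i) of a Whitney cover, $r_i=\frac{\varepsilon}{1+\varepsilon}d_D(x_i)$. Parts (a) and (b) are purely metric and use only these two facts, while the counting statements (c) and (d) will additionally invoke the disjointness property (ii) and the volume doubling of $m_0$.

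For part (a), I would take a point $z$ in the intersection $B(x_i,\lambda r_i)\cap B(x_j,\lambda r_j)$, so that $d(x_i,x_j)<\lambda(r_i+r_j)$ by the triangle inequality. The $1$-Lipschitz bound $|d_D(x_i)-d_D(x_j)|\le d(x_i,x_j)$ together with $d_D(x_k)=\frac{1+\varepsilon}{\varepsilon}r_k$ then yields $\frac{1+\varepsilon}{\varepsilon}|r_i-r_j|<\lambda(r_i+r_j)$, i.e.\ $|r_i-r_j|<\frac{\varepsilon\lambda}{1+\varepsilon}(r_i+r_j)$. Rearranging this in the two cases $r_j\ge r_i$ and $r_j\le r_i$, and using that $\lambda<\frac{1+\varepsilon}{\varepsilon}$ makes $1+\varepsilon-\varepsilon\lambda>0$, gives the two-sided bound in (a). For part (b), applying the same Lipschitz estimate to $x$ and $x_i$ gives $|d_D(x)-d_D(x_i)|<\lambda r_i$, and substituting $d_D(x_i)=\frac{1+\varepsilon}{\varepsilon}r_i$ turns this into $\frac{1+\varepsilon-\varepsilon\lambda}{\varepsilon}r_i<d_D(x)<\frac{1+\varepsilon+\varepsilon\lambda}{\varepsilon}r_i$, which is exactly \eqref{e:3.18a}.

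For part (c), I fix $i$ and let $j$ range over the indices with $B(x_j,\lambda r_j)\cap B(x_i,\lambda r_i)\neq\emptyset$. Part (a) shows $r_j\asymp r_i$ with comparability constants depending only on $\varepsilon$ and $\lambda$, and the intersection also forces $d(x_i,x_j)<\lambda(r_i+r_j)\lesssim r_i$; hence each ball $B(x_j,r_j)$ lies inside a fixed ball $B(x_i,Cr_i)$ and has radius at least $c\,r_i$ with $C,c$ depending only on $\varepsilon,\lambda$. Since the balls $\{B(x_j,r_j)\}$ are pairwise disjoint by (ii), I would compare $\sum_j m_0(B(x_j,r_j))\le m_0(B(x_i,Cr_i))$ against the uniform lower bound $m_0(B(x_j,r_j))\gtrsim m_0(B(x_i,Cr_i))$ obtained from (VD) of $m_0$ (the radius ratios involved are all bounded uniformly in $i$), which bounds the number of such $j$ by a constant depending only on $\varepsilon,\lambda$ and the doubling constant. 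Part (d) then follows immediately from (c): if $x\in B(x_{i_0},\lambda r_{i_0})$ for one such index $i_0$, every other ball $B(x_i,\lambda r_i)$ containing $x$ meets $B(x_{i_0},\lambda r_{i_0})$ at the common point $x$, so their number is at most $C(\varepsilon,\lambda)$ by part (c); alternatively one argues directly as in (c) using part (b) in place of (a).

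I expect no serious obstacle here. The only points requiring care are the constant bookkeeping in part (a)—tracking which direction each inequality goes in the two cases—and, in part (c), making sure that all radius comparisons and ball inclusions are quantified uniformly in $i$ so that a single application of volume doubling suffices; once that is arranged, the counting argument is routine.
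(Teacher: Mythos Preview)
Your proposal is correct and follows essentially the same approach as the paper's proof: parts (a) and (b) are derived from the $1$-Lipschitz property of $d_D$ together with the Whitney relation $r_i=\frac{\varepsilon}{1+\varepsilon}d_D(x_i)$, part (c) uses (a), the disjointness of the core balls, and volume doubling to bound the overlap count, and part (d) is reduced to (c) exactly as you describe. The only cosmetic difference is that in (a) the paper bounds one direction directly and then interchanges $i$ and $j$, whereas you phrase it via $|r_i-r_j|<\frac{\varepsilon\lambda}{1+\varepsilon}(r_i+r_j)$; these are equivalent.
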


\begin{proof}
(a) is proved in \cite[Proposition 3.2(c)]{Mathav}.

\smallskip

(b).  For   $x\in B(x_i,\lambda r_i)$, by Definition \ref{def39}(i), 
$$
\Big| d_D(x)- \frac{1+\varepsilon}{\varepsilon} r_i \Big | = | d_D(x)- d_D(x_i)| \leq d(x, x_i) <\lambda r_i.
$$
This yields \eqref{e:3.18a}.

(c).   Note that  if $B(x_i,\lambda r_i)\cap B(x_i,\lambda r_j)\neq \emptyset$,  then   by (a),
\begin{equation} \label{e:3.19a}
B(x_j,r_j)\subset B(x_i,\lambda r_i+\lambda r_j+r_j)\subset 
B  (x_i,   Mr_i  ),
\end{equation}
where $M :=  \lambda+(\lambda+1) \frac{1+\varepsilon+\varepsilon\lambda}{1+\varepsilon-\varepsilon\lambda}$.
By    \eqref{e:VD} of (VD) of $m_0$,  
$$
V (x_j,r_j)  \leq  V (x_i, Mr_i)  \leq c M^{d_1}   V (x_j,r_j).
$$
Consequently,   by interchanging the role of $x_i$ and $x_j$,  we have 
\begin{equation}\label{e:3.20a}
V (x_j,r_j)\leq c M^{d_1}V (x_j,r_j). 
\end{equation}
Let 
$$
I_i:= \{ j\in I:B(x_j,\lambda r_j)\cap  B(x_i,\lambda r_i)\neq \emptyset\}.
$$
Then by \eqref{e:3.19a}-\eqref{e:3.20a}  and   pairwise disjointness of   the sets $\{B(x_i,r_i),\,i\in I\}$,
$$
V (x_j,r_j) \, \# I_i \leq c M^{d_1} \sum_{j\in I_i}  V (x_j,r_j) \leq  cM^{d_1}V (x_i, Mr_i)
 \leq c^2 M^{2 d_1} V (x_j,r_j).
$$ 
It follows that   $ \# I_i  \leq c^2 M^{2d_1}$.

(d)  Fix some $i\in I$ so that $x\in  B(x_i,\lambda r_i)$. Then $\{j\in I:x\in B(x_j,\lambda r_j)\} \subset I_i$. 
Hence by (c), $\# \{j\in I:x\in B(x_j,\lambda r_j)\} \leq \# I_i \leq C(\eps, \lambda).$  
\end{proof}

\begin{lemma}\label{lemma312+}
Let $\varepsilon\in (0,1/2)$ and let $\{B(x_i,r_i):x_i\in D,r_i>0,i\in I\}$ be an $\varepsilon$-Whitney cover of $D$. Let $x,y\in D$ and $\lambda>2(1+\varepsilon)$.  Suppose  that $d(x,y)\leq C_1\,d_D(x)$ for some $C_1 \geq 1$. Then, there is a path $\{i_0,i_2,\cdots,i_k\}\subset  I$ so that  $x\in B(x_{i_0},\lambda r_{i_0})$, $y\in B(x_{i_k},\lambda r_{i_k})$, and for $0\leq j'\leq j\leq k-1$, 
\begin{align*}
   B(x_{i_j},\lambda r_{i_{j}})\cap B(x_{i_{j+1}},\lambda r_{i_{j+1}})\neq\emptyset
   \quad \hbox{and} \quad  &r_{i_j}/r_{i_{j'}}\geq C_2e^{\alpha(j-j')} ,
\end{align*}
where $\alpha,C_2>0$ depend only on $\lambda,\varepsilon,C_1$, the constant $A>1$ in the definition of $A$-uniform domain 
for $D$ and the parameter in (VD) of $m_0$.
\end{lemma}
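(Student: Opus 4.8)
The plan is to construct the Whitney-chain connecting $x$ and $y$ by combining the uniform curve $\gamma$ from Definition \ref{D:2.1} with the local finiteness properties of the Whitney cover from Lemma \ref{lemma311+}. First I would invoke $A$-uniformity to fix a curve $\gamma \subset D$ with $\gamma(0)=x$, $\gamma(1)=y$, $\diam(\gamma)\le A\,d(x,y)\le AC_1 d_D(x)$, and $d_D(z)\ge A^{-1}\min\{d(x,z),d(y,z)\}$ for all $z\in\gamma$. Since $\gamma$ is a connected set covered by the enlarged Whitney balls $\{B(x_i,2(1+\varepsilon)r_i)\}_{i\in I}$ (which are contained in $\{B(x_i,\lambda r_i)\}$ because $\lambda > 2(1+\varepsilon)$), and since $\gamma$ is compact, I can extract a finite ordered chain $i_0,i_1,\dots,i_k\in I$ following $\gamma$ from $x$ to $y$ such that consecutive enlarged balls intersect along $\gamma$, i.e. $B(x_{i_j},\lambda r_{i_j})\cap B(x_{i_{j+1}},\lambda r_{i_{j+1}})\ne\emptyset$, with $x\in B(x_{i_0},\lambda r_{i_0})$ and $y\in B(x_{i_k},\lambda r_{i_k})$; this is exactly the kind of chaining argument underlying Lemma \ref{lemma2path}(b). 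By Lemma \ref{lemma311+}(a), along such a chain the radii only change by a bounded multiplicative factor at each step: $r_{i_{j+1}}/r_{i_j}\in[\rho^{-1},\rho]$ with $\rho=\frac{1+\varepsilon+\varepsilon\lambda}{1+\varepsilon-\varepsilon\lambda}$.

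The core of the argument is the exponential growth estimate $r_{i_j}/r_{i_{j'}}\ge C_2 e^{\alpha(j-j')}$. The idea is that as we move along $\gamma$ away from both endpoints into the ``middle'' of the curve, the distance-to-boundary function $d_D$ — and hence, by Lemma \ref{lemma311+}(b), the Whitney radius $r_{i_j}\asymp d_D(x_{i_j})$ — must grow, because the uniform-curve condition $d_D(z)\ge A^{-1}\min\{d(x,z),d(y,z)\}$ forces points in the interior of $\gamma$ to be far from $\partial D$. More precisely, fixing indices $j' \le j$ and letting $z_{j'}, z_j$ be points of $\gamma$ in the respective balls, consecutive balls intersecting and having comparable radii means that the distance traveled along $\gamma$ between the centers of ball $i_{j'}$ and ball $i_j$ is at most $\sum_{l=j'}^{j-1} C(\lambda)(r_{i_l}+r_{i_{l+1}}) \lesssim \sum_{l=j'}^{j} r_{i_l}$. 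On the other hand, by Lemma \ref{lemma311+}(c)—(d), each Whitney ball can be ``crossed'' only boundedly many times by the chain, so a purely combinatorial count shows that the number of steps $j-j'$ with radius in a fixed dyadic range $[2^{n},2^{n+1})$ is bounded by a constant depending only on $\varepsilon,\lambda$ and the (VD) constant. This yields $\sum_{l=j'}^{j} r_{i_l} \lesssim r_{\max}$ where $r_{\max} = \max_{j'\le l\le j} r_{i_l}$, and combining with $r_{i_{j'}}\le \rho^{\,\text{(steps to reach }r_{\max})}$ one converts a bound on the geometric distance into an exponential-in-$j-j'$ lower bound for the largest radius encountered, hence for $r_{i_j}$ up to trading $j$ for $j'$ at the cost of reversing roles. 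I would carry this out by: (1) setting up the chain and recording $r_{i_{j+1}} \le \rho\, r_{i_j}$; (2) using the uniformity inequality to bound $\min\{d(\gamma(t),x),d(\gamma(t),y)\}$ below in terms of $d_D(\gamma(t))\asymp r_{i_j}$ for $t$ near the $j$-th ball; (3) using the bounded-overlap count to show the chain cannot linger at a given scale, forcing geometric growth of radii; (4) collecting constants.

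The hard part will be step (3): turning the bounded-overlap property into a genuine exponential (not merely monotone or polynomial) growth rate $r_{i_j}\ge C_2 e^{\alpha(j-j')} r_{i_{j'}}$, uniformly in the chain. The subtlety is that the chain produced from $\gamma$ is not a priori ``radius-monotone'', so one must reorganize it — e.g.\ by grouping indices according to the dyadic scale of $r_{i_j}$ and showing that within each scale the chain has boundedly many indices (via Lemma \ref{lemma311+}(c)), while the transition between scale $2^n$ and scale $2^{n+1}$ uses at least one step, so after $N$ steps the scale has increased by at least $\lfloor cN\rfloor$ for some $c>0$ depending on the overlap constant. A further technical point is handling the two endpoints symmetrically: near $x$ the radii may initially be tiny and grow, but near $y$ they may shrink again, so the stated inequality $r_{i_j}/r_{i_{j'}}\ge C_2 e^{\alpha(j-j')}$ for $j'\le j$ must really be read on the portion of the chain where $r_{i_j}$ is increasing (the hypothesis $d(x,y)\le C_1 d_D(x)$ anchors the estimate to the $x$-side). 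Once the scale-counting lemma is in place, assembling the constants $\alpha, C_2$ in terms of $\lambda,\varepsilon,C_1,A$ and the (VD) parameter is routine bookkeeping.
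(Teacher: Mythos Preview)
Your approach differs from the paper's. Rather than chaining Whitney balls along a single uniform curve and then counting scales, the paper first handles the case $d(x,y)\le C_3\big(d_D(x)\wedge d_D(y)\big)$ directly: Lemma~\ref{lemma21}(b) gives a curve with $d_D(z)\ge (d_D(x)\wedge d_D(y))/(1+A)$ throughout, and Lemma~\ref{lemma2path}(b) produces a bounded-length sequence of nearby points on it; picking a Whitney ball through each point yields a chain of bounded length with all radii comparable to $d_D(x)\wedge d_D(y)$. For the remaining case $d_D(y)<d(x,y)/2$, the paper places waypoints $y_0=y,\,y_1,\dots,y_{\bar\eta+1}=x$ on the uniform curve with $d(y,y_\eta)=2^{\eta-1}d_D(y)$, verifies that each consecutive pair satisfies the hypothesis of the first case with a uniform constant, and concatenates the resulting sub-chains. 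The exponential estimate is then immediate from this built-in dyadic structure: the $\eta$-th sub-chain has bounded length and all radii $\asymp 2^\eta d_D(y)$.

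Your route may be salvageable, but step~(3) has a genuine gap as written. Knowing that each dyadic scale contributes boundedly many chain indices does not by itself force the scale to move monotonically in $j$; the chain could oscillate between scales, and ``reorganizing by dyadic scale'' would destroy the consecutive-intersection property you need. Two ingredients are missing. First, the hypothesis $d(x,y)\le C_1 d_D(x)$ caps $d_D$ along $\gamma$ at $(1+AC_1)d_D(x)$, so the starting radius $r_{i_0}\asymp d_D(x)$ is already within a bounded factor of the maximal one and the chain cannot climb further. Second, for scales $2^n\ll d_D(x)$, any $z\in\gamma$ lying in a scale-$2^n$ Whitney ball has $d_D(z)\asymp 2^n$; combining the uniform-curve bound $\min\{d(x,z),d(y,z)\}\le A\,d_D(z)$ with $d_D(z)\ge d_D(x)-d(x,z)$ forces $d(y,z)\lesssim 2^n$, so all such balls are confined to $B(y,C2^n)$. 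This localization is what actually prevents the chain from returning to large scales after visiting small ones, and it must be made explicit. Note also a slip in your endpoint analysis: since $d(x,y)\le C_1 d_D(x)$, it is $x$ that is the deep point, so radii near $x$ are large, not ``initially tiny''. The paper's waypoint construction bypasses all of this bookkeeping.
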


\begin{proof}  
 Let $x,y\in D$. By Lemma \ref{lemma21}(b), there is a path $\gamma$ in $D$ connecting  $x,y$ so that $\diam(\gamma)\leq A\,d(x,y)$ and 
\begin{equation}\label{e:lemma312.1}
\begin{split} 
d_D(z) \geq  \frac{d_D(x)\wedge d_D(y)}{A+1}\quad\hbox{ for every }z\in\gamma.
\end{split} 
\end{equation}

(i)  Suppose  that $d(x,y)\leq C_3\big(d_D(x)\wedge d_D(y)\big)$ for some $C_3\geq 1$.   
  Note  that  in this case, $\diam(\gamma)\leq Ad(x,y)\leq AC_3(d_D(x)\wedge d_D(y))$.  In view of \eqref{e:lemma312.1},  by  Lemma \ref{lemma2path}, 
there is 
a finite sequence of points $x=z_0,z_1,\cdots,z_k=y$ on $\gamma$ such that 
\begin{equation}\label{e:lemma312.2}
d(z_j,z_{j+1})<r :=2(\lambda-2(1+\varepsilon))\frac{\eps}{1+\eps+\eps\lambda}\frac{d_D(x)\wedge d_D(y)}{A+1}
\quad  \hbox{ for }0\leq i\leq k-1,
\end{equation}
where $l$ has an upper bound depending only on $\lambda,\varepsilon,A,C_3$ and the parameter in (VD) of $m_0$. For each $0\leq j\leq k$, we choose $i_j\in I$ so that $z_j\in B(x_{i_j},2(1+\eps)r_{i_j})$. Then by \eqref{e:lemma312.1},\eqref{e:lemma312.2} and Lemma \ref{lemma311+}(b),   $B(z_j,r/2)\subset B(x_{i_j},\lambda r_{i_j})$ for each $0\leq j\leq k$. Hence, the path $\{i_0,\cdots,i_k\}\subset  I$  has  the   properties that 
$x\in B(x_{i_0},\lambda r_{i_0})$, $y\in B(x_{i_k},\lambda r_{i_k})$, and 
$$	 B(x_{i_j},\lambda r_{i_{j}})\cap B(x_{i_{j+1}},\lambda r_{i_{j+1}})\neq\emptyset \quad  \hbox{for } 0\leq j\leq k-1,
$$
and
$$
C_4 (d_D(x)\wedge d_D(y))\leq r_{i_j}\leq C_5(d_D(x)\wedge d_D(y))\ \hbox{ for }0\leq j\leq k,
$$
 where $k$ has an upper bound depending only on $A$, $C_3$, $\lambda$, $\varepsilon$ and the parameter in (VD) of $m_0$, 
 and  $C_4,C_5$ are positive constants depending  only on $A$, $\lambda$ and $\varepsilon$.

\smallskip 

(ii) When $d(x,y)\leq C_1\,d_D(x)$ and $d_D(y)\geq d(x,y)/2$, 
 then $d(x,y)\leq C_1\big(d_D(x)\wedge d_D(y)\big)$.
In this case, the conclusion of the lemma follows directly from (i). 
So it remains to consider the case that $d(x,y)\leq C_1\,d_D(x)$ and $d_D(y)<d(x,y)/2$.

Let $\gamma$ be a path connecting $x,y$ in $D$ that satisfies  the defining property of $A$-uniform domain, and 
set $\bar \eta:=\big[ \frac{\log d(x,y)-\log d_D(y)}{\log 2} \big]\geq 1$. 
Set  $y_0:=y$ and  $y_{\bar\eta+1}:=x$. 
For each $\eta \in \N\cap[1, \bar \eta]$,   take some $y_\eta\in\gamma$ so that $d(y,y_\eta)=2^{\eta-1}d_D(y)\leq d(x,y)/2$.
Then 
$$
d_D(y_\eta)\geq\min \{d(y, y_\eta), d(y_\eta, x)\}/A\geq\min\{d(y,y_\eta),d(x,y)-d(y,y_\eta)\}/A\geq 2^{\eta-1}d_D(y)/A.
$$
In particular, $d_D(y_{\bar\eta})\geq d(x,y)/(4A)$. We claim that 
\begin{equation} \label{e:3.21a}
d(y_\eta,y_{\eta+1})\leq C_6\big(d_D(y_\eta)\wedge d_D(y_{\eta+1})\big)\ \hbox{ for } \eta \in \N \cap [0,  \bar\eta ],
\end{equation}
where $C_6=\max\{6A,3C_1/2\}$.  We can verify the claim case by case. Indeed, when $\eta=0$,
\[
d(y_0,y_1)=d_D(y)=A(d_D(y)\wedge \frac{d_D(y)}A)\leq A(d_D(y_0)\wedge d_D(y_1));
\]
when $1\leq\eta\leq\bar\eta-1$, 
\begin{align*}
d(y_\eta,y_{\eta+1})&\leq d(y,y_\eta)+d(y,y_{\eta+1})\leq 3\cdot 2^{\eta-1}d_D(y)\\
&=3A(\frac{2^{\eta-1}d_D(y)}{A}\wedge\frac{2^\eta d_D(y)}{A})\leq 
3A(d_D(y_\eta)\wedge d_D(y_{\eta+1}));
\end{align*}
and when $\eta=\bar\eta$, 
\begin{align*}
d(y_{\bar\eta},y_{\bar\eta+1})&=d(y_{\bar\eta},x)\leq d(x,y)+d_D(y,y_{\bar\eta})\leq \frac32d(x,y)\\
&\leq \max\{6A,\frac32C_1\}(d_D(y_{\bar\eta})\wedge\frac{d(x,y)}{C_1})\leq 
\max\{6A,\frac32C_1\}\cdot d_D(y_{\bar\eta})\wedge d_D(y_{\bar\eta+1}).
\end{align*}
This proves the claim \eqref{e:3.21a}. 
So, by the previous discussion in (i), we can find a path in $D$ connecting $y_\eta$ with $y_{\eta+1}$ for each $0\leq\eta\leq \bar\eta$ with $r_i\asymp 2^\eta d_D(y)$ for each $i$ in the path. It is then suffices to glue these paths together to get the desired path.
  \end{proof}

\medskip

 In the remaining of this section,  we assume  {\rm Cap}$(\Psi;D)_\leq$ holds for  $(D, d, m, \sE^0,\sF^0)$. We fix an $\varepsilon$-Whitney cover $\mathscr{R}$ of $D$ with $\varepsilon=1/(4A_1)$, where $A_1>1 $ is the constant in Cap$_\leq(\Psi;D)$. We use it to define an extension operator $L^2(\partial D;\sigma)\to \mcB (\tD)$ as follows.

\medskip
  For simplicity, in the rest of this section, let 
\begin{equation} \label{e:3.21}
B_i:=B\big(x_i,2A_1(1+\varepsilon)r_i\big)=B(x_i,d_D(x_i)/2).
\end{equation} 
 By {\rm Cap}$(\Psi;D)_\leq$ for $(D, d, m, \sE, \sF)$, for each $i\in I$,  there is some  $\widehat {\psi}_i\in C_c (\tD)\cap \sF$ so  that $0\leq \widehat {\psi}_i \leq 1$, $\widehat{\psi}_i =1$ on $ B(x_i,2(1+\varepsilon)r_i)$, $\widehat {\psi}_i = 0$ on $\tD \setminus   B_i$ and 
	\[
	  \sE(\widehat {\psi}_i,\widehat {\psi}_i)\leq C\frac{V (x_i,r_i)}{\Psi(r_i)},
	\]
	where $C\in (0,\infty)$ is a constant independent of $i\in I$. Set 
	\[
	\psi_i(x)=\frac{\widehat {\psi}_i(x)}{\sum_{j \in I}\widehat {\psi}_j (x)}\quad\hbox{ for }i\in I,\ x\in D.
	\]
 By property (iii) of the Whitney cover, $\sum_{j \in I}\widehat {\psi}_j (x)\geq 1$ on $D$.

\medskip 

\begin{lemma}\label{lemma311}
Suppose that condition {\rm Cap}$_\leq(\Psi;D)$ 
 holds for  $(D, d, m, \sE^0,\sF^0)$.  Then 
$\psi_i\in\sF^0$ for every $i\in I$, and there is a constant $C\in (0,\infty)$ so that 
$$
\sE^0  (\psi_i,\psi_i)\leq C\frac{V (x_i,r_i)}{\Psi(r_i)}  \quad \hbox{for every } i\in I . 
$$
\end{lemma}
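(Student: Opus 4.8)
The plan is to express $\psi_i$ as $\widehat\psi_i \cdot \varphi_i$ where $\varphi_i := (\sum_{j\in I}\widehat\psi_j)^{-1}$, so that $\psi_i$ is a product of two bounded functions in $\sF^0_{\rm loc}$ (locally), and then control the energy of the product via the Leibniz rule for energy measures. First I would note that $\sum_{j\in I}\widehat\psi_j$ is locally a finite sum: by Lemma \ref{lemma311+}(d) with $\lambda = 2A_1(1+\varepsilon)$ (recall $B_i = B(x_i, 2A_1(1+\varepsilon)r_i)$ and $\varepsilon = 1/(4A_1)$, so $\lambda < (1+\varepsilon)/\varepsilon$), every point $x\in D$ lies in at most $C(\varepsilon,\lambda)$ of the enlarged balls $B_j$, hence in the support of at most $C(\varepsilon,\lambda)$ of the $\widehat\psi_j$. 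Thus on any relatively compact open set the sum is a finite sum of elements of $\sF^0$, bounded below by $1$ by property (iii) of the Whitney cover, so $\varphi_i \in \sF^0_{\rm loc}$ with $0\le \varphi_i\le 1$, and the quotient $\psi_i = \widehat\psi_i\varphi_i$ is well-defined, bounded by $1$, and compactly supported in $B_i$. Since $\psi_i$ has compact support in $D$ and agrees locally with an element of $\sF^0$, standard regular-Dirichlet-form theory (together with $\psi_i \in L^2$) gives $\psi_i \in \sF^0$.

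Next I would estimate $\sE^0(\psi_i,\psi_i) = \tfrac12\mu_{\langle\psi_i\rangle}(D)$. Since $\psi_i$ is supported in $B_i$, it suffices to bound $\mu_{\langle\psi_i\rangle}(B_i)$. Using the energy-measure Leibniz/chain rule, for bounded $u,v\in\sF^0_{\rm loc}$ one has $d\mu_{\langle uv\rangle} \le 2\big(v^2\,d\mu_{\langle u\rangle} + u^2\,d\mu_{\langle v\rangle}\big)$; applying this with $u=\widehat\psi_i$, $v=\varphi_i$ and using $0\le\widehat\psi_i,\varphi_i\le 1$ gives
\[
\mu_{\langle\psi_i\rangle}(B_i) \le 2\,\mu_{\langle\widehat\psi_i\rangle}(B_i) + 2\int_{B_i} \widehat\psi_i^2\, d\mu_{\langle\varphi_i\rangle}.
\]
The first term is $\le 4\sE^0(\widehat\psi_i,\widehat\psi_i) \le C\, m(B(x_i,r_i))/\Psi(r_i)$ by the choice of $\widehat\psi_i$ from {\rm Cap}$_\le(\Psi;D)$. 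For the second term, on $B_i$ the sum $S:=\sum_j\widehat\psi_j$ is a finite sum over the index set $I_i := \{j: B_j\cap B_i\neq\emptyset\}$ (whose cardinality is $\le C(\varepsilon,\lambda)$ by Lemma \ref{lemma311+}(c)), and $\varphi_i = 1/S$ with $S\ge 1$, so $d\mu_{\langle\varphi_i\rangle} = S^{-4}\,d\mu_{\langle S\rangle} \le d\mu_{\langle S\rangle}$ on $B_i$. By the subadditivity (Cauchy–Schwarz) for energy measures of a finite sum, $\mu_{\langle S\rangle}(B_i) \le \#I_i \sum_{j\in I_i}\mu_{\langle\widehat\psi_j\rangle}(B_i) \le \#I_i \sum_{j\in I_i} 2\,\sE^0(\widehat\psi_j,\widehat\psi_j) \le C\sum_{j\in I_i} m(B(x_j,r_j))/\Psi(r_j)$.

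Finally I would convert each term $m(B(x_j,r_j))/\Psi(r_j)$ for $j\in I_i$ into $m(B(x_i,r_i))/\Psi(r_i)$ up to a multiplicative constant: by Lemma \ref{lemma311+}(a), $r_j\asymp r_i$ for $j\in I_i$, hence $\Psi(r_j)\asymp\Psi(r_i)$ by \eqref{eqnpsi}, and $m_0(B(x_j,r_j))\asymp m_0(B(x_i,r_i))$ by (VD) as in \eqref{e:3.20a}; summing over the at most $C(\varepsilon,\lambda)$ indices $j\in I_i$ gives the bound $C\,m(B(x_i,r_i))/\Psi(r_i)$ with a constant independent of $i$. Combining the two terms yields $\sE^0(\psi_i,\psi_i)\le C\, m(B(x_i,r_i))/\Psi(r_i)$. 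The main obstacle — really the only delicate point — is justifying the Leibniz-type inequality for the energy measure of the product of the two merely-locally-$\sF^0$ functions $\widehat\psi_i$ and $\varphi_i$ and the finite-sum subadditivity on $B_i$; this is where the local finiteness from Lemma \ref{lemma311+}(c)–(d) is essential, since it reduces everything to genuine finite sums of $\sF^0$-functions where the standard calculus of energy measures (as recalled from \cite{CF, FOT} in Section \ref{S:2.2}) applies verbatim.
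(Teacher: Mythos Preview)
Your proposal is correct and follows essentially the same strategy as the paper: factor $\psi_i$ as a product involving $\widehat\psi_i$ and the reciprocal of the (locally finite) sum $S=\sum_j\widehat\psi_j$, use the bounded-overlap Lemma \ref{lemma311+}(c)--(d) to reduce to the finite index set $I_i$, and then invoke $r_j\asymp r_i$ from Lemma \ref{lemma311+}(a) together with (VD) and \eqref{eqnpsi} to convert each $m(B(x_j,r_j))/\Psi(r_j)$ into $m(B(x_i,r_i))/\Psi(r_i)$.

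The one noteworthy technical difference is how the reciprocal factor is placed in $\sF^0$. You work with $\varphi_i=1/S\in\sF^0_{\rm loc}$ and appeal to the Leibniz and chain rules for energy measures ($d\mu_{\langle 1/S\rangle}=S^{-4}\,d\mu_{\langle S\rangle}\le d\mu_{\langle S\rangle}$ since $S\ge1$). The paper instead defines $\phi_i:=S_i\wedge(1/S_i)$ with $S_i=\sum_{j\in I_i}\widehat\psi_j\in\sF^0$; since $t\mapsto t\wedge(1/t)$ is a normal contraction, $\phi_i\in\sF^0$ with $\sE^0(\phi_i,\phi_i)\le\sE^0(S_i,S_i)$, and $\phi_i=1/S$ on $B_i$ because $S_i=S\ge1$ there, so $\psi_i=\widehat\psi_i\phi_i$ lies in $\sF^0$ by the algebra property \cite[Theorem 1.4.2]{FOT}. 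The paper's device sidesteps the ``locally-$\sF^0$'' bookkeeping you flag as the delicate point, giving $\psi_i\in\sF^0$ in one line; your route is equally valid once one observes (as you do) that on $B_i$ everything reduces to genuine finite sums of $\sF^0$-functions. The energy estimates are then identical in both arguments.
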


\begin{proof} 
For $i\in I$, let $I_i:= \{ j \in I:   B_i \cap B_j  \neq\emptyset\}.$   By Lemma \ref{lemma311+} with $\lambda = \frac{1+\eps}{2\eps}$ and $\eps=1/(4A_1)$, there is an integer $N\geq 1$ and 
 a positive constant $C>1$ so that for each $i\in I$, $\# I_i\leq  N$  and 
 \begin{equation} \label{e:3.18} 
 C^{-1}r_i\leq r_j\leq C r_i  \quad \hbox{for every }  j\in I_i.
 \end{equation}   
 For each $i\in I$, define
\[
\phi_i= \Big(\sum_{j\in I_i}  \widehat  \psi_j \Big)\wedge \frac{1}{\sum_{j\in I_i} \widehat\psi_j}.
\]
Clearly $0\leq\phi_i\leq 1$ and $\sum_{j\in I_i}  \widehat  \psi_j \in \sF^0$.  Note that  $\phi_i$ is a normal contraction of
$\sum_{j\in I_i}  \widehat  \psi_j $, that is, $ | \phi_i(x) | \leq  \big| \sum_{j\in I_i}  \widehat  \psi_j (x)  \big| $ and 
$$
| \phi_i(x)  -  \phi_i (y) | \leq \Big|  \sum_{j\in I_i}  \widehat  \psi_j (x) - \sum_{j\in I_i}  \widehat  \psi_j  (y) \Big|
\quad \hbox{for every } x, y\in D.
$$
It follows that  $\phi_i\in \sF^0$  and 
$$
\sE^0. (\phi_i,\phi_i) \leq.  \sE^0. (\sum_{j\in I_i}  \widehat  \psi_j ,\sum_{j\in I_i}  \widehat  \psi_j) 
 \leq N \sum_{j\in I_i}   \sE^0(\widehat\psi_j,\widehat\psi_j) 
 \leq  N \sum_{j\in I_i}  \frac{V (x_j ,r_j)}{\Psi(r_j)}
 \leq C_1 \frac{V (x_i,r_i)}{\Psi(r_i)},
$$
where the last inequality is due to \eqref{e:3.18}, \eqref{eqnpsi} and VD of $m_0$. By the definition of Whitney cover, $\sum_{j\in I_i}\widehat\psi_j=\sum_{j\in I }\widehat\psi_j\geq 1$ on $B(x_i,2A_1(1+\varepsilon)r_i)$. Hence by
 \cite[Theorem 1.4.2]{FOT}, 
$\psi_i=\phi_i\widehat\psi_i\in\sF_b$ and $\sE^0(\psi_i,\psi_i)\leq 2\|\phi_i\|_\infty^2\sE^0(\widehat\psi_i,\widehat\psi_i)+2\|\widehat\psi_i\|_\infty^2\sE^0(\phi_i,\phi_i)
\leq C V (x_i,r_i)/ \Psi(r_i)$. 
\end{proof}

  Recall that $\eps=1/(4A_1)$. Define 
\[
F_i:=B\big(x_i,  8A_1 (1+\varepsilon)r_i\big)\cap \partial D= B(x_i, 2d_D(x_i))\cap\partial D \quad  \hbox{ and }\quad [u]_i :=\frac1{\sigma(F_i)}\int_{F_i}ud\sigma. 
\]
Let $\xi_i\in \partial D$ so  that $d(x_i,\xi_i)=d_D(x_i)$. Then  
\begin{equation}\label{e:3.19}
B(\xi_i,d_D(x_i))\subset F_i\subset B(\xi_i,3d_D(x_i)). 
\end{equation}
For each $u\in L^1_{\rm loc}(\partial D;\sigma)$,  we define $\Ex (u)\in \sF_{\rm loc}(D)$ by 
\[
\Ex (u) (x) =\sum_{i\in I} \psi_i (x)[u]_i.
\] 
 We extend the definition of $\Ex (u)$ to $\partial D$ by setting $\Ex (u)=u$ on $\partial D$. It is not hard to see that $\Ex :C(\partial D)\to C(\tD)$.

\begin{proposition}\label{prop312}
Suppose that 
 {\rm Cap}$_\leq(\Psi;D)$
holds for  $(D,d,m, \sE^0,\sF^0)$,  and 
 $\sigma$ is  a     Radon measure  with full support on $\partial D$ satisfying {\rm (VD)}  property so that 
 {\rm (LS)} holds for     $\Theta_{\Psi, \sigma}$. 
 There is $C\in (0,\infty)$ such that $\mu_{\<\Ex  (u)\>}(D)\leq C\,\lb u \rb^2_{\Lambda_{\Psi,  \sigma} }$ for each  $u\in \dot \Lambda_{\Psi,  \sigma} $.
\end{proposition}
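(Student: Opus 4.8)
The plan is to estimate $\mu_{\<\Ex(u)\>}(D)$ one Whitney block at a time and then to reorganize the resulting sum by scale, collapsing it onto $\lb u\rb_{\Lambda_{\Psi, \sigma}}^2$ by means of the lower scaling property {\rm (LS)}. First I would record that for $u\in\dot\Lambda_{\Psi, \sigma}\subset L^2_{\rm loc}(\partial D;\sigma)$ the averages $[u]_i$ are well defined, and that on every relatively compact open subset of $D$ the function $\Ex(u)=\sum_i\psi_i[u]_i$ is a finite sum of elements of $\sF^0$ (Lemma \ref{lemma311}), so $\Ex(u)\in\sF_{\rm loc}(D)$ and $\mu_{\<\Ex(u)\>}$ is well defined. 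Fix a Borel partition $\{U_i\}_{i\in I}$ of $D$ with $U_i\subset B_i':=B(x_i,2(1+\eps)r_i)$; by locality of the energy measure, $\mu_{\<\Ex(u)\>}(D)=\sum_i\mu_{\<\Ex(u)\>}(U_i)\le\sum_i\mu_{\<\Ex(u)\>}(B_i')$. On $B_i'$ only the $\psi_j$ with $j$ in a bounded neighbour set $\tilde I_i$ are not identically zero, and for $j\in\tilde I_i$ one has $r_j\asymp r_i$, $m_0(B(x_j,r_j))\asymp m_0(B(x_i,r_i))$ and $\Psi(r_j)\asymp\Psi(r_i)$ by Lemma \ref{lemma311+} and \eqref{eqnpsi}; using $\sum_j\psi_j\equiv1$ on $D$, that constants carry no energy, and Lemma \ref{lemma311},
\[
\mu_{\<\Ex(u)\>}(B_i')=\mu_{\<\sum_{j\in\tilde I_i}\psi_j([u]_j-[u]_i)\>}(B_i')\lesssim\sum_{j\in\tilde I_i}([u]_j-[u]_i)^2\,\sE^0(\psi_j,\psi_j)\lesssim\sum_{j\in\tilde I_i}([u]_j-[u]_i)^2\,\frac{m_0(B(x_i,r_i))}{\Psi(r_i)}.
\]

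Next I would control the oscillation $([u]_i-[u]_j)^2$ of the boundary averages for a neighbour pair by a single boundary double integral. Since $j\in\tilde I_i$ forces $d_D(x_j)\asymp d_D(x_i)$ and $d(x_i,x_j)\lesssim d_D(x_i)$, both $F_i$ and $F_j$ lie inside one boundary ball $\widehat F_i:=B(\xi_i,C_0d_D(x_i))\cap\partial D$, and $\sigma(F_i)\asymp\sigma(F_j)\asymp\sigma(\widehat F_i)\asymp\sigma(B(\xi_i,r_i))$ by \eqref{e:3.19} and {\rm (VD)} of $\sigma$. Writing $[u]_i-[u]_j$ as the double $\sigma$-average of $u(x)-u(y)$ over $F_i\times F_j$ and applying Jensen's inequality, $([u]_i-[u]_j)^2\lesssim\sigma(\widehat F_i)^{-2}\int_{\widehat F_i}\int_{\widehat F_i}(u(x)-u(y))^2\sigma(dx)\sigma(dy)$. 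Combining this with the identity $m_0(B(x_i,r_i))\big/\big(\Psi(r_i)\sigma(\widehat F_i)^2\big)\asymp\big(\Theta_{\Psi, \sigma}(\xi_i,r_i)\,\sigma(B(\xi_i,r_i))\big)^{-1}$, which follows from \eqref{e:1.6} together with the uniform-domain volume comparison $m_0(B(x_i,r_i))\asymp m_0(B(\xi_i,r_i))$ (a corkscrew estimate in the spirit of Lemma \ref{lemma21}(a)), the previous display yields
\[
\mu_{\<\Ex(u)\>}(D)\lesssim\sum_{i\in I}\frac{1}{\Theta_{\Psi, \sigma}(\xi_i,r_i)\,\sigma(B(\xi_i,r_i))}\int_{\widehat F_i}\int_{\widehat F_i}\big(u(x)-u(y)\big)^2\,\sigma(dx)\,\sigma(dy).
\]

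Finally I would collapse this sum to $\lb u\rb_{\Lambda_{\Psi, \sigma}}^2$ by grouping the indices $i$ dyadically according to the size of $r_i$. For a fixed scale $2^{-k}$ the balls $\{\widehat F_i:\,r_i\in[2^{-k-1},2^{-k})\}$ have bounded overlap — only boundedly many of the pairwise disjoint Whitney balls $B(x_i,r_i)$ with $r_i\asymp2^{-k}$ can lie within distance $\asymp2^{-k}$ of a given boundary point, by {\rm (VD)} of $m_0$ — and $\Theta_{\Psi, \sigma}(\xi_i,r_i)\sigma(B(\xi_i,r_i))\asymp\Theta_{\Psi, \sigma}(x,2^{-k})\sigma(B(x,2^{-k}))$ for $x\in\widehat F_i$, so the scale-$k$ part is $\lesssim\int_{\partial D}\int_{\{d(x,y)<C2^{-k}\}}(u(x)-u(y))^2\big(\Theta_{\Psi, \sigma}(x,2^{-k})\sigma(B(x,2^{-k}))\big)^{-1}\sigma(dy)\sigma(dx)$. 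Summing over $k$ and exchanging the order of integration, the inner sum $\sum_{k:\,2^{-k}>d(x,y)/C}\big(\Theta_{\Psi, \sigma}(x,2^{-k})\sigma(B(x,2^{-k}))\big)^{-1}$ is, by {\rm (LS)} in the form \eqref{e:3.2}, a geometric series dominated by its largest term $\big(\Theta_{\Psi, \sigma}(x,d(x,y))\sigma(B(x,d(x,y)))\big)^{-1}$, which gives $\mu_{\<\Ex(u)\>}(D)\lesssim\lb u\rb_{\Lambda_{\Psi, \sigma}}^2$.

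I expect this last step to be the main obstacle: a pair $(x,y)$ is counted at every scale coarser than $d(x,y)$, and it is exactly {\rm (LS)} that forces these contributions into a convergent geometric series telescoping onto the single scale $d(x,y)$ — without {\rm (LS)} the scale sum would diverge. The bounded-overlap bookkeeping for the $\widehat F_i$ at a fixed scale and the uniform-domain comparison $m_0(B(x_i,r_i))\asymp m_0(B(\xi_i,r_i))$ are the remaining technical points needing care.
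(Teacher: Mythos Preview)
Your proposal is correct and follows essentially the same route as the paper: estimate $\mu_{\langle\Ex(u)\rangle}$ on each Whitney block via Lemma~\ref{lemma311} and the energy bound for $\psi_j$, reduce $([u]_i-[u]_j)^2$ to a boundary double integral by Jensen, group the indices by dyadic scale, use bounded overlap at each scale, and collapse the scale sum via {\rm (LS)}. The only cosmetic differences are that the paper covers $D$ directly by the larger balls $B_i=B(x_i,2A_1(1+\eps)r_i)$ (relying on Lemma~\ref{lemma311+}(d) for bounded overlap rather than your auxiliary partition $\{U_i\}$) and writes the double integral with the running variable $x\in F_i$ instead of introducing an enlarged $\widehat F_i$.
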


\begin{proof}  For each $i\in I$, let $I_i:=\{j\in I: B_j\cap B_i\neq \emptyset\}$. For  $u\in \dot \Lambda_{\Psi,  \sigma} $,  by Lemmas  \ref{lemma311+}(c)  and \ref{lemma311}
\begin{eqnarray}\label{e:3.20+}
   \mu_{\<\Ex (u)\>}(B_i) 
&=&\mu_{\<\Ex (u)-[u]_i\>}(B_i)   \nonumber \\
&\lesssim& \sum_{j\in I_i} ([u]_i-[u]_j)^2\mu_{\<\psi_j\>}(D)  \nonumber \\
&\lesssim& \sum_{j\in I_i} ([u]_i-[u]_j)^2\frac{V(x_j,r_j)}{\Psi(r_j)} \nonumber\\
&\lesssim& \sum_{j\in I_i}\frac1{\sigma(F_i) \sigma(F_j)}\int_{x\in F_i}\int_{y\in F_j}\frac{V (x_j,r_j)}{\Psi(r_j)}(u(x)-u(y))^2\sigma(dy)\sigma(dx) \nonumber \\
&\lesssim & \int_{x\in F_i}\int_{y\in \partial D\atop d(x,y)<C_1r_i} 
\frac{\big(u(x)-u(y)\big)^2}{\Theta_{\Psi, \sigma} (x,C_1r_i) V_\sigma (x,C_1r_i)}\sigma(dx)\sigma(dy),
\end{eqnarray}
where the last inequality is due to Lemma \ref{lemma311+}(a),  \eqref{e:3.19},  and the volume doubling property of $m_0$ and $\sigma$ with  $C_1>0$ independent of 
 $i\in I$ and $j\in I_i$.

Next, we fix $\eta\in (0,1)$ and $k\in \Z$. Then by \eqref{e:3.20+} and Lemma \ref{lemma311+}(d), 
\begin{equation}\label{eqn310}
\begin{aligned}
&\sum_{i\in I\atop \eta^{k+1}\leq r_i<\eta^{k}}\mu_{\<\Ex  (u)\>}(B_i)\\\lesssim& \sum_{i\in I\atop \eta^{k+1}\leq r_i<\eta^k}\int_{x\in F_i}\int_{y\in \partial D\atop d(x,y)<C_1r_i} \frac{\big(u(x)-u(y)\big)^2}{\Theta_{\Psi, \sigma}  (x,C_1r_i)   V_\sigma  (x,C_1r_i)}\sigma(dy)\sigma(dx)\\
\lesssim &\int_{x\in\partial D}\int_{y\in \partial D\atop d(x,y)<C_1\eta^k} \frac{\big(u(x)-u(y)\big)^2}{\Theta_{\Psi, \sigma}  (x,C_1\eta^k)
 V_\sigma  (x,C_1\eta^k) }\sigma(dx)\sigma(dy).
\end{aligned}
\end{equation}
As $\bigcup_{i\in I}B_i=D$, 
we have by  \eqref{eqn310}  and  {\rm(LS)}  for $\Theta_{\Psi, \sigma}$, 
\begin{eqnarray*}
&& \mu_{\<\Ex  (u)\>}(D)
 \, \leq \,   \sum_{k\in \Z } \sum_{i\in I\atop \eta^{k+1}\leq r_i<\eta^{k}}\mu_{\<\Ex  (u)\>}(B_i) \\
&\lesssim  &\int_{x\in\partial D}\int_{y\in \partial D} \sum_{k\in \Z} \1_{\{ d(x,y)<C_1\eta^k\}} 
\frac{\Theta_{\Psi, \sigma}  (x, d(x, y))}{\Theta_{\Psi, \sigma}  (x,C_1\eta^k)}
\frac{\big(u(x)-u(y)\big)^2}{\Theta_{\Psi, \sigma}  (x, d(x, y))  V_\sigma (x, d(x, y)) }\sigma(dx)\sigma(dy) \\
&\lesssim  &\int_{x\in\partial D}\int_{y\in \partial D} \sum_{k\in \Z} \1_{\{ d(x,y)<C_1\eta^k\}} 
  \left( \frac{d(x, y))}{ C_1\eta^k } \right)^\beta
\frac{\big(u(x)-u(y)\big)^2}{\Theta_{\Psi, \sigma}  (x, d(x, y) )  V_\sigma  (x, d(x, y)) }\sigma(dx)\sigma(dy) \\
&\lesssim  &\int_{x\in\partial D}\int_{y\in \partial D}  \frac{\big(u(x)-u(y)\big)^2}{\Theta_{\Psi, \sigma}  (x, d(x, y)) 
V_\sigma  (x, d(x, y)) }\sigma(dx)\sigma(dy) \, =\, \lb u \rb^2_{\Lambda_{\Psi,  \sigma} }.
\end{eqnarray*}
This proves the proposition.
\end{proof}

We can also show  that $\Ex:C_c(\partial D)\cap \Lambda_{\Psi,  \sigma} \to \bar \sF_e$ 
under  the same conditions as in Proposition \ref{prop312}. 

\begin{proposition}\label{prop313} Suppose that   {\rm Cap$_\leq(\Psi;D)$} hold for $(D, d, m, \sE^0,\sF^0)$,  and  $\sigma$ is  a     Radon measure  with full support on $\partial D$  satisfying {\rm (VD)}  property so that  {\rm (LS)} holds for     $\Theta_{\Psi, \sigma}$. 
   \begin{enumerate} [\rm (a)] 
\item For each  $\delta>0$,   $u\in C_c(\partial D)\cap \Lambda_{\Psi,  \sigma} $ and $\psi\in C_c(\tD)\cap \bar \sF $ such that $0\leq\psi\leq 1$ and $\psi=1$ on $\{x\in \partial D: d(x, {\rm supp} [u])<\delta \}$,  we have that $\psi  \, \Ex (u)\in\bar \sF  \cap C_c(\tD)$ and 
 $$  \bar \sE_1(\psi \, \Ex (u),\psi \, \Ex (u))^{1/2}  \leq C  \| u  \|_{\Lambda_{\Psi,  \sigma} }
$$ 
for some $C$ depending on $\psi$ and $ \delta$ but not on $u$. 

\item  
\begin{description}
 \item[\rm (i)]  If  $D$ is  bounded, then $\Ex : C(\partial D)\cap \Lambda_{\Psi,  \sigma} \to \bar \sF \cap C(\tD)$. 

 \item[\rm (ii)]  If   $\partial D$  is  unbounded, then $\Ex : C_c(\partial D)\cap \Lambda_{\Psi,  \sigma} \to \bar \sF_e\cap C_0(\tD)$.

\item[\rm (iii)] If $\partial D$ is  bounded and $D$ is unbounded, then for each $u\in C (\partial D)\cap \Lambda_{\Psi,  \sigma} $ such that $\int_{\partial D} u(x)\sigma(dx)=0$, we have $\Ex (u)\in \bar \sF_e\cap  C_c(\tD)$. Moreover, if $(\bar \sE , \bar \sF )$ is recurrent, 
then $\Ex : C (\partial D)\cap \Lambda_{\Psi,  \sigma} \to \bar \sF_e\cap C(\tD)$.
\end{description}
\end{enumerate} 
\end{proposition}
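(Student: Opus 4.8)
The plan is to build on Proposition \ref{prop312}, which already supplies the crucial energy estimate $\mu_{\<\Ex(u)\>}(D)\lesssim \lb u\rb_{\Lambda_{\Psi,\sigma}}^2$, and to control the remaining ingredients: the $L^2(\tD;m_0)$-norm of $\psi\,\Ex(u)$ and the membership issues (compact support, belonging to $\bar\sF$ versus $\bar\sF_e$). For part (a), I would first observe that $\psi\,\Ex(u)\in C_c(\tD)$ since $\Ex(u)\in C(\tD)$ (the Whitney partition of unity $\{\psi_i\}$ is locally finite by Lemma \ref{lemma311+}(d)) and $\psi$ has compact support. Next I would estimate the energy of the product by the Leibniz-type bound $\mu_{\<\psi\Ex(u)\>}(D)\lesssim \|\psi\|_\infty^2\,\mu_{\<\Ex(u)\>}(K)+\|\Ex(u)\|_{L^\infty(K)}^2\,\mu_{\<\psi\>}(D)$, where $K:=\operatorname{supp}[\psi]$; here $\mu_{\<\Ex(u)\>}(K)\le\mu_{\<\Ex(u)\>}(D)\lesssim\lb u\rb_{\Lambda_{\Psi,\sigma}}^2$ by Proposition \ref{prop312}, $\mu_{\<\psi\>}(D)=2\bar\sE(\psi,\psi)$ is a fixed constant, and $\|\Ex(u)\|_{L^\infty(K)}\le\|u\|_{L^\infty(\partial D)}\lesssim\|u\|_{\Lambda_{\Psi,\sigma}}$ because each $[u]_i$ is an average of $u$ over $F_i$ and $\sum_i\psi_i=1$. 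Here I want to use the fact that on the region near $\operatorname{supp}[u]$ the only Whitney balls contributing have $[u]_i$ close to values of $u$, and where $\psi=1$ we get cancellation; on the transition region where $0<\psi<1$ we are at distance comparable to $\delta$ from $\operatorname{supp}[u]$, and there I can bound $|\Ex(u)(x)|$ and $|\nabla\Ex(u)|$ crudely in terms of $\|u\|_{L^\infty}$ and the geometry, picking up a constant depending on $\delta$. For the $L^2$-norm, $\|\psi\Ex(u)\|_{L^2(\tD;m_0)}\le\|\Ex(u)\|_{L^2(K;m_0)}$, and since $\Ex(u)$ is bounded on the compact set $K$ this is $\lesssim m_0(K)^{1/2}\|u\|_{L^\infty}\lesssim_\psi\|u\|_{\Lambda_{\Psi,\sigma}}$. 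Combining, $\bar\sE_1(\psi\Ex(u),\psi\Ex(u))^{1/2}\lesssim_{\psi,\delta}\|u\|_{\Lambda_{\Psi,\sigma}}$, and $\psi\Ex(u)\in\bar\sF$ since it is a bounded compactly supported function of finite energy in $\sF_{\rm loc}$ (one should check it lies in $\bar\sF$ using \eqref{e:RD1}; this is automatic once energy and $L^2$-norm are finite and it has compact support in $\tD$).

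For part (b)(i), when $D$ is bounded, $\tD$ is compact, $C(\partial D)=C_c(\partial D)$ and $\Lambda_{\Psi,\sigma}=\dot\Lambda_{\Psi,\sigma}$ (every function is automatically in $L^2$); one simply takes $\psi\equiv 1$ in part (a), concluding $\Ex(u)\in\bar\sF\cap C(\tD)$. For (b)(ii), when $\partial D$ is unbounded, given $u\in C_c(\partial D)\cap\Lambda_{\Psi,\sigma}$ I would choose $\psi\in C_c(\tD)\cap\bar\sF$ equal to $1$ on a neighborhood of $\operatorname{supp}[u]$; then by (a) $\psi\Ex(u)\in\bar\sF\cap C_c(\tD)$. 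To pass from $\psi\Ex(u)$ to $\Ex(u)$ itself, note that $v:=(1-\psi)\Ex(u)$ vanishes near $\operatorname{supp}[u]$, hence on $\partial D$ (away from the support), so $v$ is supported inside $D$; moreover $\Ex(u)(x)\to 0$ as $x$ leaves compact sets because $u$ has compact support and the Whitney averages $[u]_i$ vanish for $B_i$ far from $\operatorname{supp}[u]$. Thus $\Ex(u)\in C_0(\tD)$. To see $\Ex(u)\in\bar\sF_e$: Proposition \ref{prop312} gives $\mu_{\<\Ex(u)\>}(D)<\infty$, so $\Ex(u)\in\sF^{\rm ref}$; combined with $\Ex(u)\in C_0(\tD)$ and a truncation/approximation argument (exhausting $D$ by relatively compact sets and using that $(\bar\sE,\bar\sF)$ is regular), $\Ex(u)\in\bar\sF_e$. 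For (b)(iii), when $\partial D$ is bounded but $D$ is unbounded and $\int_{\partial D}u\,d\sigma=0$: here the key point is that $\Ex(u)$ must decay at infinity. Since $\sum_i\psi_i=1$ on $D$ and $\Ex(u)(x)=\sum_i\psi_i(x)[u]_i$, for $x$ far from $\partial D$ (i.e. $d_D(x)$ large) all contributing Whitney balls $B_i$ have $x_i$ far from $\partial D$ too, and $[u]_i=\frac{1}{\sigma(F_i)}\int_{F_i}u\,d\sigma$ where $F_i=B(x_i,2d_D(x_i))\cap\partial D$; when $d_D(x_i)$ exceeds $\operatorname{diam}(\partial D)$ we get $F_i=\partial D$ and hence $[u]_i=\frac{1}{\sigma(\partial D)}\int_{\partial D}u\,d\sigma=0$. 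Therefore $\Ex(u)$ has compact support in $\tD$, and combined with the finite energy from Proposition \ref{prop312} we get $\Ex(u)\in\bar\sF_e\cap C_c(\tD)$. In the recurrent case, constants lie in $\bar\sF_e$, so for general $u\in C(\partial D)\cap\Lambda_{\Psi,\sigma}$ one writes $u=(u-c)+c$ with $c=\frac{1}{\sigma(\partial D)}\int_{\partial D}u\,d\sigma$; then $\Ex(u-c)\in\bar\sF_e\cap C_c(\tD)$ by the previous case (note $\Ex$ is linear and $\lb\cdot\rb_{\Lambda_{\Psi,\sigma}}$ is unchanged by adding constants) and $\Ex(c)=c\in\bar\sF_e$, giving $\Ex(u)\in\bar\sF_e\cap C(\tD)$.

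The main obstacle I anticipate is establishing the membership $\Ex(u)\in\bar\sF_e$ (not merely $\sF^{\rm ref}$, i.e. finite energy) in the unbounded cases (b)(ii) and (b)(iii) rigorously; one must produce an $\bar\sE$-Cauchy approximating sequence in $\bar\sF$ converging $m_0$-a.e. The natural candidates are $\psi_n\Ex(u)$ for a suitable exhausting sequence of cutoffs $\psi_n$, but showing $\bar\sE(\psi_n\Ex(u)-\psi_m\Ex(u),\cdot)\to 0$ requires a uniform tail estimate on $\mu_{\<\Ex(u)\>}$ outside large sets, which follows from the convergence of the series in the proof of Proposition \ref{prop312} but needs to be extracted carefully; when $\Ex(u)$ already has compact support (case (iii), or case (ii) after the $(1-\psi)$ decomposition once one knows $v$ is supported in a relatively compact set — which needs the decay of $\Ex(u)$ spelled out) this is easier and reduces to the regular-Dirichlet-form approximation. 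A secondary technical point is the product rule estimate in part (a): one must invoke \cite[Theorem 1.4.2]{FOT} (as in Lemma \ref{lemma311}) or its reflected-form analogue to justify $\psi\Ex(u)\in\bar\sF$ and the Leibniz bound, and verify the constant genuinely depends only on $\psi$ and $\delta$, using Lemma \ref{lemma311+} to control the local finiteness of the Whitney cover uniformly.
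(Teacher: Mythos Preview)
Your overall architecture for parts (b)(i) and (b)(iii) matches the paper's, and your intuition for (b)(ii) is close (the paper uses the truncation $\varphi_\delta\circ\Ex(u)$ rather than cutoffs $\psi_n\Ex(u)$ to get the approximating sequence in $\bar\sF$, which is cleaner once $\Ex(u)\in C_0(\tD)$ is known). However, there is a genuine gap in your argument for part (a), and it propagates.

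The problematic step is the claim $\|u\|_{L^\infty(\partial D)}\lesssim\|u\|_{\Lambda_{\Psi,\sigma}}$. This is false in general: the Besov-type norm $\|u\|_{\Lambda_{\Psi,\sigma}}=\lb u\rb_{\Lambda_{\Psi,\sigma}}+\|u\|_{L^2(\partial D;\sigma)}$ does not control $\|u\|_\infty$ without an additional Sobolev embedding, which is not available here (think of $W^{1/2,2}(\R^d)\not\hookrightarrow L^\infty$). You invoke this to bound both $\|\Ex(u)\|_{L^\infty(K)}$ in the Leibniz energy estimate and $\|\psi\Ex(u)\|_{L^2}$, so both steps collapse. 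The paper avoids $\|u\|_\infty$ entirely. For the energy on $D\setminus O$ (where $O=\{\psi=1\}$), it observes that any Whitney cell $B_i$ meeting $\operatorname{supp}[\psi]$ with $F_i\cap\operatorname{supp}[u]\neq\emptyset$ has $\sigma(F_i)\gtrsim_\psi\sigma(O)$ by (VD), and hence $|[u]_i|\leq\|u\|_{L^2(\partial D;\sigma)}/\sqrt{\sigma(F_i)}\lesssim_\psi\|u\|_{L^2(\partial D;\sigma)}$; this gives $\|\Ex u\|_{L^\infty(D\setminus O)}\lesssim_\psi\|u\|_{L^2}$, which is what goes into the Leibniz bound. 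For the $L^2$-norm of $\psi\Ex(u)$, the paper does something substantially more elaborate: it fixes a reference index $i_0$, uses the Whitney chain Lemma \ref{lemma312+} to connect every $i$ with $B_i\cap\operatorname{supp}[\psi]\neq\emptyset$ to $i_0$, and telescopes $([u]_i-[u]_{i_0})^2$ along the chain to obtain $\sum_i([u]_i-[u]_{i_0})^2 m_0(B_i)\lesssim\Psi(r)\lb u\rb_{\Lambda_{\Psi,\sigma}}^2$, then handles $[u]_{i_0}^2$ separately via $\|u\|_{L^2(\partial D;\sigma)}$. This chain argument is the missing idea in your proposal.

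A secondary imprecision in (b)(ii): it is not true that $[u]_i$ \emph{vanishes} for $B_i$ far from $\operatorname{supp}[u]$; when $5d_D(y)>d(y,\operatorname{supp}[u])$ the set $F_i$ may still intersect $\operatorname{supp}[u]$. The paper shows instead that in this regime $\sigma(F_i)\gtrsim\sigma(B(\xi,d(y,\operatorname{supp}[u])))\to\infty$, so $|[u]_i|\leq\|u\|_{L^1(\partial D;\sigma)}/\sigma(F_i)\to 0$.
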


\begin{proof}	
(a). First, we show that $\Ex u\in C(\tD)$.  
Recall that $B_i=B(x_i, d_D(x_i)/2)$ and $F_i =B(x_i,2d_D(x_i))\cap\partial D$. So
$$
d(x_i,y)\leq d_D(y) \quad \hbox{and} \quad 
\frac23d_D(y)\leq d_D(x_i)\leq 2d_D(y) \quad\hbox{for }  y\in B_i.
$$
 Thus  for each $y\in D$, 
\begin{equation}\label{eqn3.a}
\begin{aligned}
\Ex u(y) & =\sum_{i\in I}[u]_i\psi_i(y) =\sum_{i\in I, y\in B_i}[u]_i\psi_i(y)\\
&\in \Big[\inf_{z\in B(y,5d_D(y))\cap \partial D}u(z),  \ \sup_{z\in B(y,5d_D(y))\cap \partial D}u(z) \Big].
\end{aligned}
\end{equation}
It follows that $\Ex u$ is continuous at each point on $\partial D$. 

By Proposition \ref{prop312},   $\mu_{\<\Ex u\>}(D)\lesssim\lb u\rb^2_{\Lambda_{\Psi,  \sigma} }$. Define $O:=\{x\in \tD:\,\psi(x)=1\}$ and let $y\in D\setminus O$. 
  By \eqref{eqn3.a}  $\Ex u(y)=0$  if   $ 5d_D(y)\leq d(y,\operatorname{supp}[u])$.
  For $i\in I$ so that $B_i \ni y$,  let $\xi_i\in\partial D$ be such that $d(x_i,\xi_i)=d_D(x_i)$.
  Note that 
  $d(\xi_i,y)\leq d(\xi_i,x_i)+d(x_i,y)\leq 3d_D(y)$ and 
$$
F_i \supset \partial D\cap B(\xi_i,   d_D(x_i))\supset \partial D\cap B(\xi_i,2d_D(y)/3).
$$
   If $ 5d_D(y)>d(y,\operatorname{supp}[u])>\delta$,   
then  by the (VD) property of $\sigma$
\begin{align*}
\sigma(F_i)&\geq \sigma(B(\xi_i,2d_D(y)/3))\\
&\geq c_1\sigma \big( B(\xi_i,(8+5\tfrac{\diam(O)}{\delta})d_D(y)) \big)\\
&\geq 
c_1\sigma(B(\xi_i,d(\xi_i,y)+d(y,\operatorname{supp}[u])+\diam(O))\geq c_1\sigma(O)
\end{align*}
where $c_1>0$  depends on   $\psi$ and $ \delta$ but not on $u$.
This implies
\[
|\Ex u(y)|\leq\sum_{i\in I,y\in B_i}\psi_i(y)|[u]_i|\leq \sum_{i\in I,y\in B_i}\psi_i(y)\frac{\|u\|_{L^2(\partial D;\sigma)}}{\sqrt{\sigma(F_i)}}\leq  \frac{\|u\|_{L^2(\partial D;\sigma)}}{\sqrt{c_1\sigma(O)}}.
\]
This proves that   $\Ex u(y)\lesssim \|u\|_{L^2(\partial D;\sigma)}$ for every $y\in D\setminus O$, where the constant in $\lesssim$ depends on $\psi$ and $\delta$  but not on $u$. Hence by the derivation property of the energy measure (see, e.g.,  \cite[Lemma 4.3.6]{CF}),   
\begin{align*}
	\mu_{\<\psi\cdot \Ex u\>}(D)&=\mu_{\<\psi\cdot \Ex u\>}(D\cap O)+\mu_{\<\psi\cdot \Ex u\>}(D\setminus O)\\
	&=\mu_{\<\Ex u\>}(D\cap O)+\mu_{\<\psi\cdot \Ex u\>}(D\setminus O)\\
	&\leq \mu_{\<\Ex u\>}(D\cap O)+2\mu_{\<\Ex u\>}(D\setminus O)+2\|\Ex _{\eta}(u)\|_{L^\infty(D\setminus O)}^2\mu_{\<\psi\>}(D\setminus O)\\
	&\lesssim \| u \|^2_{\Lambda_{\Psi,  \sigma} },
\end{align*} 
where the constant in $\lesssim$ depends on $\psi$ and  $( D,d,m , \sE,  \sF)$.

 Next, we derive  an upper bound estimate of $\|\psi \Ex u\|_{L^2(\tD;m_0)}$. 
 Let $r=\diam(\operatorname{supp}[\psi])/2$. We fix $   i_0 \in I$ so that 
\[
B_{i_0}\cap D_{r/(4A)}\neq\emptyset 
\quad \hbox{ and } \quad 
d\big(B_{i_0},\operatorname{supp}[\psi]\big)<r.
\]
 For each $i\in I$ such that $B_i\cap\operatorname{supp}[\psi]\neq\emptyset$, by Lemma \ref{lemma312+},  there is a finite sequence $ j_{i,0} = i_0, \, j_{i,1},\cdots, \, j_{i,k_i}=i$ such that 
  $B_{j_{i,l-1}}\cap B_{j_{i,l }}\neq\emptyset$ for $1\leq l\leq k_i$,  
 and there is $\alpha>0$ such that 
\begin{equation}\label{eqn318}
\frac{r_{j_{i,l}}}{r_{j_{i,l'}}}\gtrsim e^{\alpha(l'-l)}  \quad \hbox{ for every }0\leq l<l'\leq k_i.
\end{equation} 
Note that $r_{j_{i, 0}} =r_{i_0} \gtrsim r$. Then,
\begin{eqnarray*}
&& \frac{1}{\Psi(r)}\sum_{i\in I, B_i\cap \operatorname{supp}[\psi]\neq\emptyset}([u]_i-[u]_{i_0})^2m_0(B_i)
\\
&\leq  &\frac{1}{\Psi(r)}\sum_{i\in I, B_i\cap \operatorname{supp}[\psi]\neq\emptyset}\Big(\sum_{l=0}^{k_i-1}([u]_{j_{i,l}}-[u]_{j_{i,l+1}})\Big)^2m_0(B_i)
\\
& \leq &\sum_{i\in I, B_i\cap \operatorname{supp}[\psi]\neq\emptyset}\Big(\sum_{l=0}^{k_i-1}\frac{m_0(B_i)}{\Psi(r_{j_{i,l}})}([u]_{j_{i,l}}-[u]_{j_{i,l+1}})^2\Big)\big(\sum_{l=0}^{k_i-1}\frac{\Psi(r_{j_{i,l}})}{\Psi(r)}\big)\\
& \lesssim&\sum_{i\in I, B_i\cap \operatorname{supp}[\psi]\neq\emptyset}\sum_{l=0}^{k_i-1}\frac{m_0(B_i)}{\Psi(r_{j_{i,l}})}([u]_{j_{i,l}}-[u]_{j_{i,l+1}})^2\\
& \leq& \sum_{i_1,i_2\in I\atop B_{i_1}\cap B_{i_2}\neq\emptyset}\sum_{i\in I,B_i\cap\operatorname{supp}[\psi]\neq\emptyset\atop i_1\in \{j_{i,0},\cdots j_{i,k_i-1}\}}\frac{m_0(B_i)}{\Psi(r_{i_1})}([u]_{i_1}-[u]_{i_2})^2\\
& \lesssim&\sum_{i_1,i_2\in I\atop B_{i_1}\cap B_{i_2}\neq\emptyset}\frac{m_0(B_{i_1})}{\Psi(r_{i_1})}([u]_{i_1}-[u]_{i_2})^2.
\end{eqnarray*}
where the third inequality is due to \eqref{eqnpsi} and \eqref{eqn318},  and the last inequality is due to   the pairwise disjointness of $\{B(x_i,r_i),i\in I\}$, (VD) of $m_0$
and the fact that there is $C_3>0$ such that $B(x_i,r_i)\subset B(x_{i_1},C_3r_{i_1})$ for any $i_1\in \{j_{i,0},\cdots,j_{i,k_i-1}\}$ because of \eqref{eqn318}.   Combining  this with a similar argument as that for   \eqref{e:3.20+} and \eqref{eqn310},  we get 
\begin{eqnarray*}
& & \frac{1}{\Psi(r)}\sum_{i\in I, B_i\cap \operatorname{supp}[\psi]\neq\emptyset}([u]_i-[u]_{i_0})^2m_0(B_i)\\
&\lesssim &\sum_{i,j\in I\atop B_{i}\cap B_{j}\neq\emptyset}\frac{m_0(B_{i})}{\Psi(r_{i})}([u]_{i}-[u]_{j})^2 \\
&=& \sum_{k\in\Z}\sum_{i\in I\atop \eta^{k+1}\leq r_i<\eta^{k}}\sum_{j\in I\atop B_{i}\cap B_{j}\neq\emptyset}\frac{m_0(B_{i})}{\Psi(r_{i})}([u]_{i}-[u]_{j})^2\\
&\lesssim & \sum_{k\in\Z}\int_{x\in\partial D}\int_{y\in \partial D\atop d(x,y)<C_4\eta^k} \frac{\big(u(x)-u(y)\big)^2}{\Theta_{\Psi, \sigma}  (x,C_4\eta^k)V_\sigma (x,C_4\eta^k) }\sigma(dy)\sigma(dx)\\
&\lesssim & \lb u\rb^2_{\Lambda_{\Psi,  \sigma} },
\end{eqnarray*} 
where we used the (LS) property of  $\sigma$ in the last inequality.
Hence,  
\begin{align*}
\|\psi \Ex u\|_{L^2(\tD;m_0)}^2 &\lesssim \sum_{i\in I, B_i\cap \operatorname{supp}[\psi]\neq\emptyset}[u]_i^2m_0(B_i)\\
&\leq 2\sum_{i\in I, B_i\cap \operatorname{supp}[\psi]\neq\emptyset}([u]_i-[u]_{i_0})^2m_0(B_i)+2[u]_{i_0}^2\sum_{i\in I, B_i\cap \operatorname{supp}[\psi]\neq\emptyset}m_0(B_i)\\
&\lesssim \Psi(r)\lb u\rb_{\Lambda_{\Psi,  \sigma} }^2+\frac{\sum_{i\in I, B_i\cap \operatorname{supp}[\psi]\neq\emptyset}m_0(B_i)}{\sigma(F_{i_0})}\|u\|^2_{L^2(\partial D;\sigma)}. 
\end{align*}
This gives an upper bound estimate of $\|\psi \Ex u\|_{L^2(\tD;m_0)}$ in terms of $\|u\|_{\Lambda_{\Psi,  \sigma} }$. 

Combining the above $L^2$ norm estimate with  the energy estimate part in the previous paragraph establishes the inequality in  (a).

\medskip

(b)  Let $u\in \Lambda_{\Psi,  \sigma} \cap C_c(\partial D)$.  

\smallskip

(i) If $D$ is bounded, then   $\Ex (u)\in \bar \sF\cap C(\tD)$  by (a). 

 \smallskip

(ii) Suppose  $\partial D$ is unbounded. For $y\in\tD$, 
 if $ 5d_D(y)\leq d(y,\operatorname{supp}[u])$, then $\Ex u(y)=0$ by \eqref{eqn3.a}. Fix some $\xi \in \operatorname{supp}[u]$.
If $5d_D(y)>d(y,\operatorname{supp}[u])$ and $ y\in  B_i := B(x_i, d_D(x_i)/2)$, then   
\begin{equation} \label{e:3.23} 
d_D(x_i)\geq\frac{2}{3}d_D(y)>\frac{2}{15}d(y,\operatorname{supp}[u]).
\end{equation}
Clearly $[u]_i=0$ if $F_i\cap \operatorname{supp}[u] =\emptyset$. When $F_i\cap \operatorname{supp}[u] \not= \emptyset$,  
$$
d(x_i, \xi) <  2  d_D(x_i)   + \diam (\operatorname{supp}[u]),
$$
where $\xi_i\in \partial D$  so that $d(x_i, \xi_i)=d_D(x_i)$. Suppose 
 $d(y,\operatorname{supp}[u])\geq \frac{15}2\diam(\operatorname{supp}[u])$. 
 Then $ d_D(x_i) \geq \frac23d_D(y)\geq \frac2{15}d(y,\operatorname{supp}[u])\geq\diam(\operatorname{supp}[u])$.
Hence $B(\xi,  d_D(x_i))\subset  (B(\xi_i,  4d_D(x_i))$.  By the VD of $\sigma$, 
  \begin{eqnarray*} 
  \sigma (F_i)  &\geq&  \sigma (B(\xi_i, d_D(x_i)))
  \gtrsim  \sigma (B(\xi_i, 4 d_D(x_i)))  \\
  &\geq &    \sigma (B(\xi,  d_D(x_i)))
  \gtrsim  \sigma (B(\xi,  d(y,\operatorname{supp}[u]) )).
  \end{eqnarray*} 
  As there are at most $N$ number of $i\in I$ so that $B_i\ni y$, we have  
  \begin{eqnarray*} 
\Ex u(y)  &=&  \sum_{i\in I,   B_i \ni y} [u]_i\psi_i(y)
\leq \sum_{i\in I,   B_i \ni y}  \psi_i(y)   \sigma(F_i)^{-1}\|u\|_{L^1(\partial D;\sigma)} \\
&\lesssim & N  V_\sigma (\xi,d_D(y))^{-1}\|u\|_{L^1(\partial D;\sigma)}\to 0
\qquad \hbox{as }  d(y,\operatorname{supp}[u])\to\infty. 
\end{eqnarray*} 
 This proves that $\Ex u \in C_0 (\tD)$. 
This together with (a) yields that  $\varphi_\delta\circ u\in C_c(\tD)\cap\bar\sF$,  where 
\[
\varphi_{\delta}(t)=\begin{cases}t-\delta&\hbox{ if }t>\delta,\\
	0&\hbox{ if }-\delta\leq t\leq \delta,\\
	t+\delta &\hbox{ if }t<-\delta.
\end{cases}
\]	
Moreover, by Proposition \ref{prop312},   $\bar \sE (\varphi_{\delta}\circ\Ex (u),\varphi_{\delta}\circ\Ex (u))$ is bounded in $\delta >0$ .  As $\lim_{\delta \to 0} \varphi_{\delta}\circ\Ex (u)=\Ex (u)$, it follows that  $\Ex (u)\in \bar \sF_e$ (see, e.g., \cite{CF}). 

\smallskip

 (iii)  Suppose that $\partial D$ is bounded, $D$ is unbounded,  and $u\in C (\partial D)\cap \Lambda_{\Psi,  \sigma} $ having  
 \hfill \\
  $\int_{\partial D} u(x)\sigma(dx)=0$.
  If $d_D(y)>\frac{3}{2}\diam(\partial D)$, then for any $i\in I$ having    $y\in B_i$,   we have $d_D(x_i)\geq\frac23d_D(y)>\diam(\partial D)$. This implies that  $F_i=\partial D$ and so $\Ex u(y) = \sum_{j \in I} [u]_j \psi_j  (y) = \sum_{j \in I:  B_j  \ni y} [u]_j \psi_j (y) =0$. This together with (a) proves that $\Ex(u)\in C_c(\tD) \cap \bar \sF$.  
Moreover, if $(\bar\sE ,\bar\sF)$ is recurrent, then $\Ex ({\mathbbm 1}_{\partial D})=1 \in \sF_e$. This implies that for any $u\in C (\partial D)\cap \Lambda_{\Psi,  \sigma} $, with $c(u):= \int_{\partial D}u(x)\sigma(dx)$, $\Ex(u)=\Ex(u-c(u))+c(u)\in \bar \sF_e\cap C(\tD)$.
\end{proof}

\begin{proposition}\label{P:3.15} 
 Suppose that    {\rm Cap}$_\leq(\Psi; D)$  and {\rm PI}$(\Psi;D)$ hold for  $(D, d, m,  \sE^0,\sF^0)$, 
 and  $\sigma$ is  a     Radon measure  with full support on $\partial D$ satisfying {\rm (VD)}  property so that 
 {\rm (LS)} holds for     $\Theta_{\Psi, \sigma}$. 
Then $\sigma$ is a smooth measure  on $\partial D$ with full $\bar\sE$-quasi-support.
\end{proposition}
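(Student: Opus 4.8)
The statement to prove is that $\partial D$ is an $\bar\sE$-quasi-support of the measure $\sigma$. By definition, this means two things: (1) $\sigma$ is a smooth measure that does not charge sets of zero $\bar\sE$-capacity, and the complement $\tD\setminus\partial D = D$ is $\sigma$-negligible (which is automatic since $\sigma$ lives on $\partial D$); and (2) $\partial D$ is, up to an $\bar\sE$-polar set, the smallest quasi-closed set outside of which $\sigma$ vanishes — equivalently, for the PCAF $A^\sigma$ associated with $\sigma$, the support of $A^\sigma$ is quasi-everywhere equal to $\partial D$. The first half is already available: Theorem \ref{thmrestriction2} gives that $\sigma$ does not charge sets of zero $\bar\sE$-capacity under exactly the hypotheses here (PI$(\Psi;D)$ plus (LS) for $\Theta_{\Psi,\sigma}$), so $\sigma$ is a smooth measure. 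Thus the real content is the full-quasi-support statement: we must show no nontrivial part of $\partial D$ is "invisible" to $\sigma$.

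\textbf{Proof plan for Proposition \ref{P:3.15}.}
The goal is to show that $\partial D$ is an $\bar\sE$-quasi-support of $\sigma$; that is, $\sigma$ is a smooth measure and its quasi-support $\operatorname{supp}[\sigma]$ agrees with $\partial D$ up to an $\bar\sE$-polar set. The smoothness part is already in hand: since $\operatorname{PI}(\Psi;D)$ holds and $\Theta_{\Psi, \sigma}$ satisfies (LS), Theorem \ref{thmrestriction2} shows $\sigma$ charges no set of zero $\bar\sE$-capacity, so $\sigma$ is smooth; moreover $\sigma(D)=0$ forces $\operatorname{supp}[\sigma]\subset\partial D$. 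It thus remains to prove $\mathrm{Cap}(\partial D\setminus\operatorname{supp}[\sigma])=0$. Let $Y$ be the largest $\bar\sE$-quasi-open set with $\sigma(Y)=0$ (so $\tD\setminus Y=\operatorname{supp}[\sigma]$ and $D\subset Y$). Then $Y\cap\partial D$ is quasi-Borel, hence capacitable, so $\mathrm{Cap}(Y\cap\partial D)=\sup\{\mathrm{Cap}(K):K\subset Y\cap\partial D\text{ compact}\}$, and every such $K$ satisfies $\sigma(K)=0$. Consequently the proposition reduces to the following claim: \emph{there is no compact set $K\subset\partial D$ with $\sigma(K)=0$ and $\mathrm{Cap}(K)>0$.}

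To prove the claim, suppose such a $K$ exists. Since $\sigma$ is Radon with $\sigma(K)=0$, for each $\eps>0$ we may pick an open $U_\eps\supset K$ in $\tD$ with $\sigma(U_\eps\cap\partial D)<\eps$, and then (using an adapted distance cutoff and the volume-doubling geometry of $\partial D$) a function $u_\eps\in C_c(\partial D)\cap\Lambda_{\Psi,  \sigma} $ with $0\le u_\eps\le 1$, $u_\eps\equiv 1$ on a neighborhood of $K$ in $\partial D$, and $\operatorname{supp}[u_\eps]\subset U_\eps\cap\partial D$. The technical heart of the argument is the estimate $\lb u_\eps\rb_{\Lambda_{\Psi,  \sigma} }^2+\|u_\eps\|_{L^2(\partial D;\sigma)}^2\to 0$ as $\eps\to 0$: the ``tail'' part of the Besov integral is dominated by $\sigma(U_\eps\cap\partial D)$ times $\sup_{x}\sum_{k\ge 0}\Theta_{\Psi, \sigma}(x,2^k r)^{-1}$, which is finite thanks to the uniform reverse doubling \eqref{e:3.2} of $r\mapsto\Theta_{\Psi, \sigma}(x,r)$ coming from (LS), while the near-diagonal part is handled by arranging $u_\eps$ to interpolate between $1$ near $K$ and $0$ off $U_\eps$ through a region of small $\sigma$-measure. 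Now apply Propositions \ref{prop312} and \ref{prop313} to $u_\eps$: this gives $\Ex(u_\eps)\in\bar\sF_e$ (after the truncation/normalization used there, according to whether $\partial D$ is bounded and whether $(\bar\sE,\bar\sF)$ is recurrent) with $\mu_{\<\Ex(u_\eps)\>}(D)\le C\lb u_\eps\rb_{\Lambda_{\Psi,  \sigma} }^2$. A direct inspection of $\Ex(u_\eps)(x)=\sum_{i}\psi_i(x)[u_\eps]_i$ shows $\Ex(u_\eps)\equiv 1$ on a $\tD$-neighborhood of $K$: indeed, for $x\in D$ with $d_D(x)$ and $d(x,K)$ small, every Whitney set $F_i$ with $x\in B_i$ lies inside $B(x,5d_D(x))\cap\partial D$, hence inside the neighborhood on which $u_\eps\equiv 1$, so $[u_\eps]_i=1$ and $\Ex(u_\eps)(x)=\sum_i\psi_i(x)=1$. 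Therefore $\Ex(u_\eps)\ge 1$ $\bar\sE$-q.e. on a neighborhood of $K$, so
$$
\mathrm{Cap}(K)\ \lesssim\ \bar\sE_1\big(\Ex(u_\eps),\Ex(u_\eps)\big)\ \xrightarrow[\eps\to 0]{}\ 0,
$$
contradicting $\mathrm{Cap}(K)>0$. (In the transient case one may use the $0$-order capacity and then only $\mu_{\<\Ex(u_\eps)\>}(D)\to 0$ is needed; in the remaining cases $\Ex(u_\eps)$, or $\varphi_\delta\circ\Ex(u_\eps)$, is compactly supported and the $L^2$-estimate is used as well.) This proves the claim and hence the proposition.

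\textbf{Main obstacle.} The crux is the seminorm estimate $\lb u_\eps\rb_{\Lambda_{\Psi,  \sigma} }\to 0$ together with its $L^2$ companion, which is precisely where the (VD) property of $\sigma$ and the (LS) property of $\Theta_{\Psi, \sigma}$ do the essential work: one must construct $u_\eps$ so that both the near-diagonal contribution (built from a transition across sets of small $\sigma$-measure) and the long-range tail (summed against $\Theta_{\Psi, \sigma}^{-1}$, convergent by (LS)) vanish with $\eps$. A secondary nuisance is the case bookkeeping inherited from Proposition \ref{prop313} — when $\partial D$ is bounded and $(\bar\sE,\bar\sF)$ is transient one extends $u_\eps-c(u_\eps)$ with $c(u_\eps):=\int_{\partial D}u_\eps\,d\sigma\to 0$ and adds the small constant back, and when $\partial D$ is unbounded one works with $\Ex(u_\eps)\in C_0(\tD)$ and its truncations — but in every case the resulting function is $\equiv 1$ on a neighborhood of $K$ up to an error that is negligible in $\bar\sE_1$, so the contradiction persists.
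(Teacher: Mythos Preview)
Your overall strategy --- produce a boundary function $u_\eps$ that is $1$ near a $\sigma$-null compact $K$ of positive capacity, extend it to $\tD$ via $\Ex$, and contradict $\mathrm{Cap}(K)>0$ --- is natural, but the step you flag as the ``technical heart'' is a genuine gap. You assert that one can choose $u_\eps$ so that $\lb u_\eps\rb_{\Lambda_{\Psi,\sigma}}\to 0$, with the near-diagonal part ``handled by arranging $u_\eps$ to interpolate between $1$ near $K$ and $0$ off $U_\eps$ through a region of small $\sigma$-measure.'' But small $\sigma$-measure of the transition region is \emph{not} by itself enough to kill the near-diagonal Besov energy: for $x$ in that region the inner integral is essentially $\sum_{2^{-k}<r_0}\Theta_{\Psi,\sigma}(x,2^{-k})^{-1}$, which diverges as the transition scale $r_0\to 0$ because $\Theta_{\Psi,\sigma}(x,r)\to 0$. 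Even a Lipschitz cutoff does not help in general, since there is no useful relation between $d(x,y)^2$ and $\sigma(B(x,d(x,y)))\Theta_{\Psi,\sigma}(x,d(x,y))$ under the hypotheses. In effect, constructing such $u_\eps$ with small $\Lambda_{\Psi,\sigma}$-seminorm is equivalent to showing that $K$ has small ``Besov capacity,'' which is precisely the content of what you are trying to prove --- so the argument is circular at this point.

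The paper sidesteps this difficulty entirely by working on $\tD$ rather than on $\partial D$. Using the characterization of quasi-support (e.g.\ \cite[Theorem~4.6.2]{FOT}), it assumes there is $f\in\bar\sF$ with $f|_{\partial D}=0$ $\sigma$-a.e.\ but not $\bar\sE$-q.e., takes a $C_c$-approximation $f_n\to f$ in $\bar\sF$, and sets $g_n:=\psi\cdot\Ex(f_n|_{\partial D})$. The restriction theorem (Theorem~\ref{thmrestriction1}/\ref{thmrestriction2}) gives $\|f_n|_{\partial D}\|_{\Lambda_{\Psi,\sigma}}\lesssim\|f_n\|_{\bar\sF}$, and Proposition~\ref{prop313}(a) gives $\|g_n\|_{\bar\sF}\lesssim\|f_n|_{\partial D}\|_{\Lambda_{\Psi,\sigma}}$. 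Hence $(g_n)$ is $\bar\sF$-Cauchy with limit $g$ satisfying $g|_{\partial D}=f|_{\partial D}$ q.e., while passing to the limit in $\|g_n\|_{\bar\sF}\lesssim\|f_n|_{\partial D}\|_{\Lambda_{\Psi,\sigma}}$ yields $\|g\|_{\bar\sF}\lesssim\|f|_{\partial D}\|_{\Lambda_{\Psi,\sigma}}=0$ (because $f|_{\partial D}=0$ $\sigma$-a.e.), forcing $g=0$ q.e.\ on $\partial D$ --- a contradiction. The point is that the Besov seminorm of $f_n|_{\partial D}$ is controlled not by an ad hoc cutoff construction on $\partial D$, but by the already-established restriction inequality, which does all the work.
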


\begin{proof}
It is proven in Theorem \ref{thmrestriction2} that $\sigma$ is a smooth measure. 
We prove the rest of the claim by contradiction. Assume that $\partial D$ is not a quasi-support of $\sigma$, then by \cite[Theorem 4.6.2]{FOT}, there exists $f\in \bar\sF$ such that $f|_{\partial D}=0$ $\sigma$-a.e. and $f|_{\partial D}$ does not equal to $0$ q.e. on $\partial D$. Moreover, we can assume that $f$ is compactly supported. Take $\psi\in. C_c(\tD)\cap\bar{\sF}$ such that   $0\leq \psi\leq 1$ and $\psi=1$ on a neighborhood of the support of $f$.  

By the regularity of $\big(\bar \sE ,\bar \sF \big)$, we can find a sequence $f_n\in C_c(\tD)\cap \bar \sF $ 
with ${\rm supp}[f_n] \subset \{ \psi=1\}$
so that $f_n \to \tilde{f}$ $\bar \sE$-q.e. and $f_n\to f$ in $\bar \sF $ as $n\to\infty$. 
Next, for $n\geq 1$, we let $g_n=\psi\cdot\Ex (f_n|_{\partial D})$. Then, by Theorems \ref{thmrestriction1}, \ref{thmrestriction2} and Proposition \ref{prop313} (a),  for $n,m\geq 1$
\begin{eqnarray}
	&\label{eqn311}g_n\in C_c (\tD)\cap \bar\sF,\quad g_n|_{\partial D}=f_n|_{\partial D}\\
	&\label{eqn312}\|g_n-g_m\|_{\bar\sF}\lesssim \|(g_n-g_m)|_{\partial D}\|_{\Lambda_{\Psi,  \sigma} }=\|(f_n-f_m)|_{\partial D}\|_{\Lambda_{\Psi,  \sigma} }\lesssim \|f_n-f_m\|_{\bar\sF}\\
	&\label{eqn313}\|g_n\|_{\bar\sF}\lesssim\|g_n|_{\partial D}\|_{\Lambda_{\Psi,  \sigma} }=\|f_n|_{\partial D}\|_{\Lambda_{\Psi,  \sigma} }
\end{eqnarray}
Combining \eqref{eqn311} and \eqref{eqn312}, by passing to a subsequence if necessary, we can find $g\in \bar\sF$ such that 
\begin{eqnarray}\label{eqn314}
	g_n\to g\hbox{ in }\bar\sF,  \quad   g_n\to g \  \ \bar \sE  \hbox{-q.e.} 
	\quad \hbox{and} \quad g_n|_{\partial D}\to g|_{\partial D} \  \hbox{ in }\Lambda_{\Psi,  \sigma} .
\end{eqnarray}
Combining with the fact that $f_n\to f$ $\bar \sE$-q.e. and $g_n|_{\partial D}=f_n|_{\partial D}$ for each $n\geq 1$, we see that 
\[
g|_{\partial D}=f|_{\partial D}\quad\hbox{q.e.}
\]
Hence, by the assumption $f$ does not equal to $0$ q.e. on $\partial D$, we know that $g$ does not equal to $0$ q.e. on $\partial D$; on the other hand, by taking the limit of \eqref{eqn313} (using \eqref{eqn312}), we see that $\|g\|_{\sF}\lesssim \|g|_{\partial D}\|_{\Lambda_{\Psi,  \sigma} }=\|f|_{\partial D}\|_{\Lambda_{\Psi,  \sigma} }=0$ so that $g=0$ q.e. on $\partial D$. This leads to a contradiction.
\end{proof}

We conclude this section with the following theorem about $\big(\check \sE,\check  \sF \big)$. 

\begin{theorem}\label{T:3.16}
Suppose that {\rm Cap}$_\leq(\Psi; D)$  and {\rm PI}$(\Psi;D)$ hold  for  $(D, d, m,  \sE^0,\sF^0)$,  and  $\sigma$ is  a     Radon measure  with full support on $\partial D$ satisfying {\rm (VD)}  property so that 
 {\rm (LS)} holds for     $\Theta_{\Psi, \sigma}$.  
\begin{enumerate}[\rm (a)]
\item If $D$ is bounded, or if  $\partial D$ is unbounded, or if $(\bar \sE ,\bar \sF )$ is recurrent, we have 
\[
\check \sE (u,u)\asymp \lb u \rb_{\Lambda_{\Psi,  \sigma} }^2  \quad \hbox{for  }  u\in \check \sF    .
\]
  Moreover, $\Lambda_{\Psi,  \sigma} \cap C_c(\partial D)$ is a core of $\check \sF $. 

\item If $D$ is unbounded, $\partial D$ is bounded and $(\bar \sE ,\bar \sF )$ is transient, then
\[
\check \sE (u,u)\asymp \| u \|_{\Lambda_{\Psi,  \sigma} }^2=\lb u \rb_{\Lambda_{\Psi,  \sigma} }^2+\|u\|_{L^2(\partial D;\sigma)}^2
 \quad \hbox{for  }  u\in \check \sF    .
\]
 Moreover, $\Lambda_{\Psi,  \sigma} \cap C_c(\partial D)\subset \check \sF $ is a core of $\check \sF$. 
\end{enumerate}
\end{theorem}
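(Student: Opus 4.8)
The plan is as follows. Since $\sigma$ is a smooth measure with full $\bar\sE$-quasi-support on $\partial D$ (Theorem~\ref{thmrestriction2} and Proposition~\ref{P:3.15}), I would take the time-change measure to be $\nu=\sigma$, so that $\check\sF=\check\sF_e\cap L^2(\partial D;\sigma)$. Throughout I use that $\check\sE(u,u)=\bar\sE(\mathcal Hu,\mathcal Hu)=\inf\{\bar\sE(f,f):f\in\bar\sF_e,\ f=u\ \bar\sE\text{-q.e.\ on }\partial D\}$ for $u\in\check\sF_e$, that $\mu_{\<f\>}(\partial D)=0$ for $f\in\bar\sF_e$ (from \eqref{e:em} and approximation), so $\bar\sE(f,f)=\tfrac12\mu_{\<f\>}(D)$, and that $\check\sF_e\subset\dot\Lambda_{\Psi,\sigma}$ (Theorem~\ref{thmrestriction1}), whence $\check\sF\subset\Lambda_{\Psi,\sigma}$ and $C_c(\partial D)\cap\check\sF\subset C_c(\partial D)\cap\Lambda_{\Psi,\sigma}$.

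\emph{Case (a).} The bound $\lb u\rb_{\Lambda_{\Psi,\sigma}}^2\lesssim\check\sE(u,u)$ for $u\in\check\sF_e$ is exactly \eqref{e:3.5}. For the reverse inequality: in each of the three subcases of (a) (with $C_c(\partial D)=C(\partial D)$ when $\partial D$ is bounded), Proposition~\ref{prop313}(b) supplies, for every $u\in C_c(\partial D)\cap\Lambda_{\Psi,\sigma}$, an element $\Ex(u)\in\bar\sF_e$ with $\Ex(u)|_{\partial D}=u$; such $u$ therefore lies in $\check\sF_e\cap L^2(\partial D;\sigma)=\check\sF$, and Proposition~\ref{prop312} gives $\check\sE(u,u)\le\bar\sE(\Ex(u),\Ex(u))=\tfrac12\mu_{\<\Ex(u)\>}(D)\lesssim\lb u\rb_{\Lambda_{\Psi,\sigma}}^2$. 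Combined with the inclusion of the first paragraph this yields $C_c(\partial D)\cap\check\sF=C_c(\partial D)\cap\Lambda_{\Psi,\sigma}$, and since $(\check\sE,\check\sF)$ is regular on $L^2(\partial D;\sigma)$ the former is a core; this proves the core assertion. For a general $u\in\check\sF$ I would pick $u_n\in C_c(\partial D)\cap\check\sF$ with $u_n\to u$ in $\check\sE_1^{1/2}$, note $\check\sE(u_n,u_n)\to\check\sE(u,u)$, and use $u-u_n\in\check\sF_e$ with \eqref{e:3.5} to get $\lb u-u_n\rb_{\Lambda_{\Psi,\sigma}}\to0$; letting $n\to\infty$ in $\check\sE(u_n,u_n)\asymp\lb u_n\rb_{\Lambda_{\Psi,\sigma}}^2$ gives the claim.

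\emph{Case (b).} For the upper bound, since $\partial D$ is bounded I would fix once and for all a function $\psi\in C_c(\tD)\cap\bar\sF$ with $0\le\psi\le1$ and $\psi\equiv1$ on a neighborhood of $\partial D$; for $u\in C(\partial D)\cap\Lambda_{\Psi,\sigma}$, Proposition~\ref{prop313}(a) gives $\psi\,\Ex(u)\in\bar\sF\cap C_c(\tD)$ with $(\psi\,\Ex(u))|_{\partial D}=u$ and $\bar\sE_1(\psi\,\Ex(u),\psi\,\Ex(u))\lesssim\|u\|_{\Lambda_{\Psi,\sigma}}^2$, whence $u\in\check\sF$ and $\check\sE(u,u)\lesssim\|u\|_{\Lambda_{\Psi,\sigma}}^2$; as in (a) this identifies $C_c(\partial D)\cap\check\sF=C_c(\partial D)\cap\Lambda_{\Psi,\sigma}$ as a core and, after the same limiting argument (now using also $L^2(\partial D;\sigma)$-convergence), extends the upper bound to all of $\check\sF$. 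The matching lower bound needs $\lb u\rb_{\Lambda_{\Psi,\sigma}}^2\lesssim\check\sE(u,u)$, which is \eqref{e:3.5}, together with $\|u\|_{L^2(\partial D;\sigma)}^2\lesssim\check\sE(u,u)$, which I would prove by contradiction: if $u_n\in\check\sF$ satisfy $\|u_n\|_{L^2(\partial D;\sigma)}=1$ and $\check\sE(u_n,u_n)\to0$, then $\lb u_n\rb_{\Lambda_{\Psi,\sigma}}\to0$ by \eqref{e:3.5}, so $\{u_n\}$ is bounded in $\Lambda_{\Psi,\sigma}$; as $\partial D$ is compact, $\sigma$ is finite and (VD), and $\Theta_{\Psi,\sigma}$ has (LS), the embedding $\Lambda_{\Psi,\sigma}\hookrightarrow L^2(\partial D;\sigma)$ is compact (a Rellich-type property of $\Lambda_{\Psi,\sigma}$ on the compact space $\partial D$), so a subsequence converges in $L^2(\partial D;\sigma)$ to some $u$ with $\|u\|_{L^2(\partial D;\sigma)}=1$ and $\lb u\rb_{\Lambda_{\Psi,\sigma}}\le\liminf\lb u_n\rb_{\Lambda_{\Psi,\sigma}}=0$, i.e.\ $u$ equals a nonzero constant $c$ $\sigma$-a.e. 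By lower semicontinuity of $\check\sE$, $c\in\check\sF$ and $\check\sE(c,c)\le\liminf\check\sE(u_n,u_n)=0$; but $1\in\check\sF_e$ (since, by transience of $(\bar\sE,\bar\sF)$, the equilibrium potential $e_{\partial D}$ of the compact set $\partial D$ lies in $\bar\sF_e$ and equals $1$ q.e.\ on $\partial D$), $\mathcal H1=e_{\partial D}$, and by \eqref{e:2.7} $\check\sE(c,c)=c^2\kappa(\partial D)$ with $\kappa(\partial D)=\check\sE(1,1)=\bar\sE(e_{\partial D},e_{\partial D})=\operatorname{Cap}_{\bar\sE}(\partial D)\in(0,\infty)$ (finite as $\partial D$ is compact, positive by the standing assumption on $\partial D$). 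Hence $c=0$, a contradiction.

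The routine ingredients are the restriction and extension estimates of Theorems~\ref{thmrestriction1}, \ref{thmrestriction2} and Propositions~\ref{prop312}, \ref{prop313}, plus two limiting arguments. The main obstacle is the bound $\|u\|_{L^2(\partial D;\sigma)}^2\lesssim\check\sE(u,u)$ in case (b): it is not implied by the restriction estimates, which control only the Besov seminorm $\lb\cdot\rb_{\Lambda_{\Psi,\sigma}}$, and it genuinely uses both transience---through $\kappa(\partial D)=\operatorname{Cap}_{\bar\sE}(\partial D)>0$---and the compactness of the Besov embedding on the compact boundary $\partial D$ (strong, not merely weak, $L^2$-convergence being essential, since a Fatou argument alone would allow loss of mass in the limit).
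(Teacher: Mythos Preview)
Your overall strategy coincides with the paper's: establish the two-sided estimate first on $C_c(\partial D)\cap\Lambda_{\Psi,\sigma}$ using Theorem~\ref{thmrestriction1} for the lower bound and Propositions~\ref{prop312}--\ref{prop313} for the upper bound, identify this set as a core, and in case~(b) prove $\|u\|_{L^2(\partial D;\sigma)}^2\lesssim\check\sE(u,u)$ by contradiction, producing a nonzero constant $c$ with $\check\sE(c,c)=0$ and deriving a contradiction from transience via $\bar\sE(\mathcal H\1_{\partial D},\mathcal H\1_{\partial D})>0$.

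The one substantive difference is in how you extract the limiting constant in the contradiction step. You invoke a Rellich-type compact embedding $\Lambda_{\Psi,\sigma}\hookrightarrow L^2(\partial D;\sigma)$, which you state but do not prove; in this generality (state-dependent scale $\Theta_{\Psi,\sigma}$ on an abstract doubling space) this is not a standard reference result, so as written it is a gap. The paper avoids this entirely with a one-line Poincar\'e inequality: since $\partial D$ is bounded and $\Theta_{\Psi,\sigma}$ has (LS), the weight $\big(\Theta_{\Psi,\sigma}(x,d(x,y))\,\sigma(B(x,d(x,y)))\big)^{-1}$ is bounded below by a positive constant, so
\[
\Big\|u_n-\fint_{\partial D}u_n\,d\sigma\Big\|_{L^2(\partial D;\sigma)}^2\le\int_{\partial D}\int_{\partial D}(u_n(x)-u_n(y))^2\,\sigma(dx)\sigma(dy)\lesssim\lb u_n\rb_{\Lambda_{\Psi,\sigma}}^2\to0,
\]
which forces $u_n\to\1_{\partial D}$ in $L^2$ directly, with no compactness needed. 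Replacing your compactness appeal by this inequality closes the gap and also streamlines the argument.
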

\begin{proof}

(a). First, for each $u\in C_c(\partial D)\cap \Lambda_{\Psi,  \sigma} $, 
\[
\lb u \rb_{\Lambda_{\Psi,  \sigma} }^2\lesssim \check \sE(u,u)\leq \bar \sE \big(\Ex (u),\Ex (u)\big)\lesssim \lb u \rb_{\Lambda_{\Psi,  \sigma} }^2,
\]
where   the first inequality is due to  Theorem \ref{thmrestriction1},  
and the second inequality is due to Proposition \ref{prop312} and Proposition \ref{prop313}(b). 
Moreover, by Theorem \ref{thmrestriction1}, we know that $u|_{\partial D}\in\Lambda_{\Psi,  \sigma} \cap C_c(\partial D)$ for each $u\in \bar \sF \cap C_c(\tD)$. So by \cite[Theorem 5.8]{CF}, $\Lambda_{\Psi,  \sigma} \cap C_c(\partial D)$ is a core of $\check \sF$. 

(b).  Choose $\psi\in C_c(\tD)$ such that $\psi=1$ on a neighborhood of $\partial D$ and $0\leq\psi\leq 1$. Then, for each $u\in C_c(\partial D)\cap \Lambda_{\Psi,  \sigma} $, we have  
\begin{equation}\label{eqn315}
\check \sE(u,u)=\bar \sE (\mathcal{H}u,\mathcal{H}u)\leq \bar \sE \big(\psi\Ex (u),\psi\Ex (u)\big)\lesssim \lb u \rb_{\Lambda_{\Psi,  \sigma} }^2+\|u\|_{L^2(\partial D;\sigma)}^2
\end{equation}
by Proposition \ref{prop313} (a).

Moreover, by Theorem \ref{thmrestriction1}, 
\begin{equation}\label{eqn316}
\check \sE(u,u)\gtrsim \lb u \rb_{\Lambda_{\Psi,  \sigma} }^2. 
\end{equation}

Finally,  we show 
\begin{equation}\label{eqn317}
\check \sE(u,u)\gtrsim \|u\|_{L^2(\partial D;\sigma)}^2
\end{equation}
We prove it by contradiction. Assume that \eqref{eqn317} is not true, then there is a sequence $u_n\in \check\sF,\,n\geq1$ such that $\lim\limits_{n\to\infty}\check\sE (u_n,u_n)=0$ and $\|u_n\|_{L^2(\partial D;\sigma)}=\sigma(\partial D)^{1/2}$ for every $n\geq 1$. By \eqref{eqn316} and {\rm (LS)} for $\Theta_{\Psi, \sigma}$, we have  
\[
\Big\|  u_n|_{\partial D}-\oint_{\partial D}u_nd\sigma \Big\|^2_{L^2(\partial D;\sigma)}\leq \int_{\partial D}\int_{\partial D}(u_n(x)-u_n(y))^2\sigma(dx)\sigma(dy)
\lesssim \lb u_n|_{\partial D}\rb^2_{\Lambda_{\Psi,  \sigma} }\to 0
\]
and so $u_n|_{\partial D}$ converges in $L^2(\partial D;\sigma)$ to the constant function $\1_{\partial D}$
as $n\to \infty$. Hence, by the lower-semicontinuity of $\check\sE$, 
\[
\bar\sE\big(\sH \1_{\partial D},\sH\1_{\partial D})=\check\sE(\1_{\partial D},\1_{\partial D}\big)
\leq \liminf_{n\to\infty}\check\sE(u_n,u_n)
=0.
\]
However, $\bar\sE\big(\sH \1_{\partial D},\sH\1_{\partial D})>0$ as $(\bar\sE,\bar\sF)$ is transient. This is  a contradiction. 

Finally, combining \eqref{eqn315}, \eqref{eqn316} and \eqref{eqn317}, we see the desired estimate. $\Lambda_{\Psi,  \sigma} \cap C(\partial D)$ is a core of $\check \sF$ by  the  same reason as that for (a). 
\end{proof}

\begin{remark}\rm  
In \cite{GS}, a similar trace theorem is established on an unbounded uniform domain in a different setting. Let $(D,d,m)$ be an unbounded, locally compact, non-complete, doubling metric measure space  
that supports a $p$-Poincar\'e inequality for some $1\leq p<\infty$, and in addition $D$ be a uniform domain in its completion $(\bar D,d)$ with unbounded boundary $\partial D:=\bar D\setminus D$. 
 Let $D^{1,p}(D)$ be the Dirichlet-Sobolev space defined in terms of upper gradients. Note that in this setting, a capacity upper bound condition is automatically satisfied, in fact, for any $z\in D$ and $r>0$, there is a Lipschitz bump function $\varphi$ for $B(z,r)\subsetneq B(z,2r)$ with Lipschitz constant $1/r$, so 
\[
\|\varphi\|^p_{D^{1,p}}\leq V(x,2r)/r^p. 
\]	
The restriction of $D^{1,p}(D)$ onto the boundary $\partial D$ as functions in the homogeneous Besov space $HB_{p,p}^{1-(\theta/p}(\partial D)$ is considered in \cite{GS}, under the condition that there is  a non-atomic Borel regular measure $\sigma$ on $\partial D$ that satisfies  $\theta$-codimensional condition with respect to $m$ for some $0<\theta<p$ in the sense that 
\[
\frac{V_\sigma (x,r)}{V (x,r)}\asymp r^{-\theta}
\]
 When $p=2$,   the above  $\theta$-codimensional condition  implies  that our (LS)  holds for     $\Theta_{\Psi, \sigma}$  with  $\Psi(r)=r^2$. We will see  Theorem \ref{T:6.1} below that our (LS) condition is natural in the sense that (VD) condition for $\sigma$ plus (LS) condition for $\Theta_{\Psi,\sigma}$ is equivalent to the capacity density condition \eqref{e:6.1}.

\smallskip   

Both \cite{GS} and our paper use the Poincaré inequality along a chain of balls to prove the restriction theorems, though there are some differences in details.  It is likely that the approach in \cite{GS} can also be modified 
 to prove Theorems \ref{thmrestriction1} and \ref{thmrestriction2}. For the extension theorems,
 the use of Whitney cover method  to define the extension map is nowadays standard.
This method can be traced back to Whitney \cite{Wh}, and has been used by many authors, see, e.g.,  \cite{St} on Euclidean spaces and     \cite{BS, GS,HK} on more general state spaces.       \qed
\end{remark}

\medskip

In the {remaining part} of the paper, we will focus on harmonic measures. In particular, in Section \ref{S:5}, we will see that once condition {\rm(VD)} and {\rm (LS)}  hold for some $\sigma$, then it holds for the harmonic measure when $\partial D$ is bounded or the elliptic measure from $\infty$ when $\partial D$ is unbounded.

\section{Volume doubling of harmonic measures}\label{S:4}
Recall that $(D,d)$ is an $A$-uniform domain with $(\tD,d)$ being its completion. For each $x\in D$, we denote by $\omega_x$ the harmonic measure of the reflected diffusion process $\bar X$ on $\partial D$ starting from $x$; that is,  
\[
\int_{\partial D} f(y)\omega_x(dy)=\bE_x[f(\bar X_{\tau_D})]  \quad 
\hbox{ for each }f\in C_c(\partial D).
\]
In this section, we study the doubling property of the harmonic measure. Let $\Psi$ be a continuous bijection from $(0,\infty)$ to $(0,\infty)$ that enjoys the property \eqref{eqnpsi}. 
 
\medskip

\noindent\textbf{HK($\Psi$)}: We say that $(\tD,d,m_0,\bar\sE,\bar\sF)$ satisfies the heat kernel estimate {\rm HK}($\Psi$) if its associated diffusion process $\bar X$ has a transition density function $\bar p(t, x, y)$ with respect to the measure $m_0$ on $\tD$, and that there are positive constants  $c_1,c_2,c_3,c_4$  so  that for every $t>0$,
\begin{eqnarray}
\bar{p}(t, x,y)&\leq & \frac{c_1}{V(x,\Psi^{-1}(t))}\exp\big(-c_2t\Phi(c_3 d(x,y) / t)\big)
   \  \hbox{ for $m_0$-a.e. }  x,y\in \tD,   \label{e:4.1}\\
\bar{p}(t, x,y)&\geq&  \frac{c_4}{V(x,\Psi^{-1}(t))}
\quad  \hbox{ for $m_0$-a.e. }x,y\in \tD\hbox{ with }d(x,y)\leq \Psi^{-1}(t).   \label{e:4.2}
\end{eqnarray}
Here $\Phi(s):=\sup_{r>0}(\frac{s}{r}-\frac{1}{\Psi(r)})$. 

\smallskip

\begin{remark} \label{R:4.1} \rm
\begin{enumerate} [(i)]
   \item   It follows from \cite[Proposition 3.1(b)]{CKW} that 
  the lower bound estimate \eqref{e:4.2} in condition \textbf{HK($\Psi$)}
    implies that the diffusion process $\bar X$ is conservative. 
  
 \smallskip
 
 \item It is known  (see \cite[Theorem 3.1]{BGK}) 
 that if  \textbf{HK($\Psi$)} holds, then $\bar{p}(t, x, y)$ has a jointly 
  continuous modification on $(0, \infty) \times \tD \times \tD$. Thus
  the heat kernel estimates hold pointwise for this jointly continuous modification.  In particular, the diffusion process can be modified to start from every point from $\tD$, which is a Feller process on $\tD$ having strong Feller property. 
 
 \smallskip
 
 \item  It is known that under (VD),  {\rm HK}($\Psi$) is equivalent to {\rm PI}($\Psi$) and {\rm CS}($\Psi$),
where {\rm CS}($\Psi$) is a cutoff Sobolev inequality condition that implies {\rm Cap}$_\leq (\Psi)$;
see \cite{AB, BB6, BBK, GHL}  and \cite[Remark 2.9]{Mathav}.
When $\tD$ is unbounded, it is shown in \cite[Theorem 1.2]{GHL} that 
{\rm HK}($\Psi$)  implies  {\rm Cap}$ (\Psi)$.   \qed
\end{enumerate} 
\end{remark}

\begin{theorem}\label{thm41}
Assume that the reflected Dirichlet form 
$(\tD,  d, m_0, \bar \sE, \bar \sF)$ satisfies the heat kernel estimate condition {\rm HK($\Psi$)}, and assume that $\overline  {\rm Cap}\big(B(x,r)\cap\partial D,B(x,2r)\big)>0$ for each $x\in \partial D$ and $r\in (0,\diam(\partial D)/3)$. The following conditions are equivalent:
\begin{enumerate} [\rm (i)]   
\item  (Relative boundary capacity doubling property) There are  constants $C_1>1$ and $C_2>0$ so that 
\[
	 \overline  {\rm Cap}\big(B(x,2r)\cap\partial D,B(x,4r)\big)
	 \leq C_1 \overline  {\rm Cap}\big(B(x,r)\cap\partial D,B(x,4r)\big) 
\]	
for each $0<r<C_2\,\diam(\partial D)$ and $x\in \partial D$.
		
\item (Harmonic measure doubling property) There are constants $C_1>1$ and $C_2>0$ so that 
\[
\omega_{x_0}\big(B(x,2r)\big) \leq C_1\omega_{x_0}\big(B(x,r)\big) 
\]
for each $x\in\partial D$, $x_0\in D$ and $0<r<\frac{d(x,x_0)}4\wedge (C_2\,\diam(\partial D))$. 
\end{enumerate}
 The constants $C_1,C_2$ depend only on the constants of the other condition and the parameters in \eqref{eqnpsi}, {\rm(VD)} of $m_0$ and {\rm HK($\Psi$)}.
\end{theorem}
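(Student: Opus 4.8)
The plan is to establish the equivalence by relating both quantities to a common intermediate object, namely the $\bar\sE$-equilibrium potential (capacitary potential) $e_{B(x,r)\cap\partial D}$ of the relative condenser $\big(B(x,r)\cap\partial D, B(x,4r)\big)$, and then exploiting the probabilistic interpretation of this potential together with the heat kernel estimates \textbf{HK($\Psi$)}. The first step is to record the standard identities: the relative capacity $\overline{\rm Cap}(B(x,r)\cap\partial D,B(x,4r))$ equals $\bar\sE(e,e)$ where $e$ is the equilibrium potential, and $e(z)=\bar\bP_z\big(\sigma_{B(x,r)\cap\partial D}<\tau_{B(x,4r)}\big)$ for $\bar\sE$-q.e.\ $z$; moreover the capacitary measure $\mu_e$ (the Revuz/equilibrium measure) is supported on $B(x,r)\cap\partial D$ and has total mass equal to the capacity. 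Because $(\bar\sE,\bar\sF)$ is strongly local with a continuous heat kernel (by the remark following \textbf{HK($\Psi$)}, $\bar X$ is a Feller process that can start from every point), one gets clean pointwise control: for $z$ at distance $\asymp r$ from $B(x,r)\cap\partial D$ but inside $B(x,2r)$, an interior Harnack-type comparison (available under \textbf{HK($\Psi$)} via elliptic Harnack inequality, which is a consequence of \textbf{HK($\Psi$)}, see \cite{BGK}) plus the hitting-probability estimates of balls coming from the two-sided heat kernel bounds give $e(z)\asymp 1$ on an annular region of width $\asymp r$ near $B(x,2r)$.

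The second, and core, step is to compare the capacitary measure of the condenser at scale $r$ with the harmonic measure $\omega_{x_0}$ restricted to $B(x,r)$. The idea (mirroring the toy computation in the Introduction, $c_{i,j}=\omega(x_i)\omega(x_j)\bar\sE(\1_{\{o\}},\1_{\{o\}})$, and the heuristic $\omega_x\asymp \omega/\omega(E)$ together with $\bar\sE(e_K,e_K)\asymp m_0(B(x,r))/\Psi(r)$) is: by the last-exit / Revuz-measure description, for $x_0\in D$ with $d(x_0,x)\asymp$ (large compared to $r$), one has
\[
\omega_{x_0}\big(B(x,r)\cap\partial D\big)\asymp G_{B(x,4r)}^{\bar X}(x_0,\cdot)\text{-weighted mass of }\mu_{e_{B(x,r)\cap\partial D}},
\]
and since the Green function $G_{B(x,4r)}^{\bar X}(x_0,w)$ is comparable to a constant $g(x,r;x_0)$ for all $w$ in $B(x,2r)\cap\partial D$ (again by \textbf{HK($\Psi$)} and the boundary Harnack / Carleson-type principle valid on uniform domains under two-sided heat kernel bounds — cf.\ the Carleson estimate proved later in the paper in Section \ref{S:5}), we obtain
\[
\omega_{x_0}\big(B(x,r)\cap\partial D\big)\asymp g(x,r;x_0)\cdot\overline{\rm Cap}\big(B(x,r)\cap\partial D,B(x,4r)\big).
\]
Crucially the prefactor $g(x,r;x_0)$ is the \emph{same} (up to the constants in \textbf{HK($\Psi$)} and \eqref{eqnpsi}) for the radii $r$, $2r$, and $4r$, because Green functions of nested balls of comparable radii are comparable on the relevant region; this is where the positivity hypothesis $\overline{\rm Cap}(B(x,r)\cap\partial D,B(x,2r))>0$ is used to ensure all objects are nondegenerate and the comparisons are not vacuous.

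Given this two-sided comparison, the equivalence is immediate: (i) $\Rightarrow$ (ii) follows by writing $\omega_{x_0}(B(x,2r))\asymp g(x,2r;x_0)\overline{\rm Cap}(B(x,2r)\cap\partial D,B(x,8r))$ and $\omega_{x_0}(B(x,r))\asymp g(x,r;x_0)\overline{\rm Cap}(B(x,r)\cap\partial D,B(x,4r))$, using that $g(x,2r;x_0)\asymp g(x,r;x_0)$ and that $\overline{\rm Cap}$ with the outer ball enlarged from $4r$ to $8r$ changes the value by at most a fixed constant (an easy capacity monotonicity plus a doubling-of-the-ambient-ball estimate, standard under (VD) and \textbf{HK($\Psi$)}), and then invoking the doubling hypothesis (i); for (ii) $\Rightarrow$ (i) one runs the same chain of comparisons in reverse. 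One should be slightly careful with the range of $r$: the restrictions $r<C_2\diam(\partial D)$ and $r<\tfrac14 d(x,x_0)$ are exactly what make the Green-function comparisons and the ``$\omega_{x_0}$ sees $B(x,r)$ from far away'' heuristic legitimate, and the resulting constants $C_1,C_2$ depend only on the stated parameters. The main obstacle I anticipate is making the comparison $\omega_{x_0}(B(x,r)\cap\partial D)\asymp g(x,r;x_0)\cdot\overline{\rm Cap}(\cdots)$ fully rigorous — i.e.\ controlling the Green function of $B(x,4r)$ uniformly over the support of the capacitary measure and showing the capacitary measure genuinely ``is'' the harmonic measure seen through that Green kernel; this requires the boundary Harnack / Carleson estimate for $\bar X$ on the uniform domain $D$, which under \textbf{HK($\Psi$)} should either be cited or derived (it is in fact developed in Section \ref{S:5} of the paper), and careful bookkeeping of which annular regions the potential $e$ and the Green function are comparable on.
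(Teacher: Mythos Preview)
Your proposal has a genuine gap in the core step. You write that for $x_0\in D$ with $d(x_0,x)$ large compared to $r$,
\[
\omega_{x_0}\big(B(x,r)\cap\partial D\big)\asymp \text{``}G_{B(x,4r)}^{\bar X}(x_0,\cdot)\text{-weighted mass of }\mu_{e_{B(x,r)\cap\partial D}}\text{''},
\]
but the hypothesis of (ii) is precisely $d(x,x_0)>4r$, so $x_0\notin B(x,4r)$ and $G_{B(x,4r)}^{\bar X}(x_0,\cdot)\equiv 0$. The equilibrium identity $e(z)=\int \bar g_{B(x,4r)}(z,w)\,\sigma(dw)$ is only useful for $z$ \emph{inside} the local ball; it cannot by itself produce a formula for $\omega_{x_0}$ with $x_0$ far away. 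Replacing $G_{B(x,4r)}$ by $\bar g_D$ does not immediately help either, since $\bar g_D(x_0,w)=0$ for $w\in\partial D$ and the capacitary measure lives on $\partial D$; making sense of ``$\omega_{x_0}\asymp \bar g_D(x_0,y_r)\cdot\overline{\rm Cap}$'' at a corkscrew point $y_r$ is essentially Proposition~\ref{P:6.8}, whose proof in the paper requires the capacity density condition~\eqref{e:6.1}, a strictly stronger hypothesis than what Theorem~\ref{thm41} assumes.

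The paper's proof supplies exactly the missing bridge. First (Proposition~\ref{prop47}) it works \emph{locally}: for $y\in\partial B(x,4r)$ it compares the capacitary potential $h_\lambda(y)=\bar\bP_y(\sigma_{\partial D\cap B(x,\lambda r)}<\tau_{B(x,2sr)})$ to capacity via the Green function of the enlarged ball $B(x,2sr)$ (your idea, but with both arguments inside), and then---this is a second point your outline does not address---passes from $h_\lambda$ to the hitting probability $f_\lambda(y)=\bar\bP_y(\sigma_{\partial D\cap B(x,\lambda r)}=\tau_{D\cap B(x,2sr)})$, which involves exit from $D\cap B(x,2sr)$ rather than $B(x,2sr)$. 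The comparison $f_\lambda\asymp h_{\lambda/(6+2A)}$ is not automatic and is obtained via the auxiliary functions $v_{\lambda/(6+2A)},u_{\lambda/(6+2A)}$ in \eqref{e:4.9}--\eqref{e:4.11}. Second, the passage from $f_\lambda(y)$ (local) to $\omega_{x_0}(B(x,\lambda r))$ (global) is done by a stopping-time decomposition over successive visits of $\bar X$ to $\overline{B(x,4r)}$ and exits from $B(x,2sr)$; this excursion sum together with Lemma~\ref{lemma44}(c) is what replaces your non-existent global Green-function identity.
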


 Here, we remark that if (ii) holds and $d(x_0,\partial D)\geq c\,\diam(\partial D)$ for some $c>0$, then $\omega_{x_0}$ is a doubling measure with the doubling constant depending only on $C_1,C_2,c$ and the constant of (VD) for $m_0$. Indeed, if $r>r_0:=(\frac{c}{4}\wedge C_2)\diam(\partial D)$, we can find a finite cover $B(z_i,r_0/2),1\leq i\leq N$ of $\partial D$ where $z_i\in\partial D$ for $1\leq i\leq N$, as $(\tD,d)$ is metric doubling. Then, for any $r\geq r_0$ and $x\in\partial D$,
\[
\omega_{x_0}(B(x,r))\geq \inf_{1\leq i\leq N}\omega_{x_0}(B(z_i,r_0/2))\geq \frac{\inf_{1\leq i\leq N}\omega_{x_0}(B(z_i,r_0/2))}{\omega_{x_0}(\partial D)}\cdot\omega_{x_0}(B(x,2r)).
\]

\medskip

The proof of Theorem \ref{thm41} will be given in \S \ref{S:4.2}.

\subsection{Harnack principles} \label{S:4.1}
We need the elliptic Harnack principle and  the scale invariant boundary Harnack principle for harmonic functions. We first recall
 the definition of harmonicity and Dirichlet boundary condition. See papers \cite{BCM,BM,GHL,L} for a reference. \smallskip  

\begin{definition} 
Let $V$ be a proper open subset of $\tD$. 

\begin{enumerate} 
\item[\rm (a)] Let $V\subset\tD$ be an open subset and $f\in\bar\sF_{loc}(V)$. We say $f$ is harmonic in $V$ (with respect to $\bar{X}$) if for each relatively compact open subset $O$ of  $V$,
\[
\bE_x[|f(\bar{X}_{\tau_O})|; \, \tau_O<\infty]<\infty 
\quad \hbox{and} \quad 
f(x)=\bE_x[f(\bar{X}_{\tau_O});\tau_O<\infty]
\quad \hbox{ for $\bar\sE$-q.e. }x\in O
\]
We say $f$ is regular harmonic in $V$ if \[\bE_x[|f(\bar{X}_{\tau_V})| ;  \, \tau_V<\infty ]<\infty\hbox{ and }f(x)=\bE_x[f(\bar{X}_{\tau_V});  \, \tau_V<\infty ]\hbox{ for $\bar\sE$-q.e. }x\in V.\]

\item[\rm (b)]  We say $f\in\sF_{\rm loc}(D)$ satisfies the Dirichlet boundary condition along $V\cap \partial D$ if for every open subset $O\subset V\cap D$ relatively compact in $\tD$ such that $d(O,D\setminus V)>0$, there is $u\in  \sF^0$ such that $f =u$ $m$-a.e. on $O$.  
\end{enumerate} 
\end{definition}

\begin{remark} \label{remark44}\rm 
If $f\in \bar\sF_e\cap L^\infty(\tD;m_0)$ and $f=0$ $\bar{\sE}$-q.e. on $V\cap\partial D$ for some open $V\subset \tD$, then $f$ satisfies the Dirichlet boundary condition along $V\cap \partial D$. In fact, for each open $O\subset V\cap D$ relatively compact in $\tD$ such that $d(O,D\setminus V)>0$, we can find $\psi\in \bar\sF\cap C_c(\tD)$ such that $0\leq \psi\leq 1$, $\psi|_O=1$ and $\psi|_{\tD\setminus V}=0$, then $\psi\cdot f\in \sF^0$ and $(\psi\cdot f)|_O=f|_O$.   \qed
\end{remark}

\medskip

\begin{definition}\label{D:4.5}  \rm  \begin{enumerate}
\item[(i)] We say that $(\tD,d,m_0,\bar\sE,\bar\sF)$ satisfies the elliptic Harnack principle (EHP) if there are $C_{1,e},C_{2,e}\in (1,\infty)$ so that 
\[
h(x)\leq C_{1,e}\,h(y) \quad \hbox{ for each } x, y\in B(x_0 ,r/ C_{2,e})
\]
for each $x_0\in \tD$,   $r>0$ and  non-negative function $h$ that is harmonic in $B(x,r)$.\smallskip

\item[(ii)] We say that the scale invariant boundary Harnack principle (BHP) on $D$ holds for $(\tD,d,m_0,\bar\sE,\bar\sF)$ if there are $C_{1,b},C_{2,b}\in (1,\infty)$ so that 
\begin{align}\label{BHP}
f(y)g(z)\leq C_{1,b}f(z)g(y)  \quad \hbox{ for $\bar\sE$-q.e. }   y,z \in B(z ,r/C_{2,b}),
\end{align}
where $x\in \partial D$, $r\in (0,\operatorname{diam}(\partial D)/2)$ and $f,g$ are non-negative harmonic in $D\cap B(x,r)$ that satisfy the Dirichlet boundary condition along $\partial D\cap B(x,r)$. 
\end{enumerate}
\end{definition}

If (EHP)   holds for $(D, d, m, \sE^0, \sF^0)$, 
then every harmonic function has a  locally H\"older continuous version. Representing each harmonic function by its continuous version,  then \eqref{BHP} is equivalent  to  holding  for every $y,z\in B(z,r/C_{2,b})$.

\medskip 

It is known that under the  {\rm HK}($\Psi$)  condition  for $(\tD, d, m_0, \bar \sE, \bar \sF)$, (EHP) and (BHP) hold on $D$; see \cite[Theorem 1.2]{GHL} and  \cite[Theorem 1.1]{AC}.  Conversely, if (EHP) holds for  $(\tD, d, m_0, \bar \sE, \bar \sF)$, then by \cite[Theorem 7.9]{BCM} (noting that 
$(\tD, d, m_0)$ is (VD)), there is a metric $\bar d$ on $\sX$ that is quasisymmetric to $d$ and a smooth Radon measure $\mu$ having 
full $\bar \sE$-support on $\sX$ so that the time-changed Dirichlet space $(\tD, \bar d, \mu, \bar \sE, \bar \sF_e \cap L^2(\sX; \mu))$
 has property {\rm HK}($\Psi$) for $\Psi(r)=r^\beta$ for some $\beta \geq 2$. 
 It is well known \cite{CF, FOT} that the trace Dirichlet spaces of $(\tD, d, m_0, \bar \sE, \bar \sF)$  and $(\tD, \bar d, \mu, \bar \sE, \bar \sF_e \cap L^2(\sX; \mu))$ on $\partial D$ are related through a strictly increasing continuous time-change. Thus as far as trace Dirichlet spaces are concerned,
 assuming (EHP) holds for  $(\tD, d, m_0, \bar \sE, \bar \sF)$ is essentially equivalent to assuming 
  {\rm HK}($\Psi$)  holds for  $(\tD, d, m_0, \bar \sE, \bar \sF)$ up to a time change.

\medskip

\begin{lemma}\label{lemma44}
Assume that {\rm HK($\Psi$)} holds for $(\tD,  d, m_0, \bar \sE, \bar \sF)$. Let $s>A+\frac12$ and $t\in (0,1)$. 
	
\begin{enumerate} [\rm (a)] 
\item There is a constant $C_{s,t}\in (0,1)$ so that
\[
f(y)\geq C_{s,t} f(z) \quad \hbox{for every }x\in \partial D\hbox{ and }y,z\in \overline{B(x,r)}\cap D_{tr}
\]
for $r\in (0,\diam(D)/2s)$ and  non-negative  $f$ that is harmonic in $B(x,2sr)\cap D$.

\item There is a constant $C_s\in (0,1)$  so that 
\[
f(y)\geq C_sf(z)   
\quad \hbox{for each }x\in\partial D\hbox{ and }y,z\in    \partial B(x,r) 
\] 
for $r\in (0,\diam(D)/2s)$ and non-negative function $f$ that is harmonic in 
\[
E=\big(D\cap B(x,2sr)\big)\cup \Big( \partial D\cap \big(B(x,3r/2)\setminus\overline{B(x,r/2)}\big)\Big).
\]

\item  There is a constant $C_s\in (0,1)$  so that 
\[
f(y)g(z)\geq C_s\,f(z)g(y)\hbox{ for each }x\in\partial D,\,y,z\in \partial B(x,r)
\] 
for $r\in (0,\diam(D)/2s)$ and non-negative functions $f, g$ that are harmonic in $B(x,2sr)\cap D$ and satisfy the Dirichlet boundary condition along $\partial D\cap \big(B(x,2sr)\setminus \overline{B(x,r/2)}\big)$.
\end{enumerate} 
\end{lemma}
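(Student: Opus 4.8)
The plan is to deduce all three statements from the known Harnack principles (EHP) and (BHP) — valid under \textbf{HK($\Psi$)} — by using the chaining/path properties of an $A$-uniform domain established in Section~\ref{S:2}, especially Lemma~\ref{lemma21} and Lemma~\ref{lemma2path}. The recurring idea is that $\overline{B(x,r)}\cap D_{tr}$ (or $\partial B(x,r)$, once one restricts to the part well inside $D$) can be covered by a \emph{bounded} number of interior balls, each of radius comparable to $r$, lying deep enough inside $D$ that the harmonic function is well-behaved on a dilate of each, and that consecutive balls overlap. The bound on the number of balls, uniform in $r$ and $x$, comes from (VD) of $m_0$ via Lemma~\ref{lemma2path}(a)--(b); this is exactly what makes the resulting constants scale-invariant.

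For part (a): fix $x\in\partial D$ and $r\in(0,\diam(D)/2s)$, and let $f\ge 0$ be harmonic in $B(x,2sr)\cap D$. First I would note that every point $y\in\overline{B(x,r)}\cap D_{tr}$ has $d_D(y)\ge tr$, so the Euclidean ball $B(y,tr/(2C_{2,e}))$ (with $C_{2,e}$ the EHP constant) is contained in $B(x,2sr)\cap D$ once $s>A+\tfrac12$ is large enough relative to $t$; apply (EHP) on $B(y,tr/C_{2,e})$ to control $f$ on $B(y,tr/(2C_{2,e}))$. Next, since $\overline{B(x,r)}\cap D_{tr}$ is path-connected — here I would invoke Lemma~\ref{lemma21}(c) to produce a path-connected set $E$ with $\overline{B(x,r)}\cap D_{tr}\subset E\subset B(x,(2A+1)r)\cap D_{tr/(1+A)}$, which is still inside $B(x,2sr)\cap D$ when $s>A+\tfrac12$ — I can join any two points $y,z$ by a path in $E$, cover that path by finitely many EHP-balls (Lemma~\ref{lemma2path}(b), with the number bounded by $\diam(E)/r\lesssim_A 1$ and the (VD) parameter), and chain the EHP inequalities. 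This yields $f(y)\ge C_{s,t}f(z)$ with $C_{s,t}$ depending only on $s,t,A$, the constants in \eqref{eqnpsi}, (VD) and \textbf{HK($\Psi$)}.

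Part (b) is the same chaining argument, but now the harmonicity region is the ``annular collar'' $E=(D\cap B(x,2sr))\cup(\partial D\cap(B(x,3r/2)\setminus\overline{B(x,r/2)}))$, which includes a boundary neighborhood, so on the open set $B(x,3r/2)\setminus\overline{B(x,r/2)}$ the function $f$ is genuinely harmonic up to $\partial D$; I would cover $\partial B(x,r)$ — which by the $A$-uniformity (Lemma~\ref{lemma21}(a)) meets $D_{r/(4A)}$ and can be threaded by a connected subset staying in $B(x,3r/2)\setminus\overline{B(x,r/2)}$ at distance $\gtrsim r$ from $\partial B(x,r/2)\cup\partial B(x,3r/2)$ — by boundedly many balls on which (EHP) applies, and chain. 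Part (c) combines part (b)-type chaining with (BHP): near $\partial D$ one uses the boundary Harnack inequality $f(y)g(z)\le C_{1,b}f(z)g(y)$ on balls $B(w,\rho/C_{2,b})$ with $w\in\partial D\cap B(x,2sr)$ and $\rho\asymp r$, while for points of $\partial B(x,r)$ that are interior (at distance $\gtrsim r$ from $\partial D$) one uses (EHP) on $f$ and on $g$ separately; chaining a bounded sequence of such steps along a connected subset of $\partial B(x,r)$ gives the product inequality with a scale-invariant constant.

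The main obstacle is \emph{bookkeeping of the scales and inclusions}: one must check that for $s>A+\tfrac12$ every EHP-ball (radius a fixed fraction of $r$) centered at a point of the relevant connected set, and every BHP-ball centered at a boundary point in $B(x,2sr)$, actually sits inside the prescribed region of harmonicity ($B(x,2sr)\cap D$, or the collar $E$), and that the connecting path stays in $D_{cr}$ for a suitable $c=c(A)>0$ so that interior EHP applies uniformly. This is precisely where the geometric lemmas of Section~\ref{S:2} — the uniform-domain curve with $d_D(z)\ge A^{-1}\min\{d(x,z),d(y,z)\}$, and the counting bound of Lemma~\ref{lemma2path} — are used; once the inclusions are verified, the chaining is routine and produces constants with the asserted dependence.
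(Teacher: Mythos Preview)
Your plan matches the paper's proof: Harnack-chain via (EHP) along uniform-domain paths for (a) and (b), then for (c) split $\partial B(x,r)$ into interior points (handled by part (a) applied to both $f$ and $g$) and near-boundary points (handled by a single (BHP) application to compare the ratio $f/g$ to its value at a nearby interior reference point). One correction for (b): the connecting path between $y,z\in D\cap\partial B(x,r)$ need not and generally cannot stay in the annulus $B(x,3r/2)\setminus\overline{B(x,r/2)}$ (being at distance $\gtrsim r$ from both boundary spheres of a width-$r$ annulus is impossible); the paper lets the uniform-domain path roam freely in $D\cap\overline{B(x,(2A+1)r)}\subset D\cap B(x,2sr)$ and instead checks the only relevant bound, namely $d(w,\partial D\setminus E)\ge r/(2(A+1))$ for every $w$ on the path, which is precisely what is needed to apply (EHP) in balls contained in $E$.
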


\begin{proof}
(a). For each $y,z\in \overline{B(x,r)}\cap D_{tr}$,  by Lemma \ref{lemma21}(b), 
there is  a path $\gamma$ in $\overline{B(x,(2A+1)r)}\cap D_{tr/(1+A)}\subset B(x,2sr)\cap D$ connecting $y,z$. Conclusion (a) then follows from   Lemma \ref{lemma2path}(b),
 a routine Harnack chain argument and (EHP). 
	
(b). For each $y,z\in D\cap\partial B(x,r)$, by the property of $D$ being an
$A$-uniform domain, there is  a path $\gamma$ connecting $y,z$ in $  D\cap  \overline{B(x,(2A+1)r)}\subset   D\cap  B(x,2sr)$, 
 and   that for every $\omega\in\gamma$,
\begin{align*}
d(w,\partial D\setminus E)
&\geq \max\Big\{\frac{r}{2}-d(y,w), \frac{r}{2}-d(z,w),\frac{d(w,y)\wedge d(z,w)}{A}\Big\}\geq \frac{r}{2(A+1)}.
\end{align*}
The inequality of (b) then holds for $y,z$ by Lemma \ref{lemma2path}(b), a routine Harnack chain argument and (EHP). Notice that $f$ is continuous in $E$, the inequality extends to $\partial B(x,r)$.  

(c). Let $0<\lambda<\min\{\frac{s}{A+1/2}-1,1\}$ and $t=\frac{\lambda}{16AC_{2,b}}$. By (a), we know that 
\begin{equation}\label{eqn41}
f(y)\geq C f(z), g(y)\geq C g(z) \hbox{ for every }y,z\in \overline{B\big(x,(1+\lambda)r\big)}\cap D_{tr}
\end{equation}
for some $C$ independent of $f,g,r,x$. By (BHP), we can show that for each $y\in \partial B(x,r)\setminus D_{tr}$
\begin{equation}\label{eqn42}
 C_{1,b}^{-1}\,\frac{f(z)}{g(z)}\leq \frac{f(y)}{g(y)}\leq C_{1,b}\,\frac{f(z)}{g(z)} \hbox{ for some }z\in \overline{B\big(x,(1+\lambda)r\big)}\cap D_{tr}.
\end{equation}
In fact, we can find $y'\in \partial D$ such that $d(y,y')<tr$, so $r-tr<d(x,y')<r+tr$. Then, we apply (BHP) to the ball $B(y',4AC_{2,b}tr)=B(y',\lambda r/4)\subset B(x,(1+\lambda)r)\setminus \overline{B(x,r/2)}$ to find $z\in B(y',4Atr)\cap D_{tr}\subset \overline{B\big(x,(1+\lambda)r\big)}\cap D_{tr}$ such that \eqref{eqn42} holds. Note that $B(y',4Atr)\cap D_{tr}\neq \emptyset$ by Lemma \ref{lemma21}. 
    
The desired estimate follows immediately from \eqref{eqn41} and \eqref{eqn42}. 
\end{proof}

\subsection{Proof of Theorem \ref{thm41}}\label{S:4.2}
We prove Theorem \ref{thm41} in this subsection. First, we introduce some more notations.

For open $U\subset \tD$ such that $\tD\setminus U$ is not $\bar\sE$-polar, we let $\bar{p}_U (t, x,y)$ be the transition density function  of the subprocess $\bar{X}^U$ of $\bar{X}$  killed upon leaving $U$: $\bar\bP_x(\bar{X}_t\in E;t<\tau_U)=\int_E \bar{p}_U(t,x,y)m_0(dy)$. Since we always assume HK$(\Psi)$, $\bar{p}_U (t, x,y)$ is well defined and jointly continuous on $(0,\infty)\times U^2$. Let 
\[\bar{g}_U(x,y)=\int_0^\infty \bar{p}_U(t, x,y)dt\] 
be the Green's function on $U$. By \cite[Theorem 4.4, Remark 2.7(ii), Proposition 2.9(iii)]{BCM}, the green's function $\bar{g}_U(x,\cdot)$ is in $\bar\sF_{\rm loc}(U\setminus \{x\})$ and satisfies the Dirichlet boundary condition along $\partial U$.

\begin{lemma}\label{lemma45}
Suppose that {\rm HK($\Psi$)} holds for $(\tD,  d, m_0, \bar \sE, \bar \sF)$. Let $U$  be an open subset of $\tD$, $x,y\in U$ and $r>0$. Suppose  that  $\gamma$ in $U$ is a continuous curve with $\gamma(0)=x$, $\gamma(1)=y$ and $d(\gamma,\tD\setminus U)\geq r$. Then there is a positive constant $C_s$ depending  only on the bounds of {\rm HK($\Psi$)}, {\rm (VD)} 
and $s:=\diam(\gamma)/r$
so that 
\[
\bar\bP_x(\sigma_{B(y,r)}<\tau_{U})>C_s  . 
\] 
Moreover, there is $\eta>1$ depending only on the bounds of {\rm HK($\Psi$) and (VD)} so that $\bar{g}_{U}(u,v)>0$ for $u\in B(x,r/\eta)$ and $v\in B(y,r/\eta)$. 
\end{lemma}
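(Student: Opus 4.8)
The plan is a Harnack‑chain argument driven by the near‑diagonal lower heat kernel bound in HK($\Psi$); throughout I would work with the jointly continuous version of $\bar p$ and the Feller modification of $\bar X$ (so that all estimates hold pointwise and $\bar X$ may be started from every point of $\tD$). The starting observation is that $d(\gamma,\tD\setminus U)\geq r$ forces $B(z,r)\subset U$ for every $z\in\gamma$. I would fix a small constant $\eta\in(0,1/6)$, to be pinned down at the end and depending only on the constants in HK($\Psi$), {\rm (VD)} and \eqref{eqnpsi}, and apply Lemma \ref{lemma2path}(b) to $\gamma$ with radius $\eta r$: this yields points $x=z_0,z_1,\dots,z_l=y$ on $\gamma$ with $d(z_i,z_{i+1})<\eta r$ and $l\leq L$, where $L$ depends only on $\diam(\gamma)/(\eta r)$ and the {\rm (VD)} constants; since $\eta$ is a fixed constant, $L$ depends only on $s=\diam(\gamma)/r$ and {\rm (VD)}.

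The core is a scale‑invariant one‑step estimate: there is $c_0>0$, depending only on the constants in HK($\Psi$), {\rm (VD)} and \eqref{eqnpsi}, so that for all $\xi,\xi'\in\tD$ with $d(\xi,\xi')<\eta r$ and all $w\in\overline{B(\xi,\eta r)}$,
\[
\bar\bP_w\big(\sigma_{B(\xi',\eta r)}<\tau_{B(\xi,r)}\big)\geq c_0 .
\]
To prove it, set $t_0:=\Psi(3\eta r)$. Since $d(w,v)<3\eta r=\Psi^{-1}(t_0)$ for every $v\in B(\xi',\eta r)$, the lower bound \eqref{e:4.2} combined with {\rm (VD)} (to compare $m_0(B(w,3\eta r))$ with $m_0(B(\xi',\eta r))$) gives $\bar\bP_w(\bar X_{t_0}\in B(\xi',\eta r))\geq c_5$ for some $c_5>0$ independent of $\eta$, $r$ and the points. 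On the other hand $B(w,r/2)\subset B(\xi,r)$, so $\bar\bP_w(\tau_{B(\xi,r)}\leq t_0)\leq\bar\bP_w(\tau_{B(w,r/2)}\leq t_0)$, and the standard exit‑time tail estimate under HK($\Psi$) (a consequence of the upper bound \eqref{e:4.1} via a Lévy‑type maximal inequality; cf. \cite{CKW, GHL}) bounds this last probability by a quantity tending to $0$ as $t_0/\Psi(r/2)\to 0$; by \eqref{eqnpsi}, $t_0/\Psi(r/2)\leq C_\Psi(6\eta)^{\beta_1}$. Choosing $\eta$ small enough that this tail is $\leq c_5/2$, and using $\{\bar X_{t_0}\in B(\xi',\eta r)\}\cap\{t_0<\tau_{B(\xi,r)}\}\subset\{\sigma_{B(\xi',\eta r)}<\tau_{B(\xi,r)}\}$, we obtain the one‑step estimate with $c_0=c_5/2$. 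I expect this step to be the main obstacle: the delicate point is to make the exit‑time tail strictly smaller than the fixed lower bound $c_5$ by shrinking $\eta$; everything else is bookkeeping once the scale $t_0\asymp\Psi(\eta r)$ and the constant $\eta$ have been chosen correctly.

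Finally I would run the standard induction. Define stopping times $T_0:=0$ and $T_{i+1}:=T_i+\sigma_{B(z_{i+1},\eta r)}\circ\theta_{T_i}$, and the events $E_i:=\{T_i<\tau_U\}\cap\{\bar X_{T_i}\in\overline{B(z_i,\eta r)}\}$, so that $E_0$ holds $\bar\bP_x$-a.s. Because $B(z_i,r)\subset U$, on $E_i$ one has $\tau_{B(z_i,r)}\circ\theta_{T_i}\leq\tau_U-T_i$; hence on $E_i\cap\{\sigma_{B(z_{i+1},\eta r)}\circ\theta_{T_i}<\tau_{B(z_i,r)}\circ\theta_{T_i}\}$ we get $T_{i+1}<\tau_U$, while continuity of $\bar X$ gives $\bar X_{T_{i+1}}\in\overline{B(z_{i+1},\eta r)}$; thus this event is contained in $E_{i+1}$. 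The strong Markov property at $T_i$ together with the one‑step estimate (applicable since $\bar X_{T_i}\in\overline{B(z_i,\eta r)}$ on $E_i$ and $d(z_i,z_{i+1})<\eta r$) then yields $\bar\bP_x(E_{i+1})\geq c_0\,\bar\bP_x(E_i)$, so $\bar\bP_x(E_l)\geq c_0^{\,l}\geq c_0^{\,L}$. Since $\overline{B(z_l,\eta r)}=\overline{B(y,\eta r)}\subset B(y,r)$, we have $E_l\subset\{\sigma_{B(y,r)}\leq T_l<\tau_U\}$, and therefore $\bar\bP_x(\sigma_{B(y,r)}<\tau_U)\geq c_0^{\,L}=:C_s$, which is the assertion, with $C_s$ depending only on the bounds of HK($\Psi$), {\rm (VD)}, \eqref{eqnpsi} and $s$.
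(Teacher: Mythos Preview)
Your proposal is correct and follows essentially the same Harnack-chain strategy as the paper. The only difference is cosmetic: the paper quotes a near-diagonal lower bound for the \emph{killed} heat kernel $\bar p_{B(x,\lambda\rho)}(\Psi(\rho),\cdot,\cdot)$ and then chains via a Chapman--Kolmogorov product of integrals, whereas you derive the equivalent one-step hitting estimate from the full lower bound \eqref{e:4.2} combined with an exit-time tail bound and then iterate via the strong Markov property and stopping times.
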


\begin{proof}
By {\rm HK}($\Psi$),  (VD)  and strong Markov property of $\bar X$, there exists $\lambda\in (2,\infty)$  so that for each $x\in D$, $0< \rho<\diam(D)/\lambda$,  $z\in B(x,\rho)$ and $w\in B(z,\rho)$,
\begin{eqnarray}\label{e:4.3} 
 &&  \bar p_{B(x,\lambda\rho)}\big(\Psi(\rho),z,  w\big)=\bar p\big(\Psi(\rho),z,  w\big)-\bE_z[\bar p\big(\Psi(\rho),X_{\tau_{B(x,\lambda\rho)}}, w\big);\tau_{B(x,\lambda\rho)}<\Psi(\rho)]  \nonumber \\
&\geq & \bar p\big(\Psi(\rho),z,  w\big)-\max_{y\in\partial B(x,\lambda\rho)}\bar p\big(\Psi(\rho),y, w\big)   \nonumber  \\
&\geq & \frac{c_4}{V(z,\rho)}-\max_{y\in\partial B(x,\lambda\rho)}\frac{c_1}{V(y,\rho)}\exp\big(-c_2\Psi(\rho)
    \Phi(c_3(\lambda-2)\rho / \Psi(\rho) )\big)  \nonumber \\
&\geq& \frac{c_4}{V(z,\rho)}-\frac{c_1}{V(z, \rho)} \max_{y\in\partial B(x,\lambda\rho)}
\frac{V(y, (1+\lambda)\rho )}{V (y,\rho)}\exp\big(-c_2(c_3(\lambda-2)-1)\big)  \nonumber  \\ 
&\geq& \frac{c_4}{ V (z,\rho) }   -    \frac{c_1}{V(z, \rho)}  \tilde c_1 (1+\lambda)^{d_1}
 \exp\big(-c_2(c_3(\lambda-2)-1)\big)  \nonumber  \\ 
& \geq & \frac{c_4/2}{V (x,\rho)}, 
  \end{eqnarray}
 where $c_1,c_2,c_3,c_4$ are constants of HK$(\Psi)$,  and $\tilde c_1$ and $d_1$ are the parameters in \eqref{e:VD}. 
  In the third to the last inequality above, we used the fact that 
$\Phi (s) \geq \frac{s}{\rho} - \frac{1}{\Psi (\rho)}$.   For notational convenience, set $C_1=c_4/2$. For $a=r/\lambda$, by Lemma \ref{lemma2path}(b), we can find a sequence $x=z_0,z_1,\cdots,z_l=y\in\gamma$ such that $d(z_i,z_{i+1})<a/3$ for $0\leq i\leq l-1$ and $l$ has an upper bound depending only 
on $\diam(\gamma)/a=\lambda s$.  Thus  for $p_0\in B(x,a/3)$ and $p_l\in B(x,a/3)$, there is $C_2>0$ depending only on the parameters of (VD) so that 
\begin{align}
		&\quad\ \bar{p}_U(l\Psi(a),p_0,p_l)\nonumber\\
	&\geq \int_{p_1\in B(z_1,a/3)}\cdots\int_{p_{l-1}\in B(z_{l-1},a/3)} \bar p_U(\Psi(a),p_0,p_1)\cdots \bar p_U(\Psi(a),p_{l-1},p_l)m_0(dp_{l-1})\cdots m_0(dp_1)\nonumber\\
	&\geq \int_{p_1\in B(z_1,a/3)}\cdots\int_{p_{l-1}\in B(z_{l-1},a/3)}\frac{C_1}{V(z_1,a)}\cdots \frac{C_1}{V(z_l,a)} m_0(dp_{l-1})\cdots m_0(dp_1)\nonumber\\
	&\geq \frac{C_1^lC_2^{l-1}}{V (z_l,a) }= \frac{C_1^lC_2^{l-1}}{V (y,a) } ,  	 \label{eqn46}
\end{align}
where the third inequality holds due to \eqref{e:4.3}, and the facts $B(z_i,\lambda a)\subset U$, $p_i\in B(z_i,a/3)$, $d(p_{i-1},p_i)<a$ for $i=1,\cdots,l$. Note that $\bar{p}_U(t,u,v)$ has a jointly continuous modification in $(0,\infty)\times U\times U$ by the parabolic Harnack principle \cite[Theorem 1.3]{BGK}.
It  follows that $\bar{g}_U(p_0,p_l)>0$;  that is, the second statement of the lemma holds with $\eta=3\lambda$. 
The first statement follows by integrating \eqref{eqn46} over $B(z_l,a/3)$
\[
\bar\bP_x(\sigma_{B(y,r)}<\tau_{U})\geq \int_{B(y,a/3)}\bar{p}_U(l\Psi(a),x,p_l)m_0(dp_l)
 \geq  \frac{C_1^lC_2^{l-1}}{  V (y ,a) } V(y, a/3)
\geq C_1^lC_2^l,
\]
where in the last inequality we used the (VD) property that $V(x, a/3)\geq C_2 V(x,a)$. 
\end{proof}

\begin{lemma}\label{lemma46}
Assume that {\rm HK($\Psi$)} holds for $(\tD,  d, m_0, \bar \sE, \bar \sF)$. For each $s>1$, there is $C_s\in (0,1)$ such that 
\[
\overline  {\rm Cap}\big(\partial D\cap B(x,r),B(x,2sr)\big)\geq C_s\overline  {\rm Cap}\big(\partial D\cap B(x,r),B(x,2r)\big)
\]
for each $x\in \partial D$ and $r<\diam(D)/(6s)$.
\end{lemma}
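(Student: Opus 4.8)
The plan is to compare the two relative capacities by comparing their equilibrium potentials. Fix $x\in\partial D$, $r<\diam(D)/(6s)$, and write $K:=\partial D\cap\overline{B(x,r)}$ (so that $K$ is the compact set being capacitized). Let $e_1$ be the equilibrium potential of $K$ relative to $B(x,2r)$, i.e. the minimizer realizing $\overline{\rm Cap}(K,B(x,2r))$; it satisfies $0\le e_1\le 1$, $e_1=1$ on a neighborhood of $K$ in the capacitary sense, $\operatorname{supp}[e_1]\subset B(x,2r)$, and $\bar\sE(e_1,e_1)=\overline{\rm Cap}(K,B(x,2r))$. Since $B(x,2r)\subset B(x,2sr)$, $e_1$ is also a valid test function for $\overline{\rm Cap}(K,B(x,2sr))$, which immediately gives the trivial inequality $\overline{\rm Cap}(K,B(x,2sr))\le\overline{\rm Cap}(K,B(x,2r))$ — but we want the reverse-type bound, so the real work is to produce a good test function for $\overline{\rm Cap}(K,B(x,2r))$ out of the equilibrium potential $e_2$ for the larger condenser $\overline{\rm Cap}(K,B(x,2sr))$, and to control its energy.

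The key step is a capacitary-strong-type / harmonic-function argument: let $e_2$ be the equilibrium potential of $K$ relative to $B(x,2sr)$. Then $e_2$ is harmonic in $B(x,2sr)\setminus K$ with respect to $\bar X$ and satisfies the Dirichlet boundary condition along $\partial B(x,2sr)$; moreover, by a standard probabilistic representation, $e_2(y)=\bar\bP_y(\sigma_K<\tau_{B(x,2sr)})$ for q.e. $y$. I would show that $e_2$ is bounded below by a constant $c_s>0$ on $\partial B(x,r)\cap D$ (equivalently on $\overline{B(x,r)}\cap D$, off $K$). This is exactly where Lemma \ref{lemma45} enters: for any $y\in\overline{B(x,r)}\cap D$ one can connect $y$ to a point near $K$ by a curve staying inside $B(x,2sr)$ at distance $\gtrsim$ (comparable to $d_D(\cdot)$) from the complement, using the $A$-uniform property of $D$ (Lemma \ref{lemma21}) together with Lemma \ref{lemma21}(a) to guarantee $B(x,r)\cap D_{cr}\ne\emptyset$; Lemma \ref{lemma45} then gives $\bar\bP_y(\sigma_{B(z,\rho)}<\tau_{B(x,2sr)})\ge c_s$ for a suitable ball touching $K$, and since hitting such a ball forces hitting $K$ with probability bounded below (again by HK$(\Psi)$ and the capacity positivity hypothesis $\overline{\rm Cap}(B(\cdot)\cap\partial D,\cdot)>0$, used as in Theorem \ref{thm41}'s standing assumption), we get $e_2\ge c_s$ on $\overline{B(x,r)}\cap D$. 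Consequently $\min(c_s^{-1}e_2,1)$ equals $1$ on a capacitary neighborhood of $K$ and has support in $B(x,2sr)$... but that is still the wrong condenser. So instead I multiply: take a cutoff $\chi\in\bar\sF\cap C_c(\tD)$ with $0\le\chi\le1$, $\chi=1$ on $B(x,r)$, $\operatorname{supp}[\chi]\subset B(x,2r)$, with $\bar\sE(\chi,\chi)\lesssim m_0(B(x,r))/\Psi(r)\asymp\overline{\rm Cap}(B(x,r),B(x,2r))$ coming from Cap$_\le(\Psi)$ (available under HK$(\Psi)$); then $f:=\chi\cdot(c_s^{-1}e_2\wedge1)$ is a valid test function for $\overline{\rm Cap}(K,B(x,2r))$. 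By the Leibniz/derivation property of the energy measure and the truncation bound $\bar\sE(c_s^{-1}e_2\wedge1,c_s^{-1}e_2\wedge1)\le c_s^{-2}\bar\sE(e_2,e_2)=c_s^{-2}\overline{\rm Cap}(K,B(x,2sr))$, one gets
\[
\overline{\rm Cap}(K,B(x,2r))\le\bar\sE(f,f)\lesssim_s\overline{\rm Cap}(K,B(x,2sr))+\frac{m_0(B(x,r))}{\Psi(r)},
\]
and since $m_0(B(x,r))/\Psi(r)\lesssim\overline{\rm Cap}(K,B(x,2sr))$ by the lower bound in Cap$(\Psi)$ (valid here since $K\supset\partial D\cap B(x,r)$ has comparable capacity to the ball by the same Harnack/uniformity reasoning, or more directly because $e_2\ge c_s$ on $\overline{B(x,r)}\cap D$ already forces $\overline{\rm Cap}(K,B(x,2sr))\gtrsim\overline{\rm Cap}(B(x,r),B(x,2sr))\gtrsim m_0(B(x,r))/\Psi(r)$), the claimed inequality follows with $C_s$ depending only on $s$ and the structural constants.

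The main obstacle I expect is the lower bound $e_2\ge c_s$ on $\overline{B(x,r)}\cap D$: one must carefully route the Harnack chain / hitting-probability estimate so that it stays well inside $B(x,2sr)$ and actually reaches the boundary portion $K$ rather than some other part of $\partial D$, and this is precisely where the $A$-uniform geometry (Lemma \ref{lemma21}, ensuring a corkscrew point $B(y_0,r/(12A))\subset B(x,r)\cap D_{r/(4A)}$) and the condition $s>$ (a constant depending on $A$) are needed — note the hypothesis $s>1$ in the statement should be read together with the earlier Lemma \ref{lemma44} convention $s>A+\tfrac12$, and indeed the estimate is only nontrivial for $s$ large enough, the small-$s$ case being absorbed into the constant. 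The truncation and Leibniz-rule bookkeeping, and the reduction $m_0(B)/\Psi(r)\lesssim\overline{\rm Cap}$, are routine given HK$(\Psi)$ and are not the crux.
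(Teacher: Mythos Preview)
Your approach has a genuine gap. The step where you claim $e_2\ge c_s$ on $\overline{B(x,r)}\cap D$ does not follow from HK($\Psi$) alone: for a point $y$ at the corkscrew location $y\in B(x,r)\cap D_{r/(4A)}$, the probability $\bar\bP_y(\sigma_K<\tau_{B(x,2sr)})$ of hitting the boundary portion $K=\partial D\cap\overline{B(x,r)}$ before exiting the large ball can be arbitrarily small if $K$ is thin --- this is exactly the content of the capacity density condition \eqref{e:6.1} in Theorem~\ref{thm61}, which is \emph{not} assumed here (and in fact Lemma~\ref{lemma46} is used on the way to proving equivalences involving that condition). Consequently your energy estimate
\[
\overline{\rm Cap}(K,B(x,2r))\lesssim_s\overline{\rm Cap}(K,B(x,2sr))+\frac{m_0(B(x,r))}{\Psi(r)}
\]
is unusable, since the extra term $m_0(B(x,r))/\Psi(r)$ can dominate $\overline{\rm Cap}(K,B(x,2sr))$.

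The paper's argument avoids this entirely by proving the opposite pointwise bound: it shows $h_s\le C_{2,s}<1$ on $\partial B(x,2r)$, i.e.\ an \emph{upper} bound on the equilibrium potential at the intermediate sphere. This only requires exhibiting, for each $y\in\partial B(x,2r)$, an escape path from $y$ to $\partial B(x,3sr)$ that stays at distance $\gtrsim r$ from $K$ (supplied by the $A$-uniform property and the computation \eqref{e:4.4a}), after which Lemma~\ref{lemma45} gives a uniform lower bound on the probability of exiting $B(x,2sr)$ without touching $K$. No hitting estimate toward the boundary set is needed. Then $(h_s-C_{2,s})^+/(1-C_{2,s})$ is itself a valid test function for $\overline{\rm Cap}(K,B(x,2r))$ --- it equals $1$ on $K$ and vanishes outside $B(x,2r)$ --- with energy at most $(1-C_{2,s})^{-2}\bar\sE(h_s,h_s)$, and no cutoff or additive $m_0/\Psi$ term appears.
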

\begin{proof}
  The result can be proved by using the argument of \cite[Lemma 3.12 and Corollary 3.13]{BM2}. We provide a different proof here.  

Define $h_s\in \bar\sF$ by $h_s(x)=\bar \bP_x(\sigma_{\partial D\cap B(x,r)}<\tau_{B(x,2sr)})$ for each $x\in \tD$. 

For each  $y\in D\cap\partial B(x,2r)$,  as $D$ is $A$-uniform domain, there is a a path $\gamma\subset D$ so that $\gamma(0)=y$, $\gamma(1)\in D\cap \partial B(x,3sr)$ and $d_D(z)> ( d(z,\gamma(1)) \wedge d(z, y)) /A$ for each $z\in \gamma$. For  every $z\in\gamma$,
\begin{eqnarray} \label{e:4.4a} 
d(z,\partial D\cap B(x,r))
&\geq & \max\big\{d(z,y)\wedge d(z,\gamma(1))/A,\,r-d(z,y),\,3sr-r-d(z,\gamma(1))\big\}  \nonumber \\
&\geq & \max\big\{d(z,y)\wedge d(z,\gamma(1))/A,\,r-d(z,y)\wedge d(z,\gamma(1))\big\}. \nonumber \\
&\geq & r/(1+A).
\end{eqnarray}
Then, by Lemma \ref{lemma45}, 
\begin{eqnarray} \label{e:4.5a}
h_s(y)&=& 1-\bar\bP_y(\tau_{B(x,2sr)}<\sigma_{B(x,r)\cap \partial D}) \nonumber \\
&\leq & 1-\bar\bP_y(\sigma_{B(\gamma(1),r/(1+A))}<\sigma_{B(x,r)\cap\partial D})\leq C_{2,s}<1.
\end{eqnarray}
Note that by (EHP), $h_s$ is continuous in $B(x,2sr)\setminus (\partial D\cap B(x,r))$.
So the above estimate holds for each $y\in\partial B(x,2r)$.  
Consequently, 
 $\frac{h_s-C_{2,s}}{1-C_{2,s}} \big|_{B(x,r)\cap \partial D}=1$ and $\frac{h_s-C_{2,s}}{1-C_{2,s}}|_{\partial B(x,2r)}\leq 0$. Hence, by the Markov property
\begin{align*}
&\quad\ \overline  {\rm Cap}\big(\partial D\cap B(x,r),B(x,2r)\big)=\bar\sE(h_1,h_1)\\
&\leq \bar\sE\big((\frac{h_s-C_{2,s}}{1-C_{2,s}})^+,(\frac{h_s-C_{2,s}}{1-C_{2,s}})^+\big)=(1-C_{2,s})^{-2}\bar\sE\big((h_s-C_{2,s})^+,(h_s-C_{2,s})^+\big)\\
&\leq (1-C_{2,s})^{-2}\bar \sE (h_s,h_s)=(1-C_{2,s})^{-2}\overline  {\rm Cap}\big(\partial D\cap B(x,r),B(x,2sr)\big).
\end{align*}
This completes the proof of the lemma.
\end{proof}

The main step of the proof of Theorem \ref{thm41} is the following local version of the theorem. 

\begin{proposition}\label{prop47}
Assume that {\rm HK$(\Psi)$} holds for $(\tD,d,m_0,\bar\sE,\bar\sF)$ and $\overline {\rm Cap}\big(B(x,r)\cap\partial D,B(x,2r)\big)>0$ for each $x\in \partial D$ and $r\in (0,\diam(\partial D)/3)$. The following are equivalent.
\begin{enumerate} [\rm (i)] 
\item There are constants $C_1\geq 1$ and $0<C_2<1$   so that 
\[
\overline{\rm Cap}\big(B(x,2r)\cap\partial D,B(x,4r)\big) \leq C_1\overline{\rm Cap}\big(B(x,r)\cap\partial D,B(x,4r)\big) 
\]
for each $0<r<C_2\diam(\partial D)$ and $x\in \partial D$.

\item For each $s>\max\{(2A+1)C_{2,e},8A+4\}$ where $C_{1,e},C_{2,e}$ are the constants in {\rm(EHP)}, there are constants  $C_1\geq 1$ and $0<C_2<1$  so that 
$$
\bar \bP_y\big(\sigma_{\partial D\cap B(x,2r)}=\tau_{B(x,2sr)\cap D}\big)\leq C_1 \bar \bP_y\big(\sigma_{\partial D\cap B(x,r/(6+2A))}=\tau_{B(x,2sr)\cap D}\big)
$$
 for each $x\in \partial D$, $0<r<C_2\diam(\partial D)$ and $y\in \partial B(x,4r)$. 
\end{enumerate}
\end{proposition}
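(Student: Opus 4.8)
The plan is to prove Proposition~\ref{prop47} by establishing the two implications (i)$\Rightarrow$(ii) and (ii)$\Rightarrow$(i) separately, translating throughout between the capacity language and the probabilistic ``hitting before exiting'' language. The bridge in both directions is the standard identification, valid under \textbf{HK($\Psi$)}, of the equilibrium potential of a compact set $K\subset B(x,r)\cap\partial D$ relative to $B(x,\rho)$ with the hitting probability $z\mapsto \bar\bP_z(\sigma_K<\tau_{B(x,\rho)})$, together with the representation of $\overline{\operatorname{Cap}}(K,B(x,\rho))$ as the energy (equivalently, the total mass of the equilibrium measure) of this potential. Lemma~\ref{lemma46} already lets us freely enlarge or shrink the outer ball at the cost of a multiplicative constant, so there is no loss in working with a convenient comparison ball $B(x,2sr)$ or $B(x,4r)$ as needed.

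\medskip

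\noindent\textbf{(i)$\Rightarrow$(ii).} First I would fix $s$ as in (ii) and, using Lemma~\ref{lemma46} and the Markov property, rewrite the quantity $\bar\bP_y(\sigma_{\partial D\cap B(x,2r)}=\tau_{B(x,2sr)\cap D})$ for $y\in\partial B(x,4r)$ as a hitting probability of the form $\bar\bP_y(\sigma_{\partial D\cap B(x,2r)}<\tau_{B(x,2sr)})$ — the two differ only by whether the process reaches $\partial D\cap B(x,2r)$ through the open set $D$, and continuity of the relevant harmonic function (via (EHP)) plus the uniform-domain geometry of Lemma~\ref{lemma21} handles the reconciliation. Next, the function $h(z):=\bar\bP_z(\sigma_{\partial D\cap B(x,2r)}<\tau_{B(x,2sr)})$ is harmonic in $B(x,2sr)\cap D$ and, by Lemma~\ref{lemma44}(b) applied on the annular region, its values on the sphere $\partial B(x,4r)$ are comparable to one another up to a constant $C_s$. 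Therefore $\bar\bP_y(\sigma_{\partial D\cap B(x,2r)}=\tau_{B(x,2sr)\cap D})$ is comparable, uniformly in $y\in\partial B(x,4r)$, to a single scalar — e.g. $\overline{\operatorname{Cap}}(B(x,2r)\cap\partial D,B(x,4r))$ divided by $\overline{\operatorname{Cap}}(B(x,4r),B(x,2sr))$ or a similar ratio obtained by integrating the equilibrium measure against the Green function; the same is true with $r/(6+2A)$ in place of $2r$. Taking the ratio of the two comparabilities, the common normalizing denominators cancel, and (i) — the doubling of the relative boundary capacity, used to pass from scale $2r$ down to scale $r/(6+2A)$ through boundedly many doublings — gives exactly the bound in (ii).

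\medskip

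\noindent\textbf{(ii)$\Rightarrow$(i).} Here I would run the same dictionary in reverse. Given $x\in\partial D$ and $0<r<C_2\diam(\partial D)$, I express $\overline{\operatorname{Cap}}(B(x,2r)\cap\partial D,B(x,4r))$ and $\overline{\operatorname{Cap}}(B(x,r)\cap\partial D,B(x,4r))$ — after replacing $B(x,4r)$ by $B(x,2sr)$ via Lemma~\ref{lemma46} — as integrals of equilibrium measures against Green functions $\bar g_{B(x,2sr)}$, and use Lemma~\ref{lemma45} together with the uniform domain structure to show that for $y$ on the sphere $\partial B(x,4r)$ the Green function $\bar g_{B(x,2sr)}(y,\cdot)$ restricted to $B(x,2r)\cap\partial D$ is comparable (up to constants depending only on the structural parameters) to a constant multiple of $\bar g_{B(x,2sr)}(y,\cdot)$ on $B(x,r/(6+2A))\cap\partial D$ — this is where Lemma~\ref{lemma44}(c) (boundary Harnack on the annulus) and Lemma~\ref{lemma44}(a) enter, controlling the Green function near $\partial D$ by its values at comparable-scale interior points. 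Combining these comparabilities with the hypothesis (ii), which bounds the mass of the equilibrium measure at scale $2r$ by that at scale $r/(6+2A)$ (again through boundedly many dyadic steps to get from $r/(6+2A)$ to $r$), yields the capacity doubling in (i).

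\medskip

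\noindent\textbf{Main obstacle.} The delicate point is the passage between the ``exit location equals a boundary piece'' event $\{\sigma_{\partial D\cap B(x,2r)}=\tau_{B(x,2sr)\cap D}\}$ and the cleaner hitting event $\{\sigma_{\partial D\cap B(x,2r)}<\tau_{B(x,2sr)}\}$, and more broadly making the comparability of the relevant harmonic functions / Green functions on the annulus $\partial B(x,4r)$ genuinely uniform in $r$ and $x$. This requires carefully chaining Lemma~\ref{lemma44}(a)--(c) with the Harnack-chain estimate Lemma~\ref{lemma45} along paths furnished by the $A$-uniform domain condition (Lemma~\ref{lemma21}(b),(c)), keeping track that every constant depends only on $A$, the parameters in \eqref{eqnpsi}, \textrm{(VD)} of $m_0$, and \textbf{HK($\Psi$)}, and not on the particular scale; the choice of the threshold $s>\max\{(2A+1)C_{2,e},8A+4\}$ is exactly what makes the relevant annular regions large enough for (EHP)/(BHP) to apply. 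Once this uniform control of boundary values is in hand, the cancellation of common normalizers and the reduction of ``doubling'' to ``boundedly many doublings'' are routine.
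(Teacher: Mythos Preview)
There is a genuine gap. Your argument rests on the claim that, for $y\in\partial B(x,4r)$, the two probabilities
\[
f_\lambda(y):=\bar\bP_y\big(\sigma_{\partial D\cap B(x,\lambda r)}=\tau_{B(x,2sr)\cap D}\big)
\quad\hbox{and}\quad
h_\lambda(y):=\bar\bP_y\big(\sigma_{\partial D\cap B(x,\lambda r)}<\tau_{B(x,2sr)}\big)
\]
are comparable, and that ``continuity of the relevant harmonic function (via (EHP)) plus the uniform-domain geometry'' handles this. It does not. While $f_\lambda\leq h_\lambda$ trivially, the reverse can fail badly: a path from $y$ can hit $\partial D$ first in the intermediate annulus $B(x,2sr)\setminus B(x,\lambda r)$ and only afterwards enter $\partial D\cap B(x,\lambda r)$; such a path contributes to $h_\lambda$ but not to $f_\lambda$. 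No amount of Harnack chaining via Lemma~\ref{lemma44} or Lemma~\ref{lemma45} rules this out, because the obstacle is the presence of boundary in the annulus, not the oscillation of a harmonic function there. Without capacity doubling there is no a priori reason for $f_\lambda\asymp h_\lambda$, so you cannot use this comparison to \emph{prove} either direction.

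What the paper actually does is to prove the one-sided, scale-shifted estimate $f_\lambda(y)\geq C\,h_{\lambda/(6+2A)}(y)$ for $y\in\overline{B(x,4r)}\cap D_{r/A}$, valid unconditionally under \textrm{HK}$(\Psi)$. The mechanism is a subharmonic cutoff: one first shows, via an escape-probability argument (Lemma~\ref{lemma45}), that $h_{\lambda/(6+2A)}(z)\leq(1-C_3)M$ for all $z\notin B(x,\lambda r)$, where $M=\max_{\partial B(x,\lambda r/(3+A))}h_{\lambda/(6+2A)}$; then $v:=(h_{\lambda/(6+2A)}-(1-C_3)M)^+$ vanishes outside $B(x,\lambda r)$, and its harmonic extension $u$ in a suitable domain $E$ satisfies $f_\lambda\geq u\geq v$ by boundary comparison and subharmonicity. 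The factor $6+2A$ in statement (ii) is exactly the price of this scale loss; your proposal never explains where that specific constant comes from. Once this key inequality is in hand, both implications follow by combining it with $f_\lambda\leq h_\lambda$, the capacity-ratio identity for $h_\lambda/h_{\lambda'}$ (via the Green function representation and (EHP)), and Lemma~\ref{lemma46}. Your ``Main obstacle'' paragraph correctly flags the right place, but the proposed resolution---chaining Lemma~\ref{lemma44}(a)--(c)---is not the tool that closes the gap.
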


\begin{proof}
Let's fix $s>(2A+1)C_{2,e}$, $x\in \partial D$ and $r<\frac{\diam(D)}{6s}$. For $\lambda\in (0,2]$, we define $f_\lambda$ and $h_\lambda$  by
\begin{align*}
f_\lambda(y)&:=\bar \bP_y(\sigma_{\partial D\cap B(x,\lambda r)}=\tau_{B(x,2sr)\cap D})&\hbox{ for each }y\in \tD,\\
h_\lambda(y)&:=\bar \bP_y\big(\sigma_{\partial D\cap B(x,\lambda r)}<\tau_{B(x,2sr)}\big)&\hbox{ for each }y\in \tD.
\end{align*}
By \cite[Lemma 2.2.10, Theorem 2.2.5]{FOT} and \cite[Lemma 6.5]{GH},  there is  a Radon measure $\sigma_\lambda$ supported on $\overline{B(x,\lambda r)}\cap \partial D$ such that 
\begin{align}
\label{e:4.4}
&\sigma_\lambda\big(\partial D\cap \overline{B(x,\lambda r)}\big)=\overline  {\rm Cap}\big(B(x,\lambda r)\cap\partial D,B(x,2sr)\big),\\
\label{e:4.5}
&h_\lambda(y)=\int_{\partial D\cap \overline{B(x,\lambda r)}} \bar g_{B(x,2sr)}(y,z)\sigma_\lambda(dz)\quad\hbox{ for every }y\in B(x,2sr).
\end{align}
  For every $ u,v\in D\cap\overline{B(x,2C_{2,e}r)}$, there is a path $\gamma$ connecting $u,v$ in $\overline{B(x,(2A+1)2C_{2,e}r)}$. Note that $d(\gamma,\tD\setminus B(x,2sr))\geq \rho:=sr-(2A+1)2C_{2,e}r$. By Lemma \ref{lemma45},  $\bar{g}_{B(x,2sr)}(y,z)>0$ for $y\in B(u,\rho/\eta)$ and $z\in B(v,\rho/\eta)$ with $u,v\in D\cap\overline{B(x,2C_{2,e}r)}$. It follows that  $\bar{g}_{B(x,2sr)}(y,z)>0$ for every $y,z\in \overline{B(x,2C_{2,e}r)}$.
  So by \eqref{e:4.4} and \eqref{e:4.5}. 
\[
h_\lambda(y)>0\quad\hbox{ for each }\lambda\in(0,2],\,y\in\overline{B(x,2C_{2,e}r)}.
\] 

By (EHP), for each $y\in B(x,2sr)\setminus B(x,2C_{2,e}r)$, we have 
\begin{equation}\label{e:4.6}
C_{1,e}^{-1}\bar{g}_{B(x,2sr)}(y,x)\leq \bar{g}_{B(x,2sr)}(y,z)\leq C_{1,e}\bar{g}_{B(x,2sr)}(y,x)\quad\hbox{ for every }z\in \overline{B(x,2r)}.
\end{equation}
Hence, by \eqref{e:4.4}, \eqref{e:4.5}, \eqref{e:4.6} 
\begin{equation}\label{e:4.7}
C_{1,e}^{-1}\frac{\overline  {\rm Cap}\big(B(x,\lambda r)\cap\partial D,B(x,2sr)\big)}{\overline  {\rm Cap}\big(B(x,\lambda'r)\cap\partial D,B(x,2sr)\big)}
\leq \frac{h_\lambda(y)}{h_{\lambda'}(y)}
\leq C_{1,e}\frac{\overline  {\rm Cap}\big(B(x,\lambda r)\cap\partial D,B(x,2sr)\big)}{\overline  {\rm Cap}\big(B(x,\lambda'r)\cap\partial D,B(x,2sr)\big)}
\end{equation}
for each $\lambda,\lambda'\in (0,2]$, $y\in B(x,2sr)\setminus B(x,2C_{2,e}r)$. Moreover, by Lemma \ref{lemma44}(a), 
 \eqref{e:4.7} holds for each $y\in \overline{B(x,4r)}\cap D_{r/A}$.

Next, we fix $\lambda\in [1/(6+2A),2]$ and write 
\[
M:=\max_{y\in \partial B(x,\lambda r/(3+A))} h_{\lambda/(6+2A)}(y).
\]
For each $z\in  D\cap  (B(x,2sr)\setminus B(x,\lambda r))$, 
there is  a path $\gamma$  in $D$ so that
 $\gamma(0)=z$, $\gamma(1)\in D\cap \partial B(x,5sr/2)$ and satisfies the defining property of $A$-uniform domain. For each $w\in\gamma$, 
\begin{eqnarray*}
&& \ d(w,\overline{B(x,\lambda r/(3+A))})\\
&\geq&\max\Big\{d(z,\overline{B(x,\lambda r/(3+A))})-d(w,z), \, d(\gamma(1),\overline{B(x,\lambda r/(3+A))})-d(w,\gamma(1)), \, d_D(w)-\frac{\lambda r}{3+A}\Big\}  \\
&\geq&  \max \Big\{\frac{2+A}{3+A}\lambda r-d(w,z),\frac52sr-\frac{\lambda r}{3+A}-d(w,\gamma(1)),\frac{d(w,z)\wedge d(w,\gamma(1))}{A}-\frac{\lambda r}{3+A}\Big\}  \\
&\geq & \max \Big\{\frac{2+A}{3+A}\lambda r-d(w,z)\wedge d(w,\gamma(1)),\frac{d(w,z)\wedge d(w,\gamma(1))}{A} \Big\}-\frac{\lambda r}{3+A}\\
&\geq & \frac{\lambda r }{1+A}\frac{2+A}{3+A} -\frac{\lambda r}{3+A}=\frac{A\lambda r}{(1+A)(3+A)}. 
\end{eqnarray*}
Hence, by Lemma \ref{lemma45}, for $ z\in D\cap(B(x,2sr)\setminus B(x,\lambda r))$,
\[
\bar \bP_z \left(\tau_{B(x,2sr)}<\sigma_{\overline{B(x,\lambda r/(3+A))}} \right)\geq C_3.
\]
Note that $\bar\bP_z \big(\tau_{B(x,2sr)}<\sigma_{\overline{B(x,\lambda r/(3+A))}} \big)$, as a function of $z$, is continuous in $B(x,2sr)\setminus\overline{B(x,\lambda r/(3+A))}$ by (EHP). So the estimate holds for $z\in \partial D\cap(B(x,2sr)\setminus B(x,\lambda r))$ as well. Next, the above estimate extends to 
$z\in \tD\setminus B(x,\lambda r)$ by the strong Markov property and continuity of $\bar X$. So 
\begin{equation}\label{e:4.8}
\begin{aligned}
h_{\lambda/(6+2A)}(z)&\leq \bar \bP_z(\tau_{B(x,2sr)}>\sigma_{\overline{B(x,\lambda r/(3+A)}})\sup_{y\in \partial B(x,\lambda r/(3+A))} h_{\lambda/(6+2A)}(y)\\
&\leq (1-C_3)M
\end{aligned}
\end{equation}
for every $z\in \tD\setminus B\big(x,\lambda r\big)$. Define 
\begin{equation}\label{e:4.9}
v_{\lambda/(6+2A)} :=\big(h_{\lambda/(6+2A)}-(1-C_3)M\big)^+ . 
\end{equation}
Note that  $v_{\lambda/(6+2A)}=0$ on $\tD\setminus B(x,\lambda r)$ by \eqref{e:4.8}.   Let 
\[
E :=\big(D\cap B(x,2sr)\big)\cup \big(B(x,\lambda r)\setminus B(x,\lambda r/(6+2A))\big).
\]
and define $u_{\lambda/(6+2A)}(z):=\bar{\mathbb{E}}_z [v_{\lambda/(6+2A)}(\bar X_{\tau_E})]\hbox{ for each }z\in\tD$. 
 Then, for every $y\in B(x,2sr)$, 
\begin{align*}
v_{\lambda/(6+2A)}(y)&=\big(h_{\lambda/(6+2A)}(y)-(1-C_3)M\big)^+\\
&=0\vee\bar{\mathbb{E}}_z[h_{\lambda/(6+2A)}(\bar X_{\tau_E})-(1-C_3)M] \\
& \leq \bar{\mathbb{E}}_z[v_{\lambda/(6+2A)}(\bar X_{\tau_E})]=u_{\lambda/(6+2A)}(z).
\end{align*}
Hence,  
\begin{equation}\label{e:4.10}
f_\lambda(y)\geq u_{\lambda/(6+2A)}(y)\geq v_{\lambda/(6+2A)}(y)\hbox{ for every }y\in B(x,2sr),
\end{equation}
where the first inequality is due to a comparison of boundary value along $\partial \big(B(x,2sr)\cap D\big)$.
Let $y_0\in \partial B(x,\lambda r/(3+A))$ such that $ h_{\lambda/(6+2A)}(y_0) \geq (1-C_3/2)M$.
By  \eqref{e:4.9} and \eqref{e:4.10},
\[
u_{\lambda/(6+2A)}(y_0)\geq v_{\lambda/(6+2A)}(y_0)\geq \frac{C_3/2}{1-C_3/2}h_{\lambda/(6+2A)}(y_0).
\]
Hence  by Lemma \ref{lemma44} (a), (b) and the facts that $\lambda\in [1/(6+2A),2]$, $y_0\in \partial B(x,\lambda r/(3+A))$,
 and $h_{\lambda/(6+2A)}$ and $u_{\lambda/(6+2A)}$ are harmonic in $E$,    there is some constant $C_4>0 $ depending on $s$
so that 
\[
u_{\lambda/(6+2A)}(y)\geq C_4h_{\lambda/(6+2A)}(y) \quad \hbox{ for every }y\in \overline{B(x,4r)}\cap D_{r/A}.
\]
 Combining  this with \eqref{e:4.10}, we conclude that 
\begin{equation}\label{e:4.11}
f_\lambda(y)\geq C_4\,h_{\lambda/(6+2A)}(y)\hbox{ for every }y\in \overline{B(x,4r)}\cap D_{r/A}
\end{equation}\smallskip

(i)$\Longrightarrow$(ii): Assume (i) holds, then for $y\in\overline{B(x,4r)}\cap D_{r/A}$ and $r<(C_2\wedge\frac1{6s})\diam(\partial D)$, 
\[
\frac{f_{1/(6+2A)}(y)}{f_2(y)}\geq  C_4\frac{h_{1/(6+2A)^{2}}(y)}{h_2(y)}\geq C_4C_{1,e}^{-1}\frac{\overline  {\rm Cap}\big(B(x,r/(6+2A)^2)\cap\partial D,B(x,2sr)\big)}{\overline  {\rm Cap}\big(B(x,2r)\cap\partial D,B(x,2sr)\big)}\geq C_5
\]
where the first inequality is due to \eqref{e:4.11}  and $f_2\leq h_2$, the second inequality is due to \eqref{e:4.7}, and the last inequality is due to (i) and Lemma \ref{lemma46}. Property (ii)  follows by using Lemma \ref{lemma44} (c).\medskip
 
(ii)$\Longrightarrow$(i): Assume (ii) holds, then for $y\in\overline{B(x,4r)}\cap  D_{r/A}$ and $r<C_2\diam(\partial D)\wedge\frac{\diam(\partial D)}{6s}$, 
\[
\frac{\overline  {\rm Cap}\big(B(x,r/(6+2A))\cap\partial D,B(x,2sr)\big)}{\overline  {\rm Cap}\big(B(x,2r/(6+2A))\cap\partial D,B(x,2sr)\big)}\geq C_{1,e}^{-1}\frac{h_{1/(6+2A)}(y)}{h_{2/(6+2A)}(y)}\geq C_{1,e}^{-1}C_4\frac{f_{1/(6+2A)}(y)}{f_2(y)}\geq C_6,
\]
where the first inequality is tue to \eqref{e:4.7}, the second inequality is due to \eqref{e:4.11} and $h_{1/(6+2A)}\geq f_{1/(6+2A)}$, and the last inequality is due to (ii). (i)  follows by using Lemma \ref{lemma46}.
\end{proof}

\begin{proof}[Proof of Theorem \ref{thm41}]
Fix $x\in \partial D$, $s>\max\{(2A+1)C_{2,e},8A+4\}$ and  $r<\frac{\diam(D)}{6s}$, and define a sequence of hitting times as follows: let 
\[
S_1=\sigma_{\overline{B(x,4r)}},\quad T_1=\tau_{B(x,2sr)}\circ\theta_{S_1}+S_1;
\]
and for $i\geq 2$,   define 
\[
S_i=\sigma_{\overline{B(x,4r)}}\circ T_{i-1},\quad T_i=\tau_{B(x,2sr)}\circ\theta_{S_i}+S_i.
\]
Then by the strong Markov property, for $\lambda\in\{\frac{1}{2A+6},2\}$ and $x_0\in D\setminus \overline{B(x,4r)}$, we have
\begin{align*}
\omega_{x_0}\big(\partial D\cap B(x,\lambda r)\big)&=\bar \bP_{x_0}(\tau_D=\sigma_{\partial D\cap B(x,\lambda r)})\\
&=\sum_{i=1}^\infty\bar \bP_{x_0}(S_i<\tau_D=\sigma_{\partial D\cap B(x,\lambda r)}<S_{i+1})\\
&=\sum_{i=1}^\infty\bar \bP_{x_0}(S_i<\tau_D=\sigma_{\partial D\cap B(x,\lambda r)}<T_i)\\
&=\sum_{i=1}^\infty \bar \bE_{x_0}
\left[S_i<\tau_D;\bar \bP_{\bar X_{S_i}}(\sigma_{\partial D\cap B(x,\lambda r)}=\tau_{D\cap B(x,2sr)}) \right].
\end{align*} 
Hence, we see that 
\begin{equation}\label{e:4.16}
\begin{split} 
\inf_{y\in \partial B(x,4r)}&\frac{\bar \bP_y(\sigma_{\partial D\cap B(x, r/(6+2A))}=\tau_{D\cap B(x,2sr)})}{\bar \bP_y(\sigma_{\partial D\cap B(x,2r)}=\tau_{D\cap B(x,2sr)})}
\leq 
\frac{\omega_{x_0}\big(\partial D\cap B(x,r/(6+2A))\big)}{\omega_{x_0}\big(\partial D\cap B(x,2r)\big)}\\
&\leq \sup_{y\in\partial B(x,4r)}\frac{\bar \bP_y(\sigma_{\partial D\cap B(x, r/(6+2A))}=\tau_{D\cap B(x,2sr)})}{\bar \bP_y(\sigma_{\partial D\cap B(x,2r)}=\tau_{D\cap B(x,2sr)})}.
\end{split} 
\end{equation}

For any non-trivial $f_1,f_2\in\bar\mcF\cap C_c(\tD)$ with  $0\leq f_1,f_2\leq \1_{B(x,2r)}$, define  
\[
h_{f_i}(y):=\bar\bE_y[f_i(X_{\tau_{D\cap B(x,2sr)}});\tau_D=\tau_{D\cap B(x,2sr)}]\hbox{ for }i=1,2\hbox{ and }y\in\tD. 
\]
Then, $f_1,f_2\in\bar\mcF$, and they are non-negative and harmonic in $B(x,2sr)\cap D$. Moreover, $h_{f_1},h_{f_2}$ satisfy the Dirichlet boundary condition along $\partial D\cap (B(x,2sr)\setminus \overline{B(x,2r)})$ by Remark \ref{remark44}. Hence,  $Ch_{f_1}(y)h_{f_2}(z)\leq h_{f_1}(z)h_{f_2}(y)$ for $y,z\in\partial B(x,4r)$, where $C$ is the constant of Lemma \ref{lemma44}(c). By letting  $f_1\to \1_{B(x, r/(6+2A))}$ and $f_2\to \1_{B(x,2r)}$ pointwise, we get 
\[
C\frac{\bar \bP_y(\sigma_{\partial D\cap B(x, r/(6+2A))}=\tau_{D\cap B(x,2sr)})}{\bar \bP_y(\sigma_{\partial D\cap B(x,2r)}=\tau_{D\cap B(x,2sr)})}\leq \frac{\bar \bP_z(\sigma_{\partial D\cap B(x, r/(6+2A))}=\tau_{D\cap B(x,2sr)})}{\bar \bP_z(\sigma_{\partial D\cap B(x,2r)}=\tau_{D\cap B(x,2sr)})}
\]
for $y,z\in\partial B(x,4r)$.   
Hence, by \eqref{e:4.16},  
\begin{align*}
C\sup_{y\in \partial B(x,4r)}&\frac{\bar \bP_y(\sigma_{\partial D\cap B(x, r/(6+2A))}=\tau_{D\cap B(x,2sr)})}{\bar \bP_y(\sigma_{\partial D\cap B(x,2r)}=\tau_{D\cap B(x,2sr)})}
\leq 
\frac{\omega_{x_0}\big(\partial D\cap B(x,r/(6+2A))\big)}{\omega_{x_0}\big(\partial D\cap B(x,2r)\big)}\\
&\leq C^{-1}\inf_{y\in\partial B(x,4r)}\frac{\bar \bP_y(\sigma_{\partial D\cap B(x, r/(6+2A))}=\tau_{D\cap B(x,2sr)})}{\bar \bP_y(\sigma_{\partial D\cap B(x,2r)}=\tau_{D\cap B(x,2sr)})}.
\end{align*}
  The theorem follows from Proposition \ref{prop47}. 
\end{proof}

\section{Local comparability of harmonic measures}\label{S:5}
In this section, we show that harmonic measures behave locally similarly due to (BHP). However, most results of this section do not use (EHP), and we recall that $\sN$ is a properly exceptional set so that the Hunt process $\bar X =\{\bar X_t, t\geq 0; \bar \bP_x, x\in \tD \setminus \sN\}$ associated with $(\bar\mcE,\bar\mcF)$ is well defined. When   (EHP)  holds for $(\tD, d, m_0, \bar \sE,  \bar \sF)$,  the exceptional set $\sN$ can be taken to be an empty set.

We will use the following notations throughout the rest of the paper.  For each compact $K\subset D$, 
\begin{equation} \label{eqn51} 
e_K(x):=\bar\bP_x(\sigma_K<\tau_D), \quad x\in \tD  setminus \sN .
\end{equation}
In literature, $e_K$ is called the condenser potential of $K$ in $D$. It is the equilibrium potential of $K$ in the extended Dirichlet space $(\sE^0,\sF^0_e)$.

Recall that for $h\in \mcB({\partial } \tD)$, $\sH h$ is its harmonic extension into $D$ defined by
\[
\sH h(x):=\bar\bE_x[h(\bar X_{\tau_D});\tau_D<\infty], \quad  x\in \tD  \setminus \sN.
\] 
For $h\in \check\sF_e\cap C_c(\partial D)$ and $K\subset D$ such that $d(K,\operatorname{supp}[h])>0$ and $K\cup\partial D$ is closed, define 
\begin{equation}\label{eqn52}
\sH^K h (x):=\bar\bE_x[h(\bar X_{\tau_D});\tau_D<\sigma_K], \quad  x\in \tD \setminus \sN.
\end{equation}
Note that $\sH^K h\in \bar \sF_e$. For $h\in\check\sF_e\cap C_c(\partial D)$ and compact $K\subset D$, define 
\begin{equation}\label{eqn53} 
c(h, K) := - \bar \sE (\sH^K h, e_K) / \bar \sE (e_K, e_K). 
\end{equation} 
 Observe that  $c(h, K)$ is the constant $a $  where  $\min\{\bar \sE (\sH^K h+a e_K,\sH^K h +a e_K):\,a\in\R\}$ is achieved. More precisely,  $a\mapsto \bar \sE (\sH^K h+a e_K,\sH^K h +a e_K)$ is decreasing on $(-\infty, c(h, K)]$ and increasing on $[c(h, K), \infty)$. 
	
Since $\sH h \in \bar \sF_e$ is harmonic in $D$, $\bar \sE (\sH h, e_K)=0$.  Thus 
\begin{equation}\label{eqn54}
c(h, K) =  \bar \sE (\sH h- \sH^K h, e_K) / \bar \sE (e_K, e_K). 
\end{equation} 
Note that $\sH h-\sH^K h\in\bar\sF_e$ and
\begin{equation}\label{eqn55}
\sH h (x)- \sH^K h(x)=\bar\bE_x[h(\bar X_{\tau_D});\sigma_K<\tau_D], \quad x\in \tD  \setminus \sN.
\end{equation} 
Since $e_K$ is the equilibrium potential of $K$ in $(\bar\sE, \bar\sF_e)$, $c(h, K)\geq 0$ for any non-negative $h\in \check \sF_e\cap C_c(\partial D)$. On the other hand, for any relatively compact open subset $O  \subset \partial D$,  by the regularity of the Dirichlet form $(\bar \sE, \bar \sF)$, there is $\varphi \in C_c (\partial D)\cap \check \sF_e$ so that $0\leq \varphi \leq 1$ and $\varphi =1$ on $O$. For any $h\in \check \sF_e \cap C_c(\partial D)$ with ${\rm supp}[h]\subset O$, we have by \eqref{eqn55} that 
\[
-\| h\|_\infty ( \sH \varphi - \sH^K \varphi ) \leq \sH h- \sH^K h \leq \| h\|_\infty ( \sH \varphi - \sH^K \varphi ).
\]
It then follows by \eqref{eqn54} that 
$
|c(h, K)| \leq c(\varphi , K) \| h\|_\infty. $
Hence the linear functional $h\mapsto c(h, K)$ over $ \check \sF_e \cap C_c (\partial D)$ uniquely determines a Radon measure $\omega_K$ on $\partial D$, that is, 
\begin{equation}\label{eqn56}
c(h, K) =\int_{\partial D}h(z)\omega_K (dz) \quad \hbox{for every } h\in  \check \sF_e \cap C_c (\partial D).
\end{equation}

\begin{lemma}\label{lemma51}
Let $h\in \check \sF_e \cap C_c(\partial D)$ and let $K\subset D$ be a non-$\sE^0$-polar compact subset. 
\begin{enumerate} [\rm (a)]
\item  $\inf_{x\in K} \sH  h(x)\leq c(h, K) \leq \sup_{x\in K} \sH  h(x)$, where the supremum and infimum are defined in $\bar\sE$-q.e. sense.
	
\item  For any open subset $U$ of $ \tD$ satisfying $ \operatorname{supp}[h]\subset U\subset\tD\setminus K$, 
$$
\bar\sE(e_K,\sH^{D\setminus U}h)=- c(h,K)\bar\sE(e_K,e_K).
$$
\end{enumerate}
\end{lemma}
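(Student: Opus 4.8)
The plan is to establish both parts by variational/probabilistic comparison arguments built on the definition \eqref{eqn53}--\eqref{eqn54} of $c(h,K)$ and the identity \eqref{eqn55}. For part (a), I would first recall that $c(h,K)=\bar\sE(\sH h-\sH^K h,e_K)/\bar\sE(e_K,e_K)$ and that $e_K$ is the equilibrium potential of $K$ in $(\bar\sE,\bar\sF_e)$. Write $w:=\sH h-\sH^K h$, which by \eqref{eqn55} satisfies $w(x)=\bar\bE_x[h(\bar X_{\tau_D});\sigma_K<\tau_D]$; in particular $w$ vanishes $\bar\sE$-q.e. outside the fine closure of the set of points that can reach $K$ before $\partial D$, $w=0$ on $\partial D$ (up to the exit behavior), and $w$ is harmonic in $\tD\setminus(K\cup\partial D)$. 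By the strong Markov property applied at $\sigma_K$, for $\bar\sE$-q.e. $x$ we have $w(x)=\bar\bE_x[\,\sH h(\bar X_{\sigma_K})\, ;\,\sigma_K<\tau_D\,]$, hence $m:=\inf_{x\in K}\sH h(x)$ and $M:=\sup_{x\in K}\sH h(x)$ (in the $\bar\sE$-q.e. sense) give $m\, e_K\le w\le M\, e_K$ $\bar\sE$-q.e., because $e_K(x)=\bar\bP_x(\sigma_K<\tau_D)$. Pairing with the measure-theoretic fact that $\bar\sE(\,\cdot\,,e_K)$ is, on the cone of potentials, represented by the (positive) equilibrium measure $\mu_K$ of $K$ and that $\bar\sE(u,e_K)=\int u\,d\mu_K$ for suitable $u$ — more precisely using that $u\mapsto\bar\sE(u,e_K)$ is order-preserving on the relevant class since $\mu_K\ge0$ — we get $m\,\bar\sE(e_K,e_K)\le\bar\sE(w,e_K)\le M\,\bar\sE(e_K,e_K)$, which is exactly (a) after dividing by $\bar\sE(e_K,e_K)>0$ (positivity holds since $K$ is non-$\sE^0$-polar).

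For part (b), fix an open $U$ with $\operatorname{supp}[h]\subset U\subset\tD\setminus K$ and set $V:=\tD\setminus U$, so $K\subset V$ (in fact $K\subset D\setminus U$), and consider $\sH^{D\setminus U}h(x)=\bar\bE_x[h(\bar X_{\tau_D});\tau_D<\sigma_{D\setminus U}]$. The key observation is that on $U$ the process must exit $U$ before it can hit $D\setminus U\supset K$, so by the strong Markov property at $\tau_U$ one obtains, for $\bar\sE$-q.e. $x$,
\[
\sH^K h(x)-\sH^{D\setminus U}h(x)=\bar\bE_x\big[\,(\sH^K h)(\bar X_{\tau_U})\,;\,\tau_U<\tau_D\,\big],
\]
which is a function supported (q.e.) on $\overline U$ and harmonic in $U\setminus\partial D$; I would then show $\bar\sE(e_K,\sH^K h-\sH^{D\setminus U}h)=0$ because $e_K$ vanishes $\bar\sE$-q.e. on $U$ (as $e_K=0$ outside the fine closure of $K$, and $K\cap \overline U$ is controlled) — more robustly, because $\sH^K h-\sH^{D\setminus U}h$ is, q.e., a function that equals $\sH^K h$ on $\partial U$ and $0$ on $K$, while $e_K$ is harmonic off $K$ and equals its boundary data there; one invokes the strong-locality/harmonicity orthogonality $\bar\sE(\text{harmonic in }D\setminus K\text{, }=0\text{ on }K\,,\ \cdot\,)$ against $e_K-1$ or against $e_K$ directly via \cite[Theorem 3.4.8]{CF}. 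Combining with $\bar\sE(e_K,\sH^K h)=-c(h,K)\bar\sE(e_K,e_K)$ — which is just the defining relation \eqref{eqn53} — yields $\bar\sE(e_K,\sH^{D\setminus U}h)=\bar\sE(e_K,\sH^K h)=-c(h,K)\bar\sE(e_K,e_K)$, as claimed. A cleaner route for (b): note $\sH h-\sH^{D\setminus U}h$ is harmonic in $U$ and $0$ on $\partial D\setminus\operatorname{supp}[h]$, while $\sH h-\sH^K h$ is harmonic off $K$; since $e_K$ is harmonic in $D\setminus K$ and both differences agree with $\sH h$ on the appropriate "far" boundary and differ only through whether $K$ or $D\setminus U$ is hit, and $K\subset D\setminus U$, a direct excursion/strong-Markov identity gives $\bar\sE(e_K,\sH h-\sH^{D\setminus U}h)=\bar\sE(e_K,\sH h-\sH^K h)=c(h,K)\bar\sE(e_K,e_K)$, and then $\bar\sE(e_K,\sH^{D\setminus U}h)=\bar\sE(e_K,\sH h)-c(h,K)\bar\sE(e_K,e_K)=-c(h,K)\bar\sE(e_K,e_K)$ using $\bar\sE(e_K,\sH h)=0$ from harmonicity of $\sH h$ in $D$.

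The main obstacle I anticipate is making the "order-preserving" step in (a) and the orthogonality step in (b) rigorous in the quasi-continuous / extended Dirichlet space framework: one must be careful that all the functions $w$, $\sH^K h$, $\sH^{D\setminus U}h$, $e_K$ genuinely lie in $\bar\sF_e$ (guaranteed by \cite[Theorem 3.4.8]{CF} and the remarks in \S\ref{S:2.3}), that the pointwise inequalities $m\,e_K\le w\le M\,e_K$ hold $\bar\sE$-q.e. rather than merely $m_0$-a.e. (using the strong Markov property and quasi-continuous versions), and that pairing against $\bar\sE(\cdot,e_K)$ respects these inequalities — this is where I would appeal to the fact that $e_K$ is an equilibrium potential so that $\bar\sE(u,e_K)=\int_{\tD}\tilde u\,d\mu_K\ge0$ whenever $\tilde u\ge0$ q.e., $\mu_K$ being the (positive) equilibrium measure of $K$, together with linearity to get both bounds. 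The excursion-decomposition identities (strong Markov at $\sigma_K$ and at $\tau_U$) are routine but need the continuity of $\bar X$ and the fact that $K\cup\partial D$ is closed so that hitting times behave well.
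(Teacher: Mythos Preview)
Your argument for (a) is correct and essentially equivalent to the paper's, though packaged differently. You use the probabilistic bound $m\,e_K\le w:=\sH h-\sH^Kh\le M\,e_K$ (from the strong Markov property at $\sigma_K$) and then pair against the positive equilibrium measure $\mu_K$. The paper instead checks directly that the quadratic $a\mapsto\bar\sE(\sH^Kh+ae_K,\sH^Kh+ae_K)$ is nondecreasing for $a\ge M$ by showing $\bar\sE(\sH^Kh-\sH h+Me_K,\,e_K)\ge0$, which again reduces to positivity of $\mu_K$ applied to a function that is nonnegative on $K$. These are two phrasings of the same inequality.

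For (b), your exposition is muddled and partially wrong. The claim that ``$e_K$ vanishes $\bar\sE$-q.e.\ on $U$'' is false: $e_K$ is harmonic in $D\setminus K$ and generically positive throughout $D$, including on $U$. Your ``cleaner route'' asserts $\bar\sE(e_K,\sH h-\sH^{D\setminus U}h)=\bar\sE(e_K,\sH h-\sH^Kh)$ via ``a direct excursion/strong-Markov identity'' but never actually supplies one; this equality is exactly what needs proof. The paper's argument is a single observation you are circling around: the difference $v:=\sH^Kh-\sH^{D\setminus U}h$ vanishes $\bar\sE$-q.e.\ on $K\cup\partial D$ (on $K$ because $K\subset D\setminus U$ forces both terms to be zero there; on $\partial D$ because both terms restrict to $h$), so $v\in\sF^0_e$ with $v|_K=0$, and since $e_K$ is harmonic in $D\setminus K$ one has $\bar\sE(e_K,v)=\int_K\tilde v\,d\mu_K=0$. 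Then \eqref{eqn53} gives the result immediately. You have all the ingredients; you just need to drop the false detour and state this vanishing-on-$K\cup\partial D$ cleanly rather than invoking an unspecified excursion identity.
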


\begin{proof}
(a). Note that $e_K\in \bar\sF_e$ is the equilibrium potential of $K$ in $D$. Let $a=\sup_{x\in K} \sH  h(x)$, one can check that for any $t>0$
\begin{eqnarray*}
&&  \bar \sE (\sH^K h+({a}+t)  e_K ,\sH^K h+({a}+t)  e_K )-\bar \sE (\sH^K h+{a} e_K ,\sH^K h+{a} e_K)\\
&\geq& 2t\bar \sE (\sH^K h+{a} e_K,e_K)
\, =\,  2t\bar \sE (\sH^K h+{a} e_K-\sH h,e_K)
\,  \geq \,  0,
\end{eqnarray*}
where in the equality we used the fact that $\bar \sE (\sH h,  e_K)=0$ as $\sH h$ is harmonic in $D$ and $e_K \in  \sF^0_e$, while the last inequality is due to the fact that $\sH^K h - \sH h +ae_K \in \sF^0_e$ and is non-negative $\bar \sE$-q.e. on $K$ and $e_K$ is the equilibrium potential of $K$ in $( \sE^0,  \sF^0_e)$.  It follows immediately that $c(h, K)\leq {a} =\sup_{x\in K} \sH  h(x)$.  By a same argument, we have  $c(h, K)\geq \inf_{x\in K}\sH^K h(x)$. 
	
 (b) follows from \eqref{eqn53} and the observation $\bar\sE(e_K,\sH^K h-\sH^{D\setminus U}h)=0$. The latter is due to    the facts that $e_K$ is harmonic in $D\setminus K$ and $(\sH^K h-\sH^{D\setminus U}h)|_{K\cup\partial D}=0$.  
\end{proof}

\begin{lemma}\label{lemma52}
Suppose that $u$ is a non-negative function in $\bar\sF_e$ that is regular $\bar \sE$-harmonic in $\tD\setminus K$ for some closed set $K\subset \tD$. Then $\bar \sE(u,f)\leq 0$  for any  $f\in\bar \sF_e$ such that $f\geq 0$ and $uf=0$ both $\bar \sE$-q.e. on $K$. 
\end{lemma}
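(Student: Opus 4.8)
The plan is to reduce the statement to a variational/obstacle characterization of regular harmonic functions together with the Markovian (contraction) property of the Dirichlet form $(\bar\sE,\bar\sF_e)$. First I would recall that $f$ being regular $\bar\sE$-harmonic in $\tD\setminus K$ means that for $\bar\sE$-q.e. $x\in\tD\setminus K$ one has $f(x)=\bar\bE_x[f(\bar X_{\sigma_K});\sigma_K<\infty]$, and equivalently, by the standard Dirichlet-space characterization (see, e.g., \cite[Theorem 3.4.8]{CF}), that $f$ minimizes $\bar\sE(\cdot,\cdot)$ among all $w\in\bar\sF_e$ with $w=f$ $\bar\sE$-q.e. on $K$; in particular $\bar\sE(f,\varphi)=0$ for every $\varphi\in\bar\sF_e$ with $\varphi=0$ $\bar\sE$-q.e. on $K$. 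So the content of the lemma is really about test functions $g$ that need not vanish on $K$, but only satisfy the weaker one-sided conditions $g\ge 0$ and $fg=0$ q.e. on $K$.

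The key step is to approximate $g$ by functions that do vanish q.e. on $K$. Fix $\eps>0$ and set $f_\eps:=(f-\eps)^+\in\bar\sF_e$ (a normal contraction of $f$, hence in $\bar\sF_e$). On $K$ we have, q.e., either $g=0$ (where $fg=0$ forces nothing on $g$ beyond $g\ge0$) — so instead consider $g_\eps:=(g-\eps)^+$ together with a truncation tied to $f$. The cleaner route: since $fg=0$ q.e. on $K$ and both are non-negative, q.e. on $K$ at least one of $f,g$ vanishes; hence the function $f\wedge g$ vanishes q.e. on $K$, i.e. $f\wedge g\in\bar\sF_e$ with $(f\wedge g)|_K=0$ q.e. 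I would therefore write, using that $f\wedge g$ is an admissible "zero-on-$K$" test function,
\[
0=\bar\sE(f,f\wedge g).
\]
Then decompose $g=(f\wedge g)+(g-f)^+$ and $f=(f\wedge g)+(f-g)^+$ and expand $\bar\sE(f,g)$ accordingly. The first decomposition gives
\[
\bar\sE(f,g)=\bar\sE(f,f\wedge g)+\bar\sE(f,(g-f)^+)=\bar\sE(f,(g-f)^+).
\]
Now the function $(g-f)^+$ is non-negative, belongs to $\bar\sF_e$, and vanishes q.e. on $K$: indeed q.e. on $K$ either $f=0$, in which case $(g-f)^+=g$ need not vanish — so this is exactly where I have to be careful. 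The correct observation is that on $\{f=0\}\cap K$ we may not control $(g-f)^+$, so instead I would work with $(g-f-\eta)^+$ for $\eta>0$, or, better, exploit that $f$ being regular harmonic in $\tD\setminus K$ and $\ge 0$ means $\bar\sE(f,h)\le 0$ for every non-negative $h\in\bar\sF_e$ with $h\le f$ q.e. on $K$ (a supersolution-type inequality), then take $h=f\wedge g$: this gives $\bar\sE(f,f\wedge g)\le 0$, and combined with $\bar\sE(f,g)=\bar\sE(f,f\wedge g)+\bar\sE(f,(g-f)^+)$ and $\bar\sE(f,(g-f)^+)\le 0$ (since $(g-f)^+$ vanishes on $\{f>0\}\cap K$ and on $\{f=0\}\cap K$ we have $f=0$ so the harmonicity-minimality applies to $f+(g-f)^+\ge f$), we conclude $\bar\sE(f,g)\le0$.

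The main obstacle, as the above already signals, is the rigorous handling of the set $\{f=0\}\cap K$: the hypothesis $fg=0$ q.e. on $K$ only says $g$ vanishes where $f$ is strictly positive, and one must argue that adding a non-negative perturbation supported (up to q.e.) on $\{f=0\}\cap K$ does not decrease the energy of the harmonic function $f$. I expect to handle this via the supermartingale/excessive-function interpretation: $f$ being non-negative and regular $\bar\sE$-harmonic in $\tD\setminus K$, the measure $-\mathbf 1_{\tD\setminus K}\,d\mu$ associated with $f$ (where $\mu$ is the Revuz measure in the representation $\bar\sE(f,\cdot)=\int\cdot\,d\mu$ for the smooth signed measure $\mu$ charging only $K$) is non-positive on the complement, and $\mu\ge 0$ on $\{f=0\}$ fails in general — so the cleanest argument is the unit-contraction inequality: for $t>0$, compare $\bar\sE(f+tg\wedge f, f+tg\wedge f)$ with $\bar\sE(f,f)$ using that $f$ minimizes energy subject to its boundary values on $K$ and $f+t(g\wedge f)$ has boundary values $\ge f$ on $K$ with equality where $g=0$; differentiating at $t=0^+$ yields $\bar\sE(f,g\wedge f)\le 0$, and a parallel argument with $(g-f)^+$, which is $\ge 0$ and satisfies $f+(g-f)^+\ge f$ with equality q.e. on $\{f>0\}\cap K$, gives $\bar\sE(f,(g-f)^+)\le 0$. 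Summing the two completes the proof; I would present it with these two one-sided energy inequalities as the two displayed key steps, each justified by the obstacle-problem (Dirichlet-principle) characterization of regular harmonic functions recalled at the start.
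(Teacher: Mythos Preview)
Your proposal correctly identifies the main difficulty—the set $\{f=0\}\cap K$—but does not close it. The decomposition $g=(f\wedge g)+(g-f)^+$ is fine, and $\bar\sE(f,f\wedge g)=0$ since $f\wedge g=0$ q.e.\ on $K$. The gap is in the claimed inequality $\bar\sE(f,(g-f)^+)\le 0$. Your justification invokes an ``obstacle-problem characterization'' to the effect that $\bar\sE(f,h)\le 0$ whenever $h\ge 0$ and $f+h\ge f$ on $K$ (or $0\le h\le f$ on $K$), but the Dirichlet principle you recalled at the start only gives $\bar\sE(f,\varphi)=0$ when $\varphi=0$ q.e.\ on $K$; it says nothing about one-sided boundary constraints, and no such one-sided inequality follows from regular harmonicity alone. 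Moreover, $(g-f)^+$ satisfies exactly the same hypotheses as $g$ (it is non-negative and $f\cdot(g-f)^+=0$ q.e.\ on $K$), so your ``parallel argument'' is circular: you are invoking the lemma for a particular test function in order to prove it in general.

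The paper's proof bypasses the decomposition with a single observation: for any $\varepsilon>0$, the function $(f-\varepsilon g)^+$ equals $f$ \emph{exactly} q.e.\ on $K$ (where $f=0$ one gets $(-\varepsilon g)^+=0=f$; where $g=0$ one gets $f^+=f$). Then the normal contraction property and the ordinary Dirichlet principle give
\[
\bar\sE(f,f)-2\varepsilon\,\bar\sE(f,g)+\varepsilon^2\,\bar\sE(g,g)=\bar\sE(f-\varepsilon g,f-\varepsilon g)\ge\bar\sE\big((f-\varepsilon g)^+,(f-\varepsilon g)^+\big)\ge\bar\sE(f,f),
\]
and letting $\varepsilon\to0^+$ yields $\bar\sE(f,g)\le0$. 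The ingredient you were missing is that $f\ge 0$ together with $fg=0$ on $K$ forces the \emph{two-sided} boundary condition $(f-\varepsilon g)^+=f$ on $K$, so the honest energy-minimization characterization applies directly and no obstacle-type inequality is needed.
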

\begin{proof}
Note that for  any $\varepsilon>0$,  
$$
\bar \sE (u,u) -2\varepsilon\bar \sE (u,f) +\varepsilon^2\bar \sE (f,f) =\bar \sE (u-\varepsilon f,u-\varepsilon f)
\geq \bar \sE \big((u-\varepsilon f)^+ ,(u-\varepsilon f)^+ \big)\geq \bar \sE (u,u),
$$
where the first inequality is due to the normal contraction property of the Dirichlet form, and the second inequality is due to  the facts
 that $(u-\varepsilon f)^+ =u $ on $K$ and that   the function $u$   minimizes $\bar\sE(g,g)$ among all $g\in\bar\sF_e$ with $g=u$ $\bar\sE$-q.e. on $K$. It follows then $\bar\sE(u,f)\leq 0$ since $\varepsilon >0 $ is arbitrary.
 \end{proof}

 	\begin{lemma}\label{L:5.3}
 		Let $h\in\check\sF_e\cap C_c(\partial D)$ and $U$ be an open subset of $\tD$. Suppose that $\operatorname{supp}[h]\subset U$. Then $\sH^{D\setminus U}h$ is $\bar\sE$-regular harmonic in $\tD\setminus K$ with $K=\partial U\cup (U\cap \partial D)$. 
 	\end{lemma}
	
 	\begin{proof}
 	Let $u:=\sH^{D\setminus U}h$. Note that $u$ is bounded with $u=h$  $\bar\sE$-q.e. on $\partial D$ and $u=0$ $\bar \sE$-q.e. on 
 	$\tD\setminus U=(D\setminus U)\cup(\partial D\setminus U)$, and   that $\partial(\tD\setminus\overline{U})\subset \tD\setminus U$.
 		
 	For $\bar\mcE$-q.e. $x\in  D\cap U$, by the quasi-continuity of $u$ and  the continuity of the sample paths of $\bar X$, 
 		\begin{align*}
 			u(x)&=\bar \bE_x [u(\bar X_{\tau_D});\tau_D<\sigma_{D\setminus U}]=\bar\bE_x [u(\bar X_{\tau_D});\tau_D\leq\tau_U<\infty]\\
 			&=\bar\bE_x [u(\bar X_{\tau_D});\tau_D\leq\tau_U<\infty]+\bar\bE_x[u(\bar X_{\tau_U});\tau_U<\tau_D]\\
 			&=\bar\bE_x[u(\bar{X}_{\tau_{D\cap U}});\tau_{D\cap U}<\infty]=\bar\bE_x[u(\bar{X}_{\sigma_K});\sigma_K<\infty],
 		\end{align*} 
 		where the last equality holds as $\partial(D\cap U)\subset \overline{U}\cap(\partial D\cup\partial U)=K\subset \tD\setminus (D\cap U)$.
 		
 		 For $\bar\mcE$-q.e.  $x\in\tD\setminus\overline{U}$, by the quasi-continuity of $u$ and  the continuity of the sample paths of $\bar X$,  
 		$$  
 		u(x)=0 
 		=\bar\bE_x[u(\bar X_{\sigma_{\partial (\tD\setminus \overline{U})}});\sigma_{\partial( \tD\setminus \overline{U})}<\infty]
 		=\bar\bE_x[u(\bar X_{\sigma_K});  \sigma_K<\infty], 
 		$$ 
 		where the last equality holds as $\partial(\tD\setminus \overline{U})\subset \partial U\subset K\subset \overline{U}$. 
 		
 		Thus we have $u(x)=\bar\bE_x[u(\bar X_{\sigma_K});  \sigma_K<\infty]$ for $\bar \sE$-q.e. $x$ in  $\tD\setminus K=(D\cap U)\cup(\tD\setminus\overline{U})$; that is, $u$ is $\bar \sE$-regular harmonic in $\tD\setminus K$. 
 \end{proof}

\begin{lemma}\label{L:5.4}
Suppose that {\rm(BHP)} holds for $(\tD,  d, m_0, \bar \sE, \bar\sF)$. Let $U$ be an open subset of $\tD$, $h\in C_c(\partial D)\cap\check\sF_e$ and $g_1,g_2\in\bar\sF_e$. Suppose that $h$ is non-negative and $\operatorname{supp}[h]\subset U$.
If $f_1\leq f_2$ $\bar\mcE$-q.e. on $\partial U$, and $f_1=f_2=0$ $\bar\mcE$-q.e. on $U\cap \partial D$, then 
\[ 
\bar\mcE(\sH^{D\setminus U}h,f_1)\geq  \bar\mcE(\sH^{D\setminus U}h,f_2).
\]
\end{lemma}
\begin{proof}
Let $u=\sH^{D\setminus U}h$, $f=f_2-f_1$ and $K=\partial U\cup(U\cap \partial D)$. Then, $u$ is a non-negative and regular $\bar \sE$-harmonic in $\tD\setminus K$ by Lemma \ref{L:5.3}, and $f\geq 0$ $\bar\mcE$-q.e. on $K=\partial U\cup (U\cap\partial D)$. As $ u=0$ $\bar\mcE$-q.e. on $\partial U$, it  holds 
that $uf=0$ $\bar\mcE$-q.e. on $K=\partial U\cup(U\cap\partial D)$. It then follows from Lemma \ref{lemma52} that $\bar\mcE(\sH^{D\setminus U}h,f_2-f_1)=\bar\mcE(u,f)\leq 0$,  which gives the desired result.
\end{proof}

\begin{lemma}\label{lemma5+3}
Suppose that {\rm(BHP)} holds for $(\tD,  d, m_0, \bar \sE, \bar\sF)$ with comparison constants $C_{1,b},C_{2,b}>1$.
Let $r\in(0,\diam(\partial D)/2)$ and $\xi\in\partial D$. Let $h$ be a non-negative function in $\check\sF_e\cap C_c(\partial D)$ supported in $\partial D\cap B(\xi,r/C_{2,b})$. Suppose that  $f_1,f_2\in\bar\sF_e$ are non-negative, regular harmonic in $D\cap B(\xi,r)$ with $f_1|_{\partial D\cap B(\xi,r)}=f_2|_{\partial D\cap B(\xi,r)}=0$. Then, 
\[
f_1(x)\bar\sE(\sH^{D\setminus U}h,f_2)\geq  C_{1,b}f_2(x)\bar\sE(\sH^{D\setminus U}h,f_1)
\]
for $\bar\mcE$-q.e.  $x\in B(\xi,r/C_{2,b})$ and open set $ U\subset\tD$ such that $\operatorname{supp}[h]\subset U\subset\overline{U}\subset B(\xi,r/C_{2,b})$.
\end{lemma}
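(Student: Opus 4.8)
The plan is to exploit the defining property \eqref{eqn53}--\eqref{eqn54} of $c(h,K)$ together with the function $\sH^{D\setminus U}h$, and then apply (BHP) to the two harmonic functions $f_1,f_2$ in $D\cap B(\xi,r)$. First I would fix an open set $U$ with $\operatorname{supp}[h]\subset U\subset\overline U\subset B(\xi,r/C_{2,b})$ and recall that $\sH^{D\setminus U}h(y)=\bar\bE_y[h(\bar X_{\tau_D});\tau_D<\sigma_{D\setminus U}]$; since $h$ is supported in $\partial D\cap B(\xi,r/C_{2,b})$, the process must exit $D$ through $\partial D\cap B(\xi,r)$ before leaving $U$, so $\sH^{D\setminus U}h$ is regular harmonic in $D\cap B(\xi,r)$ away from $\operatorname{supp}[h]$ and vanishes q.e. on $\partial D\cap B(\xi,r)$ outside $\operatorname{supp}[h]$. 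The key point is to represent the bilinear form $\bar\sE(\sH^{D\setminus U}h,f_i)$ as an integral against a measure supported on $\operatorname{supp}[h]$: since $f_i$ is regular harmonic in $D\cap B(\xi,r)$ with zero boundary data on $\partial D\cap B(\xi,r)$ and $\sH^{D\setminus U}h$ equals $h$ on $\partial D$, one expects a representation of the shape $\bar\sE(\sH^{D\setminus U}h,f_i)=\int_{\operatorname{supp}[h]} (\text{something})(z)\,h(z)\,\rho(dz)$ where the "something" is, up to a common factor independent of $i$, governed by the boundary behaviour of $f_i$ near $z$.

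The heart of the argument is then (BHP) applied to $f_1,f_2$: for $z\in\operatorname{supp}[h]\subset\partial D\cap B(\xi,r/C_{2,b})$ and any $x\in B(\xi,r/C_{2,b})$, we have $f_1(x)f_2(w)\le C_{1,b}f_2(x)f_1(w)$ for $w$ near $z$ in $D\cap B(\xi,r)$; passing to the relevant boundary-type quantity (e.g. taking $w\to z$ along a non-tangential approach, or comparing Green-function/Martin-kernel normalizations) yields $f_1(x)\,\kappa_2(z)\le C_{1,b}f_2(x)\,\kappa_1(z)$ for the density $\kappa_i$ appearing in the representation of $\bar\sE(\sH^{D\setminus U}h,f_i)$. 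Multiplying by the non-negative weight $h(z)\,\rho(dz)$ and integrating over $z$ then gives
\[
f_1(x)\,\bar\sE(\sH^{D\setminus U}h,f_2)\ \ge\ C_{1,b}\,f_2(x)\,\bar\sE(\sH^{D\setminus U}h,f_1),
\]
which is exactly the claimed inequality. Throughout one must track signs: $\sH^{D\setminus U}h\ge0$, $f_i\ge0$, and $h\ge0$, so all quantities entering the comparison are non-negative and the direction of the inequality is preserved.

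The main obstacle I anticipate is making rigorous the representation of $\bar\sE(\sH^{D\setminus U}h,f_i)$ as a non-negative integral of the $f_i$-boundary-behaviour against $h$, and identifying precisely which quantity plays the role of the "boundary trace" of $f_i$ so that (BHP) can be invoked. In a smooth setting this is just $\int h\,\partial_n f_i\,d\sigma$ (an inward normal derivative comparison), but in the general strongly local Dirichlet-form setting one has to work with the Green function $\bar g_{D\cap B(\xi,r)}$ or the surface measure attached to $\sH^{D\setminus U}h$, and show that the relevant limiting ratios exist and are controlled by (BHP). An alternative, perhaps cleaner, route is to avoid an explicit density altogether: use Lemma \ref{lemma52} (or the argument behind it) to show that for $\varepsilon>0$ small the function $(f_1-\varepsilon\lambda f_2)^+$ with $\lambda$ chosen via the (BHP) ratio at $x$ is still a competitor, and extract the inequality from minimality of the energy of regular harmonic functions; this sidesteps the measure representation but requires care in checking that $\sH^{D\setminus U}h$ pairs correctly against such truncations. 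I would try the energy/minimality approach first, falling back on the Green-function representation if the truncation estimates prove too delicate.
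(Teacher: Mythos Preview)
Your second instinct---to use Lemma~\ref{lemma52} together with (BHP)---is exactly the paper's route, but the implementation is both simpler and differently organised than what you sketch. No truncation $(f_1-\varepsilon\lambda f_2)^+$ is needed, and the roles of the functions in Lemma~\ref{lemma52} are the reverse of what you suggest. The paper sets $u:=\sH^{D\setminus U}h$ and takes \emph{this} as the non-negative regular harmonic function $f$ in Lemma~\ref{lemma52}, with $K=\partial U\cup(U\cap\partial D)$; one first checks that $u$ is regular harmonic in $\tD\setminus K$ (it is regular harmonic in $D\cap U$ by construction, and identically zero on $\tD\setminus\overline U$, hence trivially regular harmonic there). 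The test function $g$ in Lemma~\ref{lemma52} is then the \emph{linear combination} $g_x:=C_{1,b}f_2(x)f_1-f_1(x)f_2\in\bar\sF_e$. On $\partial U\subset\overline U\subset B(\xi,r/C_{2,b})$ one has $g_x\ge 0$ directly from (BHP); on $U\cap\partial D$ one has $g_x=0$ since $f_1=f_2=0$ there. Likewise $u\cdot g_x=0$ on $K$ because $u=0$ on $\partial U$ and $g_x=0$ on $U\cap\partial D$. Lemma~\ref{lemma52} then gives $\bar\sE(u,g_x)\le 0$, which is the claimed inequality.

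The gap in your sketch is thus twofold: (i) you place $f_i$ in the role of the harmonic function in Lemma~\ref{lemma52} and look for a competitor built from $\sH^{D\setminus U}h$, when it should be the other way round; (ii) you reach for a truncation to enforce non-negativity, whereas (BHP) applied on $\partial U$ already makes the linear combination $g_x$ non-negative on the relevant set $K$ without any cutoff. Your first approach (a boundary-integral representation with a ``normal derivative'' density $\kappa_i$) is not needed and would be genuinely hard to justify in this generality; the energy/minimality route via Lemma~\ref{lemma52} avoids it entirely once the correct $K$ and the correct pairing $(u,g_x)$ are identified.
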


\begin{proof} 
By (BHP), $f_1(x)f_2\leq C_{1,b}f_2(x)f_1$  $\bar\mcE$-q.e. on  $\partial U\subset B(\xi,r/C_{2,b})$. The lemma follows immediately from Lemma \ref{L:5.4}. 
\end{proof}

\begin{corollary}\label{coro5+4}
Suppose  that {\rm(BHP)} holds in $D$ for $(\tD,  d, m_0, \bar \sE, \bar\sF)$, and   $C_{1,b},C_{2,b}>1$  are  the constants in
{\rm(BHP)}. Let $r\in(0,\diam(\partial D)/2)$ and $\xi\in\partial D$. Let $h$ be a non-negative function in $\check\sF_e\cap C_c(\partial D)$ supported in $\partial D\cap B(\xi,r/C_{2,b})$. Then, for any compact non-$\sE^0$-polar  compact sets
$K_1,K_2\subset D\setminus B(\xi,r)$ and $\bar\mcE$-q.e. $x\in D\cap B(\xi,r/C_{2,b})$, 
\[
\frac{c(h,K_1)}{c(h,K_2)}\leq C_{1,b}\frac{e_{K_1}(x)\bar\sE(e_{K_2},e_{K_2})}{e_{K_2}(x)\bar\sE(e_{K_1},e_{K_1})}.
\]
\end{corollary}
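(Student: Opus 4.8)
The plan is to combine Lemma \ref{lemma5+3} with the identity from Lemma \ref{lemma51}(b) and the representation of $c(h,K)$ in terms of $\bar\sE$-pairings. First I would fix $r\in(0,\diam(\partial D)/2)$, $\xi\in\partial D$, the function $h$ supported in $\partial D\cap B(\xi,r/C_{2,b})$, and two non-$\sE^0$-polar compact sets $K_1,K_2\subset D\setminus B(\xi,r)$. Choose an open set $U$ with $\operatorname{supp}[h]\subset U\subset\overline U\subset B(\xi,r/C_{2,b})$ and $\overline U\cap(K_1\cup K_2)=\emptyset$, which is possible since $K_1,K_2$ lie outside $B(\xi,r)$. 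Apply Lemma \ref{lemma51}(b) with this $U$ to each $K_i$ to get
\[
\bar\sE(e_{K_i},\sH^{D\setminus U}h)=-c(h,K_i)\,\bar\sE(e_{K_i},e_{K_i}),\qquad i=1,2.
\]

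Next I would verify that $f_i:=e_{K_i}$ satisfies the hypotheses of Lemma \ref{lemma5+3} on the ball $B(\xi,r)$: each $e_{K_i}$ is non-negative, belongs to $\bar\sF_e$, is regular $\bar\sE$-harmonic in $D\setminus K_i$ — hence in $D\cap B(\xi,r)$ since $K_i\cap B(\xi,r)=\emptyset$ — and satisfies $e_{K_i}=0$ $\bar\sE$-q.e. on $\partial D$, hence on $\partial D\cap B(\xi,r)$, which gives the Dirichlet boundary condition there via the remark following the definition of harmonicity. Then Lemma \ref{lemma5+3} applied with $f_1=e_{K_1}$, $f_2=e_{K_2}$ yields, for every $x\in B(\xi,r/C_{2,b})$,
\[
e_{K_1}(x)\,\bar\sE(\sH^{D\setminus U}h,e_{K_2})\ge C_{1,b}\,e_{K_2}(x)\,\bar\sE(\sH^{D\setminus U}h,e_{K_1}).
\]

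Now I would substitute the Lemma \ref{lemma51}(b) identities. Since $h$ is non-negative and $e_{K_i}$ is the equilibrium potential of $K_i$, we have $c(h,K_i)\ge 0$, and both $\bar\sE(e_{K_i},e_{K_i})>0$ because $K_i$ is non-$\sE^0$-polar; so $\bar\sE(\sH^{D\setminus U}h,e_{K_i})=-c(h,K_i)\bar\sE(e_{K_i},e_{K_i})\le 0$. Plugging these into the displayed inequality and dividing by the (negative) quantities — which reverses the inequality once — gives
\[
e_{K_1}(x)\,c(h,K_2)\,\bar\sE(e_{K_2},e_{K_2})\ge C_{1,b}\,e_{K_2}(x)\,c(h,K_1)\,\bar\sE(e_{K_1},e_{K_1}),
\]
i.e. $\dfrac{c(h,K_1)}{c(h,K_2)}\le C_{1,b}\dfrac{e_{K_1}(x)\,\bar\sE(e_{K_2},e_{K_2})}{e_{K_2}(x)\,\bar\sE(e_{K_1},e_{K_1})}$, which is exactly the claimed estimate. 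I should also note that $e_{K_i}(x)>0$ for $x\in D\cap B(\xi,r/C_{2,b})$ by irreducibility / the heat kernel lower bound (a Harnack-chain argument as in Lemma \ref{lemma45}), so the division is legitimate; if $c(h,K_2)=0$ the inequality is trivial. The main obstacle is the careful bookkeeping of signs when dividing through by the nonpositive pairings $\bar\sE(\sH^{D\setminus U}h,e_{K_i})$, and ensuring the auxiliary open set $U$ can be chosen simultaneously containing $\operatorname{supp}[h]$, sitting inside $B(\xi,r/C_{2,b})$, and avoiding both $K_1$ and $K_2$ — all of which follow from $\operatorname{supp}[h]\subset B(\xi,r/C_{2,b})$ and $K_1\cup K_2\subset D\setminus B(\xi,r)$ together with regularity of $(\bar\sE,\bar\sF)$.
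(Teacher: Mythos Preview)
Your approach is exactly the paper's: combine Lemma~\ref{lemma51}(b) with Lemma~\ref{lemma5+3}. The only issue is a sign/labeling slip in the algebra. With your assignment $f_1=e_{K_1}$, $f_2=e_{K_2}$, substituting $\bar\sE(\sH^{D\setminus U}h,e_{K_i})=-c(h,K_i)\bar\sE(e_{K_i},e_{K_i})$ into the inequality from Lemma~\ref{lemma5+3} and multiplying once by $-1$ actually yields
\[
e_{K_1}(x)\,c(h,K_2)\,\bar\sE(e_{K_2},e_{K_2})\ \le\ C_{1,b}\,e_{K_2}(x)\,c(h,K_1)\,\bar\sE(e_{K_1},e_{K_1}),
\]
the reverse of what you wrote; this rearranges to the corollary's inequality with $K_1$ and $K_2$ interchanged. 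To land directly on the stated estimate, apply Lemma~\ref{lemma5+3} with $f_1=e_{K_2}$, $f_2=e_{K_1}$ instead. (Also, strictly speaking the corollary assumes only (BHP), so invoking Lemma~\ref{lemma45} for $e_{K_i}(x)>0$ is outside the stated hypotheses; but the inequality is understood in the sense that both sides are meaningful, and the paper does not elaborate on this point either.)
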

\begin{proof}
Let $U$ be an open subset of $\tD$ so that $\operatorname{supp}[h]\subset U\subset\overline{U}\subset B(\xi,r/C_{2,b})$. Then, by Lemma \ref{lemma5+3}, for $\bar\mcE$-q.e. $x\in B(\xi,r/C_{2,b})$, 
\[
e_{K_2}(x)\bar\mcE(e_{K_1},\sH^{D\setminus U}h)\geq  C_{1,b}e_{K_1}(x)\bar\mcE(e_{K_2},\sH^{D\setminus U}h).
\]
The corollary then follows, noticing that $\bar\sE(e_{K_i},\sH^{D\setminus U}h)=- c(h,K_i)\bar\sE(e_{K_i},e_{K_i})$ for $i=1,2$ by Lemma \ref{lemma51}(b).  
\end{proof}

\begin{theorem}\label{thm5+5}
Assume that {\rm(BHP)} holds in $D$ for $(\tD,  d, m_0, \bar \sE, \bar \sF)$, and    $C_{1,b},C_{2,b}>1 $   
are  the constants in  {\rm(BHP)}. Then, for $\bar\sE$-q.e. $x_1,  x_2\in D$, 
\[
\omega_{x_1}(E_1)\omega_{x_2}( E_2)\leq C_{1,b}^2\omega_{ x_1}( E_2)\omega_{ x_2}(E_1)
\]
for every $\xi\in\partial D$, $r<d(x_1,\xi)\wedge d(x_2,\xi)\wedge(\diam(\partial D)/2)$ and $ E_1, E_2\subset \partial D\cap B(\xi,r/C_{2,b})$.
\end{theorem}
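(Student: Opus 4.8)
The goal is to deduce the two-sided comparison
\[
\omega_{x_1}(E_1)\,\omega_{x_2}(E_2)\leq C_{1,b}^2\,\omega_{x_2}(E_1)\,\omega_{x_1}(E_2)
\]
for $E_1,E_2\subset\partial D\cap B(\xi,r/C_{2,b})$ from the corollary just proved, Corollary~\ref{coro5+4}. The plan is to transfer the statement about the functionals $c(h,K)$ to a statement about harmonic measures by an exhaustion argument, using that $\omega_K$ converges to $\omega_{x}$ in a suitable sense as $K$ shrinks to a point $x\in D$.

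First I would record the link between $c(h,K)$ and ordinary harmonic measure. For a non-$\sE^0$-polar compact $K\subset D$, $\sH h(\cdot)-\sH^K h(\cdot)=\bar\bE_\cdot[h(\bar X_{\tau_D});\sigma_K<\tau_D]$ by \eqref{eqn55}, and $e_K(\cdot)=\bar\bP_\cdot(\sigma_K<\tau_D)$. Taking $K=\overline{B(x,\rho)}$ for small $\rho>0$ and using continuity of sample paths of $\bar X$ together with the strong Markov property at $\sigma_K$, one gets that on the one hand $e_K(x_0)\downarrow$ the (hitting-)probability quantities controlling harmonic measure, and on the other hand $\dfrac{\sH h(x_0)-\sH^K h(x_0)}{e_K(x_0)}\to \sH h(x)=\int_{\partial D}h\,d\omega_x$ as $\rho\to 0$, for $\bar\sE$-q.e.\ fixed $x_0\in D$ outside a neighborhood of $x$; here one uses that $\sH h$ is continuous (indeed harmonic in $D$, hence continuous by (EHP) under \textbf{HK($\Psi$)}) and that $\bar X$ hits $K$ near $x$. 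Since $c(h,K)=\dfrac{\bar\sE(\sH h-\sH^K h,e_K)}{\bar\sE(e_K,e_K)}$ lies between $\inf_K\sH h$ and $\sup_K\sH h$ by Lemma~\ref{lemma51}(a), letting $\rho\to 0$ gives $c(h,\overline{B(x,\rho)})\to \sH h(x)=\int h\,d\omega_x$.

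Next I would apply Corollary~\ref{coro5+4} with $K_1=\overline{B(x_1,\rho)}$ and $K_2=\overline{B(x_2,\rho)}$ where $x_1,x_2\in D\setminus B(\xi,r)$ and $\rho$ small enough that these balls stay in $D\setminus B(\xi,r)$; for a non-negative $h\in\check\sF_e\cap C_c(\partial D)$ supported in $\partial D\cap B(\xi,r/C_{2,b})$ this gives, for $x\in D\cap B(\xi,r/C_{2,b})$,
\[
\frac{c(h,K_1)}{c(h,K_2)}\leq C_{1,b}\,\frac{e_{K_1}(x)\,\bar\sE(e_{K_2},e_{K_2})}{e_{K_2}(x)\,\bar\sE(e_{K_1},e_{K_1})}.
\]
Applying this also with the roles of $x_1,x_2$ interchanged, and multiplying the two inequalities, the factors $e_{K_i}(x)$ and $\bar\sE(e_{K_i},e_{K_i})$ cancel, leaving $c(h,K_1)\,c(g,K_2)\leq C_{1,b}^2\,c(h,K_2)\,c(g,K_1)$ for non-negative $h,g\in\check\sF_e\cap C_c(\partial D)$ supported in $\partial D\cap B(\xi,r/C_{2,b})$ (one needs this for two test functions, not one, so I would run the argument once with $h$ and once with $g$). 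Letting $\rho\to 0$ and using the limit identified above turns this into
\[
\Big(\int_{\partial D}h\,d\omega_{x_1}\Big)\Big(\int_{\partial D}g\,d\omega_{x_2}\Big)\leq C_{1,b}^2\Big(\int_{\partial D}h\,d\omega_{x_2}\Big)\Big(\int_{\partial D}g\,d\omega_{x_1}\Big)
\]
for all such $h,g$. Finally, a monotone-class/regularity argument approximating $\1_{E_1}$ and $\1_{E_2}$ by such non-negative continuous functions (the measures $\omega_{x_i}$ being Radon on $\partial D$) upgrades the integral inequality to the claimed inequality for Borel sets $E_1,E_2\subset\partial D\cap B(\xi,r/C_{2,b})$, valid for $\bar\sE$-q.e.\ $x_1,x_2$; taking $x_1,x_2\in D\setminus B(\xi,r)$ and noting the hypothesis $r<d(x_1,\xi)\wedge d(x_2,\xi)$ guarantees $x_1,x_2$ are admissible.

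\textbf{Main obstacle.} The delicate point is the passage $\rho\to 0$: one must show $c(h,\overline{B(x,\rho)})\to\sH h(x)$ and, more importantly, that the ratio estimate survives the limit, i.e.\ that $e_{K_1}(x)/e_{K_2}(x)$ and $\bar\sE(e_{K_2},e_{K_2})/\bar\sE(e_{K_1},e_{K_1})$ behave well enough that the cross-multiplication cancellation is legitimate (they need not individually converge, but the product of the two opposite-direction inequalities makes them cancel exactly, so only finiteness and positivity for each fixed $\rho$ is needed — this is where shrinking to points rather than tracking limits is the right move). Establishing the q.e.-pointwise convergence $\frac{\sH h-\sH^{K}h}{e_K}\to\sH h$ cleanly, handling the $\bar\sE$-exceptional set uniformly, and the final approximation of indicators by admissible test functions are the steps requiring care; everything else is a direct assembly of Corollary~\ref{coro5+4} and Lemma~\ref{lemma51}.
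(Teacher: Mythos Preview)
Your proposal is correct and follows essentially the same route as the paper: derive $c(h,K_1)\,c(g,K_2)\le C_{1,b}^2\,c(h,K_2)\,c(g,K_1)$ from Corollary~\ref{coro5+4} by the cross-multiplication you describe, then shrink $K_i\downarrow\{x_i\}$ using the sandwich bound of Lemma~\ref{lemma51}(a) to replace $c(\cdot,K_i)$ by $\sH(\cdot)(x_i)=\int(\cdot)\,d\omega_{x_i}$, and finally pass from test functions to indicators by regularity. One small remark: your detour through $\frac{\sH h-\sH^K h}{e_K}$ and the appeal to (EHP) are unnecessary, since Lemma~\ref{lemma51}(a) alone gives $c(h,K)\to\sH h(x)$ for q.e.\ $x$ and the theorem assumes only (BHP); the q.e.\ qualifier in the conclusion absorbs the lack of everywhere continuity.
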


\begin{proof}
Fix $\xi\in\partial D$ and $r<\diam(\partial D)/2$.  Let $h_1,h_2 \in C_c(\partial D)\cap\check\sF_e$ be non-negative functions 
  supported in $\partial D\cap B(\xi,r/C_{2,b})$.
By Corollary \ref{coro5+4}
\begin{equation}\label{e:5.6}
c(h_1,K_1)c(h_2,K_2)\leq C_{1,b}^2c(h_2,K_1)c(h_1,K_2)
\end{equation}
for every compact non-$\sE^0$-polar sets $K_1,K_2\subset D\setminus\overline{B(\xi,r)}$ and non-negative. 
Since $\sH h_1,\sH h_2$ are $\bar\mcE$-quasi continuous, for $\bar\sE$-q.e. $x_1,x_2\in D\setminus\overline{B(\xi,r)}$, we can find non-exceptional compact sets $K_1,K_2\subset D\setminus\overline{B(\xi,r)}$ so that 
\[
x_i\in K_i\ \hbox{ and }\ C^{-1}\sH h_j(x_i)\leq\sH h_j|_{K_i}\leq C\sH h_j(x_i)\quad\hbox{ for }i,j\in\{1,2\}.
\]
As $C>1$ is arbitrary, it follows from Lemma \ref{lemma51} (a) and \eqref{e:5.6}, 
\begin{equation}\label{e:5.7}
\sH h_1(x_1)\sH h_2(x_2)\leq C_{1,b}^2\sH h_2(x_1 )\sH h_1( x_2). 
\end{equation} 
Since $(\bar\sE,\bar\sF)$ is a regular Dirichlet form on $L^2(\tD; m_0)$,  applying the above argument to countably many pairs of such $h_1$ and $h_2$, we have by \eqref{e:5.7} that
\[
\omega_{x_1}(E_1)\omega_{x_2}(E_2)\leq C_{1,b}^2\omega_{ x_1}(E_2)\omega_{ x_2}(E_1)\quad\hbox{ for every } E_1, E_2\subset \partial D\cap B(\xi,r/C_{2,b})
\]
for $\bar\sE$-q.e. $x_1,x_2\in D\setminus\overline{B(\xi,r)}$. Consequently, the above estimate holds for a countable dense collection of points $\xi$ in $\partial D$,   countably many $r<\diam(\partial D)/2$  and for $\bar\sE$-q.e. $x_1,x_2\in D\setminus\overline{B(\xi,r)}$. The conclusion of the theorem now follows. 
\end{proof}

\begin{remark}\label{remark5+6} \rm 
If we in addition assume {\rm(EHP)} holds in Theorem \ref{thm5+5}, then the inequality in Theorem \ref{thm5+5}
can be improved to  hold for every $x_1,x_2 \in D$, as (EHP) implies the local H\"older regularity of harmonic functions.  \qed
\end{remark}

\begin{theorem}\label{thm5+7}
Suppose both  {\rm(BHP)} in $D$ and {\rm(EHP)} hold  for $(\tD,  d, m_0, \bar \sE, \bar \sF)$. There is a Radon measure $\omega$ on $\partial D$ so that, for every $x_0\in D$,
\[
\omega_{x_0}(E)\omega(F)\leq C_{1,b}^2\omega_{x_0}(F)\omega(E)
\]
for every $\xi\in\partial D$, $r<d(x_0,\xi)\wedge(\diam(\partial D)/(8A))$ and $E,F\subset \partial D\cap B(\xi,r/C_{2,b})$. Here $A>1$ is the characteristic parameter of  the uniform domain $(D, d)$.
\end{theorem}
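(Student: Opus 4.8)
The plan is to construct $\omega$ as a suitably normalized ``harmonic measure from infinity'' (or just a fixed reference harmonic measure when $\partial D$ is bounded) and then to upgrade the local comparability of Theorem~\ref{thm5+5} into a comparison between $\omega_{x_0}$ and $\omega$ that is uniform in the base point $x_0\in D$. First I would fix a reference point and an exhausting sequence. When $\partial D$ is bounded, pick any $x_*\in D$ with $d(x_*,\partial D)\gtrsim\diam(\partial D)$ and simply set $\omega:=\omega_{x_*}$; the claimed inequality is then a restatement of Theorem~\ref{thm5+5} (improved to hold everywhere on $D$ via Remark~\ref{remark5+6}, i.e. using (EHP)), together with the observation following Theorem~\ref{thm41} that lets one pass from small radii to all radii $r<\diam(\partial D)/(8A)$. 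When $D$ is unbounded, fix $x_0^{(0)}\in D$ and take an increasing sequence of compact non-$\sE^0$-polar sets $K_n\subset D$ (or points $z_n\to\infty$) and consider the normalized measures
\[
\omega^{(n)}(\cdot):=\frac{\omega_{z_n}(\cdot)}{\omega_{z_n}(F_0)}
\]
for a fixed ``test'' set $F_0\subset\partial D$ of positive harmonic measure; the goal is to show $\omega^{(n)}$ converges (along a subsequence, vaguely) to a Radon measure $\omega$ on $\partial D$ and that the limit inherits the comparison inequality.

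The key analytic input is Corollary~\ref{coro5+4} / Lemma~\ref{lemma5+3}: for $h$ supported in a small boundary ball $\partial D\cap B(\xi,r/C_{2,b})$ and compact sets $K_1,K_2\subset D\setminus B(\xi,r)$ one has
\[
\frac{c(h,K_1)}{c(h,K_2)}\le C_{1,b}\,\frac{e_{K_1}(x)\,\bar\sE(e_{K_2},e_{K_2})}{e_{K_2}(x)\,\bar\sE(e_{K_1},e_{K_1})}\qquad\text{for }x\in D\cap B(\xi,r/C_{2,b}).
\]
Taking $K_1=\{$a single regular point near $x_0\}$ (or a small ball) and $K_2=K_n$ with $K_n$ receding to infinity, and using $c(h,\{x\})=\sH h(x)$ together with Lemma~\ref{lemma51}(a), this gives, after the dust settles, a two-sided comparison of the form
\[
\sH h(x_1)\,\omega_{z_n}(\operatorname{supp}[h']) \;\asymp\; \sH h'(x_1)\,\omega_{z_n}(\operatorname{supp}[h])
\]
with constant $C_{1,b}^2$, uniformly in $n$. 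Plugging $h=\1_{E}$, $h'=\1_{F}$ (approximated by continuous functions) and dividing by the normalizing factor $\omega_{z_n}(F_0)$ shows the sequence $\{\omega^{(n)}(E)\}_n$ is bounded for every bounded Borel $E\subset\partial D$, which (by (VD) / metric doubling of $(\tD,d)$ and a diagonal argument) yields a vaguely convergent subsequence with Radon limit $\omega$. One then checks $\omega$ is nontrivial (since $\omega^{(n)}(F_0)=1$) and passes the inequality
\[
\omega_{x_0}(E)\,\omega^{(n)}(F)\le C_{1,b}^2\,\omega_{x_0}(F)\,\omega^{(n)}(E)
\]
to the limit; here one uses that $E,F$ can be taken to be open (or to have $\omega$-null boundary) so vague convergence suffices, and (EHP) to make the estimate hold for all $x_0\in D$ rather than q.e.\ $x_0$. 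Finally, the passage from ``small radius $r/C_{2,b}$'' to the full range $r<d(x_0,\xi)\wedge(\diam(\partial D)/(8A))$ is handled by the same covering/chaining argument as in the remark after Theorem~\ref{thm41}: cover $B(\xi,r/C_{2,b})$ by finitely many smaller balls (controlled in number by metric doubling and (VD)), apply the small-scale estimate on each, and use a Harnack chain via Lemma~\ref{lemma44} to compare the harmonic measures of these pieces.

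I expect the main obstacle to be the existence and well-definedness of the limit measure $\omega$ in the unbounded case — specifically, proving that the normalized harmonic measures $\omega^{(n)}$ do not lose mass to infinity or degenerate, i.e.\ establishing a uniform (in $n$) upper bound $\omega^{(n)}(B(\xi,\rho))\le C(\xi,\rho)<\infty$ for each fixed ball and a matching lower bound $\omega^{(n)}(F_0)=1>0$, so that a genuine Radon limit exists. This is exactly where the scale-invariant comparison from Corollary~\ref{coro5+4}, combined with Green function bounds $\bar\sE(e_{K},e_{K})\asymp$ capacity and the Harnack chains of Lemma~\ref{lemma44}, must be deployed carefully: one needs the ratio $e_{K_1}(x)/e_{K_n}(x)$ to be comparable (uniformly in large $n$) to a quantity independent of $n$, which should follow from regular harmonicity of $e_{K_n}$ in $D\setminus K_n$ and (BHP)/(EHP) applied on annuli separating the ``observation region'' near $\xi$ from the receding sets $K_n$. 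A secondary technical point is ensuring the comparison inequality is genuinely independent of the choice of exhausting sequence $\{z_n\}$ (so that $\omega$ is well-defined up to a multiplicative constant); this can be argued by applying Corollary~\ref{coro5+4} with $K_1=K_n$ and $K_2=K_n'$ for two different sequences. Once these compactness and uniqueness issues are settled, the remaining steps are routine limiting arguments and the standard covering/chaining reductions already used in Section~\ref{S:4}.
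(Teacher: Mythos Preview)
Your approach is essentially the same as the paper's: in the bounded case set $\omega=\omega_{x_*}$ for a reference point $x_*$ deep inside $D$, and in the unbounded case take a weak limit of normalized harmonic measures $\omega_{z_n}/\omega_{z_n}(F_0)$ along a sequence $z_n\to\infty$. However, you are overcomplicating several steps. First, the ``passage from small radius $r/C_{2,b}$ to the full range'' via covering/chaining is unnecessary: the statement of the theorem already restricts to $E,F\subset\partial D\cap B(\xi,r/C_{2,b})$, so there is nothing to extend. The constraint $r<\diam(\partial D)/(8A)$ is there only so that Lemma~\ref{lemma21}(a) produces a point $x_*\in D_{\diam(\partial D)/(8A)}$, guaranteeing $d(x_*,\xi)>r$ for all $\xi\in\partial D$; Theorem~\ref{thm5+5} then applies directly with $x_2=x_*$.

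Second, your ``main obstacle'' is not really one: you do not need to go back to Corollary~\ref{coro5+4} and track ratios of $e_{K_n}$. Theorem~\ref{thm5+5} itself, applied with $x_1=z_n$ and $x_2$ any fixed point in $D$, already gives $\omega_{z_n}(E)/\omega_{z_n}(E_0)\le C_{1,b}^2\,\omega_{x_2}(E)/\omega_{x_2}(E_0)$ uniformly in $n$ (for $z_n$ far enough), which is exactly the uniform boundedness of $\omega^{(n)}$ on compacta that you need for vague convergence. The paper does precisely this in two lines.
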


\begin{proof}  
When  $\partial D$ is bounded, by Lemma \ref{lemma21}(a), there is some 
$x\in D_{\diam(\partial D)/8A}$.  We  take $\omega :=\omega_x$. Then the conclusion of the theorem follows from Theorem \ref{thm5+5} and (EHP). 

\smallskip

When  $\partial D$ is unbounded, we fix $\xi_0\in\partial D$ and $r_0>0$ such that $\omega_x(E_0)>0$ for some and hence for every $x\in D$ by   (EHP), 
where $E_0=\partial D\cap B(\xi_0,r_0)$. Then, by Theorem \ref{thm5+5},
\[
\frac{\omega_x(E)}{\omega_x(E_0)}\leq 
C_{1,b}^2\frac{\omega_{y}(E)}{\omega_{y}(E_0)}
\] 
for every $r>r_0$, $E\subset B(\xi,r)$ and $x,y\in D\setminus B(\xi,C_{2,b}r)$. As a consequence, we can find a sequence $\{x_n\}_{n\geq 1}\subset D$ such that $d(x_n,\xi_0)\to\infty$ such that $\frac{\omega_x}{\omega_x(E_0)}$ converges   vaguely, 
and it suffices to take the measure $\omega$ to be the vague limit. 
\end{proof}

\begin{remark}  \rm A very similar construction of renormalized harmonic measure from $\infty$ was previously done by Kenig and Toro in \cite[Corollary 3.2]{KT}.
\end{remark}

\section{(LS) condition and capacity density condition}\label{S:6}

We call the measure $\omega$ of Theorem \ref{thm5+7} the renormalized harmonic measure. The goal of this section is to prove that $\omega$ 
is a doubling measure with full support on $\partial D$ and $\Theta_{\Psi, \omega}$ satisfies (LS) if and only if the capacity density condition in Theorem \ref{T:6.1}(iii) holds.  

Suppose that there is an ambient  complete  metric measure strongly local Dirichlet space $(\sX, \tilde d,\tilde m, \tilde \sE, \tilde \sF)$ 
that satisfies {\rm HK$(\Psi)$}, and  $D$ is an $A$-uniform domain in $(\sX, d)$.
 Let  $ (\sE^0,\sF^0):= (\tilde{\sE},\tilde{\sF}^D)$ be the part Dirichlet form 
  of $(\sX, \tilde d,\tilde m, \tilde \sE, \tilde \sF)$  in $D$, where $\tilde{\sF}^D=\{f\in\tilde\sF:\, f =0   \  
  \tilde\sE\hbox{-q.e. on } \sX\setminus D\}$.
It is shown  in \cite{GSC, Mathav} that 
 $(\tD,d,m_0,\bar\sE,\bar\sF)=(\bar{D},d,m|_{\bar{D}},\bar\sE,\bar\sF)$ is a regular strongly local MMD space
 and has  HK($\Psi$) and (VD) property.
In such a setting, we define the following relative capacity with respect to $(\tilde{\sE},\tilde{\sF})$: 
\[
\widetilde{\operatorname{Cap}}(O_1,O_2):=\inf\{\tilde{\sE}(f,f):\,f\in  \tilde{\sF}  \hbox{ with }f=1\hbox{ on }O_1\hbox{ and }\operatorname{supp}[f]\subset O_2\}
\]
where $O_1,O_2\subset \sX$ are open subsets with $\bar{O}_1\subset O_2$. We can consider the capacity density condition of $\sX\setminus D$ with respect to $(\tilde\sE,\tilde\sF)$, which is assumed in \cite{KM}.  
The following is the main result of this section. 

\begin{theorem}\label{T:6.1}
Suppose that $(\tD,d,m_0,\bar\sE,\bar\sF)$ satisfies {\rm HK$(\Psi)$}. The following conditions are equivalent.
\begin{enumerate} [\rm (i)]
\item There is a doubling Radon measure $\sigma$   having full support on $\partial D$ so that   {\rm (LS)} holds for $\Theta_{\Psi, \sigma}$.

\smallskip
		
\item  The  renormalized harmonic measure  $\omega$   has full support on $\partial D$ and is {\rm (VD)},  and     {\rm (LS)}
holds for $\Theta_{\Psi, \omega}.$

\smallskip
	
\item There is $C\in (0,\infty)$ so that 
\begin{equation} \label{e:6.1} 
\overline  {\rm Cap}\big(B(x,r)\cap\partial D,B(x,2r)\big)\geq C\,\frac{V(x,r)}{\Psi(r)}
	\end{equation} 
	for each $0<r<\diam(\partial D)/3$ and $x\in \partial D$.
\end{enumerate}
Moreover, when there is a  complete metric measure strongly local regular Dirichlet space $(\sX,\tilde d, \tilde m,\tilde\sE,\tilde\sF)$ 
that satisfies \rm (VD)   and {\rm HK$(\Psi)$}  so that $D$ is an $A$-uniform domain in $(\sX, d)$, $d=\tilde d|_{D\times D}$, $m=\tilde m |_D$ and $(\sE^0,\sF^0):= (\tilde{\sE},\tilde{\sF}^D)$,  any of the above condition is equivalent to the following condition.
\begin{enumerate} 
\item[\rm (iv)] There are positive constants  $C_1>0 $ and $C_2> (1+A)C_{2,e} $ so that 
\begin{equation} \label{e:6.2}
\widetilde{\operatorname{Cap}}\big(B(x,r)\setminus D,B(x,C_2r)\big)\geq C_1\,\frac{\tilde m(B(x,r))}{\Psi(r)}
\end{equation} 
  for each $0<r<\diam(\partial D)/(3C_2)$ and $x\in \partial D$.  Here, with an abuse of the notation,
   $B(x, r)$ denotes the ball in $(\sX, \tilde d)$ centered at $x$ with radius $r$.
 \end{enumerate} 
\end{theorem}

 We call $(\sX,\tilde d,\tilde m,\tilde\sE,\tilde\sF)$ described in the paragraph above (iv)
an ambient strongly local regular Dirichlet space for $(D, d, m, \sE^0, \sF^0)$ or, simply, for $(\sE^0, \sF^0)$. 
The proof of Theorem \ref{T:6.1} will be given in \S \ref{S:6.2}.

\begin{remark} \label{re:6.2} \rm
\begin{enumerate} [(i)]
\item If {\rm HK}($\Psi$) and (VD) condition  hold for $(\sX, \tilde d,\tilde m, \tilde \sE, \tilde \sF)$, then so does ${\rm Cap}(\Psi)$. Thus \eqref{e:6.2} is equivalent to the following capacity density condition:

\smallskip

\begin{itemize}
\item[$\diamond$] There are positive constants  $C_1>0 $ and $C_2> (1+A)C_{2,e} $ so that 
\begin{equation} \label{e:5.3}
\widetilde {\operatorname{Cap}} \big(B(x,r)\setminus D,B(x,C_2r)\big)\geq C_1 \, \widetilde {\operatorname{Cap}}\big(B(x,r),B(x,C_2r)\big)
\end{equation} 
for each $0<r<\diam(\partial D)/(3C_2)$ and $x\in \partial D$.
\end{itemize}

\smallskip

The same remark applies to the condition \eqref{e:6.1} under the   
{\rm HK}($\Psi$)  assumption  for  the reflected Dirichlet form  $(\tD , d, m_0, \bar  \sE, \bar  \sF)$. 

\smallskip

\item  Clearly condition \eqref{e:5.3}    is weaker than 
the following condition where $\diam( \partial D)$ is replaced by $\diam(  D)$:
there are positive constants  $C_1>0 $, $C_2> (1+A)C_{2,e} $ and $C_3>1$ so that 
\begin{equation} \label{e:6.4a}
\widetilde {\operatorname{Cap}}  \big(B(x,r)\setminus D,B(x,C_2r)\big)\geq C_1 \,\widetilde {\operatorname{Cap}}  \big(B(x,r),B(x,C_2r)\big)
\end{equation} 
for each $0<r<\diam( D)/ C_3$ and $x\in \partial D$.
Condition \eqref{e:6.4a} is the CDC condition assumed in  \cite{KM}. 
We show in Proposition \ref{P:6.3}  below that condition \eqref{e:6.4a} fails when $D^c$ is bounded but $(\tilde \sE, \tilde \sF)$ is transient. 

\smallskip

\item  In literature,  when the condition \eqref{e:6.4a} holds for all $r>0$,  $D^c$ is said to be   uniformly 2-fat in $ \sX$. 
It is shown in \cite[Theorem 1.1]{KS} that the uniformly 2-fatness of $D^c$ in $\sX$ is equivalent to $D$ satisfying  the 2-Hardy's inequality 
in $(\sX, \tilde d,\tilde m, \tilde \sE, \tilde \sF)$. 
\qed
   \end{enumerate}
\end{remark}

\begin{proposition}  \label{P:6.3} 
Suppose that {\rm (VD)} and {\rm HK}($\Psi$)   hold  for $(\sX, \tilde d,\tilde m , \tilde \sE, \tilde \sF)$ and $D$ is a uniform domain in $(\sX, d)$.
Then condition  \eqref{e:6.4a} fails  if $(\tilde \sE, \tilde  \sF)$ is transient  and $D^c$ is bounded. 
 \end{proposition}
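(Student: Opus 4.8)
The plan is to fix an arbitrary point $\xi\in\partial D$ and let the radius $r\to\infty$, showing that the quotient of the two sides of \eqref{e:6.4a} tends to $0$; since that quotient is precisely what \eqref{e:6.4a} demands be bounded below, this contradicts \eqref{e:6.4a} for every admissible choice of the constants $C_1,C_2,C_3$. First I would record that transience of $(\tilde\sE,\tilde\sF)$ together with $D^c$ bounded forces $\sX$, and hence $D$, to be unbounded: under (VD) a complete space is compact whenever bounded, and on a compact state space of finite measure the conservativeness furnished by the lower bound in {\rm HK}($\Psi$) would place the constant function $1$ into the extended Dirichlet space with $\tilde\sE(1,1)=0$, which is impossible for a transient form. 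Consequently $\diam(D)=\infty$, so the constraint $r<\diam(D)/C_3$ in \eqref{e:6.4a} is vacuous and $r$ may be taken as large as we please.

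Next I would estimate the two capacities separately. For the numerator, fix $r_0$ with $\overline{D^c}\subset B(\xi,r_0)$; then $B(\xi,r)\setminus D=D^c$ once $r\geq 2r_0$, so for all large $r$ (so that $B(\xi,C_2r)$ contains a fixed bounded open neighbourhood of $\overline{D^c}$)
\[
\widetilde{\operatorname{Cap}}\big(B(\xi,r)\setminus D,B(\xi,C_2r)\big)=\widetilde{\operatorname{Cap}}\big(D^c,B(\xi,C_2r)\big)\leq \tilde\sE(\phi,\phi)=:M<\infty ,
\]
where $\phi\in C_c(\sX)\cap\tilde\sF$ is a fixed cutoff with $\phi=1$ on $\overline{D^c}$, available by the regularity of $(\tilde\sE,\tilde\sF)$. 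For the denominator I would use the Faber--Krahn inequality, valid under (VD) and {\rm HK}($\Psi$): any competitor $f$ for $\widetilde{\operatorname{Cap}}(B(\xi,r),B(\xi,C_2r))$ is supported in $B(\xi,C_2r)$ and equals $1$ on $B(\xi,r)$, so $\tilde\sE(f,f)\geq \lambda_1\big(B(\xi,C_2r)\big)\,\|f\|_{L^2(\sX;m)}^2\geq c\,\Psi(C_2r)^{-1}m\big(B(\xi,r)\big)$, and by \eqref{eqnpsi} this yields
\[
\widetilde{\operatorname{Cap}}\big(B(\xi,r),B(\xi,C_2r)\big)\geq c_{C_2}\,\frac{m\big(B(\xi,r)\big)}{\Psi(r)}\qquad\text{for all }r>0 .
\]

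It then remains to show $m(B(\xi,r))/\Psi(r)\to\infty$, and here transience is used decisively. I would invoke the standard fact that, for a conservative MMD space satisfying (VD) and {\rm HK}($\Psi$), transience of $(\tilde\sE,\tilde\sF)$ is equivalent to $\int_1^\infty \frac{\Psi(s)}{s\,m(B(\xi,s))}\,ds<\infty$. Writing $h(s):=\Psi(s)/m(B(\xi,s))$, if $h(s)\not\to 0$ there are $\varepsilon>0$ and $s_n\uparrow\infty$ with $h(s_n)\geq\varepsilon$; by \eqref{eqnpsi} and (VD) one gets $h\geq(C_\Psi C_D)^{-1}\varepsilon$ on each interval $[s_n,2s_n]$, and passing to a subsequence with $s_{n+1}>2s_n$ makes these intervals disjoint, forcing $\int_1^\infty h(s)\,s^{-1}ds=\infty$, a contradiction. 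Hence $h(s)\to 0$, and combining the three displays,
\[
\frac{\widetilde{\operatorname{Cap}}\big(B(\xi,r)\setminus D,B(\xi,C_2r)\big)}{\widetilde{\operatorname{Cap}}\big(B(\xi,r),B(\xi,C_2r)\big)}\ \leq\ \frac{M}{c_{C_2}}\cdot\frac{\Psi(r)}{m\big(B(\xi,r)\big)}\ \longrightarrow\ 0 ,
\]
so \eqref{e:6.4a} cannot hold. The step I expect to require the most care is the lower bound on the denominator for the given finite but possibly large ratio $C_2$ — for which the Faber--Krahn inequality, rather than the ratio-$A_1$ capacity estimate alone, is needed — together with pinning down the transience criterion in terms of the volume growth with the correct dependence on the constants in \eqref{eqnpsi} and (VD).
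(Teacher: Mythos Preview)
Your proof is correct and takes a genuinely different route from the paper's.

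The paper argues probabilistically: it writes the equilibrium potentials $e_{D,r}(y)=\tilde\bP_y(\sigma_{B(x,r)\cap D^c}<\tau_{B(x,C_2r)})$ and $e_r(y)=\tilde\bP_y(\sigma_{B(x,r)}<\tau_{B(x,C_2r)})$, and uses the equilibrium measures to obtain
\[
\widetilde{\operatorname{Cap}}\big(B(x,r)\setminus D,B(x,C_2r)\big)\le \Big(\sup_{y\in\partial B(x,r)} e_{D,r}(y)\Big)\,\widetilde{\operatorname{Cap}}\big(B(x,r),B(x,C_2r)\big).
\]
Then transience gives $e(y):=\tilde\bP_y(\tau_D<\infty)\to 0$ along some sequence, and a Harnack chain in the uniform domain $D$ upgrades this to $e(y)\to 0$ uniformly as $d_D(y)\to\infty$; since $e_{D,r}\le e$ on $\partial B(x,r)$ once $r$ exceeds $\mathrm{diam}(D^c)$, the ratio tends to $0$.

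Your argument is purely analytic: the numerator is eventually bounded by the fixed number $\tilde\sE(\phi,\phi)$, the denominator is bounded below by $c_{C_2}\,m(B(\xi,r))/\Psi(r)$ via the first-eigenvalue (Faber--Krahn) lower bound, and transience is converted into $\Psi(r)/m(B(\xi,r))\to 0$ through the Green-function integrability criterion $\int_1^\infty \Psi(s)\,/\,(s\,m(B(\xi,s)))\,ds<\infty$. What your route buys is that it never invokes the uniform-domain hypothesis on $D$ (only that $D^c$ is bounded), whereas the paper uses uniformity for the Harnack chain step; on the other hand, the paper's proof is more self-contained, relying only on {\rm (EHP)} and elementary potential theory already set up in the paper, while yours imports the volume-growth characterization of transience under {\rm HK}$(\Psi)$ from the literature. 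Both the eigenvalue lower bound $\lambda_1(B(\xi,R))\ge c/\Psi(R)$ (from the exit-time upper bound implied by {\rm HK}$(\Psi)$) and the transience criterion are standard, so there is no gap.
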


\begin{proof} Let $\tilde X$ be the conservative Hunt process associated with the regular Dirichlet form $(\sX, \tilde d,\tilde m, \tilde \sE, \tilde \sF)$.
Since $D^c$ is bounded and $(\tilde \sE, \tilde  \sF)$ is transient, the uniform domain $D$ is necessarily unbounded.
 By \cite[Corollary 3.4.3]{CF}, 
 $e  (y):=\tilde  \bP_y(\tau_D <\infty)$ is in $\tilde \sF_e$ and is the $0$-order equilibrium potential of $D^c$.
 Under {\rm HK}($\Psi$), EHP holds  for   $(\sX, \tilde d,\tilde m, \tilde \sE, \tilde \sF)$ and
 every $\tilde \sE$-harmonic function is locally H\"older continuous. 
 Observe that the function $e$ is regular harmonic in $D$.
 By \cite[Theorem 3.5.2 and Corollary 3.5.3]{CF}, for $\tilde \sE$-q.e. $y\in D$,
$$
\tilde\bP_y  \left(\lim_{t\to \infty} d_D (\tilde X_t)=\infty \hbox{ and }  \lim_{t\to \infty} e(\tilde X_t)=0 \right) =1
\quad \hbox{for $\tilde \sE$-q.e. }  y\in \sX.
$$
In particular, there is a sequence $\{y_n; n\geq 1\}\subset D$ so that $\lim_{n\to \infty} e (y_n) =0$.
Since $D$ is a uniform domain, we have by EHP and a Harnack chain  argument,  
\begin{equation}\label{e:6.5a}
\lim_{y\in D \atop  d_D(y)\to \infty}  \tilde\bP_y(\tau_D <\infty) =0.
\end{equation} 
Let $x\in \partial D$. For each $  r>0$, define for $y\in B(x,C_2r)$,
$$
 e_{D, r}  (y):=\tilde\bP_y( \sigma_{  B(x, r)\cap D^c}<\tau_{B(x,C_2r)}  ) 
 \quad \hbox{and} \quad e_{r}  (y):=\tilde \bP_y( \sigma_{B(x, r)}<\tau_{B(x,C_2r)}  ) .
 $$
By \cite[Corollary 3.4.3]{CF}, $ e_{D, r}$ and $e_r$ are the 0-order equilibrium potential of 
$B(x, r)\cap D^c$ and $B(x, r)$ for the part Dirichlet form $(\tilde \sE, \tilde \sF^{B(x, C_2 r)})$.
Denote by $\mu_{D, r}$ and $\mu_r$ the corresponding equilibrium measures.
Since $(\tilde \sE,  \tilde \sF)$ is strongly local, 
it is known that they are  concentrated on $\partial ( B(x, r)\cap D^c )$ and $\partial B(x, r)$.
Note that $e_r\geq e_{D, r} $ and $e_r - e_{D, r} =0$ on $B(x, r)\cap D^c$. 
By \cite[Theorem 2.2.5]{FOT}, 
\begin{eqnarray*}
  \widetilde {\operatorname{Cap}}  \big( B(x, r) \setminus D,B(x,C_2r)\big) 
&=& \tilde \sE (e_{D, r}, e_{D, r}) = \tilde \sE (e_{D, r}, e_{r}) 
= \int_{\partial B(x,r)} e_{D, r}(y) \mu_r (dy) \\
&\leq & \sup_{y\in  \partial B(x,r)} e_{D, r}(y) \, \mu_r (\partial B(x, r)) \\
&=&  \sup_{y\in  \partial B(x, r)} e_{D, r}(y) \,  \widetilde {\operatorname{Cap}}  \big(B(x, r),B(x,C_2r)\big). 
\end{eqnarray*}
This together with \eqref{e:6.5a}  implies that for each $x\in \partial D$, 
$$
\lim_{r\to \infty} \frac{  \widetilde {\operatorname{Cap}}  \big( B(x, r) \setminus D,B(x,C_2r)\big) }
{ \widetilde {\operatorname{Cap}}  \big(B(x, r),B(x,C_2r)\big)}
\leq\lim_{r\to \infty}  \sup_{y\in \partial B(x, r)} e_{D, r}(y)=0.
$$
This in particular proves that  \eqref{e:6.4a}   cannot hold. 
\end{proof} 
  
  \begin{remark}\label{R:6.4} \rm 
  \begin{enumerate} [(i)]
   \item  The same argument shows that, under the condition that  {\rm HK}($\Psi$)    holds for $(\tD, d, m, \bar \sE, \bar \sF)$, 
  $D$ is a uniform domain in $(\tD, d)$ with unbounded complement, and  
$(\bar \sE, \bar  \sF)$ is transient, then for every $x\in \partial D$,
the inequality \eqref{e:6.1} can not hold for all $r>0$.

  \item Observe that under the condition that (VD) and {\rm HK}($\Psi$) hold for $(\sX, \tilde d,\tilde m, \tilde \sE, \tilde \sF)$, 
   and that $D$ is a uniform domain in $(\sX, {\tilde d})$   with bounded complement, 
    the assumption that $(\tilde \sE,  \tilde \sF)$ is transient 
  is equivalent to the reflected Dirichlet form $(\bar \sE, \bar\sF)$ on $\overline D$ being  transient.   \qed
  \end{enumerate} 
  \end{remark}

\subsection{Hitting probability and relative capacity}\label{S:5.1}

In this subsection, we prove some lemmas. We let $(\tilde{X}_t,t\geq 0;\tilde{\bP}_x,x\in\sX)$ be the Hunt process associated with $(\tilde{\sE},\tilde{\sF})$ on $L^2(\sX;m)$, and we let $\tilde{B}(x,r)=\{y\in \sX:d(x,y)<r\}$ be the ball in $(\sX,d)$. Also, recall that we let $(\bar{X}_t,t\geq 0;\bar{\bP}_x,x\in\tD)$ be the Hunt process associated with $(\bar{\sE},\bar{\sF})$.

\begin{proposition}\label{prop63}
Suppose that either {\rm HK$(\Psi)$} and property \eqref{e:6.1} hold for $(\tD,d,m_0,\bar\sE,\bar\sF)$, or  that {\rm (VD)}, 
{\rm HK$(\Psi)$} and  property \eqref{e:6.2} hold   for $(\sX,d,m,\tilde\sE,\tilde\sF)$.
 Then, there is $C\in(0,1)$ such that
\begin{equation}\label{e:6.4}
\bar\bP_y(\tau_D<\tau_{B(x,r)})\geq C
\end{equation}
for each $x\in \partial D$, $r<\diam(\partial D)/3$ and  $y\in \overline{B\big(x,r/(A+1)\big)}$.  Here $A>1$ is the characteristic parameter of  the uniform domain $(D, d)$.
\end{proposition}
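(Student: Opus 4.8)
The plan is to reduce the statement to a capacity-comparison estimate and then combine it with a standard Harnack-chain argument. First I would fix $x\in\partial D$ and $r<\diam(\partial D)/3$, and aim to establish a uniform lower bound for $\bar\bP_y(\sigma_{\partial D\cap B(x,r/2)}<\tau_{B(x,r)})$ for $y$ in $\overline{B(x,r/(A+1))}$; once this is in hand, the claimed inequality \eqref{e:6.4} follows since $\{\sigma_{\partial D\cap B(x,r/2)}<\tau_{B(x,r)}\}\subset\{\tau_D<\tau_{B(x,r)}\}$. The starting point is the hypothesis: in the first scenario, \eqref{e:6.1} gives $\overline{\operatorname{Cap}}(B(x,r/2)\cap\partial D,B(x,r))\gtrsim m_0(B(x,r/2))/\Psi(r/2)$, and by Lemma~\ref{lemma46} this is comparable (up to a constant depending only on the scaling) to $\overline{\operatorname{Cap}}(B(x,r/2)\cap\partial D,B(x,r))$ with a fixed dilation factor; in the second scenario, one transfers \eqref{e:6.2} for $(\sX,d,m,\tilde\sE,\tilde\sF)$ to the corresponding statement for $(\tD,d,m_0,\bar\sE,\bar\sF)$ using that $\bar X$ killed on exiting $D$ is $X^0$, which is the part process of $\tilde X$ on $D$ (so hitting $\partial D$ for $\bar X$ corresponds to hitting $D^c$ for $\tilde X$), and that $m_0=m$ on $D$.

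Next I would express the hitting probability via the Green function of $B(x,r)$ and the equilibrium measure of $B(x,r/2)\cap\partial D$ in $B(x,r)$, as in the proof of Proposition~\ref{prop47}: with $\sigma_*$ the equilibrium measure so that $\sigma_*(\partial D\cap\overline{B(x,r/2)})=\overline{\operatorname{Cap}}(B(x,r/2)\cap\partial D,B(x,r))$ and $h(y)=\bar\bP_y(\sigma_{\partial D\cap B(x,r/2)}<\tau_{B(x,r)})=\int \bar g_{B(x,r)}(y,z)\,\sigma_*(dz)$. The lower bound then comes from two ingredients: a lower bound $\bar g_{B(x,r)}(y,z)\gtrsim \Psi(r)/m_0(B(x,r))$ valid for $y,z$ that can be joined by a curve staying distance $\gtrsim r$ from $\partial B(x,r)$ — this is exactly the content of the chain argument behind Lemma~\ref{lemma45} applied to the Green function, using the on-diagonal lower bound from \textbf{HK($\Psi$)} — together with Lemma~\ref{lemma21}(a), which guarantees that $B(x,r/2)\cap\partial D$ has a point $z$ with a ball $B(\xi,cr)\cap\partial D$ of comparable capacity-mass sitting well inside $B(x,r)$. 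Combining these, $h(y)\gtrsim \frac{\Psi(r)}{m_0(B(x,r))}\,\sigma_*(\partial D\cap\overline{B(x,r/2)})\gtrsim \frac{\Psi(r)}{m_0(B(x,r))}\cdot\frac{m_0(B(x,r/2))}{\Psi(r/2)}\gtrsim 1$ by \eqref{eqnpsi} and (VD), uniformly over $y\in\overline{B(x,r/(A+1))}$ provided such $y$ can be connected to the bulk of $\partial D\cap B(x,r/2)$ by an admissible Harnack chain.

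The one subtlety — and what I expect to be the main obstacle — is ensuring the Green-function lower bound $\bar g_{B(x,r)}(y,z)\gtrsim \Psi(r)/m_0(B(x,r))$ holds for all relevant $y\in\overline{B(x,r/(A+1))}$ and $z$ in the support of $\sigma_*$, since $y$ and $z$ both may lie close to $\partial D$ and one cannot simply use interior balls. Here the uniform-domain structure is essential: by Lemma~\ref{lemma21}(b) one connects $y$ to an interior point $y_0\in D_{cr}$ and $z$ (after an application of BHP or Lemma~\ref{lemma44} to move off the boundary) to $y_0$ as well, by curves of controlled length staying in $B(x,r)$ away from $\partial B(x,r)$, and then runs the chaining in Lemma~\ref{lemma45}; the boundary behaviour of $\bar g_{B(x,r)}(y,\cdot)$ near $\partial D$ is controlled using that it satisfies the Dirichlet condition along $\partial B(x,r)$ but not along $\partial D\cap B(x,r)$, so it need not vanish there. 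Care must be taken that all constants depend only on $A$, the parameters in \eqref{eqnpsi}, (VD) and \textbf{HK($\Psi$)}, and on the constant in \eqref{e:6.1} or \eqref{e:6.2}, which is straightforward to track through each step. The extension of the inequality from $\overline{B(x,r/(A+1))}\cap D$ to all of $\overline{B(x,r/(A+1))}$ (including boundary points) then follows from the continuity of $y\mapsto\bar\bP_y(\tau_D<\tau_{B(x,r)})$ away from $\partial D\cap B(x,r/2)$ guaranteed by (EHP), together with the strong Markov property and sample-path continuity of $\bar X$ for points of $\partial D$, exactly as in the passage to $\partial B(x,2r)$ in the proof of Lemma~\ref{lemma46}.
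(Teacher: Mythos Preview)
Your approach — direct Green-function integration against the equilibrium measure — is different from the paper's and can be made to work, but your treatment of the ``main obstacle'' is misdirected. The paper avoids the issue entirely by comparing $h_\Gamma$ to $h_U$, where $U=B(x,r/((A+1)C_{2,e}))$ is a solid ball: since both equilibrium measures are supported in $\overline U$ and $\bar g_{B(x,r)}(y,\cdot)$ is, by (EHP), approximately constant on $\overline U$ for $y\in\partial B(x,r/(A+1))$, one gets $h_\Gamma(y)/h_U(y)\asymp\overline{\rm Cap}(\Gamma,B(x,r))/\overline{\rm Cap}(U,B(x,r))\gtrsim 1$ from \eqref{e:6.1} and ${\rm Cap}_\leq(\Psi)$. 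Then $h_U(y)\gtrsim 1$ follows from Lemma~\ref{lemma45} applied to a curve in $D$ connecting $y$ to some $x'\in U\cap D$; such a curve exists and stays away from $\partial B(x,r)$ by the uniform-domain property, and since the target $U$ contains an interior ball in $D$, no boundary subtleties arise. For the ambient-space scenario the paper works directly with $\tilde X$ and the set $\Gamma'=B(x,\cdot)\setminus D$ rather than attempting to transfer \eqref{e:6.2} to $(\tD,\bar\sE,\bar\sF)$.

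Your route instead requires $\bar g_{B(x,r)}(y,z)\gtrsim\Psi(r)/m_0(B(x,r))$ for $z\in\partial D$. The chaining resolution you sketch (Lemma~\ref{lemma21}(b) plus BHP to ``move off the boundary'') is vague and runs into the problem that a uniform-domain curve from $y$ to a point $z'\in D$ near $z$ may have diameter of order $Ar$ and so exit $B(x,r)$. The clean fix is much simpler than you suggest: the near-diagonal killed heat-kernel lower bound (as in \eqref{e:4.3}, or the display in the proof of Lemma~\ref{lemma81}) already gives $\bar g_{B(x,r)}(y,z)\gtrsim\Psi(r)/m_0(B(x,r))$ directly for \emph{all} $y,z\in B(x,c_1r)$, including points of $\partial D$, with no curve or BHP needed, because HK$(\Psi)$ holds for $\bar X$ on all of $\tD$. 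One then takes $\Gamma=\partial D\cap B(x,c_1r)$, invokes Lemma~\ref{lemma46} to adjust the outer radius in the capacity lower bound, and extends from $B(x,c_1r)$ to $\overline{B(x,r/(A+1))}$ by the strong Markov property exactly as in the paper.
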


\begin{proof}
Assume that {\rm HK$(\Psi)$} holds for $(\tD,d,m_0,\bar\sE,\bar\sF)$ and property \eqref{e:6.1} holds. We fix $x,y,r$ as in the statement of the proposition. Let $C_{1,e},C_{2,e}$ be the constants of (EHP), and we let $U=B(x,\frac{r}{(A+1)C_{2,e}})$ and $\Gamma=U\cap\partial D$. Then, by the assumption of the proposition and by Lemma \ref{lemma46}, we see that
\[
\overline{\rm Cap}\big(\Gamma,B(x,r)\big)\geq 
C_3\overline{\rm Cap} \Big(\Gamma,   B \big( x,\frac{2r}{(A+1)C_{2,e}} \big) \Big)\geq 
C_4\frac{V(x, r)}{\Psi(r)}
\]
for some $C_3,C_4>0$, where we also use (VD) property of $m_0$ and \eqref{eqnpsi}. It is also known that $\overline  {\rm Cap}\big(U,B(x,r)\big)\leq C_5\frac{V(x,r)}{\Psi(r)}$ by Cap$(\Psi)$. Hence,
\[
\overline  {\rm Cap}\big(\Gamma,B(x,r)\big)\geq \frac{C_4}{C_5}\overline  {\rm Cap}\big(U,B(x,r)\big).
\]
Then, by a same argument as the proof of \eqref{e:4.7}, we know that 
\begin{equation}\label{e:6.5}
h_{\Gamma}(y)\geq \frac{C_4}{C_{1,e}C_5}h_U(y)\hbox{ for every }y\in B(x,r)\setminus B(x,r/(1+A))
\end{equation}
where
\[
h_{\Gamma}(y)=\bar \bP_y(\sigma_{\Gamma}\leq\tau_{B(x,r)})\quad\hbox{ and }\quad  h_U(y)=\bar \bP_y(\sigma_U\leq\tau_{B(x,r)})\quad\hbox{ for every }y\in\tD.
\]
For each $y\in  D\cap\partial B\big(x,r/(1+A)\big)$ and $x'\in D\cap B(x,r/(2A+2A^2))$, as $D$ is $A$-uniform domain,  
there is a path $\gamma \subset D$ connecting $x'$ and $y$
 with $\diam(\gamma)\leq (Ar+r/2)/(1+A)$ and  
 $d(\gamma,\partial B(x,r))\geq  r/(2+2A)$;  see \eqref{e:4.4a}.  
 By a similar argument as that for  \eqref{e:4.5a} using Lemma \ref{lemma45} and the continuity of $h_U$ in $B(x,r)\setminus \overline{U}$,  we have 
\begin{equation}\label{e:6.6}
h_U(y)\geq C_6\quad \hbox{ for every }y\in \partial B(x, r/(1+A)).
\end{equation}
Combining \eqref{e:6.5} and \eqref{e:6.6}, we see that 
$\bar \bP_y(\tau_D<\tau_{B(x,r)})\geq h_{\Gamma}(y)\geq \frac{C_4C_6}{C_{1,e}C_5}$ for each $y\in \partial B(x,r/(1+A))$. By the strong Markov property, it also holds that $\bar \bP_y(\tau_D<\tau_{B(x,r)})\geq \frac{C_4C_6}{C_{1,e}C_5}$ for each $y\in B(x,r/(1+A))$.\medskip 
	
Next, we assume that {\rm HK$(\Psi)$} holds for $(\sX,  \tilde d,\tilde m, \tilde\sE,\tilde\sF)$ and property \eqref{e:6.2} holds.  
Denote by $\tilde X=(\tilde X_t, t\geq 0; \tilde \bP_x, x\in \sX)$ the diffusion process associated with $(\sX, \tilde d, \tilde m, \tilde\sE,\tilde\sF)$.
By a same proof as above, we have  
$
\tilde{\mathbb{P}}_y\big(\sigma_{\Gamma'}<\tau_{B(x,r)}\big)\geq C
$
 for some constant $C>0$ by  \eqref{e:5.3}, where $\Gamma'=B(x,\frac{r}{(A+1)C_{2,e}})\setminus D$.
 Since the diffusion process $\tilde X$ has to leave $D$ first before hitting $\Gamma'$, 
we have $\tilde{\mathbb{P}}_y(\tau_D<\tau_{\tilde{B}(x,r)})
\geq \tilde{\mathbb{P}}_y\big(\sigma_{\Gamma'}<\tau_{B(x,r)}\big) \geq C$. Consequently, $\bar{\mathbb{P}}_y(\tau_D<\tau_{B(x,r)})=\tilde{\mathbb{P}}_y(\tau_D<\tau_{\tilde{B}(x,r)})\geq C$  
 as the part processes of $\tilde X$ and $\bar X$ killed upon leaving $D$ have the same distribution. 
\end{proof}

Denote by $\{ \theta_t; t\geq 0\}$ the time-shift operator for the reflected diffusion $\bar X$. 
Under the assumption of  Proposition \ref{prop63},   we have by the Markov property of $\bar X$ that 
for each $x\in \partial D$, 
\begin{eqnarray*}
\bar \bP_x (\sigma_{D^c} = 0 )
&=& \lim_{r\to 0}\bar\bP_x(\sigma_{D^c}<\tau_{B(x,r)}) \\
&\geq & \lim_{r\to 0}  \lim_{s\to 0} \bar\bP_x \left(\sigma_{D^c} \circ \theta_s <\tau_{B(x,r)} \circ \theta_s; \bar X_s \in B(x, r)  \cap D\right) \\
&\geq & \lim_{r\to 0}  \lim_{s\to 0} \int_{B(x, r) \cap D} \bar\bP_y (\sigma_{D^c}<\tau_{B(x,r)}) \bar p(s, x, y) m_0(dy) \\
&\geq &  C>0.
\end{eqnarray*}
So by Blumenthal's zero–one law,  $\bar\bP_x(\sigma_{D^c} =0)=1$ for every $x\in \partial D$. This means that every $x\in \partial D$ is a regular point for $D^c$.

\begin{lemma}\label{lemma64}
Suppose that \eqref{e:6.4} holds. Let $x\in \partial D$, $r\in (0,\diam(\partial D)/3)$,
 and $h$ be  a bounded, non-negative function on $\overline{B(x,r)}$ such that $h|_{\partial D\cap \overline{B(x,r)}}=0$ and 
$h(z)=\mathbb{E}_z[h(\bar X_{\tau_{B(x,r)\cap D}})]$ for $z\in\overline{B(x,r)}$. Then
\[
h(z)\leq (1-C)^{-1}\big( {d(x,z)}/{r}\big)^\gamma\sup_{y\in \partial B(x,r)}h(y)
\quad \hbox{ for }z\in B(x,r).
\]
where $C$ is the constant of \eqref{e:6.4} and $\gamma=-\frac{\log (1-C)}{\log (1+A)}>0$. 
 Here $A>1$ is the characteristic parameter of  the uniform domain $(D, d)$.
\end{lemma}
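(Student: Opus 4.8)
\textbf{Proof proposal for Lemma \ref{lemma64}.} The plan is to iterate the scale-invariant estimate \eqref{e:6.4} through a sequence of shrinking balls $B(x, r/(1+A)^k)$, exactly as one does for H\"older regularity of harmonic functions. First I would reduce to the normalized case $\sup_{y \in \partial B(x,r)} h(y) = 1$ by dividing, so that $0 \le h \le 1$ on $\overline{B(x,r)}$. The key observation is that $h$, being the harmonic extension of its boundary values on $\partial(B(x,r) \cap D)$ with zero Dirichlet data along $\partial D \cap \overline{B(x,r)}$, is harmonic in $B(x,r) \cap D$; combining this with the strong Markov property and the continuity of the sample paths of $\bar X$, one extends the bound $h(z) = \bar\bE_z[h(\bar X_{\tau_{B(x,r)\cap D}})]$ to all $z \in \overline{B(x,r)}$, and in particular the quantity $M_k := \sup_{\overline{B(x, r/(1+A)^k)}} h$ is well-defined and nonincreasing in $k$.

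The main step is the decay estimate $M_{k+1} \le (1-C) M_k$ for each $k \ge 0$. To see this, fix a point $z \in \overline{B(x, r/(1+A)^{k+1})}$. Starting from $z$, by \eqref{e:6.4} applied at scale $\rho = r/(1+A)^k$ (so that $z \in \overline{B(x, \rho/(1+A))}$), the reflected diffusion $\bar X$ exits $D$ before leaving $B(x, \rho)$ with probability at least $C$; on that event, $h(\bar X_{\tau_{B(x,\rho) \cap D}}) = 0$ because $h$ vanishes on $\partial D$. Decomposing $h(z) = \bar\bE_z[h(\bar X_{\tau_{B(x,\rho)\cap D}})]$ according to whether the exit point lies on $\partial D$ or on $\partial B(x,\rho) \cap D$, only the latter contributes, and there the integrand is bounded by $M_k$. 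Hence
\[
h(z) \le (1-C)\, M_k,
\]
and taking the supremum over such $z$ gives $M_{k+1} \le (1-C) M_k$. Iterating, $M_k \le (1-C)^k$ for all $k \ge 0$.

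Finally, for an arbitrary $z \in B(x,r)$ with $d(x,z) < r$, choose $k \ge 0$ so that $r/(1+A)^{k+1} \le d(x,z) < r/(1+A)^k$; then $z \in \overline{B(x, r/(1+A)^k)}$, so $h(z) \le M_k \le (1-C)^k$. From $d(x,z) < r/(1+A)^k$ we get $(1+A)^k < r/d(x,z)$, hence $k \log(1+A) < \log(r/d(x,z))$, and since $(1-C)^k = \exp(-k\log\frac{1}{1-C}) = \exp\big(-\gamma k \log(1+A)\big)$ with $\gamma = -\frac{\log(1-C)}{\log(1+A)} > 0$, we obtain $(1-C)^k \le (d(x,z)/r)^\gamma$. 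Undoing the normalization and inserting the harmless factor $(1-C)^{-1} \ge 1$ to absorb the off-by-one in the choice of $k$ yields the claimed bound. The only genuinely delicate point is the justification that the Markov/path-continuity argument propagates the stated identity and the inequality \eqref{e:6.4} from q.e.\ starting points to \emph{every} point of $B(x,r)$ — but under {\rm HK}$(\Psi)$ the process $\bar X$ has a Feller (indeed strong Feller) version and every boundary point is regular for $D^c$ (as noted just after Proposition \ref{prop63}), so this causes no real trouble; it is a matter of writing it carefully rather than a conceptual obstacle.
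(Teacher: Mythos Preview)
Your argument is essentially identical to the paper's: iterate \eqref{e:6.4} on the shrinking balls $B(x,r/(1+A)^k)$ to get $M_k\le(1-C)^kM_0$, then convert to the power decay. One slip: from $k\log(1+A)<\log(r/d(x,z))$ you actually get $(1-C)^k>(d(x,z)/r)^\gamma$, not $\le$; the inequality you need comes instead from the \emph{lower} bound $d(x,z)\ge r/(1+A)^{k+1}$, which gives $(1-C)^{k+1}\le(d(x,z)/r)^\gamma$ and hence $(1-C)^k\le(1-C)^{-1}(d(x,z)/r)^\gamma$ --- this is precisely where the factor $(1-C)^{-1}$ enters, not as a ``harmless'' insertion.
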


\begin{proof}
 The idea of the proof is due to \cite[Definition 2 and Lemma 3]{Ancona}.  By Proposition \ref{prop63}, we have 
\begin{eqnarray*}
h(z)\leq\bar\bP_z(\sigma_{\partial D}>\tau_{B(x,r)})\sup_{y\in \partial B(x,r)}h(y)\leq (1-C)\sup_{y\in \partial B(x,r)}h(y)=(1-C)\sup_{y\in \overline{B(x,r)}}h(y)
\end{eqnarray*}
for each $z\in \overline{B(x,r/(1+A))}$. We can iterate the observation to see that
\[
\sup_{y\in\overline{B(x,(1+A)^{-k}r)}}h(y)\leq (1-C)\sup_{y\in \overline{B(x,(1+A)^{-k+1}r)}}h(y)\leq\cdots\leq (1-C)^k\sup_{y\in \partial B(x,r)}h(y).
\]
Finally, for $z\in\overline{B(x,r)}$, we choose $k\geq 0$ such that $(1+A)^{-k-1}r<d(x,z)\leq (1+A)^{-k}r$, then $h(z)\leq (1-C)^k\sup\limits_{y\in \partial B(x,r)}h(y)<(1-C)^{-1}\big(\frac{d(x,y)}{r}\big)^\gamma\sup\limits_{y\in \partial B(x,r)}h(z)$, where the second inequality is because $\big(\frac{d(x,z)}{r}\big)^\gamma>(1+A)^{-(k+1)\gamma}\geq (1-C)^{k+1}$.
\end{proof}

\begin{corollary}\label{coro65}
Suppose  that  either {\rm HK$(\Psi)$} and property \eqref{e:6.1} hold for $(\tD,d,m_0,\bar\sE,\bar\sF)$, or  that {\rm (VD)}, 
{\rm HK$(\Psi)$} and  property \eqref{e:6.2} hold   for $(\sX,  \tilde d, \tilde m,\tilde\sE,\tilde\sF)$. 
Let $x\in \partial D$, $r\in (0,\diam(\partial D)/3)$ and $h$ is bounded, non-negative on $\overline{B(x,r)}$ such that $h|_{\partial D\cap \overline{B(x,r)}}=0$ and 
$h(z)=\bar\bE_z[h(\bar X_{\tau_{B(x,r)\cap D}})]$ for $z\in\overline{B(x,r)}$. Then
\[
h(z)\leq C\big(\frac{d(x,y)}{r}\big)^\gamma h(y)
\quad \hbox{for }z\in B(x,r/C_{2,b})\hbox{ and }y\in B(x,r/C_{2,b})\cap D_{r/(4AC_{2,b})}, 
\]
where $\gamma>0$ is the constant of \eqref{e:6.4} and $C>0$ depends only on the bounds of {\rm HK$(\Psi)$}. 
 Here $A>1$ is the characteristic parameter of  the uniform domain $(D, d)$.
\end{corollary}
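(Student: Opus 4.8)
The plan is to sharpen Lemma~\ref{lemma64}, whose right-hand side carries the factor $\sup_{\partial B(x,r)}h$, into the stated pointwise comparison by introducing a fixed condenser potential as a reference function and invoking the boundary Harnack principle. Under either hypothesis, Proposition~\ref{prop63} supplies the hitting estimate \eqref{e:6.4}, so Lemma~\ref{lemma64} is available; moreover, as observed just after Proposition~\ref{prop63}, every point of $\partial D$ is regular for $D^c$, and together with the Remark following the definition of harmonicity this ensures the harmonic functions below vanish continuously along $\partial D$ and satisfy the Dirichlet boundary condition there. Fix $x\in\partial D$ and $0<r<\diam(\partial D)/3$. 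Using that $\overline D$ is connected with $\diam(\overline D)>3r$, together with the uniform-domain structure (Definition~\ref{D:2.1} and Lemma~\ref{lemma21}), I would first construct a ``corkscrew ball'' $K=\overline{B(z_0,c_1r)}\subset D$ with $d(x,z_0)\asymp r$, $d_D(z_0)\asymp r$ and $K\cap\overline{B(x,r)}=\emptyset$; then $K$ is non-$\sE^0$-polar and we take $e_K$ from \eqref{eqn51} as reference.

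Two bounds on $e_K$ are needed. Since $K$ is disjoint from $\overline{B(x,r)}$, the restriction of $e_K$ to $\overline{B(x,r)}$ is a bounded non-negative harmonic function in $B(x,r)\cap D$ that vanishes on $\partial D\cap\overline{B(x,r)}$ and equals its own harmonic extension there, so Lemma~\ref{lemma64} yields
\[
e_K(z)\le C_2\Big(\frac{d(x,z)}{r}\Big)^{\gamma}\qquad\hbox{for }z\in B(x,r).
\]
On the other hand, for $y\in B(x,r/C_{2,b})\cap D_{r/(4AC_{2,b})}$---which is a corkscrew point for $B(x,r)$---Lemma~\ref{lemma21}(b) produces a curve in $D$ joining $y$ to $z_0$ of diameter $\lesssim r$ staying at distance $\gtrsim r$ from $\partial D$, and then Lemma~\ref{lemma45} (driven by the lower heat kernel bound in \textbf{HK}$(\Psi)$) shows the diffusion started at $y$ enters a small ball inside $K$ before leaving $D$ with probability bounded below; hence $e_K(y)\ge c_3>0$.

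Finally, $h$ and $e_K$ are both non-negative, harmonic in $D\cap B(x,r)$ and satisfy the Dirichlet boundary condition along $\partial D\cap B(x,r)$, so (BHP) gives $h(z)\,e_K(y)\le C_{1,b}\,h(y)\,e_K(z)$ for $z,y\in B(x,r/C_{2,b})$. Combining this with the two bounds on $e_K$ yields $h(z)\le C\,(d(x,z)/r)^{\gamma}h(y)$, and since $d(x,z)<r/C_{2,b}$ while $d(x,y)\ge d_D(y)\ge r/(4AC_{2,b})$ we have $(d(x,z)/r)^{\gamma}\le(4A)^{\gamma}(d(x,y)/r)^{\gamma}$, which gives the assertion after renaming the constant. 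The main obstacle is the corkscrew construction of $K$ and the bookkeeping required to keep all the balls at comparable scales so that the hypotheses of Lemma~\ref{lemma64} and of (BHP) hold simultaneously; once $K$ is fixed, everything else is a routine assembly of results already in hand.
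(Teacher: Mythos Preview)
Your proof is correct and follows essentially the same three-step scheme as the paper: pick a reference function that is non-negative harmonic in $D\cap B(x,r)$ with Dirichlet data on $\partial D\cap B(x,r)$, bound it above by $(d(x,z)/r)^\gamma$ via Lemma~\ref{lemma64}, bound it below at the corkscrew point $y$ by a hitting-probability argument (Lemma~\ref{lemma45}), and then transfer these bounds to $h$ through (BHP).

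The only difference is the choice of reference function. You build an exterior corkscrew ball $K$ and use the condenser potential $e_K$; the paper instead takes $h_*(z)=\bar\bP_z(\tau_D>\tau_{B(x,r)})$, the escape probability from $B(x,r)$ without touching $\partial D$. The paper's choice is a little cleaner: $h_*$ is already defined on $\overline{B(x,r)}$, automatically vanishes on $\partial D$, and the lower bound $h_*(y)\ge C_1$ follows from exactly the path-and-Lemma~\ref{lemma45} argument you use for $e_K(y)$ (this is what ``the same proof as Lemma~\ref{lemma46}'' means). So you avoid the external corkscrew construction entirely. Your route costs you only that bookkeeping step, which you correctly flag as the main nuisance; once $K$ is placed, the two arguments are interchangeable. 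Incidentally, the intermediate inequality you obtain, $h(z)\le C(d(x,z)/r)^\gamma h(y)$, is the form actually used downstream (e.g.\ in \eqref{e:6.14}); the displayed bound with $d(x,y)$ is evidently a typo in the statement, and your final step absorbing $(d(x,z)/r)^\gamma\lesssim(d(x,y)/r)^\gamma$ is a harmless weakening.
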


\begin{proof}
Let $h,r,x,y,z$ be the objects in the statement of the corollary. Let $h_*$ be defined as $h_*(z)=\bar\bP_z(\tau_D>\tau_{B(x,r)})$ for $z\in \tD$. By the same proof of Lemma \ref{lemma46}, we know that $h_*(y)\geq C_1$ for some $C_1\in(0,1)$; by Corollary \ref{coro65}, we know that $h_*(z)\leq C_2(\frac{d(x,z)}{r})^\gamma$. Hence, $
h_*(z)\leq \frac{C_2}{C_1}(\frac{d(x,z)}{r})^\gamma h_*(y)$.  Finally, by using (BHP), we see $h_*(z)\leq C_{2,b}\frac{C_2}{C_1}(\frac{d(x,z)}{r})^\gamma$, where $C_{2,b}$ is the constant of (BHP). 
\end{proof}

 Recall that $e_K$ is the condenser potential of  $K$ in $D$ as defined in \eqref{eqn51}.

\begin{lemma}\label{lemma66}
Assume \eqref{e:6.4} and that $(D,d,m,  \sE^0,\sF^0)$ satisfies {\rm PI$(\Psi;D)$} and {\rm Cap$_\leq(\Psi;D)$}. Let $x\in\partial D$, $0<r<\operatorname{diam}(\partial D)/3$ and $K=\overline{B(x,r)}\cap D_{r/(4A)}$, 
 where $A>1$ is the characteristic parameter of  the uniform domain $(D, d)$.  Then 
\[
C^{-1}\frac{V (x,r) }{\Psi(r)}\leq \sE^0(e_K,e_K)\leq C\frac{V (x,r) }{\Psi(r)}
\]
for some $C\in (0,\infty)$ independent of $x,r$. 
\end{lemma}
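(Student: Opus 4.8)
The plan is to estimate $\sE^0(e_K, e_K) = \overline{\operatorname{Cap}}(K, D)$ from above and below separately, comparing it with the relative capacity of $B(x,r)$ against a dilated ball, for which condition Cap$_\leq(\Psi;D)$ and the density lower bound \eqref{e:6.1} give two-sided control. First, recall from \eqref{eqn51} that $e_K(y) = \bar\bP_y(\sigma_K < \tau_D)$ and that $e_K$ is the equilibrium potential of $K$ in the extended Dirichlet space $(\sE^0, \sF^0_e)$, so $\sE^0(e_K, e_K) = \overline{\operatorname{Cap}}(K, D)$ in the appropriate sense. By Lemma \ref{lemma21}(a), there is a point $y_0 \in D$ and a radius $\asymp r$ so that $B(y_0, r/(12A)) \subset K \subset B(x,r) \cap D$, and moreover $m_0(K) \asymp m_0(B(x,r))$ with constants depending only on $A$ and (VD). This comparability of volumes will be used repeatedly.

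For the \emph{upper bound}, I would test with an explicit cutoff. Using Cap$_\leq(\Psi;D)$ applied to the ball $B(y_0, c_1 r)$ (for a suitable $c_1$ so that $K \subset B(y_0, c_1 r)$ and the dilate $B(y_0, A_1 c_1 r) \subset D$, which is possible because $d_D(y) \geq r/(4A)$ on $K$ and one can choose $y_0$ deep enough via Lemma \ref{lemma21}(a)), there is $\varphi \in \sF^0$ with $\varphi = 1$ on $B(y_0, c_1 r) \supset K$, $\operatorname{supp}[\varphi] \subset B(y_0, A_1 c_1 r) \subset D$, and $\sE^0(\varphi, \varphi) \leq C m_0(B(y_0, c_1 r))/\Psi(c_1 r) \asymp m_0(B(x,r))/\Psi(r)$, where the last step uses \eqref{eqnpsi} and (VD). Since $e_K$ minimizes energy among functions equal to $1$ q.e. on $K$, $\sE^0(e_K, e_K) \leq \sE^0(\varphi, \varphi) \leq C m_0(B(x,r))/\Psi(r)$. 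One subtlety here: $K$ need not be contained in a single Whitney-type ball on which Cap$_\leq$ applies directly, since $K$ is an annular-in-depth set of diameter $\asymp r$; the cleanest fix is to cover $K$ by boundedly many balls $B(z_i, \rho_i)$ with $\rho_i \asymp \operatorname{dist}$-to-$\partial D \asymp r$ (Lemma \ref{lemma2path}(a) plus the fact that $d_D \asymp r$ on $K$), apply Cap$_\leq(\Psi;D)$ to each, and add up the cutoffs, using that the number of balls is bounded and each contributes $\asymp m_0(B(x,r))/\Psi(r)$.

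For the \emph{lower bound}, the idea is that $e_K$ is comparable to the hitting probability $h_*(y) = \bar\bP_y(\sigma_{B(x,r)\cap D^c}^{\,} < \tau_{B(x,r)})$... more precisely, one wants $\overline{\operatorname{Cap}}(K, D) \gtrsim \overline{\operatorname{Cap}}(B(x,r)\cap\partial D, B(x,2r))$, and then invoke \eqref{e:6.1} together with Lemma \ref{lemma46} to conclude $\gtrsim m_0(B(x,r))/\Psi(r)$. To get the first comparison I would argue probabilistically: a path started near $\partial D$ inside $B(x,r)$ must, before exiting a somewhat larger ball, pass through $K$ with probability bounded below — this is exactly the kind of Harnack-chain estimate furnished by Lemma \ref{lemma45} combined with the uniform domain geometry (Lemma \ref{lemma21}(b)), because one can connect any point of $\partial D \cap B(x,r)$ to $K$ by a curve staying in $D$ at depth $\asymp r$. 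Quantitatively, this should yield $e_K(y) \geq c\, h_\Gamma(y)$ for $y$ in an appropriate region, where $h_\Gamma$ is the equilibrium potential of $\Gamma = B(x,r)\cap\partial D$; taking energies and using that $e_K$ minimizes energy then gives $\sE^0(e_K,e_K) \gtrsim \overline{\operatorname{Cap}}(\Gamma, B(x,2r)) \gtrsim m_0(B(x,r))/\Psi(r)$ by \eqref{e:6.1} and Lemma \ref{lemma46}.

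The main obstacle I anticipate is the lower bound, specifically making rigorous the comparison between $\overline{\operatorname{Cap}}(K,D)$ and $\overline{\operatorname{Cap}}(B(x,r)\cap\partial D, B(x,2r))$: one must handle that $K$ sits strictly inside $D$ while $\partial D \cap B(x,r)$ is on the boundary, and relate their equilibrium potentials. The cleanest route is probably to note that since $D^c$ is regular (as observed right after Proposition \ref{prop63}, under our standing hypotheses), the reflected diffusion started on $\partial D$ immediately enters $D^c$, and to build a Harnack chain of balls from a neighborhood of $\Gamma$ down to $K$ using Lemma \ref{lemma45}, obtaining $\inf_{y} e_K(y)/h_\Gamma(y) \geq c > 0$ on the relevant annulus; the energy comparison then follows because $e_K$, being the equilibrium potential of $K$, has energy no larger than that of any competitor agreeing with it on $K$, while $c^{-1} e_K$ dominates $h_\Gamma$ where needed. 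I would also double-check that PI$(\Psi;D)$ is genuinely needed — it enters through the $A$-uniform Harnack chain arguments and through ensuring $K$ is non-$\sE^0$-polar so that $e_K$ is well-defined and nontrivial — and cite Lemma \ref{lemma44} and Lemma \ref{lemma46} for the Harnack-type inputs rather than re-proving them.
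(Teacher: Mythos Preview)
Your upper bound is fine and matches the paper's argument: cover $K$ by boundedly many interior balls of radius $\asymp r$ and sum the cutoffs coming from Cap$_\leq(\Psi;D)$.

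The lower bound, however, has a genuine gap. The lemma assumes only \eqref{e:6.4}, PI$(\Psi;D)$ and Cap$_\leq(\Psi;D)$; it does \emph{not} assume HK$(\Psi)$ for the reflected space, nor the capacity density bound \eqref{e:6.1}. Your route invokes Lemma~\ref{lemma45}, Lemma~\ref{lemma46} and Lemma~\ref{lemma44} (all of which sit under HK$(\Psi)$) and then \eqref{e:6.1} itself, so the argument is not available under the stated hypotheses. There is also a structural problem with the proposed comparison $e_K\ge c\,h_\Gamma$: since $e_K$ vanishes on $\partial D$ while $h_\Gamma=1$ on $\Gamma=\partial D\cap B(x,r)$, no such pointwise inequality can hold near $\Gamma$, and hence the ``taking energies'' step does not yield $\overline{\operatorname{Cap}}(K,D)\gtrsim\overline{\operatorname{Cap}}(\Gamma,B(x,2r))$ in the direction you need.

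The paper's lower bound is more elementary and explains exactly why PI$(\Psi;D)$ is in the hypotheses. From \eqref{e:6.4} one gets $e_K\le 1-C_4$ on $B\big(x,\tfrac{r}{4A(1+A)}\big)$, while $e_K=1$ on $K$. Then Lemma~\ref{lemma36} (which is pure PI$(\Psi;D)$, no HK needed) gives
\[
\Psi(r)\,\sE^0(e_K,e_K)\ \gtrsim\ \int_{y\in B(x,r)\cap D_{\kappa r}}\frac{1}{m_0(B(y,r))}\int_{z\in B(x,r)\cap D_{\kappa r}}\big(e_K(y)-e_K(z)\big)^2\,m_0(dz)\,m_0(dy),
\]
and restricting $y$ to the shallow region where $e_K\le 1-C_4$ and $z$ to $K$ where $e_K=1$ produces a term $\gtrsim m_0(B(x,r))$ by Lemma~\ref{lemma21}(a) and (VD). So PI$(\Psi;D)$ is used directly for the energy lower bound, not merely to guarantee non-polarity of $K$ or to feed a Harnack chain.
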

\begin{proof} First, we prove the upper bound. There is $C_1<\infty$ independent of $x,r$ such that we can cover $K$ with $N\leq C_1$ balls of the form $B(y_i,r/8A)$, where $y_i\in K$ for $1\leq i\leq N$. For each $1\leq i\leq N$, we can find $\phi_i\in C_c(\tD)\cap  \sF^0$ such that $0\leq \phi_i\leq 1$, $\phi_i|_{\tD\setminus B(y_i,r/4A)}=0$,  $\phi_i|_{B(y_i,r/8A)}=1$ and
\[
\sE^0  (\phi_i,\phi_i)\leq C_2\frac{V (y_i,\lambda r)}{\Psi(\lambda r)}\leq C_3\frac{V (x,r)}{\Psi(r)}
\] 
for some $C_2,C_3>0$ independent of $x,r$, due to $\operatorname{Cap}_{\leq}(\Psi;D)$, (VD) and \eqref{eqnpsi}. Then, we define $\widehat {\phi}_K$ by $\widehat {\phi}_K(z)=\max_{1\leq i\leq N}\phi_i(z)$ for each $z\in \tD$, then one can see that 
$$
\sE^0 (e_K,e_K)\leq d\sE^0(\widehat {\phi}_K,\widehat {\phi}_K)\leq \sum_{i=1}^N  \sE^0 (\phi_i,\phi_i)\leq C_1C_3\frac{V(x,r)}{\Psi(r)}.
$$ 
	
Next, we prove the lower bound. By \eqref{e:6.4},
\begin{equation}\label{e:6.7}
e_K(y)\leq 1-C_4\hbox{ for every }y\in B(x,\frac{r}{4A(1+A)}).
\end{equation}
Let $\delta=\frac{r}{16A^2(1+A)}$, we have
\begin{align*}
\Psi(r) \sE^0 (e_K,e_K)
&\gtrsim\int_{y\in B(x,r)\cap D_{\delta r}}\frac{1}{m_0(B(y,r))}\int_{z\in B(x,r)\cap D_{\delta r}}\big(e_K(y)-e_K(z)\big)^2m_0(dy)m_0(dz)\\
&\geq \frac{1}{V(x, r)}\int_{y\in B(x,4A\delta r)\cap D_{\delta r}}\int_{z\in K}\big(e_K(y)-e_K(z)\big)^2m_0(dy)m_0(dz)\\
&\gtrsim\frac{m_0\big(B(x,4A\delta r)\cap D_{\delta r}\big)m_0(K)}{V(x, r)}\\
&\gtrsim V(x,r),
\end{align*}
where we use Lemma \ref{lemma36} in the first inequality, we use (VD) property of $m_0$ in the second inequality, we use \eqref{e:6.7} in the third inequality, and we use (VD) property of $m_0$ and Lemma \ref{lemma21} (a) in the last inequality. 
\end{proof}

\subsection{Proof of Theorem \ref{T:6.1}} \label{S:6.2}
  Recall the definition of $e_K$ and $\omega_K$ in \eqref{eqn51} and \eqref{eqn56}. Recall also that $C_{1,b},C_{2,b}$ are constants of (BHP).

\begin{proposition}\label{prop67}
Assume \eqref{e:6.4} and that {\rm HK$(\Psi)$} holds for $(\tD,d,m_0,\bar\sE,\bar\sF)$.  Then there is  a constant $C_1\in (1,\infty)$ such that 
\[
C_1^{-1}e_K(y)\frac{V(x,r)}{\Psi(r)}\leq\bar \sE (e_K,e_K)\,\omega_K(B(x,r))\leq C_1e_K(y)\frac{V(x,r)}{\Psi(r)}
\]
for $x\in\partial D$, $r<\diam(\partial D)/(8AC_{2,b})$, compact $K\subset D\setminus \overline{B(x,C_{2,b}r)}$ and $y\in B(x,r)\cap D_{r/(4A)}$. Here $A>1$ is the characteristic parameter of  the uniform domain $(D, d)$.
\end{proposition}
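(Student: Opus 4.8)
The plan is to estimate the two quantities $\bar\sE(e_K,e_K)\,\omega_K(B(x,r))$ and $e_K(y)\,m_0(B(x,r))/\Psi(r)$ separately and compare them. Recall from \eqref{eqn53} that $\omega_K$ is the Radon measure determined by $c(h,K)=\int_{\partial D}h\,d\omega_K$, so that $\bar\sE(e_K,e_K)\,\omega_K(B(x,r))$ should be understood (by inner regularity) as $\sup\{\bar\sE(e_K,e_K)\,c(h,K): h\in\check\sF_e\cap C_c(\partial D),\ 0\le h\le1,\ \operatorname{supp}[h]\subset B(x,r)\}$, together with the matching lower bound from test functions $h$ with $h=1$ on a ball $B(x,\rho)$ for $\rho$ comparable to $r$. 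The point of the proof is that $c(h,K)$, by Lemma \ref{lemma51}(a), is squeezed between $\inf_K\sH h$ and $\sup_K\sH h$; since $K$ is far from $B(x,r)\supset\operatorname{supp}[h]$, $\sH h$ is a nonnegative function on $D$ vanishing on $\partial D\cap B(x,C_{2,b}r)$ and regular harmonic there, so (BHP) lets us compare $\sH h(z)$ for $z\in K$ against $\sH h$ evaluated at a fixed reference point, and then (EHP)/Harnack-chain arguments relate this to $e_K$. The auxiliary set $L:=\overline{B(x,r)}\cap D_{r/(4A)}$ plays the role of the ``collar'' near the support of $h$.

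First I would fix $x\in\partial D$, $r<\diam(\partial D)/(8AC_{2,b})$, compact $K\subset D\setminus\overline{B(x,C_{2,b}r)}$, and $y\in B(x,r)\cap D_{r/(4A)}$, and introduce $L=\overline{B(x,r)}\cap D_{r/(4A)}$, which is nonempty and non-$\sE^0$-polar by Lemma \ref{lemma21}(a) and Lemma \ref{lemma66}; by Lemma \ref{lemma66}, $\bar\sE(e_L,e_L)\asymp m_0(B(x,r))/\Psi(r)$. The strategy is to prove the two-sided bound $\bar\sE(e_K,e_K)\,\omega_K(B(x,r))\asymp e_K(y)\,\bar\sE(e_L,e_L)$. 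For the comparison I would apply Corollary \ref{coro5+4} with the roles of the two condensers taken by $K$ and $L$ (note $L\subset D\setminus B(x,C_{2,b}r)$ fails as stated, so one actually compares $K$ against a condenser sitting at distance $\asymp r$ from $x$ but outside $B(x,C_{2,b}r)$; this is handled by first using Lemma \ref{lemma44}(a) and a Harnack chain to move from $L$-values to values on $\partial B(x,cr)$, then chaining outward). This yields, for suitable nonnegative $h\in\check\sF_e\cap C_c(\partial D)$ supported in $\partial D\cap B(x,r/C_{2,b})$,
\[
\frac{c(h,K)}{c(h,L)}\asymp\frac{e_K(y)\,\bar\sE(e_L,e_L)}{e_L(y)\,\bar\sE(e_K,e_K)}.
\]
Here $e_L(y)\asymp1$ for $y\in B(x,r)\cap D_{r/(4A)}$ because $y$ is within a bounded Harnack chain of $L$ inside $D$ (EHP), and $c(h,L)\asymp 1$ for $h$ with $h=1$ on a fixed fraction of $B(x,r)\cap\partial D$, again by Lemma \ref{lemma51}(a) together with EHP-controlled boundary behaviour of $\sH h$ near $L$. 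Rearranging gives $\bar\sE(e_K,e_K)\,c(h,K)\asymp e_K(y)\,\bar\sE(e_L,e_L)\asymp e_K(y)\,m_0(B(x,r))/\Psi(r)$, which is the claimed estimate once we pass from this particular $h$ to $\omega_K(B(x,r))$ via inner regularity (upper bound from taking $\operatorname{supp}[h]\subset B(x,r)$, lower bound from $h\ge\1$ on $B(x,\rho)$, $\rho\asymp r$, so that $\omega_K(B(x,\rho))\le c(h,K)\le\omega_K(B(x,r))$ and then using (VD) plus doubling of $\omega_K$-type balls, which follows from Theorem \ref{thm41}/Theorem \ref{thm5+5} under HK$(\Psi)$).

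The main obstacle will be the careful bookkeeping of which radii and which condensers the Harnack-type comparisons (Corollary \ref{coro5+4}, Lemma \ref{lemma44}) are actually allowed to be applied to: Corollary \ref{coro5+4} requires both compact sets to lie outside $B(\xi,r)$ while $h$ is supported in $B(\xi,r/C_{2,b})$, so the collar set $L$ — which touches $\overline{B(x,r)}$ — cannot be plugged in directly, and one must interpose an intermediate condenser at scale $\asymp r$ sitting just outside $B(x,C_{2,b}r)$ and transfer estimates to $L$ through an EHP/Harnack-chain step exactly as in the proof of Lemma \ref{lemma44}(a) and Lemma \ref{lemma66}. A second, more technical point is justifying $c(h,L)\asymp1$ and $e_L(y)\asymp1$ uniformly in $r$, which uses that the annular region $\overline{B(x,r)}\cap D_{r/(4A)}$ has controlled geometry (Lemma \ref{lemma21}) and that harmonic functions there obey scale-invariant Harnack (EHP) with constants independent of $r$. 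None of these steps is conceptually new beyond what is already developed in Sections \ref{S:4}--\ref{S:5}; the work is in assembling them with uniform constants.
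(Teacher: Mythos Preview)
Your strategy is the same as the paper's: compare $K$ to a fixed reference condenser near $x$ via Corollary~\ref{coro5+4}, then control the reference capacity by Lemma~\ref{lemma66} and the remaining factors by Harnack/hitting arguments. The execution, however, takes an unnecessary detour in two places.

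\textbf{Choice of reference condenser.} Your $L=\overline{B(x,r)}\cap D_{r/(4A)}$ does not sit outside $B(x,C_{2,b}r)$, which is exactly the hypothesis of Corollary~\ref{coro5+4}; you then propose an ``intermediate condenser'' workaround that you leave unspecified. The paper sidesteps this entirely by choosing the reference set at a larger scale: with $R=4AC_{2,b}r$ it takes
\[
K_x:=\overline{B(x,R)}\cap D_{R/(4A)}=\overline{B(x,4AC_{2,b}r)}\cap D_{C_{2,b}r}.
\]
Every point of $K_x$ has $d_D\ge C_{2,b}r$, so in particular distance $\ge C_{2,b}r$ from $x\in\partial D$; hence $K_x\subset D\setminus\overline{B(x,C_{2,b}r)}$ and Corollary~\ref{coro5+4} applies directly with $\xi=x$ and radius $C_{2,b}r$. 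Lemma~\ref{lemma66} at scale $R$, together with (VD) for $m_0$ and \eqref{eqnpsi}, still gives $\bar\sE(e_{K_x},e_{K_x})\asymp m_0(B(x,r))/\Psi(r)$. The bounds $e_{K_x}(y)\ge C$ and $c(h,K_x)\ge C$ (for the specific $h$ below) follow from Lemma~\ref{lemma45} and Lemma~\ref{lemma51}(a) combined with (EHP) and Proposition~\ref{prop63}, exactly as you outlined for $L$.

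\textbf{From $c(h,K)$ to $\omega_K(B(x,r))$.} You invoke ``doubling of $\omega_K$-type balls'' via Theorem~\ref{thm41}/Theorem~\ref{thm5+5}. This is both unnecessary and unavailable: Theorem~\ref{thm41} has an additional hypothesis (relative boundary capacity doubling) that is not assumed here, so appealing to it would be circular. The paper uses a single test function $h$ with $0\le h\le 1$, $h=1$ on $B(x,r/2)\cap\partial D$, $\operatorname{supp}[h]\subset B(x,r)$. For the upper bound on $\omega_K(B(x,r))$, note that Corollary~\ref{coro5+4} gives $c(h,K)\bar\sE(e_K,e_K)\le C_{1,b}\,e_K(y)\,\bar\sE(e_{K_x},e_{K_x})\,c(h,K_x)/e_{K_x}(y)$, and $c(h,K_x)\le 1$ holds for \emph{every} such $h$ (by Lemma~\ref{lemma51}(a), since $\sH h\le 1$); taking the supremum over $h$ yields the bound on $\omega_K(B(x,r))$. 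For the lower bound, the specific $h$ satisfies $c(h,K)\le\omega_K(B(x,r))$ trivially, so the lower estimate on $c(h,K)$ (using $c(h,K_x)\ge C_3$) transfers directly. No doubling of $\omega_K$ is needed.
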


\begin{proof}
Set 
$ K_x :=\overline{B\big(x,4AC_{2,b}r\big)}\cap D_{C_{2,b}r} .$ 
Let $h\in C_c(\partial D)\cap \check\sF_e$ that satisfies $0\leq h\leq 1$, $\operatorname{supp}[h]\subset B(x,r)$ and $h|_{B(x,r/2)}=1$. Then, by Corollary \ref{coro5+4}, we see that 
\begin{align}\label{e:6.8}
\begin{split}
C_{1,b}^{-1}\frac{e_K(y)}{e_{K_x}(y)}c(h, K_x) \bar \sE (e_{K_x},e_{K_x})
&\leq c(h, K) \bar \sE (e_K,e_K)
\\&\leq C_{1,b}\frac{e_K(y)}{e_{K_x}(y)}c(h, K_x) \bar \sE (e_{K_x},e_{K_x}).
\end{split}
\end{align}
Next, by using (EHI) and Proposition \ref{prop63}, we can show that $\sH h(z)\geq C_3$ for each $z\in K_x$, hence, $c(h, K_x)\geq C_3$ by Lemma \ref{lemma51} (a). Moreover, $e_{K_x}(y)\geq C_4$ by Lemma \ref{lemma45}, noticing that we can find $z\in K_x$ by Lemma \ref{lemma21}(a) such that $B(z,C_{2,br}/3A) \subset K_x$ and there is a path connecting $y,z$ such that
 $d(\gamma,\partial D)\geq \frac{r}{4A(1+A)}$. So \eqref{e:6.8} is simplified to be 
\[
C_5^{-1}e_K(y)\bar \sE (e_{K_x},e_{K_x})\leq c(h, K) \bar \sE (e_K,e_K)\leq C_5e_K(y)\bar \sE (e_{K_x},e_{K_x}).
\]
  Then, by Lemma \ref{lemma66} and \eqref{eqn56}, 
\[
C_6^{-1}e_K(y)\frac{V(x, r)}{\Psi(r)}\leq 
\bar\sE(e_K,e_K)\int_{\partial D}h(z)\omega_K(z)\leq C_6e_K(y)\frac{V(x,r)}{\Psi(r)}.
\]
The proposition follows immediately by the regular property of $(\bar\mcE,\bar\mcF)$. 
\end{proof}

  The following estimate of harmonic measure was obtained by Aikawa and Hirata \cite[Lemmas 3.5 and 3.6]{AH} in the Euclidean setting.

\begin{proposition}\label{P:6.8}
Suppose that either {\rm HK$(\Psi)$} and property \eqref{e:6.1} hold for $(\tD,d,m_0,\bar\sE,\bar\sF)$, or  that {\rm (VD)}, 
{\rm HK$(\Psi)$} and  property \eqref{e:6.2} hold   for an ambient strongly local regular Dirichlet space 
$(\sX, \tilde d, \tilde  m,\tilde\sE,\tilde\sF)$  for $(\sE^0,\sF^0)$. 
 Then, there are $C_1,C_2\in (1,\infty)$ such that 
\begin{equation}\label{e:6.10a}
C_1^{-1}\frac{\bar{g}_D(x_0,y)V(x,r)}{\Psi(r)}\leq \omega_{x_0}\big(B(x,r)\big)\leq C_1\frac{\bar{g}_D(x_0,y)V(x,r)}{\Psi(r)}
\end{equation}
for $x\in\partial D$, $r<\diam(\partial D)/(8AC_{2,b})$, $x_0\in D\setminus \overline{B(x,C_{3,b}r)}$ and $y\in B(x,r)\cap D_{r/(4A)}$. 
 Here $A>1$ is the characteristic parameter of  the uniform domain $(D, d)$.
\end{proposition}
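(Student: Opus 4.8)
The plan is to reduce the statement to Proposition \ref{prop67} by a suitable choice of the compact set $K$ and by identifying the two occurrences of the condenser potential quantities with the Green function $\bar g_D(x_0,\cdot)$. First I would fix $x\in\partial D$, $r<\diam(\partial D)/(8AC_{2,b})$, $x_0\in D\setminus\overline{B(x,C_{3,b}r)}$ (with $C_{3,b}$ chosen large enough, say $C_{3,b}\geq 2C_{2,b}$, so that a ball of radius comparable to $r$ around $x$ stays away from $x_0$) and $y\in B(x,r)\cap D_{r/(4A)}$. The idea is to take $K=K_\rho:=\overline{B(x_0,\rho)}$ for a radius $\rho\asymp d_D(x_0)\wedge(C_{3,b}r)$ small enough that $K_\rho\subset D\setminus\overline{B(x,C_{2,b}r)}$ is a non-$\sE^0$-polar compact set, and then let $\rho\to 0$. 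For such $K=K_\rho$, the measure $\omega_K$ of \S\ref{S:5} and the harmonic measure $\omega_{x_0}$ are related through the Green function: one has, for $h\in\check\sF_e\cap C_c(\partial D)$,
\[
c(h,K_\rho)=\int_{\partial D}h(z)\,\omega_{K_\rho}(dz),
\]
and by \eqref{eqn54}--\eqref{eqn55} combined with the resolvent/Green function identity for $\bar X$ killed on $D$, the normalized quantity $\omega_{K_\rho}(\cdot)/\bar\sE(e_{K_\rho},e_{K_\rho})$ converges, as $\rho\to0$, to $\bar g_D(x_0,\cdot)^{-1}$ times $\omega_{x_0}(\cdot)$ up to the right normalization; more precisely $\bar\sE(e_{K_\rho},e_{K_\rho})^{-1}\,\omega_{K_\rho}(B(x,r))\to \bar g_D(x_0,y')\,\omega_{x_0}(B(x,r))$-type limits where $y'$ ranges over $\partial B(x_0,\rho)$, and by (EHP) $\bar g_D(x_0,y')\asymp \bar g_D(x_0,x_0')$ is a well-defined comparison constant as $\rho\to 0$.

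Next I would invoke Proposition \ref{prop67}, which under \eqref{e:6.4} and {\rm HK}($\Psi$) gives
\[
C_1^{-1}e_{K_\rho}(y)\frac{m_0(B(x,r))}{\Psi(r)}\leq\bar\sE(e_{K_\rho},e_{K_\rho})\,\omega_{K_\rho}(B(x,r))\leq C_1 e_{K_\rho}(y)\frac{m_0(B(x,r))}{\Psi(r)}
\]
for $y\in B(x,r)\cap D_{r/(4A)}$, uniformly in $\rho$. The last step is to identify $\lim_{\rho\to0}\big(\bar\sE(e_{K_\rho},e_{K_\rho})\big)^{-1}e_{K_\rho}(y)$ with a constant multiple of $\bar g_D(x_0,y)$. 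This is the standard fact that, for the transient part process $X$ on $D$, $e_{K_\rho}(y)=\bar\bP_y(\sigma_{K_\rho}<\tau_D)$ and $\bar\sE(e_{K_\rho},e_{K_\rho})$ is the capacity of $K_\rho$ in $(\sE^0,\sF^0_e)$, so that $e_{K_\rho}(y)/\capa(K_\rho)\to \bar g_D(y,x_0)/(\text{const})$ via the representation $e_{K_\rho}(y)=\int \bar g_D(y,z)\,\mu_{K_\rho}(dz)$ with $\mu_{K_\rho}$ the equilibrium measure of mass $\capa(K_\rho)$; the uniform joint continuity of $\bar g_D$ near $x_0$ (a consequence of {\rm HK}($\Psi$) and (EHP)) lets the limit be taken, with $\bar g_D(x_0,y)\asymp \bar g_D(y',y)$ for $y'\in\partial B(x_0,\rho)$. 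Combining the two displays and sending $\rho\to 0$ yields \eqref{e:6.10a}. In the alternative hypothesis, where {\rm (VD)}, {\rm HK}($\Psi$) and \eqref{e:6.2} hold for the ambient space $(\sX,d,m,\tilde\sE,\tilde\sF)$, one first uses Theorem \ref{thm61}(iv)$\Rightarrow$(iii) — i.e., that \eqref{e:6.2} forces \eqref{e:6.1} for the reflected form — so that the previous case applies; alternatively, and more directly, $\bar g_D=\tilde g_D$ is the Green function of the part process and the same argument runs with $\tilde X$ in place of $\bar X$, since the part processes of $\tilde X$ and $\bar X$ killed upon leaving $D$ coincide.

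The main obstacle I anticipate is the limiting argument $\rho\to0$: one must make rigorous that $\bar\sE(e_{K_\rho},e_{K_\rho})^{-1}\omega_{K_\rho}$ converges (weakly, or at least on the balls $B(x,r)$ under consideration) and that the limit is exactly $\bar g_D(x_0,\cdot)^{-1}\omega_{x_0}(\cdot)$ up to a dimensionless constant, uniformly enough that the two-sided bound survives the passage to the limit. This requires the equilibrium-measure representation $e_{K_\rho}=\int \bar g_D(\cdot,z)\mu_{K_\rho}(dz)$, the normalization $\mu_{K_\rho}(D)=\capa(K_\rho)=\bar\sE(e_{K_\rho},e_{K_\rho})$, the joint continuity of $\bar g_D$ off the diagonal (from {\rm HK}($\Psi$)), and the identity $\omega_{x_0}(A)=\int_{\partial D}\bar g_D(x_0,z)\,?$-type relation — in fact the cleanest route is to note that for fixed $x_0$, the measure $A\mapsto \omega_{x_0}(A)$ and the quantity $\bar g_D(x_0,y)$ both appear linearly, so one can bypass the limit entirely by applying Proposition \ref{prop67} with $K$ a small ball around $x_0$ and using Corollary \ref{coro5+4} / Lemma \ref{lemma51} to compare $c(h,K)$ directly against $\omega_{x_0}(B(x,r))$ via $\sH h(x_0)=\int h\,d\omega_{x_0}$ and the Harnack comparison $e_K(\cdot)\asymp \bar g_D(x_0,\cdot)\capa(K)$ on the relevant region. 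I would carry it out in the latter form to keep the quantitative constants under control.
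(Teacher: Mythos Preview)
Your approach is essentially the paper's: shrink $K_\rho=\overline{B(x_0,\rho)}$, apply Proposition~\ref{prop67}, and identify the limit $e_{K_\rho}(y)/\bar\sE(e_{K_\rho},e_{K_\rho})\to\bar g_D(x_0,y)$ via the equilibrium-measure representation $e_{K_\rho}(y)=\int\bar g_D(y,z)\,\mu_{K_\rho}(dz)$ with $\mu_{K_\rho}(K_\rho)=\bar\sE(e_{K_\rho},e_{K_\rho})$ and continuity of $\bar g_D(\cdot,y)$ at $x_0$ from (EHP). The paper dispatches your ``main obstacle'' about $\omega_{K_\rho}(B(x,r))\to\omega_{x_0}(B(x,r))$ in one line using Lemma~\ref{lemma51}(a): since $\inf_{K_\rho}\sH h\le c(h,K_\rho)\le\sup_{K_\rho}\sH h$ and $\sH h$ is continuous at $x_0$, one gets $\int h\,d\omega_{K_\rho}\to\sH h(x_0)=\int h\,d\omega_{x_0}$ directly, which is the clean route you arrive at only at the end of your discussion.

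One caution: your suggestion to handle the ambient-space hypothesis by invoking Theorem~\ref{thm61}(iv)$\Rightarrow$(iii) is circular, since the proof of Theorem~\ref{thm61} (specifically (iii)$\Rightarrow$(ii) and (iv)$\Rightarrow$(ii)) uses Proposition~\ref{P:6.8}. The paper avoids this by noting that Proposition~\ref{prop63} establishes \eqref{e:6.4} under \emph{either} hypothesis, so Proposition~\ref{prop67} applies in both cases without appeal to Theorem~\ref{thm61}; your ``alternative'' remark that $\bar g_D=\tilde g_D$ and the argument runs unchanged is the correct fix.
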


\begin{proof}
This result is an easy consequence of Proposition \ref{prop67}.  
Let $K_n=\overline{B(x_0,\frac{1}{n})}$. Then for large enough $n$,  $e_{K_n}(z)=\int_{K_n}.  \bar{g}_D (z,w)\sigma_n(dw)$, where $\sigma_n$ is the equilibrium measure on $K_n$ and $\sigma_n(K_n)=\sE(e_{K_n},e_{K_n})$. By (EHP), we know that $\bar{g}_D(\cdot,y)$ is continuous at $x_0$, so 
\begin{equation}\label{e:6.9}
\lim\limits_{n\to\infty}\frac{e_{K_n}(y)}{\sE(e_{K_n},e_{K_n})}=\bar{g}_{D}(x_0,y).
\end{equation}
By Proposition \ref{prop67},  
\begin{equation}\label{e:6.10}
C_1^{-1}\frac{e_{K_n}(y)}{\sE(e_{K_n},e_{K_n})}\frac{V(x,r)}{\Psi(r)}\leq\,\omega_{K_n}\big(B(x,r)\big)\leq C_1\frac{e_{K_n}(y)}{\sE(e_{K_n},e_{K_n})}\frac{V(x,r)}{\Psi(r)}
\end{equation}
The proposition then follows from \eqref{e:6.9}, \eqref{e:6.10}  and Lemma \ref{lemma51}(a).  
\end{proof}

\begin{remark} \label{R:6.9} \rm  
\begin{enumerate}
\item When there is an ambient complete metric measure strongly local Dirichlet space $(\sX,d,m,\tilde\sE,\tilde\sF)$ that satisfies {\rm HK$(\Psi)$}
so that   $D$ is an $A$-uniform domain in $(\sX, d)$  and $ (\sE^0,\sF^0) :=(\tilde{\sE},\tilde{\sF}^D)$, the two-sided   harmonic measure  estimates \eqref{e:6.10a}   has also been proved recently in  \cite[Theorem 4.6]{KM} by Kajino and Murugan under a stronger  condition  \eqref{e:6.4a} than   \eqref{e:6.2}. In a recent updated version, the authors  outlined in \cite[\S 5.4]{KM} how their arguments can be modified to establish the estimates \eqref{e:6.10a} under condition \eqref{e:6.2}.

\item	 Our approach to Proposition \ref{P:6.8} is based on the capacity estimate and Corollary \ref{coro5+4}. 
We remark that the method of Aikawa and Hirata \cite{AH}  can be modified to provide another proof of Proposition \ref{P:6.8}. The idea is to first establish \eqref{e:6.10a} for $x\in \partial B(y,r/(8A))$ by using the Green function estimates and the hitting probability estimates from Proposition \ref{prop63}. Then, one can apply the maximal principle, together with the EHP and BHP, to extend the estimates to $D\setminus\overline{B(x,C_{3,b}r)}$. In the original paper \cite{AH}, a box argument instead of the BHP was applied to prove the upper bound, in the setting of John domains.  While this modified approach of Aikawa and Hirata is more direct, our approach has the advantage that the techniques developed in  Section \ref{S:5} of this paper do not rely on the corkscrew condition of the uniform domain directly, and allow us to establish (LS) property
without using the Green's function. In a forthcoming paper, we apply a similar idea to prove (LS) for reflected jump processes on domains that does not satisfy the corkscrew condition.
\qed
\end{enumerate} 
	\end{remark}

We can now present the proof for  Theorem \ref{T:6.1}.

\begin{proof}[Proof of Theorem \ref{T:6.1}]
(ii)$\Rightarrow$(i) is trivial. \medskip
	
(i)$\Rightarrow$(iii). Let $x\in \partial D$, $r\in (0,\operatorname{diam}(\partial D)/3)$ and $h(y)=\bar \bP_y(\sigma_{B(x,r)\cap \partial D}<\tau_{B(x,2r)})$ for each $y\in \tD$, so that 
\begin{equation}\label{e:6.11}
\bar\sE(h,h)=\overline{\rm Cap}\big(B(x,r)\cap \partial D,B(x,2r)\big).
\end{equation}
  Recall that $h_r(y)=\frac{1}{m_0(B(y,r)\cap D_{r/(4A)})}\int_{B(y,r)\cap D_{r/(4A)}}h(z)m_0(dz)$ as defined in Section \ref{S:3}. By \eqref{eqn36}, 
  there is a  positive constant $C_1,$ depending on $\sigma$,  so that 
\begin{align*}
\sqrt{\int_{y\in \partial D }\frac{\big(h(y)-h_{r}(y)\big)^2}{\Theta_{\Psi,\sigma}(y,r)}\sigma(dy)}
&\leq \sum_{k=0}^\infty\sqrt{\int_{y\in\partial D}\frac{\big(h_{\theta^kr}(y)-h_{\theta^{k+1}r}(y)\big)^2}{\Theta_{\Psi, \sigma} (y,r)}\sigma(dx) }
\\
&\leq C_1\sum_{k=0}^\infty\sqrt{\theta^{\beta k}\bar\mcE(h,h)}=\frac{C_1}{1-\theta^{\beta/2}}\sqrt{\bar\mcE(h,h)}, 
\end{align*} 
where $\beta$ is the parameter of (LS) and $\theta=1/(4A)$. Then, 
by  the doubling property of $m_0$ and $\sigma$, we have for some
 $C_2\in (0,\infty)$ depending on $\sigma$ that 
\begin{equation}\label{e:6.12}
	\bar \sE (h,h)\geq  \Big(\frac{1-\theta^{\beta/2}}{C_1} \Big)^2
	\int_{y\in  B(x,r)}\frac{\big(h(y)-h_{r}(y)\big)^2 V(y,r) }{\Psi(r)V_\sigma(y,r)}\sigma(dy)\geq C_2\frac{V(x,r)}{\Psi(r)}, 
\end{equation}
where  we use the fact that $h(z)<1-C_3$ for some $C_3>0$ if $d_D(z)>r/(4A)$, which follows an argument similar to that of Lemma \ref{lemma46} since we can find a path $\gamma$ connecting $z$ and $\tD\setminus B(x,3r)$ such that $d(\gamma,\partial D\cap B(x,r))>\frac{r}{4A(1+A)}$.
(iii) follows immediately from \eqref{e:6.11} and \eqref{e:6.12}. \medskip 
	
(iii)$\Rightarrow$(ii) and (iv)$\Rightarrow$(ii).  We fix $x\in \partial D$ and $0<r<R<\diam(\partial D)/(8AC_{2,b})$, and we choose $x_0\in D\setminus \overline{B(x,C_{2,b}R)}$. Then, by Proposition \ref{P:6.8},
\begin{equation}\label{e:6.13}
C_4\frac{\bar{g}_D(x_0,y_1)}{\bar{g}_D(x_0,y_2)}\leq\frac{\Theta_{\Psi, \omega} (x,r)}{\Theta_{\Psi, \omega} (x,R)}=\frac{V_\omega (x,r) \frac{\Psi(r)}{V (x,r)}}{V_\omega (x,R) \frac{\Psi(R)}{V(x,R)}}\leq C_5\frac{\bar{g}_D(x_0,y_1)}{\bar{g}_D(x_0,y_2)}, 
\end{equation}
where $y_1\in B(x,r)\cap D_{r/(4A)}$ and $y_2\in B(x,R)\cap D_{R/(4A)}$.

By Corollary \ref{coro65}, we know that for some $\gamma,C_6>0$, 
\begin{equation}\label{e:6.14}
\frac{\bar{g}_D(x_0,y_1)}{\bar{g}_D(x_0,y_2)}\leq C_6 \left(\frac{r}{R} \right)^{\gamma}. 
\end{equation}
Hence, $\Theta_{\Psi,\omega}$ satisfies {\rm (LS)} by \eqref{e:6.13} and \eqref{e:6.14}. 

By Lemma \ref{lemma44} (a), we also know that  
\begin{equation}\label{e:6.15}
\frac{\bar{g}_D(x_0,y_1)}{\bar{g}_D(x_0,y_2)}\geq C_7
\end{equation}
for some $C_7>0$ if $r=R/2$. Then, $\omega$ satisfies {\rm (VD)} and have full support on $\partial D$ by \eqref{e:6.13} and \eqref{e:6.15}. 
\medskip

(iii)$\Rightarrow$(iv) is   immediate  by using Lemma \ref{lemma46}, the fact $\widetilde{\operatorname{Cap}}\big(B(x,r)\setminus D,B(x,C_2r)\big)\geq \overline  {\rm Cap}\big(B(x,r)\cap \partial D,B(x,C_2r)\big)$, where $C_2$ is the constant of (iv), as well as the corkscrew property   in Lemma \ref{lemma21}  of  $D$  being an $A$-uniform domain in $(\sX, \tilde d)$   and the  (VD) property of $\tilde m$.
  \end{proof}

\section{Trace Dirichlet form}\label{S:7}

In this section, we give the characterization of  the trace Dirichlet form of $(\check\sE,\check\sF_e)$ with respect to the measure $\omega$ introduced in Theorem \ref{thm5+7}, under condition that $\omega$ satisfies (VD) and (LS) holds for $\Theta_{\Psi, \omega}$. Note that    
equivalent conditions for these two properties are given in Theorem \ref{T:6.1}.

For two  measures $\mu, \nu$ on a set $E$, we say that $\mu\asymp \nu$ if there is $C\in (1,\infty)$ such that 
$C^{-1}\nu(A)\leq\mu(A)\leq C\nu (A)$ for each $A\subset E$.

\begin{theorem}\label{thm71}
Suppose that $(\tD,d,m_0,\bar\sE,\bar\sF)$ satisfies {\rm HK($\Psi$)}. Let $\omega$ be the renormalized harmonic measure of Theorem \ref{thm5+7}.
Suppose that $\omega$ has full support on $\partial D$ and is {\rm (VD)}, and $\Theta_{\Psi, \omega}$ satisfies {\rm(LS)}.
 Let $(\check \sE, \check  \sF )$ be the trace Dirichlet form of $\big(\bar \sE ,\bar \sF \big)$ on $\partial D$ using   $\omega$ as the Revuz measure for the time-change. Then  there are positive constants $c_1,c_2$ depending on the parameter $A$ in the uniform domain condition, the parameters in {\rm(VD)}, {\rm HK($\Psi$)} for $(\tD,d,m_0,\bar\sE,\bar\sF)$, and the parameters in   {\rm(VD)} for $\omega$ and  {\rm(LS)}  for $\Theta_{\Psi, \omega}$
 such that the following  holds for the Beurling-Deny expression \eqref{e:2.7} 
of $(\check \sE, \check  \sF )$
	\begin{enumerate} [\rm (a)] 
 	
\item $ \displaystyle\frac{c_1}{V_\omega (x,d(x,y))\Theta_{\Psi, \omega} (x,d\big(x,y)\big)}\leq \frac{\check J(dx,dy)}{ \omega (dx)\omega(dy)}
\leq \frac{c_2}{V_\omega (x,d(x,y)) \Theta_{\Psi, \omega} (x,d\big(x,y)\big)}.$
	
	\item If $(\check \sE, \check  \sF )$ is recurrent or $\partial D$ is unbounded, then $\check\kappa=0$ (no killings). 
	
	\item If   $(\check \sE, \check  \sF )$ is transient and $\partial D$ is bounded, then 
	\begin{equation} \label{e:7.1a}
	c_1\frac{\overline  {\rm Cap}_0(\partial D)}{\omega(\partial D)}\leq\frac{\check\kappa(dx)}{\omega(dx)}\leq c_2\frac{\overline  {\rm Cap}_0(\partial D)}{\omega(\partial D)}.
	\end{equation} 
	Here  $\overline  {\rm Cap}_0(\partial D)=\check \sE ( {\mathbbm 1}_{\partial D},{\mathbbm 1}_{\partial D})=\bar \sE (\mathcal{H}{\mathbbm 1}_{\partial D},\mathcal{H}{\mathbbm 1}_{\partial D})$.
 	\end{enumerate} 
\end{theorem}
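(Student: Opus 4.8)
The plan is to implement rigorously the discrete toy-model heuristic from the introduction: for non-negative $f, g \in C_c(\partial D) \cap \check \sF$ with small, well-separated supports we want to show $\check \sE(f,g) \asymp \sH f(y) \sH g(y)\, \bar \sE(e_K, e_K)$ for a suitably placed compact set $K$ playing the role of the central node $o$. First I would fix $\xi_f \in \operatorname{supp}[f]$, $\xi_g \in \operatorname{supp}[g]$, set $\rho := d(\operatorname{supp}[f], \operatorname{supp}[g])$ and choose a point $p$ and radius comparable to $\rho$ so that $K := \overline{B(p, c\rho)} \cap D_{c'\rho}$ satisfies $\diam(K) \asymp d(K, \partial D) \asymp d(K, \operatorname{supp}[f]) \asymp d(K, \operatorname{supp}[g]) \asymp \rho$; Lemma \ref{lemma21}(a) guarantees such a $K$ exists and is non-$\sE^0$-polar. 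Writing $\sH f = \sH^K f + (\sH f - \sH^K f)$ and similarly for $g$, and using that $\sH f, \sH g$ are harmonic in $D$ (so $\bar \sE(\sH f, e_K) = 0 = \bar \sE(\sH g, e_K)$), I would expand
\[
\check \sE(f, g) = \bar \sE(\sH f, \sH g) = \bar \sE(\sH^K f, \sH^K g) + \bar \sE(\sH f - \sH^K f, \sH g) + \bar \sE(\sH^K f, \sH g - \sH^K g),
\]
and, mimicking the computation in \eqref{eqn13}, identify the dominant term as $c(f, K)\, c(g, K)\, \bar \sE(e_K, e_K)^{-1} \cdot \bar \sE(e_K, e_K)$-type expressions via Lemma \ref{lemma51}(b), while bounding the remaining cross terms using the boundary Harnack estimates of Lemma \ref{lemma5+3} and Corollary \ref{coro5+4}, together with the decay estimate of Lemma \ref{lemma64}/Corollary \ref{coro65} for $\sH^{D\setminus U} f$ away from $\operatorname{supp}[f]$.

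Next I would convert the resulting expression into the stated form. By Lemma \ref{lemma51}(a), $c(f, K) \asymp \sH f(y)$ for $y \in K$, and similarly $c(g, K) \asymp \sH g(y)$; by Lemma \ref{lemma66}, $\bar \sE(e_K, e_K) \asymp m_0(B(\xi_f, \rho))/\Psi(\rho)$. The harmonic extension values are controlled by the harmonic-measure comparison: the key point is that for a point $y$ at height $\asymp \rho$ near $\operatorname{supp}[f]$ one has $\omega_y|_E \asymp \omega|_E / \omega(E)$ where $E$ is a neighborhood of $\operatorname{supp}[f]$ of radius $\asymp \rho$, which follows from Theorem \ref{thm5+7} combined with (EHP) and (BHP). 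Hence $\sH f(y) = \int f\, d\omega_y \asymp \omega(E)^{-1}\int_E f\, d\omega$, and assembling all pieces yields for $f, g$ supported in small balls $E_1, E_2$ at distance $\asymp \rho$
\[
\check \sE(f, g) \asymp \Big(\tfrac{1}{\omega(E_1)}\!\int f\, d\omega\Big)\Big(\tfrac{1}{\omega(E_2)}\!\int g\, d\omega\Big)\, \frac{m_0(B(\xi_f, \rho))}{\Psi(\rho)}\, \omega(E_1) \omega(E_2)\, \frac{1}{?},
\]
and after matching against the Beurling–Deny form $\check \sE(f,g) = \iint (f(x)-f(y))(g(x)-g(y)) J(dx,dy)$ (the cross terms $\int f g\, \kappa$ vanish since the supports are disjoint) one reads off $J(dx,dy) \asymp \omega(dx)\omega(dy)/(\omega(B(x,d(x,y)))\Theta_{\Psi,\omega}(x,d(x,y)))$ using $\Theta_{\Psi,\omega}(x,\rho) = \Psi(\rho)\,\omega(B(x,\rho))/m_0(B(x,\rho))$ and the doubling of $\omega, m_0$. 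To make this precise one tests against indicator-like functions (or uses a density/covering argument over a dyadic family of annuli), invoking (VD) and (LS) for $\Theta_{\Psi,\omega}$ to sum up.

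For (b) and (c): the killing measure $\kappa$ is computed by testing the Beurling–Deny form against $f = g$ a cutoff near a point; $\kappa(E) = \check \sE(\mathbf 1, \mathbf 1_E) - (\text{jump contribution})$ type identity. If $(\check \sE, \check \sF)$ is recurrent then $\mathbf 1_{\partial D} \in \check \sF_e$ with $\check \sE(\mathbf 1_{\partial D}, \mathbf 1_{\partial D}) = 0$, forcing $\kappa = 0$ by the non-negativity in \eqref{e:2.7}; if $\partial D$ is unbounded the same conclusion follows since then $\bar X$ either is recurrent or the harmonic extension of $\mathbf 1$ is still $1$ (no mass escapes to a finite boundary) — this uses the analysis in Proposition \ref{prop313}(b)(ii) and Theorem \ref{T:3.16}(a). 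When $(\check \sE, \check \sF)$ is transient and $\partial D$ is bounded, $\kappa = \lim_{K} (\text{contribution of } e_K)$ distributes like $\omega$: concretely, by the toy-model identity $c(\mathbf 1_{\partial D}, K) = \omega_K(\partial D)$ and the uniform comparability $\omega_K|_{B(x,r)} \asymp \overline{\rm Cap}(e_K, e_K)\, \omega(B(x,r)) m_0(\cdot)/\Psi(\cdot)$ from Proposition \ref{prop67}, one localizes to get $\kappa(dx)/\omega(dx) \asymp \overline{\rm Cap}_0(\partial D)/\omega(\partial D)$ with $\overline{\rm Cap}_0(\partial D) = \bar\sE(\sH\mathbf 1_{\partial D}, \sH\mathbf 1_{\partial D})$.

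The main obstacle I expect is controlling the error (cross) terms in the expansion of $\check \sE(f,g)$ uniformly — i.e., showing $\bar \sE(\sH f - \sH^K f, \sH g)$ and its partner are genuinely of smaller order than the main term. This requires combining the boundary Harnack principle (to compare $\sH^{D\setminus U} f$ with the Green potential structure), the Hölder-type decay of Corollary \ref{coro65} near the boundary (to control the "leakage" of $\sH f - \sH^K f$ back toward $\operatorname{supp}[f]$ through $K$), and careful geometric bookkeeping using the $A$-uniform-domain Harnack-chain lemmas (Lemmas \ref{lemma44}, \ref{lemma45}). The (LS) condition for $\Theta_{\Psi,\omega}$ enters precisely here, providing the geometric decay needed to absorb the errors and to sum the dyadic contributions, just as it did in the proof of Theorem \ref{thmrestriction1} and Proposition \ref{prop312}.
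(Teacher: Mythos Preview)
Your overall framework for part (a) is right — use $e_K$, $c(f,K)$, $c(g,K)$ and Lemma \ref{lemma66} to produce the factor $m_0(B(x,\rho))/\Psi(\rho)$, then Theorem \ref{thm5+7} to convert $c(f,K), c(g,K)$ into integrals against $\omega$ — but the specific decomposition strategy is not what the paper does, and the ``main obstacle'' you flag is in fact avoided rather than overcome. You expand $\bar\sE(\sH f,\sH g)$ into a ``dominant'' term $\bar\sE(\sH^K f,\sH^K g)$ plus two cross terms and propose to show the cross terms are lower order, invoking (LS) to sum dyadic errors. The paper does \emph{not} treat any term as an error. Instead it proves a key new lemma (Lemma \ref{lemma72}): for $K=\overline{B(x,r)}\cap D_{r/(4A)}$ placed \emph{near} $\operatorname{supp}[f]$ (not at a separate central point $p$ as you propose), the difference $\sH f-\sH^K f$ is \emph{comparable from both sides} to $\sH f$ on $D\setminus B(x,2r)$. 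This is the opposite of ``small'': it lets one replace $\sH f$ by $\sH f-\sH^K f$ up to constants, then replace $\sH f-\sH^K f$ by $c(f,K)\,e_K$ (again comparable on $K$), with each replacement justified by the sign lemma (Lemma \ref{lemma52}) applied to $\sH^{D\setminus B(y,r)}g$. The argument is a chain of $\asymp$-substitutions rather than a main-term-plus-remainder estimate, and (LS) plays no role in part (a).

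Concretely, the paper first uses harmonicity of $\sH f$ to write $-\check\sE(f,g)=-\bar\sE(\sH f,\sH^{D\setminus B(y,r)}g)$, then Lemma \ref{lemma72} plus Lemma \ref{lemma52} gives $-\bar\sE(\sH f,\sH^{D\setminus B(y,r)}g)\asymp -\bar\sE(\sH f-\sH^K f,\sH^{D\setminus B(y,r)}g)$, and a second application yields $\asymp c(f,K)c(g,K)\bar\sE(e_K,e_K)$. Your placement of $K$ away from both supports would make Lemma \ref{lemma72} unavailable in the form needed.

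For (b) with $\partial D$ unbounded, your sketch (``harmonic extension of $\mathbf 1$ is still $1$'') is not correct when $(\bar\sE,\bar\sF)$ is transient. The paper instead approximates: set $v_n(x)=\bar\bP_x(\sigma_{B(\xi,n)\cap\partial D}<\infty)\nearrow 1$, shows $\int f\,d\kappa=\lim_n\check\sE(f,v_n|_{\partial D})$ via dominated convergence on the jump part, and then uses Lemma \ref{lemma5+3} to compare $-\bar\sE(\sH^{D\setminus U}f,\psi-v_n)$ with a fixed $\bar\sE(\sH^{D\setminus U}f,e_K)$ times the factor $(v_n(x_0)-1)/e_K(x_0)\to 0$. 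For (c) your outline is in the right spirit but too vague; the paper again uses Lemma \ref{lemma5+3} to compare $\bar\sE(\sH^{D_r}f,\psi-\sH\mathbf 1_{\partial D})$ with $\bar\sE(\sH^{D_r}f,e_K)$, yielding $\kappa(dx)/\omega(dx)\asymp$ constant directly, without Proposition \ref{prop67}.
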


 \begin{lemma}\label{lemma72}
Suppose  that $(\tD,d,m_0,\bar\sE,\bar\sF)$ satisfies {\rm HK$(\Psi)$}. Recall that $\sH^K h $ is defined by \eqref{eqn52}
and $A>1$ is the characteristic parameter for the uniform domain condition  of $(D, d)$. 
There is $C\in (0,1)$ such that
\[
h(y)-\sH^K h (y)\geq C\,h(y)  \quad \hbox{ for each }y\in D\setminus B(x,2r)
\]
for any $x\in \partial D$, $0<r<\operatorname{diam}(\partial D)/3$, $K=\overline{B(x,r)}\cap D_{r/(4A)}$ and $h\in C(\tD)\cap\bar \sF_e$ that is non-negative, regular harmonic in $D$ and satisfies the Dirichlet boundary condition along $\partial D\setminus \overline{B(x,r)}$.
\end{lemma}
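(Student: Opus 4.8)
The plan is to exploit the probabilistic meaning of $h - \sH^K h$. By \eqref{eqn55} with $K$ in place of the generic compact set, $h(y) - \sH^K h(y) = \bar\bE_y[h(\bar X_{\tau_D}); \sigma_K < \tau_D]$, i.e. it is the contribution to $h(y) = \bar\bE_y[h(\bar X_{\tau_D})]$ coming from those paths that visit $K$ before exiting $D$. So the inequality $h(y) - \sH^K h(y) \geq C\, h(y)$ for $y \in D \setminus B(x,2r)$ amounts to saying: a uniformly positive fraction of the $h$-weighted harmonic mass seen from $y$ comes from excursions that pass through the ``collar'' set $K = \overline{B(x,r)} \cap D_{r/(4A)}$ before hitting $\partial D$. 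Since $h$ satisfies the Dirichlet boundary condition along $\partial D \setminus \overline{B(x,r)}$, the process must ultimately exit through $\partial D \cap \overline{B(x,r)}$ to pick up any $h$-mass at all; the point is that a definite proportion of such exits is ``preceded'' by a visit to $K$.

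First I would set up the decomposition $h(y) = \bar\bE_y[h(\bar X_{\tau_D}); \sigma_K < \tau_D] + \sH^K h(y)$ and reduce the claim to showing $\sH^K h(y) \leq (1-C) h(y)$, i.e. $\bar\bE_y[h(\bar X_{\tau_D}); \tau_D < \sigma_K] \leq (1-C) h(y)$. Because $h$ vanishes ($\bar\sE$-q.e.) on $\partial D \setminus \overline{B(x,r)}$, on the event $\{\tau_D < \sigma_K\}$ the contribution is $\bar\bE_y[h(\bar X_{\tau_D}); \tau_D < \sigma_K, \bar X_{\tau_D} \in \overline{B(x,r)}]$. I would then run the process and use the strong Markov property at the first hitting time $\sigma_{\overline{B(x,2r)}}$ (note $y \notin B(x,2r)$): on $\{\tau_D < \sigma_K\}$ with exit point in $\overline{B(x,r)}$, the path must first enter $\overline{B(x,2r)}$, so $h(y) - \sH^K h(y) = \bar\bE_y[\, (h - \sH^K h)(\bar X_{\sigma_{\overline{B(x,2r)}}}); \sigma_{\overline{B(x,2r)}} < \tau_D \,]$ and likewise for $h$ itself; thus it suffices to prove the inequality for starting points $z \in \partial B(x,2r)$ (and then more generally in $\overline{B(x,2r)}$). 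For such $z$, I claim that from $z$, the process hits $K$ before exiting $D$ with probability bounded below, \emph{uniformly}; this is exactly the type of statement furnished by Lemma \ref{lemma45} applied to a Harnack chain inside $D$: by Lemma \ref{lemma21}(a) there is a ball $B(y_0, cr) \subset K$, and by the $A$-uniform property (Lemma \ref{lemma21}(b)) there is a path $\gamma$ in $D$ from $z$ to $y_0$ with $d(\gamma, \partial D) \gtrsim r$ and $\diam(\gamma) \lesssim r$, so Lemma \ref{lemma45} gives $\bar\bP_z(\sigma_K < \tau_D) \geq \bar\bP_z(\sigma_{B(y_0,cr)} < \tau_D) \geq C_0 > 0$.

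The remaining step is to upgrade ``$\bar\bP_z(\sigma_K < \tau_D) \geq C_0$'' to ``$\bar\bE_z[h(\bar X_{\tau_D}); \sigma_K < \tau_D] \geq C\, h(z)$'', which requires controlling the $h$-mass, not merely the probability. Here I would use the elliptic/boundary Harnack principle. Condition on $\sigma_K < \tau_D$; after hitting $K$ the process is at some point $w \in K \subset D_{r/(4A)}$, and by the strong Markov property $\bar\bE_z[h(\bar X_{\tau_D}); \sigma_K < \tau_D] = \bar\bE_z[h(\bar X_{\sigma_K}); \sigma_K < \tau_D] \geq C_0 \inf_{w \in K} h(w)$. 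On the other hand $h(z) \leq \sup_{w' \in \overline{B(x,2r)} \cap \partial B(x,r)\text{-region}} h(w')$ via a Harnack chain from $z$ back into the interior. The key comparison $\inf_{w \in K} h(w) \gtrsim h(z)$ for $z \in \partial B(x,2r)$ follows from Lemma \ref{lemma44}: part (a) gives comparability of $h$ over $\overline{B(x,Cr)} \cap D_{tr}$, and since both $z$ (via the uniform-domain path to an interior point of comparable depth) and $K$ lie in such a region after one Harnack-chain step, we get $h(w) \asymp h(z)$ up to a constant depending only on $A$ and the {\rm HK($\Psi$)} parameters. Combining, $h(z) - \sH^K h(z) \geq C_0 \inf_K h \geq C h(z)$ for $z \in \partial B(x,2r)$, hence (via the Markov-property reduction above) for all $y \in D \setminus B(x,2r)$.

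The main obstacle I anticipate is the second step: transferring the uniform lower bound on the hitting probability into a uniform lower bound on the $h$-weighted expectation. A crude bound $\bar\bE_z[h(\bar X_{\tau_D}); \sigma_K < \tau_D] \geq C_0 \inf_K h$ is only useful if $\inf_K h$ is comparable to $h(z)$ with a constant independent of $x, r, h$, and the natural worry is that $h$ could be much larger near the part of $\partial D \cap \overline{B(x,r)}$ that carries its boundary mass than it is on the collar $K$. This is precisely where one needs Lemma \ref{lemma44}(a) (which, unlike generic EHP, is tailored to the collar sets $\overline{B(x,r)} \cap D_{tr}$ of a uniform domain) together with the Dirichlet boundary condition of $h$ along $\partial D \setminus \overline{B(x,r)}$ to pin $h$ down; care must be taken that all intermediate balls in the Harnack chains stay inside $D \cap B(x, 2sr)$ for an appropriate $s > A + \tfrac12$ so that Lemma \ref{lemma44}(a) applies, and that the number of chain links stays bounded in terms of $A$ and the (VD) constant (Lemma \ref{lemma2path}).
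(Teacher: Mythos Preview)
Your overall strategy matches the paper's: write $h-\sH^Kh = \bar\bE_\cdot[h(\bar X_{\tau_D});\sigma_K<\tau_D]$, reduce via harmonicity to points $z\in\partial B(x,2r)$, and then combine a lower bound on $\bar\bP_z(\sigma_K<\tau_D)$ with a Harnack comparison $\inf_K h\gtrsim h(z)$. This works exactly as you describe when $z\in\partial B(x,2r)\cap D_{cr}$ for some fixed $c>0$, and the paper carries out precisely this argument for such points using Lemma~\ref{lemma21}(b), Lemma~\ref{lemma45}, and Lemma~\ref{lemma44}(a).

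The gap is at points $z\in\partial B(x,2r)$ with $d_D(z)$ small relative to $r$. There your claim ``$\bar\bP_z(\sigma_K<\tau_D)\geq C_0$ uniformly'' is false: if $d_D(z)=\varepsilon r$ with $\varepsilon$ small, the process exits $D$ before reaching $K\subset D_{r/(4A)}$ with probability close to~$1$. Likewise, your proposed Harnack-chain comparison $\inf_K h\gtrsim h(z)$ cannot be run with a uniform constant, because Lemma~\ref{lemma44}(a) only compares values of $h$ over $\overline{B(x,r)}\cap D_{tr}$, and there is no ``one Harnack step'' from a point at depth $\varepsilon r$ into $D_{tr}$ with a bound independent of $\varepsilon$. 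The point is that for such $z$ both $h(z)$ and $(h-\sH^Kh)(z)$ are small, and it is their \emph{ratio} that stays bounded. The paper handles this case by invoking (BHP): for $z\in\partial B(x,2r)\cap D_{0,r/C_1}$ with $C_1=4AC_{2,b}+1$, pick $\xi\in\partial D$ with $d(\xi,z)<r/C_1$, observe that both $h$ and $h-\sH^Kh$ are nonnegative, harmonic in $D\cap B(\xi,4AC_{2,b}r/C_1)$, and vanish on $\partial D\cap B(\xi,4AC_{2,b}r/C_1)\subset\partial D\setminus\overline{B(x,r)}$; then (BHP) transfers the inequality $(h-\sH^Kh)\geq C\,h$ from a nearby deep point (where your argument applies) to $z$. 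Once you have the inequality on all of $\partial B(x,2r)$, the extension to $D\setminus B(x,2r)$ proceeds as you outlined.
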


\begin{proof}
By Lemma \ref{lemma21}(a),  there is $z_0\in K$ so that $B(z_0,r/(12A))\subset K$. Let $C_{1,b},C_{2,b}> 1$ be constants in (BHP), 
and let $C_1=4AC_{2,b}+1$.

For each $y\in D_{r/C_1}\cap(B(x,3r)\setminus B(x,r))$, as $(D, d)$ is $A$-uniform, by Lemma \ref{lemma21}(b), 
there is a path $\gamma$ connecting $y,z_0$ in $D$ such that $d(\gamma,\partial D)>\frac{r}{C_1(A+1)}$ and $\diam(\gamma)<4Ar$. So  by Lemma \ref{lemma45}, 
$ \bar\bP_y(\sigma_K<\tau_D)\geq C_2.$
Moreover, by Lemma \ref{lemma44}(a), 
$ h(z)\geq  C_3 h(y)$ for each $z\in K.$
By the above two estimates, for $y\in D_{r/C_1}\cap(B(x,3r)\setminus B(x,r))$,
\begin{align}\label{e:7.1}
\begin{split}
h(y)-\sH^K h (y)&=\bar\bE_y[h(\bar X_{\tau_D});\sigma_K<\tau_D]=\bar\bE_y\big[\bar\bE_{X_{\sigma_K}}[h(\bar X_{\tau_D})];\sigma_K<\tau_D\big]\\
&=\bar\bE_y\big[h(\bar X_{\sigma_K});\sigma_K<\tau_D\big]\geq\bar\bE_y(\sigma_K<\tau_D)\inf_{z\in K}h(z)\geq C_2C_3 h(y). 
\end{split}
\end{align}

Next, for each $y\in D_{0,r/C_1}\cap\partial B(x,2r)$, there is  $\xi\in \partial D$ so  that $d(\xi,y)<r/C_1$. 
Note 
 $B(\xi,4AC_{2,b}r/C_1)\subset B(x,3r)\setminus B(x,r)$ and $B(\xi,4Ar/C_1)\cap D_{r/C_1}\neq\emptyset$ by Lemma \ref{lemma21}(a). Hence, by (BHP) and \eqref{e:7.1}, 
\begin{equation}\label{e:7.2}
h(y)-\sH^K h (y)\geq \frac{C_2C_3}{C_{1,b}}h(y). 
\end{equation}

Combining \eqref{e:7.1} and \eqref{e:7.2}, we see that $h(y)-\sH^K h (y)\geq \frac{C_2C_3}{C_{1,b}}h(y)$ for each $y\in \partial B(x,2r)$. 
The above inequality holds for every $y\in D\setminus B(x,2r)$ as both $h-\sH^K h$ and $h$ are regular harmonic in $ D\setminus K$ and satisfies Dirichlet boundary condition along $\partial D\setminus \overline{B(x,r)}$. 
\end{proof}

\begin{proof}[Proof of Theorem \ref{thm71}] (a).
Let $C_{1,b}$ and $C_{2,b}$ be constants of (BHP). Let $x,y\in \partial D$ and let $f,g\in C_c(\partial D)\cap \check \sF_e$ such that $f,g$ are non-negative, $f(z)=0$ for each $z\notin B(x,r)$, $g(z)=0$ for each $z\notin B(y,r)$, where $r=d(x,y)/3$.   
Then
\begin{equation}\label{e:7.3}
\begin{aligned}
2\int_{\partial D}f(z)g(w)\check J(dz,dw) & =-\check \sE (f,g) =-\bar \sE (\sH f,\sH g)=-\bar \sE (\sH f,\sH^{D\setminus B(y,r)}g),
\end{aligned}
\end{equation} 
where the third quality is due to the fact that $\sH f$ is harmonic in $D$. Let $K=\overline{B(x,r)}\cap D_{r/(4A)}$. Note that by Lemma \ref{lemma72}, $C_1\sH f(z)\leq\sH f(z)-\sH^K f(z)\leq \sH f(z)$ for every $z\in \partial B(y,r)$. By Lemma \ref{L:5.4}, 
\begin{equation}\label{e:7.4}
 \bar \sE (\sH f-\sH^Kf,\sH^{D\setminus B(y,r)}g)\asymp  \bar \sE (\sH f,\sH^{D\setminus B(y,r)}g).
\end{equation}
Moreover, by Lemmas \ref{lemma51}(a) and \ref{lemma44}(a), 
 $\sH f(z)-\sH^K f(z)\asymp c(f,K)$ for $z\in K$. Hence $\sH f-\sH^K{ f}\asymp  c(f,K) e_K$. By Lemmas \ref{lemma51}(a) and \ref{lemma52}, 
\begin{equation}\label{e:7.5}
 \bar \sE (\sH f-\sH^Kf,\sH^{D\setminus B(y,r)}g)
\asymp  c(f,K)\bar\sE(e_K,\sH^{D\setminus B(y,r)}g) 
= - c(f,K)c(g,K)\bar\sE(e_K,e_K). 
\end{equation}
Combining \eqref{e:7.3}--\eqref{e:7.5} and Lemma \ref{lemma66}, we see  
\begin{equation}\label{e:7.6}
\int_{\partial D}f(z)g(w)\check J(dz,dw)\asymp c(f,K)c(g,K)\frac{V(x, r)}{\Psi(r)}. 
\end{equation}
  Let $v\in K$.  Then $\omega_v(B(x,r))\geq C_2$ and $\omega_v(B(y,r))\geq C_2$ for some $C_2>0$ by Proposition \ref{prop63}. Then, by  Lemma \ref{lemma44}, Lemma \ref{lemma51}(a) and Theorem \ref{thm5+7},  
\begin{align}
\label{e:7.7}&c(f, K)\asymp  \sH f(v)=\int_{\partial D}f(z)\omega_v(dz)\asymp \int_{\partial D}f(z)\frac{\omega_v(dz)}{\omega_v(B(x,r))}
\asymp \int_{\partial D}f(z)\frac{\omega(dz)}{V_\omega(x,r)},\\
\label{e:7.8}&c(g, K)\asymp  \sH g(v)=\int_{\partial D}g(w)\omega_v(dw)\asymp \int_{\partial D}g(w)\frac{\omega_v(dw)}{\omega_v(B(x,r))}
\asymp \int_{\partial D}g(w)\frac{\omega(dw)}{V_\omega(y,r)}.
\end{align}
By Lemma \ref{lemma66} and (VD). By combining \eqref{e:7.6}--\eqref{e:7.8}, we see
\[
\begin{aligned}
&\quad\ \int_{\partial D\times \partial D\setminus \operatorname{diag}}f(z)g(w)\check J(dz,dw)
\\&\asymp \int_{\partial D\times \partial D}\frac{V (x,r)} {\Psi(r) V_\omega  (x,r) V_\omega (y,r)}f(z)g(w)\omega(dz)\omega(dw)\\
&\asymp \int_{\partial D\times \partial D}\frac{V (z,r) }{\Psi(r) V_\omega (z,r)^2} f(z)g(w)\omega(dz)\omega(dw)\\
&\asymp  \int_{\partial D\times \partial D}\frac{1}{\Theta_{\Psi, \omega} (z,d(z,w))V_\omega (z,d(z,w))} f(z)g(w)\omega(dz)\omega(dw).
\end{aligned}
\]
This finishes the proof since the estimate works any $x\neq y$ and $f,g$ support on small neighborhoods of $x,y$ respectively. 

\smallskip

(b). Suppose that $(\check \sE,  \check \sF)$ is recurrent. Then clearly it has no killing measure. Suppose that $(\check \sE,  \check \sF)$ is transient and $\partial D$ is unbounded. As by \cite[Theorem 5.2.5]{CF}, the transience and recurrence property is invariant under time changes,   $(\bar \sE, \bar \sF)$ is transient. Fix $\xi\in \partial D$ and let $v_n\in\bar\sF_e$ be defined as
\[
v_n(x)=\bar\bP_x\big(\sigma_{B(\xi,n)\cap\partial D}<\infty\big) \quad \hbox{ for each }x\in \tD.
\]
Then   $0\leq v_n\leq 1$ and $ \lim_{n\to \infty} v_n(x)= 1$   for each $x\in \tD$ by Lemma \ref{lemma64}. 
 Next, we fix non-negative $f\in C_c(\partial D)\cap \check\sF$, fix $r>0$ such that $\operatorname{supp}[f]\subset B(\xi,r/C_{2,b})$, fix $x_0\in B(\xi,r/C_{2,b})\cap D$, fix $\psi\in\bar\sF\cap C_c(\tD)$ such that $\psi|_{B(\xi,r)}=0$, and fix non-$\sE$-polar set $K\subset D\setminus B(\xi,r)$. Then,
\begin{align*}
&\quad\ \int_{\partial D}f(x)\check\kappa(dx)=\lim_{n\to\infty}\int_{\partial D}f(x)v_n(x)\check\kappa(dx)=\lim_{n\to \infty}\check\sE(f,v_n|_{\partial D})
=\lim_{n\to \infty}\bar\sE(\sH f,v_n)\\
&=\lim_{n\to\infty}-\bar\sE(\sH^{D\setminus B(\xi,r/C_{2,b})} f,\psi-v_n)\asymp \lim_{n\to\infty}\frac{v_n(x_0)-1}{e_K(x_0)}\bar\sE(\sH^{D\setminus B(\xi,r/C_{2,b})} f,e_K)=0,
\end{align*}
where in the second equality, we use the fact 
\begin{align*}
&\quad\ \lim\limits_{n\to\infty}\int_{\partial D\times\partial D}\big(f(x)-f(y)\big)\big(v_n(x)-v_n(y)\big)\check J(dx,dy)\\
&=\lim\limits_{n\to\infty}2\int_{x\in \partial D\cap B(\xi,r/C_{2,b})}\int_{y\in \partial D\setminus B(\xi,r)}f(x)\big(1-v_n(y)\big)\check J(dx,dy)=0
\end{align*}
by dominated convergence theorem, and we use Lemma \ref{lemma5+3} in the last inequality.

	\smallskip
	
(c) In the case that $\big(\bar \sE ,\bar \sF \big)$ is transient and $\partial D$ is bounded, 
$\bar \bP_x(\sigma_{\partial D}<\infty)<1$ for each $x\in D$.  So $\check\kappa(\partial D)=\check \sE({\mathbbm 1}_{\partial D},{\mathbbm 1}_{\partial D})=\bar \sE (\mathcal{H}{\mathbbm 1}_{\partial D},\mathcal{H}{\mathbbm 1}_{\partial D})={\rm Cap}_0(\partial D)>0$. 

\medskip

Next, we fix non-negative $f\in C_c(\partial D)\cap\check\sF_e$. 
Let $\psi\in C_c(\tD)\cap \bar \sF$ such that $\psi|_{D_{0,C_{2,b}r}}=1$, where $C_{1,b},C_{2,b}$ are the constants of (BHP) and $r=\diam(\partial D)$. Then 
\[
\int_{\partial D}f(x)\check\kappa(dx)=\bar \sE (\mathcal{H}f,\mathcal{H}{\mathbbm 1}_{\partial D})=\bar\sE(\mathcal{H}^{D_r}f,\mathcal{H}{\mathbbm 1}_{\partial D})=-\bar \sE(\mathcal{H}^{D_r}f,\psi-\mathcal{H}{\mathbbm 1}_{\partial D}),
\]
where the last inequality holds by the strongly local property of $(\bar\sE,\bar\sF)$. 

Let $K\subset D_{C_{2,b}r}$ be a compact subset. Then, by Lemma \ref{lemma5+3}, for $x_0\in D_{0,r}$
\[
-\bar\sE (\mathcal{H}^{D_r}f,\psi-\mathcal{H}{\mathbbm 1}_{\partial D})\asymp -\frac{1-\sH\1_{\partial D}(x_0)}{e_K(x_0)}\bar\sE (\mathcal{H}^{D_r}f,e_K).
\]
Noticing that $-\bar\sE(\mathcal{H}^{D_r}f,e_K)=\bar\sE (e_K,e_K)\int_{\partial D}f(x)\omega_K(dx)$ by Lemma \ref{lemma51}(b), we see
\[
\int_{\partial D}f(x)\check\kappa(dx)
\asymp \bar\sE(e_K,e_K)\frac{1-\sH\1_{\partial D}(x_0)}{e_K(x_0)}\int_{\partial D}f(x)\omega_K(dx).
\]  
Since $\omega\asymp \omega_K$,   it follows that 
\[
C_1^{-1}\bar\sE(e_K,e_K)\frac{1-\sH\1_{\partial D}(x_0)}{e_K(x_0)}\leq\frac{\int_{\partial D}f(x)\check \kappa(dx)}{\int_{\partial D}f(x)\omega(dx)}\leq C_1\bar\sE(e_K,e_K)\frac{1-\sH\1_{\partial D}(x_0)}{e_K(x_0)}
\]
for some constant $C_1\in(1,\infty)$ depending only on the bounds in (BHP). Noticing that $\check\kappa(\partial D)={\rm Cap}_0(\partial D)$, 
we conclude that $C_1^{-2}\frac{\check\kappa(\partial D)}{\omega(\partial D)}\leq\frac{\check\kappa(dx)}{\omega(dx)}\leq C_1^2\frac{\check\kappa(\partial D)}{\omega(\partial D)}$.
\end{proof}

\section{Mixed stable-like heat kernel estimates}\label{S:8}

In this short section, we point out that the stable-like heat kernel estimate holds for $(\partial D,d,\omega,\check\sE,\check\sF)$ if we in addition assume that $(\partial D,d)$ is uniformly perfect, which means that there is $C_0 \in (1,\infty)$ such that 
\[ 
\partial D\cap \big(B(x,r)\setminus B(x,r/C_0 )\big)\neq\emptyset\qquad\hbox{ for all }x\in\partial D\hbox{ and }0<r<\diam(\partial D)/C_0.
\]
  It is known that $\omega$ satisfies (RVD) if $(\partial D,d)$ is uniformly perfect and $\omega$ satisfies (VD), see \cite[Exercise 13.1]{Hei}. 
  By increasing the value of $C_0>1$ if needed, we may and do assume that there is some $c_0>0$ so that 
\begin{equation}\label{e:8.1a}
\omega ( B(x, C_0r)\setminus \overline{ B(x, r)}) \geq c_0 \omega (B(x, r))  
\quad \hbox{for every } x\in \partial D \hbox{ and } 0<r < \diam (\partial D)/C_0.
\end{equation}

\begin{lemma}\label{lemma81}
Assume that $(\tD,d,m_0,\bar\sE,\bar\sF)$ satisfies {\rm HK($\Psi$)}, and that $(\partial D,d)$ is uniformly perfect. Let $\omega$ be the renormalized harmonic measure of Theorem \ref{thm5+7}. Suppose that $\omega$ has full support on $\partial D$ and is {\rm (VD)}, and $\Theta_{\Psi, \omega}$ satisfies {\rm(LS)}.
 Recall that $\check{X}_t=\bar{X}_{\tau_t}$ is the time-changed process, where $\tau_t:=\inf\{s\geq 0:A_s^\omega>t\}$ and $A_s^\omega$ is the positive continuous additive functional of $\bar{X}$ with Revuz measure $\omega$.  Then, there are positive constants $C_1$ and $C_2$ so that
\[
C_1\Theta_{\Psi, \omega} (x,r)\leq\check\bE_x[\tau_{B(x,r)}]\leq C_2\Theta_{\Psi, \omega} (x,r)
\]
for each $x\in\partial D$ and $0<r<\diam(\partial D)/C_0$,  where $C_0 >1$ is the constant in \eqref{e:8.1a}. 
\end{lemma}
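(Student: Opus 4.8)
\textbf{Proof proposal for Lemma \ref{lemma81}.}
The plan is to estimate the mean exit time $\check\bE_x[\tau_{B(x,r)}]$ for the trace process $\check X$ directly from the trace Dirichlet form, by relating it to the jump kernel and killing measure estimates from Theorem \ref{thm71}. The key identity to exploit is that, for a transient (or suitably truncated) Dirichlet form, the mean exit time from an open set $U=B(x,r)\cap\partial D$ is given (up to the usual variational characterization) by the Green potential $G_U\1_U(x)=\int_U g_U(x,y)\omega(dy)$, where $g_U$ is the Green function of $\check X$ killed on leaving $U$. So the first step is to set up, under {\rm HK$(\Psi)$} and the (VD)/(LS) hypotheses, the explicit form of $\check\sE$ furnished by Theorem \ref{thm71}(a)--(c): the jump kernel satisfies $J(dz,dw)\asymp \omega(dz)\omega(dw)/\big(\omega(B(z,d(z,w)))\,\Theta_{\Psi,\omega}(z,d(z,w))\big)$, and the killing measure $\kappa$ either vanishes or is $\asymp\omega$ on the bounded transient case. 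This puts $(\partial D,d,\omega,\check\sE,\check\sF)$ into the exact framework of a metric measure space with a jump kernel of mixed stable-like type studied in \cite{CKW}, with scale function $\Theta_{\Psi,\omega}$.

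The second step is to verify the geometric inputs needed for the mean-exit-time estimate: $\omega$ is (VD) (by hypothesis), $(\partial D,d)$ is uniformly perfect so that $\omega$ also satisfies (RVD) via \eqref{e:8.1a}, and $\Theta_{\Psi,\omega}$ satisfies (LS) plus the trivial upper scaling coming from (VD) of $\omega$, $m_0$ and \eqref{eqnpsi} (noted right after Definition \ref{D:3.1}). With these, the upper bound $\check\bE_x[\tau_{B(x,r)}]\leq C_2\Theta_{\Psi,\omega}(x,r)$ follows from the Nash-type / Faber-Krahn argument: the jump kernel lower bound $J(dz,dw)\gtrsim \omega(dz)\omega(dw)/(\omega(B(z,d(z,w)))\Theta_{\Psi,\omega}(z,d(z,w)))$ forces enough "escape rate" out of $B(x,r)$, so that $\int_{\partial D\setminus B(x,r)}\frac{\omega(dw)}{\omega(B(x,d(x,w)))\Theta_{\Psi,\omega}(x,d(x,w))}\gtrsim \Theta_{\Psi,\omega}(x,r)^{-1}$, using (LS) to sum the dyadic annuli; then $\check\bE_x[\tau_{B(x,r)}]\le \big(\text{escape rate}\big)^{-1}\lesssim\Theta_{\Psi,\omega}(x,r)$. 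The lower bound $\check\bE_x[\tau_{B(x,r)}]\geq C_1\Theta_{\Psi,\omega}(x,r)$ is the more substantive half: here one uses the jump kernel upper bound together with the Lévy system / Dynkin formula to show that $\check\bP_x(\tau_{B(x,r/2)}\le t)$ is small for $t\lesssim \Theta_{\Psi,\omega}(x,r)$, i.e. the process is unlikely to have jumped far out of the ball in that time; this is a standard tightness estimate for stable-like processes and again relies on summing $\int_{\partial D\setminus B(x,r/2)}\frac{\omega(dw)}{\omega(B(x,d(x,w)))\Theta_{\Psi,\omega}(x,d(x,w))}$ over dyadic annuli, now controlled above by $\Theta_{\Psi,\omega}(x,r/2)^{-1}$ through (LS). The killing term, when present, contributes at most $\kappa(B(x,r))/\omega(B(x,r))\asymp \overline{\rm Cap}_0(\partial D)/\omega(\partial D)$, which is bounded and hence absorbed into the constants once $r$ is comparable to $\diam(\partial D)$; for small $r$ the diameter-bounded restriction $r<\diam(\partial D)/C_0$ keeps us away from that regime anyway.

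Rather than re-deriving these two estimates by hand, I would instead quote the relevant equivalence from \cite{CKW}: under (VD), (RVD) for $\omega$ and the scaling properties of $\Theta_{\Psi,\omega}$, the two-sided jump-kernel bound $J(dx,dy)\asymp \omega(dx)\omega(dy)/(\omega(B(x,d(x,y)))\Theta_{\Psi,\omega}(x,d(x,y)))$ (plus bounded killing) is equivalent to the conjunction of upper and lower heat kernel estimates of mixed stable type, which in turn includes the mean-exit-time estimate $\check\bE_x[\tau_{B(x,r)}]\asymp \Theta_{\Psi,\omega}(x,r)$ as one of its standard consequences (the condition often labelled $\mathrm{E}_{\Theta}$ or $\mathrm{E}_{\phi}$ there). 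Concretely: Theorem \ref{thm71} supplies exactly the hypotheses of the main theorem of \cite{CKW} for $(\partial D,d,\omega,\check\sE,\check\sF)$, so the mean-exit-time equivalence is immediate. The main obstacle I anticipate is bookkeeping rather than conceptual: one must check that the scale function $\Theta_{\Psi,\omega}$ meets the precise regularity hypotheses (both scaling bounds, and the normalization conventions) demanded in \cite{CKW}, and that the "bounded away from $\diam(\partial D)$" caveat in the killing term is compatible with their formulation; once those are reconciled, the lemma follows directly. I would therefore structure the written proof as: (1) record that $(\partial D,d,\omega,\check\sE,\check\sF)$ satisfies (VD), (RVD) and the stable-like jump-kernel bound by Theorem \ref{thm71} and \eqref{e:8.1a}; (2) invoke \cite{CKW} to conclude the mean-exit-time estimate; (3) note the killing contribution is harmless in the stated range of $r$.
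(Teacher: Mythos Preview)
Your upper-bound argument via the L\'evy system and the jump-kernel lower bound is exactly what the paper does. The lower bound is where your proposal diverges, and there is a real gap.

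Your plan to extract $\check\bE_x[\tau_{B(x,r)}]\gtrsim\Theta_{\Psi,\omega}(x,r)$ by quoting \cite{CKW} is circular in this paper's logic: Lemma~\ref{lemma81} is one of the \emph{inputs} fed into \cite[Theorem~1.13]{CKW} in the proof of Theorem~\ref{T:8.2} (alongside Theorem~\ref{thm71}), precisely to supply the exit-time condition $\mathbf E_\phi$. Your fallback direct tightness argument from the jump-kernel upper bound is also incomplete as written: the L\'evy-system bound on $\check\bP_x(\tau_{B(x,r/2)}\le t)$ requires controlling $\int_{B(x,r/2)^c}J(y,dw)$ uniformly over $y\in B(x,r/2)$, and this integral blows up for $y$ near $\partial B(x,r/2)$ because of short jumps into the annulus. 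One can repair this with a Meyer-type truncation and a martingale estimate for the small-jump part, but that is essentially re-proving a nontrivial chunk of \cite{CKW}.

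The paper instead gets the lower bound by exploiting the time-change structure directly, without touching the trace jump kernel. Since $\check X_t=\bar X_{\tau_t}$, the exit of $\check X$ from $B(x,r)\cap\partial D$ corresponds to $\bar X$ hitting $\partial D\setminus B(x,r)$, and
\[
\check\bE_x[\tau_{B(x,r)}]=\bar\bE_x\big[A^\omega_{\sigma_{\partial D\setminus B(x,r)}}\big]=\int_{B(x,r)\cap\partial D}\bar g_{D\cup B(x,r)}(x,y)\,\omega(dy)\ge\int_{B(x,c_1r)\cap\partial D}\bar g_{B(x,r)}(x,y)\,\omega(dy).
\]
From HK$(\Psi)$ for $\bar X$ one has the near-diagonal bound $\bar g_{B(x,r)}(x,y)\gtrsim\Psi(r)/m_0(B(x,r))$ for $y\in B(x,c_1r)$, and integrating against $\omega$ gives $\gtrsim\Psi(r)\,\omega(B(x,c_1r))/m_0(B(x,r))\asymp\Theta_{\Psi,\omega}(x,r)$ by (VD). This is short, uses only heat-kernel information about the ambient reflected diffusion $\bar X$, and is logically prior to any appeal to \cite{CKW}.
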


\begin{proof} 
 Let $\lambda>2$ be the constant of \eqref{e:4.3}, let  $C_\Psi\geq 1$ and $0<\beta_1<\beta_2$ be parameters in \eqref{eqnpsi}, and let $c_1:=C_\Psi^{-1}\lambda^{-\beta_2}$ and $c_2:=(c_1/(2C_\Psi))^{1/\beta_1}$. Then, there exists $c_3>0$ so that for $x_0\in\tD$, $r<\diam(D)$, $c_1\Psi(r)/2<t<c_1\Psi(r)$ and $x,y\in B(x_0,c_2r)$ 
\begin{align*}
\bar p_{B(x_0,r)}(t,x,y)\geq \bar p_{B(x_0,\lambda \Psi^{-1}(t))}(t,x,y)\geq \frac{c_3}{V(x,\Psi^{-1}(t))} \geq \frac{c_3}{V(x,r)}, 
\end{align*}
where the first inequality is due to $c_1\Psi(r)\leq \Psi(r/\lambda)$, and the second inequality is due to $c_2r\leq \Psi^{-1}(c_1\Psi(r)/2)$ and \eqref{e:4.3}. Hence we have for $\  x,y\in B(x_0,c_2r)$,
\[ 
\bar{g}_{B(x_0, r)} (x,y)=\int_0^\infty \bar{p}_{B(x_0,  r)} (t,x,y)dt\geq \int_{c_1\Psi(r)/2}^{c_1\Psi(r)}\bar{p}_{B(x_0,  r)} (t,x,y)dt\geq \frac{c_1c_3\Psi(r)}{2V(x, r)}. 
\]
Denote by $A^\omega$  the positive continuous additive functional of $\bar X$ having Revuz measure $\omega$.
Then  the boundary trace process $\check X_t = \bar X_{\tau_t}$, where $\tau_t:=\inf\{r>0: A^\omega_r >t\}$.
 Note that since $\omega$ is supported on $\partial D$,  $A^\omega_t = \int_0^t \1_{\partial D} (\bar X_s) dA^\omega_s$
 for every $t\geq 0$. Hence  for $x\in\tD$ and $0< r<\diam(D)$, 
\begin{eqnarray*}
\check{\bE}_x[\tau_{ B(x,r)}] &=&  \check{\bE}_x[\sigma_{\partial D\setminus B(x,r)}] = \check{\bE}_x \int_0^{\sigma_{\partial D\setminus B(x,r)} } 
\1_{\partial D} ( \bar X_{\tau_s}) ds   \\
& =&   {\bE}_x \int_0^{\sigma_{\partial D\setminus B(x,r)} }
\1_{\partial D} ( \bar X_r)  dA^\omega_r 
\, = \, \bE_x \big[ A^\omega_{ \sigma_{\partial D\setminus B(x,r)}} \big]\\
&=&  \int_{B(x, r)} \bar{g}_{D\cup B(x, r)} (x,y) \omega (dy)\geq  \int_{B(x, r)} \bar{g}_{B(x, r)} (x,y) \omega (dy) \\
 &\geq & \int_{B(x,c_2r)} \frac{c_1c_3\Psi(r)}{2V(x,r)}\omega (dy)  \geq c_4 \Theta_{\Psi, \omega} (x,r).
  \end{eqnarray*} 
where the third equality is by a change of variable formula \cite[(A.3.16)]{CF}, the fifth equality is by \cite[Propositions 4.1.10 and 4.1.12]{CF}, while  the last inequality is due to (VD) of $\omega$. 

  By the jump kernel estimate of Theorem \ref{thm71}, and L\'evy system equality \cite[(A.3.31)]{CF}, we have
for every $x\in \partial D$ and $0<r<\diam(\partial D)/C_0$, 
\begin{eqnarray*}
1&\geq & \check \bP_x \left(  \check X_{\tau_{B(x, r)}} \in \partial D \setminus \overline{B(x,r)}\right)    \\
  &\geq &  \check \bE_x \int_0^{\tau_{B(x, r)}}  \int_{\partial D \setminus \overline{B(x, r)}} \frac{ c_5} { V_\omega (x,d(x,y))  \Theta_{\Psi, \omega} (x,d\big(x,y)\big)} \omega(dy)  ds \\
 &\geq & \check \bE_x [ \tau_{B(x, r)}] \,   \frac{c_5 \, \omega (B(x, C_0r) \setminus \overline{B(x, r)})  } {V_\omega (x, C_0 r) \Theta_{\Psi,\omega} (x, C_0r)}  
 \geq      \frac{ c_6 \,\check{\bE}_x [ \tau_{B(x, r)}] } { \Theta_{\Psi,\omega} (x,  r)} ,
  \end{eqnarray*}
where the last inequality is due to the  (VD) and (RVD) of $\omega$ and the (LS) of $\Theta_{\Psi,\omega}(x,r)$. 
This proves  that $\check \bE_x  [ \tau_{B(x, r)} ]  \leq   \Theta_{ \Psi,\omega}(x,r)/c_6$. 
\end{proof}

Observe that under (LS) of $\omega$ and (VD) of $m_0$, there is a function $\tilde\Theta_{\Psi,\omega}(x,r)$ that is continuous and increasing in $r$ so that  $\tilde\Theta_{\Psi,\omega}(x,r)\asymp \Theta_{\Psi,\omega}(x,r)$.  Define for $t>0$, 
 $$
 \Theta^{-1}_{\Psi,\omega}(x,t) = \inf\{r>0: \tilde \Theta_{\Psi,\omega}(x,r)> t\}.
 $$
 Clearly, $\tilde\Theta_{\Psi,\omega} (x,\Theta^{-1}_{\Psi,\omega}(x,t))=t$  and so 
   $ \Theta_{\Psi,\omega} (x,\Theta^{-1}_{\Psi,\omega}(x,t)) \asymp t$ for every $t>0$ and $x\in \tD$. 
 
\begin{theorem}\label{T:8.2}
Suppose that $(\tD,d,m_0,\bar\sE,\bar\sF)$ satisfies {\rm HK($\Psi$)}, and that $(\partial D,d)$ is uniformly perfect. Let $\omega$ be the renormalized harmonic measure of Theorem \ref{thm5+7}.  Suppose that $\omega$ has full support on $\partial D$ and is {\rm (VD)}, and $\Theta_{\Psi, \omega}$ satisfies {\rm(LS)}.
 Then the trace Dirichlet form $(\check\sE,\check\sF)$  has jointly continuous heat kernel $\check{p}(t,x,y)$ on $(0, \infty)\times \partial D\times \partial D$ with respect to 
the measure $\omega$, and the following estimates hold.

\begin{enumerate}[\rm (a)]
\item If $(\check \sE, \check  \sF )$ is recurrent or $\partial D$ is unbounded, then  
\[
\check{p}(t,x,y)\asymp \frac{1}{V_\omega (x,\Theta_{\Psi, \omega} ^{-1}(x,t))}\wedge \frac{t}{ V_\omega (x,d(x,y) )\Theta_{\Psi, \omega} (x,d(x,y))}
\]
for all $t>0$ and $x, y \in \partial D$. The constants in $\asymp$ depends only depending only on the parameter $A$ in the uniform domain condition, the parameters in {\rm(VD)}, {\rm HK($\Psi$)} for $(\tD,d,m_0,\bar\sE,\bar\sF)$, and the parameters in  {\rm(VD)} for $\omega$ and  {\rm(LS)}  for 
$\Theta_{\Psi, \omega}$.

\item If $(\check\sE,\check\sF)$ is transient and $\partial D$ is bounded, then there are positive constants $0<c_1<c_2$ and $\lambda_1\geq\lambda_2>0$ depending on the parameter $A$ in the uniform domain condition, the parameters in {\rm(VD)}, {\rm HK($\Psi$)} for $(\tD,d,m_0,\bar\sE,\bar\sF)$, and the parameters in  {\rm(VD)} for $\omega$ and  {\rm(LS)}  for $\Theta_{\Psi, \omega}$ so that 
\begin{eqnarray*}
&& c_1 e^{- \alpha \lambda_1   t} 
 \left( \frac{1}{ V_\omega (x,\Theta_{\Psi, \omega} ^{-1}(x,t)) } \wedge \frac{t}{ V_\omega (x,d(x,y))\Theta_{\Psi, \omega} (x,d(x,y))} \right) \\
 &\leq & 
\check{p}(t,x,y)\, \leq \, c_2 e^{- \alpha\lambda_2    t} 
\left( \frac{1}{ V_\omega (x,\Theta_{\Psi, \omega} ^{-1}(x,t)) }
\wedge \frac{t}{  V_\omega (x,d(x,y)) \Theta_{\Psi, \omega} (x,d(x,y))} \right)
\end{eqnarray*}
for all $t>0$ and $x,y \in \partial D$, where $\alpha:=  \frac{\overline  {\rm Cap}_0(\partial D)}{\omega(\partial D)}$. 
 \end{enumerate}
\end{theorem}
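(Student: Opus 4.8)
\textbf{Proof strategy for Theorem \ref{T:8.2}.}
The plan is to deduce the heat kernel estimates from the general stable-like theory developed in \cite{CKW}, applied to the metric measure Dirichlet space $(\partial D, d, \omega, \check\sE, \check\sF)$. The first task is to verify that this space falls within the scope of \cite{CKW}: namely that $(\partial D, d)$ together with $\omega$ satisfies {\rm (VD)} and {\rm (RVD)}, and that $\check\sF\cap C_c(\partial D)$ is dense both in $(\check\sF, \check\sE_1)$ and in $C_c(\partial D)$ with the uniform norm. The {\rm (VD)} of $\omega$ is assumed; {\rm (RVD)} follows from the uniform perfectness of $(\partial D,d)$ as recorded in \eqref{e:8.1a}; the density/regularity statements come from Theorem \ref{T:3.16} once we know (as we do under the standing hypotheses, by Theorem \ref{thm61}) that $\omega$ plays the role of the measure $\sigma$ there, i.e.\ {\rm PI}$(\Psi;D)$ and {\rm Cap}$_\leq(\Psi;D)$ are available and $\Theta_{\Psi,\omega}$ has {\rm (LS)}. (Recall that under (VD) + {\rm HK}$(\Psi)$ one has {\rm PI}$(\Psi)$ and {\rm Cap}$(\Psi)$ for the reflected form, and these transfer appropriately.)

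Next I would set up the scale function. Since $\Theta_{\Psi,\omega}(x,r)$ need not be continuous or monotone in $r$, I would first replace it by the comparable, continuous, strictly increasing function $\Theta'_{\Psi,\omega}(x,r)$ and its inverse $\Theta^{-1}_{\Psi,\omega}(x,t)$ as in the paragraph preceding the theorem. One checks that $\Theta'_{\Psi,\omega}$ satisfies the scaling hypotheses required by \cite{CKW}: the upper scaling $\beta_2 + d_1$-type bound comes from \eqref{eqnpsi} and {\rm (VD)} of $m_0$ (which gives doubling of $\Theta_{\Psi,\omega}$ in $r$, hence an upper polynomial bound on $\Theta_{\Psi,\omega}(x,R)/\Theta_{\Psi,\omega}(x,r)$), and the lower scaling is precisely the {\rm (LS)} hypothesis \eqref{assum3}. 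Then Theorem \ref{thm71}(a) identifies the jump kernel of $(\check\sE,\check\sF)$: it is comparable to
$$
J(x,y) \asymp \frac{1}{\omega(B(x,d(x,y)))\,\Theta_{\Psi,\omega}(x,d(x,y))},
$$
which is exactly the mixed-stable-like form $J_{\phi}$ of \cite{CKW} with $\phi = \Theta'_{\Psi,\omega}$. The key analytic inputs of \cite{CKW} — the upper bound condition (essentially a cutoff/Faber--Krahn estimate) and the lower bound condition — reduce to the exit time estimate $\check\bE_x[\tau_{B(x,r)}] \asymp \Theta_{\Psi,\omega}(x,r)$, which is Lemma \ref{lemma81}, combined with the jump kernel comparison and the survival/Green function bounds that come from {\rm HK}$(\Psi)$ for $\bar X$ together with {\rm (BHP)}/{\rm (EHP)}.

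For part (a) — the conservative case, i.e.\ $(\check\sE,\check\sF)$ recurrent or $\partial D$ unbounded — Theorem \ref{thm71}(b) gives $\kappa = 0$, so $\check X$ is a pure-jump conservative process with jump kernel of the stated form and no killing. The two-sided heat kernel estimate then is a direct citation of the main theorem of \cite{CKW} for symmetric pure-jump Dirichlet forms of mixed-stable type on a {\rm VD}+{\rm RVD} space: one gets
$$
\check p(t,x,y) \asymp \frac{1}{\omega(B(x,\Theta^{-1}_{\Psi,\omega}(x,t)))} \wedge \frac{t}{\omega(B(x,d(x,y)))\,\Theta_{\Psi,\omega}(x,d(x,y))},
$$
and joint continuity of $\check p$ likewise follows from \cite{CKW} (or from the standard regularization argument under these estimates). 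For part (b) — $(\check\sE,\check\sF)$ transient with $\partial D$ bounded — Theorem \ref{thm71}(c) says the killing measure is $\kappa(dx) \asymp \alpha\,\omega(dx)$ with $\alpha = \overline{\rm Cap}_0(\partial D)/\omega(\partial D)$, a bounded density. I would handle this by comparing $\check X$ with the process $\check X^0$ obtained by removing the killing (equivalently, the Dirichlet form $(\check\sE - \int \cdot\,\kappa, \check\sF)$, which is of the conservative type just treated) via the Feynman--Kac formula: writing $\check X$ as $\check X^0$ killed at rate bounded between $\alpha\lambda_2$ and $\alpha\lambda_1$ (the constants absorbing the two-sided bound in $\kappa(dx)/\omega(dx) \asymp \alpha$), one gets
$$
e^{-\alpha\lambda_1 t}\,\check p^0(t,x,y) \le \check p(t,x,y) \le e^{-\alpha\lambda_2 t}\,\check p^0(t,x,y),
$$
and then substitutes the estimate for $\check p^0$ from part (a). Since $\partial D$ is bounded here, $\omega$ is in fact globally doubling, so the estimates hold for all $t>0$ and all $x,y\in\partial D$ without the restriction $r < \diam(\partial D)/C_0$.

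\textbf{Main obstacle.} The bulk of the work is not in invoking \cite{CKW} but in checking its hypotheses cleanly in this generality — in particular, confirming that Lemma \ref{lemma81}'s exit-time two-sided bound, together with Theorem \ref{thm71}(a)'s jump-kernel comparison, is exactly equivalent to the package of conditions (upper bound condition $\mathrm{UJS}$ or Faber--Krahn-type, plus the lower bound/survival estimates) under which \cite{CKW} proves the stated $\check p$ estimates. A secondary technical point is the passage from $\Theta_{\Psi,\omega}$ to the regularized $\Theta'_{\Psi,\omega}$ and verifying that all the $\asymp$ statements are preserved and that the scaling exponents genuinely lie in the admissible range $(0,\ldots)$ dictated by {\rm (VD)}, {\rm (RVD)} and the stable index being strictly below the walk dimension — here this is automatic because $\check X$ is pure-jump, but it must be stated. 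Finally, for part (b) the Feynman--Kac comparison requires that the killing rate, as a multiplicative functional, be genuinely comparable to $e^{-\alpha\lambda t}$ uniformly; this uses boundedness of $\partial D$ and the two-sided bound on $\kappa(dx)/\omega(dx)$ from Theorem \ref{thm71}(c), so no new difficulty arises beyond bookkeeping.
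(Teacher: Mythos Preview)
Your overall strategy matches the paper's: verify the jump kernel bound (Theorem \ref{thm71}(a)), the exit time bound (Lemma \ref{lemma81}), feed these into \cite[Theorem 1.13]{CKW}, and in case (b) strip off the killing via Feynman--Kac and then reinsert it. The part (b) argument you sketch is exactly what the paper does, including re-deriving the exit time bound for the pure-jump process $\check X^{(j)}$.

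There is, however, one genuine technical gap. You propose to apply \cite{CKW} directly with $\phi = \Theta'_{\Psi,\omega}$, but the scale function in \cite{CKW} is a function of $r$ alone, not of $(x,r)$. Your regularization $\Theta'_{\Psi,\omega}(x,r)$ is still state-dependent, so \cite[Theorem 1.13]{CKW} does not apply as written. The paper handles this by first performing a quasi-symmetric change of metric on $\partial D$ (citing \cite[Proposition 5.2 and the proof of Lemma 5.7]{BM}) under which $\Theta_{\Psi,\omega}(x,r)\asymp r^\beta$ for a single exponent $\beta>0$; the {\rm (VD)}, {\rm (RVD)}, jump kernel, and exit time bounds are all preserved under quasi-symmetry, and now the scale is $x$-independent so \cite{CKW} applies directly. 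Your identification of the ``main obstacle'' (matching the hypotheses of \cite{CKW}) is right in spirit, but the specific obstruction---and its resolution via metric change---is this $x$-dependence issue, not the scaling-exponent bookkeeping you flag. Once you insert that one sentence invoking \cite{BM}, the rest of your sketch goes through essentially verbatim.
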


\begin{proof}  
The  proof is along the same line as that of \cite[Theorem 2.40]{KM}. By a quasi-symmetric change of metric as given in \cite[Proposition 5.2 and the proof of Lemma 5.7]{BM},   it suffices to consider the case that $\Theta_{\Psi,\omega} (x,r)\asymp r^\beta$.

 \smallskip
 
 (a). In this case, the desired conclusion follows directly from Theorem \ref{thm71}(a)(b),  Lemma \ref{lemma81}, \cite[Theorem 1.13]{CKW} and Remark \ref{R:8.3}  below.

(b). Suppose that $(\check\sE,\check\sF)$ is transient and $\partial D$ is bounded.
 By Theorem \ref{thm71}(c), 
 there are positive constants $0<c_1<c_2$ and $\lambda_1\geq \lambda_2>0$ 
depending on the parameter $A >1$ in the uniform domain condition, the parameters in {\rm(VD)}, {\rm HK($\Psi$)} for $(\tD,d,m_0,\bar\sE,\bar\sF)$, and the parameters in {\rm(VD)} for $\omega$ and  {\rm(LS)}  for $\Theta_{\Psi, \omega}$  so that 
\begin{equation} \label{e:8.2}
 \alpha  \lambda_2 \leq \check\kappa(x):=\frac{\check\kappa(dx)}{  \omega (dx)}  \leq \alpha \lambda_1 \quad \hbox{on } \partial D.
\end{equation} 
Denote by $(\check\sE^{(j)},\check\sF)$ the Dirichlet form  defined by 
\[
\check\sE^{(j)}(f,g)=\int_{\partial D\times\partial D\setminus{\rm diagonal}}(f(x)-f(y))(g(x)-g(y))\check J(dx,dy)
\quad \hbox{ for }f,g\in\check\sF,
\]
where $\check J(dx,dy)$ is the jump kernel of $(\check\sE,\check\sF)$. 
 By \cite[Theorem 5.1.5]{CF} or \cite[Theorem 6.1.1]{FOT},  $(\check\sE, \check \sF)$ can be obtained from $(\check\sE^{(j)},\check\sF)$ through killing at rate $\check\kappa(x)$ via Feynman-Kac transform. That is, 
\begin{equation} \label{e:8.3}
\check{P}_tf(x)=\check{\bE}^{(j)}_x  \left[e^{-\int_0^t \check \kappa (\check{X}^{(j)}_s)ds}f(\check{X}^{(j)}_{ t}) \right]
\quad\hbox{ for }x\in\partial D,
\end{equation}
where $\check{P}_tf(x):=\bE_x[f(\check{X}_t)]$ and $\check{X}^{(j)}$ is the Hunt process associated with $(\check\sE^{(j)},\check\sF)$.
So for each $ x\in\partial D$ and $0<r<\diam(\partial D)/ C_0$, we have   by Lemma \ref{lemma81} that  
\begin{equation} \label{e:8.4}
\check \bE_x^{(j)}  [ \tau_{B(x, r)} ]\geq \check \bE_x  [ \tau_{B(x, r)} ] \geq C_1\Theta_{\Psi, \omega} (x,r),
 \end{equation}
 while by the same L\'evy system   argument  as in the proof of  Lemma \ref{lemma81}, 
 \begin{equation}\label{e:8.5} 
\check \bE_x^{(j)}  [ \tau_{B(x, r)} ] \leq C_2\Theta_{\Psi, \omega} (x,r). 
 \end{equation}
It follows from  Theorem \ref{thm71}(a),  \eqref{e:8.4}-\eqref{e:8.5}, 
  \cite[Theorem 1.13]{CKW} and Remark \ref{R:8.3} below that 
the Hunt process $\check{X}^{(j)}$ has a jointly continuous   heat kernel $\check{p}^{(j)}(t,x,y)$   satisfying the two sided heat kernel estimates
on $(0, \infty) \times \partial D\times \partial D$: 
\begin{equation}\label{e:8.6}
\check{p}^{(j)}(t,x,y)\asymp \frac{1}{V_\omega (x,\Theta_{\Psi, \omega} ^{-1}(x,t)) }
\wedge \frac{t}{V_\omega (x,d(x,y)) \Theta_{\Psi, \omega} (x,d(x,y))} . 
\end{equation}
  The desired conclusion now follows from \eqref{e:8.2}-\eqref{e:8.3}  and \eqref{e:8.6}. 
\end{proof}

\begin{remark} \label {R:8.3} \rm
We remark here that although it is assumed in \cite{CKW} that the state space  is unbounded,  the results there hold for bounded state spaces as well with some minor modifications  and  also some simplifications. For instance, in the setting of \cite{CKW}, suppose that  $\sX$ is bounded. We do not need to take truncations on the jump size of $X$. Instead,  
 by considering the 1-subprocess of $X$, \cite[Proposition 7.4]{CKW} holds for $(\sE_1, \sF)$ in place of $(\sE, \sF)$. 
  The proof is the same except noting in the proof of (2) $\Rightarrow  {\rm Nash} (\phi)_B$ that 
RVD holds on $\sX$ for  $ r<\diam ( \sX)/C$ for some $C>1$, (2) readily gives ${\rm Nash}(\phi)_B$. 
 This proposition together with 
\cite[Lemma 4.1]{CKW} 
  shows that the jump kernel lower bound condition $ {\bf (J)}_{\phi, \geq}$
 implies the Faber-Krahn inequality ${\bf FK}(\phi)$ for $(\sX, d, m, \sE_1, \sF)$.
 By the same arguments but without taking trucations on the jump size, Lemma 4.18, Theorem 4.23 and Theorem 4.25 in \cite{CKW} 
 hold for  the 1-subprocess of $X$ with $\rho=\diam (\sX)$ there and  condition
  ${\bf FK}(\phi)$ for $(\sX, d, m, \sE_1, \sF)$ in place  of  $(\sX, d, m, \sE, \sF)$.
  In such a way, we get two-sided heat kernel estimates ${\bf HK}(\phi)$ for $X$ for $t\leq 1$. 
  On the other hand, since $\sX$ is bounded, we have $p(t, x, y) \asymp 1$ on $[1, \infty)\times \sX \times \sX$ by exponential ergodicity. Hence   \cite[Theorem 1.13]{CKW} holds for bounded $\sX$, where the exit time condition ${\bf E}_\phi$ should be modified  to hold ``for all  
  $r \in (0, \diam (\sX) )$''
  instead of ``for all $r>0$" in \cite[Definition 1.9]{CKW}.   \qed
\end{remark} 
 
\begin{remark} \label{R:8.4} \rm
While we  were working on this project,  we learned that Kajino and  Murugan \cite{KM} were 
studying heat kernel estimates for the trace of symmetric reflected diffusions on uniform domains. 
The setting of \cite{KM} is slightly more restrictive than ours. 
In \cite{KM},  it is assumed that there is an ambient complete volume doubling strongly local 
 MMD space  $(\sX, d, m, \tilde \sE,  \tilde \sF)$ 
that  enjoys the (VD) property and heat kernel estimates {\bf HK}$(\Psi)$. 
Let $D\subset \sX$ be a uniform domain with respect to the original metric $d$ on $\sX$,
and $(\sE^0, \sF^0)$ be the part Dirichlet form of $(\tilde \sE, \tilde \sF)$ on $D$.
That is, $(\sE^0, \sF^0)$ is the Dirichlet form of the subprocess $X^0$ of the diffusion $\tilde X$ on $\sX$ associated with $(\tilde \sE, \tilde \sF)$ 
killed upon leaving $D$. The reflected Dirichlet form $(\bar \sE, \bar \sF)$ studied in \cite{KM} is the one that is generated by $(\sE^0, \sF^0)$. By   \cite{GSC, Mathav},  (VD) and {\bf HK}$(\Psi)$ hold
 for $(\bar \sE, \bar \sF)$ on $(\overline U, d, m|_{\overline U})$. 
Under these settings and a  stronger condition   \eqref{e:6.4a} than   \eqref{e:6.2},
the  results of Theorem \ref{thm71}  have also been obtained  in \cite{KM}, independently,  by a different method through showing the existence of a Na\"im kernel and deriving  the Doob-Na\"im formula for the trace Dirichlet form $(\check \sE, \check \sF)$;  see Theorem 5.8, Corollary 5.10 and Theorem 5.13 there. As mentioned in Proposition \ref{P:6.3} and Remark \ref{R:6.4}, condition   \eqref{e:6.4a} excludes  the case where $\partial D$ is bounded but the reflected diffusion $\bar X$ on $\overline D$ is transient. It  follows from  Proposition \ref{P:6.3},  Remark \ref{R:6.4} and Theorem \ref{thm71} of this paper, under condition   \eqref{e:6.4a}  the  trace Dirichlet form $(\check \sE, \check \sF)$ admits no killings.
As mentioned in  Remark \ref{R:6.9}, in a recent updated version of \cite{KM}, the authors have given an outline how their arguments can be modified 
to obtain their results under the condition \eqref{e:6.2}. 
 
 \smallskip

 There is also a subtle difference between the viewpoints of this paper and that of \cite{KM}.
 We do not assume a priori  that there is an ambient complete volume doubling strongly local 
 MMD space  $(\sX, d, m, \tilde \sE,  \tilde \sF)$ 
that  enjoys the heat kernel estimates {\bf HK}$(\Psi)$ so that $D$ is a uniform subdomain in $(\sX, d)$.
We start with a minimal diffusion $X^0$ on a metric space $(D, d)$, 
or equivalently, a  minimal strongly local Dirichlet form $(\sE^0, \sF^0)$, 
and then consider its reflected diffusion and trace process. 
The reflected diffusion $\bar X$  is uniquely determined by the minimal diffusion $X^0$.
The information about any ambient diffusion $\tilde X$ beyond $X^0$ is irrelevant to $\bar X$. 
On the other hand,  relevant to the minimal diffusion $X^0$ in $D$ is the topology on $D$, not the actual metric on $D$.
So an advantage of the viewpoints of this paper is that it allows
us to take suitable metric $d$ on $D$ so that under which the reflected diffusion can have the two-sided heat kernel estimates
 {\bf HK}$(\Psi)$.  This is the setting  of the second part of this paper. 
 (In the first part of this paper on restriction and extension theorems, no heat kernel estimate condition {\bf HK}$(\Psi)$ is assumed.)
Such a  point of view is illustrated by several examples in Section \ref{S:9}, including 
Sierpinksi gasket example  in \S \ref{S:9.1},  Sierpinksi carpet example in \S \ref{S:9.2},
 and  inner uniform domains in $\R^d$   which has the slit disc example  in \S \ref{S:9.5} as a particular case. 
 In each of these examples, there is a natural ambient complete volume doubling strongly local 
 MMD space  $(\sX, d, m, \tilde \sE,  \tilde \sF)$ 
that  enjoys the heat kernel estimates {\bf HK}$(\Psi)$ but under which the domain $D$ is a not uniform domain.
Hence  the results from \cite{KM} are not applicable if using this natural ambient complete volume doubling strongly local  MMD space  $(\sX, d, m, \tilde \sE,  \tilde \sF)$. 
However, we can change the original  metric $d$  on $D$ to a new metric $\rho_D$  which still preserves the original topology on $D$.
Under this new metric $\rho_D$, which is the geodesic metric in $D$ in these examples, one can verify that $(D, \rho_D)$ is uniform
and the strongly local  active reflected MMD  space   $(\tD, \rho_D, m_0, \bar \sE, \bar \sF)$ has (VD) and  {\bf HK}$(\Psi)$ property. 
So all the results in Sections \ref{S:3}-\ref{S:8} are applicable to these examples. Of course, one can then view $(\tD, \rho_D, m_0, \bar \sE, \bar \sF)$ as the ambient complete volume doubling strongly local   MMD space  for the minimal diffusion process $X^0$ associated with $(\sE^0, \sF^0)$. In this way, the results from \cite{KM} also become applicable. 
In summary, starting with a minimal diffusion $X^0$ on a metric space $(D, d)$ and then considering its reflected diffusion is more intrinsic, while it is extrinsic to assume a priori that there is an ambient complete volume doubling strongly local MMD space  $(\sX, d, m, \tilde \sE,  \tilde \sF)$ that satisfies the heat kernel estimates {\bf HK}$(\Psi)$ and a uniform domain $D$ of $(\sX, d)$ so that $X^0$ is the part process of the diffusion associated with $(\tilde \sE, \tilde \sF)$ killed upon leaving $D$.
\qed
\end{remark}

\section{Examples}\label{S:9}

\subsection{A subdomain of the Sierpinski gasket}\label{S:9.1}

Let $p_0=(\frac12,\frac{\sqrt{3}}2)$, $p_1=(0,0)$ and $p_2=(1,0)$ be the three vertices of an equilateral triangle, and let $F_i(x)=\frac12x+\frac12p_i$ for $i=0, 1,2,$
 be similarity maps with contraction ratio $1/2$ and fixed points $p_i$. Then the Sierpinski gasket ($SG$ for short) is the unique compact subset of $\R^2$ such that $SG=\bigcup_{i=1}^3F_i(SG)$. Denote by $\overline{p_1,p_2}$ the line segment connecting $p_1,p_2$, and  let $D=SG\setminus \overline{p_1,p_2}$. See figure \ref{fig1} for an illustration. 

\begin{figure}[htp]
\includegraphics[width=4cm]{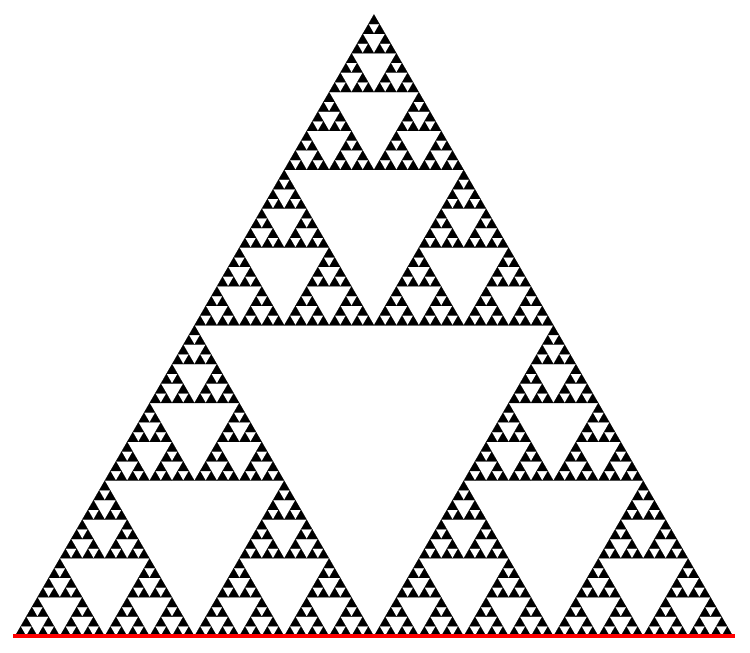}
\begin{picture}(0,0)
\put(-4,0){$p_2$}
\put(-125,0){$p_1$}
\put(-66,99){$p_0$}
\end{picture}
\caption{The Sierpinski gasket $SG$ and the line $\overline{p_1,p_2}$ (colored red)}
\label{fig1}
\end{figure}

\noindent\textbf{(Metrics).}  Denote by  $d(x,y)$  the Eucliden distance on $\R^2$. 
Define $\rho_D$  the geodesic distance in $D$, that is,  
\[\rho_D(x,y)=\inf\{\operatorname{length}(\gamma):\,\gamma\hbox{ is a rectifiable path in $D$ connecting }x,y\}\hbox{ for }x,y\in D,\]
where $\operatorname{length}(\gamma)$ is the length of a continuous rectifiable curve $\gamma$ in $\R^2$ metered with Euclidean distance. Denote by $(\tD,\rho_D)$ the completion of $(D, \rho_D)$, and let $\partial D=\tD\setminus D$.

\medskip

\noindent\textbf{(Description of $(\partial D,\rho_D)$).} We can identify $\partial D$ with the Cantor set $\{1,2\}^\mathbb{N}$: for an infinite word ${\bm\lambda}=\lambda_1\lambda_2\cdots\in \{1,2\}^\mathbb{N}$, define 
\[
\pi({\bm\lambda})=\lim_{n\to\infty}F_{\lambda_1}\circ F_{\lambda_2}\circ\cdots\circ F_{\lambda_n}(q_0),
\]
where the limit is taken in $(\tD,\rho_D)$. Moreover, for ${\bm\lambda}=\lambda_1\lambda_2\cdots,\,{\bm\lambda}'=\lambda'_1\lambda'_2\cdots\in\{1,2\}^\mathbb{N}$, we have
\[
\rho_D(\pi({\bm\lambda}),\pi({\bm\lambda}'))=\frac322^{-k} \qquad \hbox{where } \ 
 k=\min\{i\geq 1:\lambda_i\neq\lambda'_i\}-1.
\] 
It is clear that $(\partial D,\rho_D)$ is uniformly perfect.

\begin{proposition}\label{propSG1}
$(D,d)$ is not a uniform domain.  $(D,\rho_D)$ is a uniform domain, in other words, $D$ is an inner unniform domain in $(SG,d)$. 
\end{proposition}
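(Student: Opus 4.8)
The plan is to establish the two assertions separately, both by elementary geometric arguments using the self-similar structure of $SG$. For the first assertion, that $(D,d)$ is \emph{not} a uniform domain with respect to the Euclidean metric, I would argue by contradiction. Take two points $x,y\in D$ lying close to the removed segment $\overline{p_1,p_2}$ but on ``opposite sides'' of a small copy, e.g. pick $x$ near the midpoint $F_1(p_2)=F_2(p_1)$ of $\overline{p_1,p_2}$ approached from within the cell $F_1(SG)$, and $y$ the corresponding point approached from within $F_2(SG)$. Then $d(x,y)$ can be made arbitrarily small, but any path $\gamma\subset D$ connecting $x$ to $y$ must exit the pair of small cells near the top vertex $F_1(p_1)=p_1$-side... more carefully, it must travel up to the common vertex of $F_1(SG)$ and $F_2(SG)$ that is \emph{not} on $\overline{p_1,p_2}$, namely through the neighborhood of $F_1(p_0)$-region, so $\diam(\gamma)\gtrsim 1$ while $d(x,y)\to 0$; this violates $\diam(\gamma)\le A\,d(x,y)$ for any fixed $A$. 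Iterating this at all scales $2^{-n}$ (using the copies $F_{w}(SG)$ straddling the bottom line) shows no single $A$ works. I would phrase this cleanly by exhibiting, for each $n$, points $x_n,y_n$ with $d(x_n,y_n)\asymp 2^{-2n}$ (coming in from a deep subcell toward the bottom edge) yet $\rho_D(x_n,y_n)$, hence any connecting path's diameter, $\gtrsim 2^{-n}$.

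For the second assertion, that $(D,\rho_D)$ is a uniform domain, the key point is that $\rho_D$ is precisely the geodesic (length) metric inside $D$, and one needs to verify Definition~\ref{D:2.1}: for $x,y\in D$ there is a curve $\gamma\subset D$ from $x$ to $y$ with $\diam(\gamma)\le A\,\rho_D(x,y)$ and $\rho_D(z,\partial D)\ge A^{-1}\min\{\rho_D(x,z),\rho_D(y,z)\}$ along $\gamma$. First I would record the basic geometry: for a finite word $w\in\{1,2\}^n$, the subcell $F_w(SG)$ meets $\overline{p_1,p_2}$ exactly along $F_w(\overline{p_1,p_2})$, and the ``interior vertex'' $F_w(p_0)$ sits at geodesic distance $\asymp 2^{-n}$ from $\partial D$, in fact $\rho_D(F_w(p_0),\partial D)\asymp 2^{-n}$, while more generally a point in $F_w(SG)$ at distance $2^{-n}$ from the top subvertex has $\rho_D(\cdot,\partial D)\asymp 2^{-n}$. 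Then, given $x,y$, let $w$ be the longest common address prefix with $F_w(SG)\ni x,y$; one has $\rho_D(x,y)\asymp \diam_{\rho_D}(F_w(SG))\asymp 2^{-|w|}$ (up to the case where $x,y$ already lie deep in a proper subcell, handled by induction/self-similarity and the scaling $\rho_D\circ F_i = \tfrac12\rho_D$). I would then build $\gamma$ as a geodesic that first moves from $x$ ``upward'' out of its deepest subcell toward $F_w(p_0)$, crosses near $F_w(p_0)$, and descends symmetrically to $y$; along the ascending portion from $x$, a point $z$ at address depth $k\ge|w|$ has $\rho_D(x,z)\asymp 2^{-k}\asymp\rho_D(z,\partial D)$, giving the cigar condition with an absolute constant $A$, and similarly on the descending portion; the diameter bound $\diam_{\rho_D}(\gamma)\lesssim 2^{-|w|}\asymp\rho_D(x,y)$ is immediate since $\gamma\subset F_w(SG)$. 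The self-similarity $F_i:(D,\rho_D)\to(D,\rho_D)$ scaling by $1/2$ lets one reduce all estimates to finitely many base configurations inside $SG$.

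The main obstacle I anticipate is \emph{not} the topology of the construction but making the distance estimates genuinely uniform across scales and across the (countably many) ways two points can be positioned relative to the removed line — in particular the degenerate cases where $x$ and $y$ are both close to $\partial D$ but in adjacent subcells sharing only a boundary point, versus the case where one of them is deep inside a subcell far from $\overline{p_1,p_2}$. I would handle this by a clean reduction: using $\rho_D\circ F_i=\tfrac12\rho_D$ and the finite overlap structure of the cells $\{F_i(SG)\}$, it suffices to prove the uniformity constant exists for pairs $x,y$ with $\rho_D(x,y)\asymp 1$ (i.e. not both contained in a single $F_i(SG)$), which is a compactness/finite-checking argument, and then propagate to all scales by self-similarity. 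A secondary technical point is identifying $\rho_D(F_w(p_0),\partial D)$ up to constants; this follows from the observation that the only way to reach $\partial D=\overline{p_1,p_2}$'s completion-image from $F_w(p_0)$ is along geodesics staying in $F_w(SG)$, so the distance is $2^{-|w|}$ times the (positive, finite) quantity $\rho_D(p_0,\partial D)$ in the base fractal, which I would note is realized by the two edge-paths $p_0\to p_1$ and $p_0\to p_2$ and equals $1$ in Euclidean length hence is comparable to $\operatorname{diam}_{\rho_D}(SG\setminus\overline{p_1,p_2})$.
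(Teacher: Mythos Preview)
Your approach to the first assertion is essentially the same as the paper's: exhibit pairs of points with small Euclidean distance but whose only connecting paths in $D$ have diameter bounded below. The paper makes a cleaner explicit choice, taking $x_n=F_1\circ F_2^n(p_0)$ and $y_n=F_2\circ F_1^n(p_0)$, so that $d(x_n,y_n)=2^{-n-1}$ while any path in $D$ must pass through both $F_1(p_0)$ and $F_2(p_0)$, giving $\diam(\gamma)\ge 1/2$ for all $n$. Your version works, but note a small slip: $F_1(SG)$ and $F_2(SG)$ share only the vertex $F_1(p_2)=F_2(p_1)=(1/2,0)$, which \emph{does} lie on $\overline{p_1,p_2}$, so a path in $D$ must indeed detour through $F_0(SG)$ via $F_1(p_0)$ and $F_2(p_0)$ as you eventually say. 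Your scaling claim $d(x_n,y_n)\asymp 2^{-2n}$ versus diameter $\gtrsim 2^{-n}$ is unnecessarily complicated; the paper's choice gives a fixed lower bound on the diameter against arbitrarily small Euclidean distance, which is simpler.

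For the second assertion the paper takes a different route entirely: it simply cites \cite[Proposition~4.1]{L2} (Lierl), which proves inner uniformity for this class of domains in self-similar fractals. Your proposed direct argument---building the uniform curve by ascending from $x$ to the top vertex $F_w(p_0)$ of the smallest common cell and descending to $y$, then reducing all estimates to a single scale via the self-similarity $\rho_D\circ F_i=\tfrac12\rho_D$ for $i\in\{1,2\}$---is a legitimate alternative and would work with care. The advantage of your approach is self-containment; the advantage of the paper's is brevity and that Lierl's result covers a broader class of fractal domains at once, so there is no need to redo the case analysis you anticipate.
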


\begin{proof}
First, we show that $(D,d)$ is not a uniform domain. In fact, for $x_n=F_1\circ F_2^n(p_0)$ and $y_n=F_2\circ F_1^n(p_0)$ with $n\geq 0$, $d(x_n,y_n)=2^{-n-1}$. However, any continuous curve $\gamma$ connecting $x_n$ and $y_n$ in $D$ pass through the  points $F_1(p_0)$ and
$ F_2(p_0)$,  and  so  $\diam(\gamma)\geq 1/2$. Hence $(D, d)$ can not be a uniform domain.

The second statement is a special case of \cite[Proposition 4.1]{L2}.
\end{proof}

\noindent\textbf{(Measures).}  Let $m_H$ be the Hausdorff measure of dimension $\frac{\log3}{\log2}$ on $SG$,  normalized so that $m_H(SG)=1$. It is well-known that $(SG,d,m_H)$ is $\frac{\log3}{\log2}$-Ahlfors regular. 

 Let $m_0$ be the Radon measure on $\tD$ such that $m_0|_D=m_H|_D$ and $m_0(\partial D)=0$.\medskip

\begin{lemma}\label{lemmaSG2}
The metric measure space $(\tD,\rho_D,m_0)$ is $\frac{\log3}{\log2}$-Ahlfors regular.
\end{lemma}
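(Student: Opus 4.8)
The goal is to show that $(\tD,\rho_D,m_0)$ is $\frac{\log 3}{\log 2}$-Ahlfors regular, i.e.\ that there are constants $c_1,c_2>0$ with $c_1 R^{\log 3/\log 2}\le m_0(B_{\rho_D}(x,R))\le c_2 R^{\log 3/\log 2}$ for all $x\in\tD$ and $0<R\le\diam_{\rho_D}(\tD)$, where $B_{\rho_D}$ denotes a ball in the geodesic metric. The plan is to transfer the known Ahlfors regularity of $(SG,d,m_H)$ along the identity map $D\to D$, using that $\rho_D$ and $d$ are comparable at small scales in a scale-invariant way inside $D$, together with the self-similar structure of $SG$.

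First I would record the elementary inequality $d(x,y)\le\rho_D(x,y)$ for all $x,y\in D$, which gives $B_{\rho_D}(x,R)\subset B_d(x,R)$ and hence, since $m_0$ is supported on $D$ and $m_0|_D=m_H|_D$, the upper bound $m_0(B_{\rho_D}(x,R))\le m_H(B_d(x,R))\le c_2 R^{\log3/\log2}$ immediately from Ahlfors regularity of $(SG,d,m_H)$. For the lower bound I would use the self-similar cell decomposition of $SG$: given $x\in\tD$ and a scale $R$, choose $n$ with $2^{-n}\asymp R$ and pick a word $w=w_1\cdots w_n\in\{0,1,2\}^n$ so that $x$ lies in (the $\rho_D$-closure of) the cell $F_w(D):=F_{w_1}\circ\cdots\circ F_{w_n}(D)$ and moreover the cell is ``interior'', i.e.\ its $\rho_D$-diameter is $\asymp 2^{-n}$ and it is $\rho_D$-close to $x$ — this is possible because in $(SG,d)$ every point is within $O(2^{-n})$ (Euclidean) of a level-$n$ cell disjoint from $\overline{p_1,p_2}$, and for such interior cells the path-metric diameter is comparable to $2^{-n}$ (no detour around the slit is forced). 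Since $F_w$ is a similarity with ratio $2^{-n}$ and $m_H(F_w(SG))=3^{-n}$, one gets $m_0(B_{\rho_D}(x,R))\ge m_H(F_w(D))=3^{-n}\asymp R^{\log3/\log2}$, giving the lower bound.

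The step I expect to be the main obstacle is the geometric claim that points near the slit $\overline{p_1,p_2}$ still have, at every scale $R$, a nearby interior cell of comparable path-diameter; equivalently, that $\rho_D$ and $d$ are bi-Lipschitz comparable on each ball $B(x,R)\cap D$ with constants independent of $x$ and $R$ (a scale-invariant quasiconvexity statement). This needs a careful case analysis depending on whether $x$ is in the ``bulk'' of $SG$ or within distance $O(R)$ of the removed segment. For bulk points one uses that $SG$ minus the segment is locally just $SG$ away from scale $R$, so $\rho_D\asymp d$ there trivially; for points near the slit one uses the geodesic description of $(\tD,\rho_D)$ recorded above — a path from one side of the slit to a nearby point on the other must route through one of the images $F_1(p_0),F_2(p_0)$ at an appropriate level, and this routing adds only a constant factor at the relevant scale. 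Once this quasiconvexity is in hand, both bounds follow by transporting the $d$-Ahlfors regularity of $(SG,d,m_H)$ through the comparison, and the proof is complete.
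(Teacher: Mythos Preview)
Your upper bound is correct and matches the paper's: since $d\le\rho_D$ on $D$, the identity extends to a $1$-Lipschitz map $\Pi:(\tD,\rho_D)\to(SG,d)$, so $\Pi(B_{\rho_D}(x,r))$ sits in a Euclidean $r$-ball and the Ahlfors regularity of $(SG,d,m_H)$ finishes.

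Your lower bound, however, contains a genuine error. The statement you call ``equivalent'' to the existence of a nearby interior cell --- that $\rho_D$ and $d$ are bi-Lipschitz comparable on each $B_d(x,R)\cap D$ with constants independent of $x,R$ --- is \emph{false}, and this is precisely what Proposition~\ref{propSG1} records: the points $x_n=F_1F_2^n(p_0)$ and $y_n=F_2F_1^n(p_0)$ have $d(x_n,y_n)=2^{-n-1}$ while any path in $D$ joining them must pass through $F_0(SG)$, forcing $\rho_D(x_n,y_n)\ge 1/2$. Your claim that ``routing adds only a constant factor at the relevant scale'' fails exactly here. The two statements you conflate are not equivalent; only the weaker one (nearby interior cell in $\rho_D$) is true.

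The paper's lower bound avoids this detour entirely and is simpler than your interior-cell search. Given $x\in\tD$ and $r<1$, choose $n$ with $r/4\le 2^{-n}<r/2$ and \emph{any} word $\lambda\in\{0,1,2\}^n$ with $x\in\overline{D\cap F_\lambda(SG)}$. The point is that $D\cap F_\lambda(SG)$ is always path-connected in $D$ with $\rho_D$-diameter at most $2\cdot 2^{-n}<r$: if some $\lambda_i=0$ the cell misses the slit and is a scaled copy of $SG$; if $\lambda\in\{1,2\}^n$ then $D\cap F_\lambda(SG)=F_\lambda(D)$ is a scaled copy of $D$ itself, whose intrinsic diameter is at most $2$. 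Either way the cell lies in $B_{\rho_D}(x,r)$ and carries mass $3^{-n}\gtrsim r^{\log3/\log2}$. No comparison of $\rho_D$ and $d$ on Euclidean balls is needed --- only that each cell, intersected with $D$, has controlled \emph{intrinsic} diameter.
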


\begin{proof}
Denote by $B(x, r)$ the ball centered at $x$ with radius $r$ under metric $\rho_D$. First, we show that $V(x,r)\gtrsim r^{\log3/\log2}$ for each $x\in \tD$ and $r<1$. We  can find a word $\lambda_1\lambda_2\cdots\lambda_n\in\{0,1,2\}^n$ such that $r/4\leq2^{-n}<r/2$ and $x\in \overline{D\cap F_{\lambda_1}\circ \cdots\circ F_{\lambda_n}(SG)}$. Notice that diameter of $\overline{D\cap F_{\lambda_1}\circ \cdots\circ F_{\lambda_n}(SG)}$ 
under the $\rho_D$  metric is smaller than $2\cdot 2^{-n}< r$.
Thus $\overline{D\cap F_{\lambda_1}\circ \cdots\circ F_{\lambda_n}(SG)}\subset B(x,r)$,  and so
\[
V(x,r)\geq m_H (F_{\lambda_1}\circ \cdots\circ F_{\lambda_n}(SG))=3^{-n}\geq  r^{\log3/\log2}/9.
\]

Next, we show that $V(x,r)\lesssim r^{\log3/\log2}$ for each $x\in \tD$ and $r<1$. Indeed, let $\Pi:(\tD,\rho_D)\to (SG,d)$ be the continuous extension of the identity map $(D,\rho_D)\to (D,d)$. For
every $x\in \tD$ and $r<1$, $\Pi(B(x,r))$ is contained in a ball centered at $\Pi  (x)$ with radius $r$ in $(SG,d)$. Since $m_0(\partial D)=0$, we have $V(x,r)\lesssim r^{\log3/\log2}$.
This proves that $ (\tD,\rho_D,m_0)$ is $\log3/\log2$-regular.
\end{proof}

\noindent\textbf{(Dirichlet forms).} It is well-known that, up to a constant multiple,  there is a unique strongly local regular Dirichlet form $(\sE,\sF)$ on $L^2(SG; m_H )$ such that
$\sF\subset C(SG)$ and the self-similar property holds: 
\begin{align*}
&u\in \sF\ \hbox{ if and only if }\ u\circ F_i\in \sF\ \hbox{ for } i=0,1,2, \\
&\sE(u,u)=\frac53\sum_{i=1}^3\sE(u\circ F_i,u\circ F_i)\quad\hbox{ for each } u\in \sF.
\end{align*}
Moreover, $(\sE,\sF)$ is a resistance form on $SG$ in the sense of \cite[Definition 2.3.1]{Ki}. 

Let $(\bar\sE,\bar\sF)$ be the Dirichlet form of the reflected process on $\tD$, i.e. $\bar\sF=L^2(\tD;m_0)\cap  \sF^\rf$, where $\sF^\rf$ is defined in \eqref{e:rf}.  

\begin{proposition}\label{propSG2}
$\bar\sF= \sF^\rf
\subset C(\tD)$. 
Moreover, $(\bar\sE,\bar\sF)$ is a resistance form with 
$$
\bar{R}(x,y)\asymp  \rho_D(x,y)^{\log (5/3)/\log 2} \quad 
\hbox{for every } x,y\in \tD,
$$
 where $\bar{R}$ is the  corresponding resistance metric, i.e. 
$$
\bar{R}(x,y)=\sup \left\{ \bar{\sE}(f,f)^{-1}:\,f\in \bar\sF,\,f(x)=0,\,f(y)=1 \right\}.
$$ 
\end{proposition}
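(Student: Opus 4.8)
\textbf{Proof plan for Proposition \ref{propSG2}.}
The plan is to leverage the theory of resistance forms on the Sierpi\'nski gasket together with the self-similar structure. First I would recall that the standard Dirichlet form $(\sE,\sF)$ on $SG$ is a resistance form, and that its resistance metric $R$ is comparable to $d(x,y)^{\log(5/3)/\log 2}$ on $SG$; this is classical (see \cite{Ki}). The key point is that $(\bar\sE,\bar\sF)$ is, essentially by construction, the ``active reflected'' form built from the part form $(\sE^0,\sF^0)$ of $(\sE,\sF)$ on $D$, and one needs to identify it concretely. I would argue that $\sF^\rf$ (equivalently $\bar\sF$) coincides with the set of $f\in\sF^0_{\rm loc}$ with finite energy, and since the topology on $D$ generated by $\rho_D$ agrees with the subspace topology from $(SG,d)$, functions in $\sF^\rf$ extend continuously to the $\rho_D$-completion $\tD$. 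The inclusion $\bar\sF\subset C(\tD)$ then follows from the fact that finite-energy functions on a resistance form are $\frac12$-H\"older with respect to the resistance metric, combined with the comparison $\bar R(x,y)\lesssim \rho_D(x,y)^{\alpha}$ with $\alpha = \log(5/3)/\log 2$, which I establish next.

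The main work is the two-sided resistance estimate. For the upper bound $\bar R(x,y)\lesssim \rho_D(x,y)^{\alpha}$: given $x,y\in\tD$, pick the smallest cell $F_{w}(SG)$ (word $w$ of length $n$ with $2^{-n}\asymp \rho_D(x,y)$) whose $\rho_D$-closure contains both $x$ and $y$ — this uses the explicit description of $(\partial D,\rho_D)$ and the geometry of $D$; then the resistance across that cell, which by self-similarity scales like $(5/3)^{-n}\asymp \rho_D(x,y)^{\alpha}$, dominates $\bar R(x,y)$ because resistance is monotone under restriction to subnetworks (shorting the complement of the cell can only decrease resistance, and one shorts appropriately). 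For the lower bound $\bar R(x,y)\gtrsim \rho_D(x,y)^{\alpha}$: one exhibits a test function $f\in\bar\sF$ with $f(x)=0$, $f(y)=1$ and $\bar\sE(f,f)\lesssim \rho_D(x,y)^{-\alpha}$, e.g. a harmonic function on a slightly larger cell that separates $x$ from $y$, interpolating between boundary values; its energy is computed via the self-similar scaling of $\sE$. Then $\bar R(x,y)\ge \bar\sE(f,f)^{-1}\gtrsim \rho_D(x,y)^{\alpha}$. Both directions are cell-counting arguments combined with the $5/3$ energy-renormalization factor, and they mirror the standard computation of the resistance metric on $SG$ itself, localized to $D$.

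Finally, to conclude $(\bar\sE,\bar\sF)$ is a resistance form I would verify the axioms of \cite[Definition 2.3.1]{Ki}: that $\bar\sF$ separates points of $\tD$ (immediate from the resistance estimate, which also gives $\bar R$ finite and a genuine metric), that $(\bar\sF/\text{constants},\bar\sE)$ is Hilbert and complete (inherited from $(\sE,\sF)$ being a resistance form and $\bar\sF = \sF^\rf$ being closed under the energy norm modulo constants), that normal contractions operate (Markovian property of $\bar\sE$, already known since $(\bar\sE,\bar\sF)$ is a Dirichlet form), and that for each pair $x\ne y$ the supremum defining $\bar R(x,y)$ is finite and attained — the last being a consequence of the standard resistance-form machinery once $\bar R(x,y)<\infty$ is known. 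The hard part I expect is the careful bookkeeping in the upper bound for $\bar R$: one must check that the ``smallest cell containing $x$ and $y$'' behaves well under the passage to the completion $\tD$ and that shorting the rest of the network is legitimate in the reflected (infinite-network-after-removal-of-a-line) setting; the key enabling fact is that removing the open segment $\overline{p_1,p_2}$ from $SG$ still leaves, after completion in $\rho_D$, a self-similar resistance network whose cells are scaled copies, so the classical estimates transfer verbatim.
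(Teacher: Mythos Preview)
Your proposal has a genuine gap in the upper bound $\bar R(x,y)\lesssim \rho_D(x,y)^{\log(5/3)/\log 2}$. The single-cell argument --- picking the smallest $F_w(SG)$ whose $\rho_D$-closure contains both $x$ and $y$ --- fails because such a cell need not have size comparable to $\rho_D(x,y)$. For a concrete obstruction, take $x$ strictly inside $F_0(SG)$ and $y$ strictly inside $F_1F_0(SG)$, both within distance $\varepsilon$ of the shared vertex $F_0(p_1)=F_1(p_0)$. Then $\rho_D(x,y)\le 2\varepsilon$, yet the smallest cell $F_w(SG)$ containing both is $SG$ itself (since $x\in F_0(SG)\setminus F_1(SG)$ and $y\in F_1(SG)\setminus F_0(SG)$); your bound would give only $\bar R(x,y)\lesssim 1$, not $\lesssim \varepsilon^{\alpha}$. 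Moreover, even when the minimal containing cell has the right size, if its word lies in $\{1,2\}^*$ then the cell meets the removed segment and $F_w(SG)\cap D$ is a scaled copy of $D$ itself, so restricting the energy to that cell makes the argument circular. (Incidentally, your parenthetical about ``shorting'' is the wrong monotonicity for an upper bound; cutting to a sub-cell is what raises resistance.)

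The paper handles the upper bound by a different mechanism. It partitions $D$ into the cells $K_\lambda=F_{\lambda_1}\cdots F_{\lambda_n}F_0(SG)$ for $\lambda\in\{1,2\}^n$, each a full scaled copy of $SG$ lying entirely in $D$, and then \emph{chains}: for $x,y$ not in a common $K_\lambda$, one lists the vertices $q_1,\dots,q_n\in\{p_\lambda\}$ along the geodesic from $x$ to $y$, applies the single-cell Hölder estimate on each segment, and sums via the triangle inequality. The crucial geometric input is that the lengths $d(x,q_1),d(q_i,q_{i+1}),d(q_n,y)$ are dyadically distributed --- at most one in $[\tfrac12\rho_D(x,y),\rho_D(x,y)]$, at most two in $[\tfrac14\rho_D(x,y),\tfrac12\rho_D(x,y))$, and so on --- so that $\sum_i d(q_i,q_{i+1})^{\alpha/2}$ is dominated by a convergent geometric series times $\rho_D(x,y)^{\alpha/2}$. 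This chaining-plus-geometric-series step, not a single-cell monotonicity argument, is what produces the correct scaling, and it is the main idea your plan is missing. Your treatment of the lower bound (exhibiting a test function supported on a cell along the geodesic, extended by constants) and of the resistance-form axioms is essentially in line with the paper.
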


\begin{proof}
The proposition was proved in \cite[Theorems 4.3 and 4.5]{KTa} using the compatible sequence method. 
For the convenience of the reader, we  present an alternative proof 
starting from  the known fact that $(\bar\mcE,\bar\mcF)$ is the reflected Dirichlet form. 
	
For every $\lambda\in\bigcup_{n=1}^\infty\{1,2\}^n$, define 
\[
K_\lambda :=F_{\lambda_1}\circ F_{\lambda_2}\circ\cdots\circ F_{\lambda_n}\circ F_0(SG)
\quad \hbox{and} \quad 
p_\lambda :=F_{\lambda_1}\circ F_{\lambda_2}\circ\cdots\circ F_{\lambda_n}(p_0),
\]
with the convention that $\{1,2\}^0 :=\{\emptyset\}$, $K_\emptyset :=F_0(SG)$ and $p_\emptyset := p_0$. Then, $\big\{K_\lambda:\lambda\in\bigcup_{n=0}^\infty\{1,2\}^n\big\}$ forms a partition of $D$. 

Let $R(x,y) :=\sup\{ \sE(f,f)^{-1}:\,f\in\sF,\,f(x)=0,\,f(y)=1\}$ for $x,y\in SG$ be the resistance metric associated with $(\sE,\sF)$.  
By self-similarity and the fact $R(x,y)\asymp d(x,y)^{\log (5/3)/\log 2}$,  there is $C_1\in(0,\infty)$ so that
\begin{equation}\label{eqn81}
\big(f(x)-f(y)\big)^2\leq C_1\,\mu_{\<f\>}(K_\lambda)d(x,y)^{\log (5/3)/\log 2}\leq C_1\mu_{\<f\>}(D) 
\rho_D(x,y)^{\log (5/3)/\log 2}
\end{equation}
if $x,y\in K_\lambda$ for some $\lambda\in\bigcup_{n=0}^\infty\{1,2\}^n$. If there is no $\lambda\in\bigcup_{n=0}^\infty\{1,2\}^n$ 
such that $\{x,y\}\subset K_\lambda$, we let $\gamma$ be the continuous path connecting $x,y$ in $D$, and let $q_1,q_2,\cdots,q_n$ be the vertices in $\gamma\cap\{p_\lambda:\,\lambda\in\bigcup_{n=0}^\infty\{1,2\}^n\}$ in the order that $\gamma$ passes them.   Then 
\begin{align}\label{eqn82}
\begin{split}
&\big|f(x)-f(y)\big|\\
\leq& |f(x)-f(q_1)|+\sum_{i=1}^{n-1}|f(q_i)-f(q_{i+1})|+|f(q_n)-f(y)|\\
\leq& \sqrt{C_1\mu_{\<f\>}(D)d(x,q_1)^{\log (5/3)/\log 2}}+\sum_{i=1}^{n-1}\sqrt{C_1\mu_{\<f\>}(D)d(q_i,q_{i+1})^{\log (5/3)/\log 2}} \\
& +\sqrt{C_1\mu_{\<f\>}(D)d(q_n,y)^{ \log (5/3)/\log 2}}\\
\leq& 2  \sqrt{C_1\mu_{\<f\>}(D)\rho_D(x,y)^{\log (5/3)/\log 2}} \, 
\left(1+ (3/5)^{1/2} +(3/5)^{2/2} +(3/5)^{3/2} +\cdots \right),
\end{split}
\end{align}
where the second inequality is due to \eqref{eqn81},  and the last inequality is due to the observation that among the $d(x,q_1)$, $d(y,q_n)$ and $d(q_i,q_{i+1}),1\leq i\leq n-1$, there is at most one with its value in $[\frac12 \rho_D(x,y), \rho_D(x,y)]$, 
 at most two with values in $[\frac14 \rho_D(x,y),\frac12 \rho_D(x,y))$ by the  geometry of $D$, and so on. Combining \eqref{eqn81} and \eqref{eqn82}, we
get   
\begin{equation}\label{eqn75}
\big(f(x)-f(y)\big)^2\leq C_2\mu_{\<f\>}(D) \rho_D(x,y)^{\log (5/3)/\log 2}\hbox{ for every }x,y\in D\hbox{ and }
f\in \sF^\rf.
\end{equation}
Hence, $\sF^\rf \subset C(\tD)$, which implies that $\bar{\sF}=\sF^\rf$. 
Moreover, \eqref{eqn75} holds for any $x,y\in\tD$, and hence 
\[
\bar{R}(x,y)\leq C_2 \rho_D(x,y)^{\log (5/3)/\log 2}. 
\]
(RF4) of \cite[Definition 2.3.1]{Ki} follows immediately,
while   (RF1), (RF2), (RF3) and (RF5) are easy to see to hold.
 So  $(\bar\sE,\bar\sF)$ is a resistance form.

It remains show that $\bar{R}(x,y)\geq C_3 \rho_D(x,y)^{\log (5/3)/\log 2}$ for every $x,y\in\tD$, that is, we need to find $f$ so that $\{f(x),f(y)\}=\{0,1\}$ and $\bar\sE(f,f)\leq C_3^{-1} \rho_D(x,y)^{-\log (5/3)/\log 2}$. We consider two cases as before.
 When $x,y\in K_\lambda$ for some $\lambda$, there is some 
 $f$ on $K_\lambda$ so that $\{f(x),f(y)\}=\{0,1\}$ and $\mu_{\<f\>}(K_\lambda)\asymp d(x,y)^{-\log (5/3)/\log 2}\asymp  \rho_D(x,y)^{-\log (5/3)/\log 2}$.
  We extend it to $\tD$ by taking constant values on each connected component of $\tD\setminus K_\lambda$.
  When  $x,y$ does not belong to a common cell of the form $K_\lambda$, 
  there is  $\lambda\in \bigcup_{n=0}^\infty\{1,2\}^\infty$ such that 
  $\diam(K_\lambda)=\operatorname{length}(\gamma \cap K_\lambda)\geq\rho_D(x,y) /3 $, 
  where $\gamma$ is the geodesic connecting $x,y$. Let $f\in \bar\sF$ be the function that takes values $0$ or $1$ on the two ends of $\gamma\cap K_\lambda$ and has constant values on each component of $\tD\setminus K_\lambda$. 
  Then $\{f(x),f(y)\}=\{0,1\}$ as $x,y$ belong to different components of $\tD\setminus K_\lambda$ that contains an end of $\gamma$. Consequently, 
  $$
  \bar\sE(f,f)=\mu_{\<f\>}(K_\lambda)\asymp\diam(K_\lambda)^{-\log (5/3)/\log 2}
\lesssim  \rho_D(x,y)^{-\log (5/3)/\log 2}.
$$ 
This establishes $\bar{R}(x,y)\geq C_3 \rho_D(x,y)^{\log (5/3)/\log 2}$ for every $x,y\in\tD$.
\end{proof}

\begin{proposition}\label{propSG3}
$(\tD, \rho_D\,m_0,\bar\sE,\bar\sF)$ satisfies {\rm HK$(\Psi)$} with $\Psi(r)=r^{\log5/\log2}$. Moreover, the trace Dirichlet form $(\partial D, \rho_D,\omega_{p_0},\check\sE,\check\sF)$ has a heat kernel that satisfies  the following two sided estimates
\[
\check{p}(t,x,y)\asymp (t\wedge 1)^{-\log 2/\log (10/3)} 
\wedge \frac{t}{ \rho_D(x,y)^{ \log(20/3) /\log2}}
\quad \hbox{for all } x, y\in \partial D \hbox{ and } t>0.
\]
\end{proposition}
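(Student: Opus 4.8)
The plan is to verify the hypotheses of the general machinery developed in Sections \ref{S:4}--\ref{S:8} for this concrete example, and then read off the conclusion. First I would record the structural facts already assembled: by Lemma \ref{lemmaSG2}, $(\tD,\rho_D,m_0)$ is $\alpha$-Ahlfors regular with $\alpha=\log 3/\log 2$, so in particular (VD) and (RVD) hold; by Proposition \ref{propSG2}, $(\bar\sE,\bar\sF)$ is a resistance form with resistance metric $\bar R(x,y)\asymp \rho_D(x,y)^{\log(5/3)/\log 2}$. The heat kernel estimate {\rm HK}$(\Psi)$ with $\Psi(r)=r^{\log 5/\log 2}$ then follows from the standard theory of resistance forms on Ahlfors regular spaces: for a resistance form whose resistance metric satisfies $\bar R(x,y)\asymp \rho_D(x,y)^{\gamma}$ on an $\alpha$-regular space, the associated process has {\rm HK}$(\Psi)$ with $\Psi(r)=r^{\alpha}\cdot r^{\gamma}=r^{\alpha+\gamma}$; here $\alpha+\gamma=\log 3/\log 2+\log(5/3)/\log 2=\log 5/\log 2$, and one invokes e.g. the results of Kigami on resistance forms (the ``Einstein relation'' $\beta=\alpha+\gamma$), noting that Ahlfors regularity supplies the volume doubling and the needed chaining/Poincar\'e input. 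This establishes the first sentence of the proposition.

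Next I would feed {\rm HK}$(\Psi)$ into the second half of the paper. Since $\Psi(r)=r^{\log 5/\log 2}$ and $m_0$ is $\alpha$-regular, any of the equivalent conditions of Theorem \ref{thm61} needs to be checked; the most convenient is the capacity density condition \eqref{e:6.1}, or equivalently condition (iv) with the ambient space $(\sX,d,m)=(SG,\rho_D,m_H)$ (which carries the self-similar resistance form and has {\rm HK}$(\Psi)$ and (VD)). Here I would use the effective criterion in Theorem \ref{thm61}: it suffices to exhibit \emph{some} doubling Radon measure $\sigma$ with full support on $\partial D$ for which $\Theta_{\Psi,\sigma}$ has (LS). Take $\sigma$ to be the natural Bernoulli-type measure on $\partial D\cong\{1,2\}^{\mathbb N}$ transported by the coding $\pi$, so that $\sigma(B(x,r))\asymp r^{\log 2/\log 2}=r$ (balls of radius $\tfrac32 2^{-k}$ have $\sigma$-mass $2^{-k}$); $\sigma$ is doubling with full support. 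Then
\[
\Theta_{\Psi,\sigma}(x,r)=\Psi(r)\frac{\sigma(B(x,r))}{m_0(B(x,r))}\asymp r^{\log 5/\log 2}\cdot\frac{r}{r^{\log 3/\log 2}}=r^{(\log 5+\log 2-\log 3)/\log 2}=r^{\log(10/3)/\log 2},
\]
which has a strictly positive exponent, so (LS) holds. By Theorem \ref{thm61}, the renormalized harmonic measure $\omega$ (which here, $\partial D$ being bounded and $(\bar\sE,\bar\sF)$ recurrent since resistance forms on compact sets are recurrent, equals $\omega_{p_0}$ up to the normalization in Theorem \ref{thm5+7}) has full support on $\partial D$, is (VD), and $\Theta_{\Psi,\omega}$ has (LS). Moreover, comparing with the estimate in Theorem \ref{thm61}(iii)/\eqref{e:6.10a}, one gets $\omega(B(x,r))\asymp r$ as well, so $\Theta_{\Psi,\omega}(x,r)\asymp r^{\log(10/3)/\log 2}$ and $\omega$ is $1$-regular on $(\partial D,\rho_D)$.

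With these verified, the last sentence follows from Theorem \ref{T:8.2}. Since $(\partial D,\rho_D)$ is uniformly perfect and $(\bar\sE,\bar\sF)$ is recurrent, case (a) of Theorem \ref{T:8.2} applies (no killing, $\kappa=0$), giving
\[
\check p(t,x,y)\asymp \frac{1}{\omega\big(B(x,\Theta_{\Psi,\omega}^{-1}(x,t))\big)}\wedge \frac{t}{\omega\big(B(x,\rho_D(x,y))\big)\,\Theta_{\Psi,\omega}(x,\rho_D(x,y))}.
\]
Substituting $\omega(B(x,r))\asymp r$ and $\Theta_{\Psi,\omega}(x,r)\asymp r^{\log(10/3)/\log 2}$: the off-diagonal term is $t/\big(\rho_D(x,y)\cdot\rho_D(x,y)^{\log(10/3)/\log 2}\big)=t\,\rho_D(x,y)^{-\log(20/3)/\log 2}$, using $1+\log(10/3)/\log 2=(\log 2+\log(10/3))/\log 2=\log(20/3)/\log 2$; and since $\Theta_{\Psi,\omega}^{-1}(x,t)\asymp t^{\log 2/\log(10/3)}$, the on-diagonal term is $\asymp t^{-\log 2/\log(10/3)}$. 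Because $\partial D$ is bounded, one replaces $t$ by $t\wedge 1$ in the on-diagonal term (for $t\geq 1$ the process is ergodic and $\check p\asymp 1$), yielding exactly
\[
\check p(t,x,y)\asymp (t\wedge 1)^{-\log 2/\log(10/3)}\wedge \frac{t}{\rho_D(x,y)^{\log(20/3)/\log 2}},
\]
as claimed. The main obstacle is the first step: carefully justifying {\rm HK}$(\Psi)$ for the reflected resistance form, i.e. checking that the Ahlfors regularity of $(\tD,\rho_D,m_0)$ together with the resistance estimate of Proposition \ref{propSG2} really does put us in the scope of Kigami's resistance-form heat kernel theory (equivalently, verifying the chain/Poincar\'e and annulus-comparison conditions from the geometry of $D$); once {\rm HK}$(\Psi)$ is in hand, everything else is a direct substitution into Theorems \ref{thm61} and \ref{T:8.2}.
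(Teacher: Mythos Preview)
Your approach is essentially the same as the paper's: derive {\rm HK}$(\Psi)$ for $(\tD,\rho_D,m_0,\bar\sE,\bar\sF)$ from Kigami's resistance-form heat kernel theory (via Lemma~\ref{lemmaSG2} and Proposition~\ref{propSG2}), then plug into Theorem~\ref{T:8.2}. The paper's proof is just two sentences and cites \cite[Theorems 15.10 and 15.11]{Ki2} for the first step, exactly as you outline.

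The one soft spot in your write-up is the identification $\omega_{p_0}(B(x,r))\asymp r$. Your appeal to \eqref{e:6.10a} is not self-contained: that estimate gives $\omega_{p_0}(B(x,r))\asymp \bar g_D(p_0,y)\,m_0(B(x,r))/\Psi(r)\asymp \bar g_D(p_0,y)\,r^{-\log(5/3)/\log 2}$, so you would still need a Green function estimate $\bar g_D(p_0,y)\asymp r^{\log(10/3)/\log 2}$ for $y$ at distance $\asymp r$ from $\partial D$, which you have not supplied. Theorem~\ref{thm61} by itself only yields that $\omega$ is (VD) with $\Theta_{\Psi,\omega}$ satisfying (LS), not that $\omega$ is $1$-regular. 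The paper simply asserts that $\omega_{p_0}$ is the $1$-Hausdorff measure on $(\partial D,\rho_D)$; the clean justification is symmetry: the reflection of $SG$ swapping $p_1\leftrightarrow p_2$ fixes $p_0$ and preserves $(\bar\sE,\bar\sF)$, so $\omega_{p_0}$ is invariant under the induced involution on $\partial D\cong\{1,2\}^{\mathbb N}$, and iterating with the self-similarity forces $\omega_{p_0}$ to be the Bernoulli$(1/2,1/2)$ measure, which is exactly your $\sigma$. Once you replace the hand-wave at \eqref{e:6.10a} by this symmetry argument, your proof is complete and matches the paper's.
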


\begin{proof}
In view of    \cite[Theorems 15.10 and 15.11]{Ki2}, the  heat kernel estimate of $(\tD, \rho_D,m_0,\bar\sE,\bar\sF)$ is a consequence of Lemma \ref{lemmaSG2} and Proposition \ref{propSG3}.
Then, the heat kernel estimates of $(\partial D, \rho_D,\omega_{p_0},\check\sE,\check\sF)$ follows by Theorem \ref{T:8.2}, 
as  $\omega_{p_0}$ is the $1$-Hausdorff measure on $(\partial D, \rho_D)$
 and so $\Theta_{\omega_{p_0}}(x, r) = r^{\log (10/3)/\log 2}$. 
\end{proof}

\begin{remark}\rm
More generally, we have a class of fractals named $SG_n,n\geq 2$  constructed in the following way. We
 begin with an equilateral triangle with vertices $p_1=(0,0)$, $p_2=(1,0)$ and $p_0=(\frac12,\frac{\sqrt{3}}2)$, and divide it into $n^2$ equilateral triangles of side length $1/n$, where $\frac{n(n+1)}{2}$ of them are upward and $\frac{n(n-1)}2$ of them are downward. Let $F_i,i=1,2,\cdots,\frac{n(n+1)}2$ be orientation preserved similarities that map the oringinal triangle onto an upward small triangle.
Then $SG_n$ is the unique compact subset of $\R^n$ such that $SG_n=\bigcup_{i=1}^{n(n+1)/2}F_i(SG_n)$. See Figure \ref{fig2} for a picture of $SG_3$. 

\begin{figure}[htp]
\includegraphics[width=4cm]{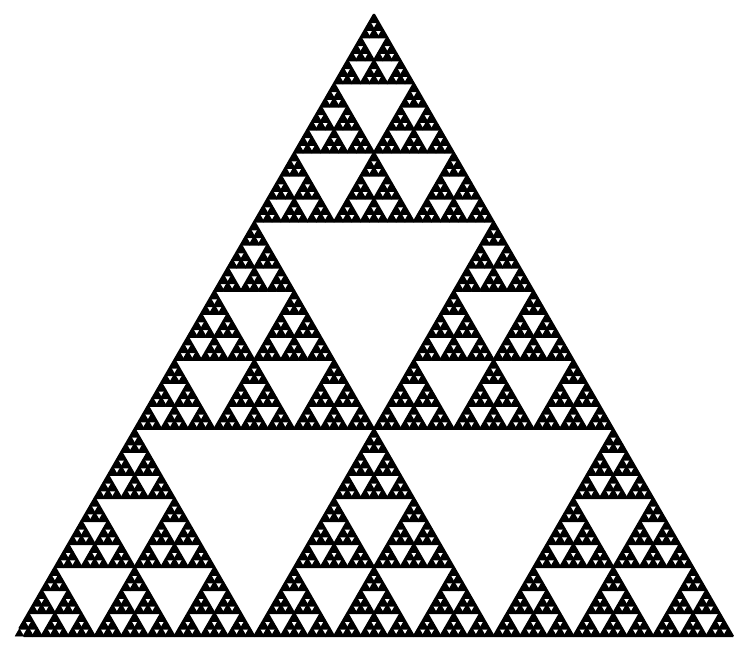}
\caption{A picture of $SG_3$.}\label{fig2}
\end{figure}

On $SG_n$, up to a constant multiple, there is a unique symmetric self-similar resistance form $(\sE,\sF)$. Let $D=SG_n\setminus\overline{p_1,p_2}$, and $ \rho_D$ be geodesic distance in $D$. Then, by the same proof of \cite[Proposition 4.1]{L2},  $(D, \rho_D)$ is a uniform domain, with $\partial D$ being a Cantor set. 

On the $(\tD, \rho_D)$, the sub-Gaussian heat kernel estimates holds by an argument similar to SG case, and Theorem \ref{T:8.2} applies so we have heat kernel estimates for the trace process on $\partial D$.  Moreover, the harmonic measure $\omega_{p_0}$ is a self-similar measure on $\partial D$, and can be computed with the method in \cite[Section 5.2]{CQ}.   \qed
\end{remark}

\subsection{Domains in the Sierpinski carpet} \label{S:9.2}

Let $F_0:=[0,1]^2$ be the unit square and let $\mathcal{Q}_1(F)$ be a collection of eight squares of side length $1/3$ such that their union is $[0,1]^2\setminus (\frac13,\frac23)^2$. For each $Q\in \mathcal{Q}_1(F)$, let $\Psi_Q$ be the orientation preserve affine map such that $\Psi_Q(F_0)=Q$, then the Sierpinski carpet $SC$ is the unique subset of $\R^2$  so that $SC=\bigcup_{Q\in \mathcal{Q}_1(F)}\Psi_Q(SC)$. See Figure \ref{fig3} below for a picture of $SC$.

\begin{figure}
	\centering
	\begin{minipage}{.5\textwidth}
		\centering
		\includegraphics[width=4cm]{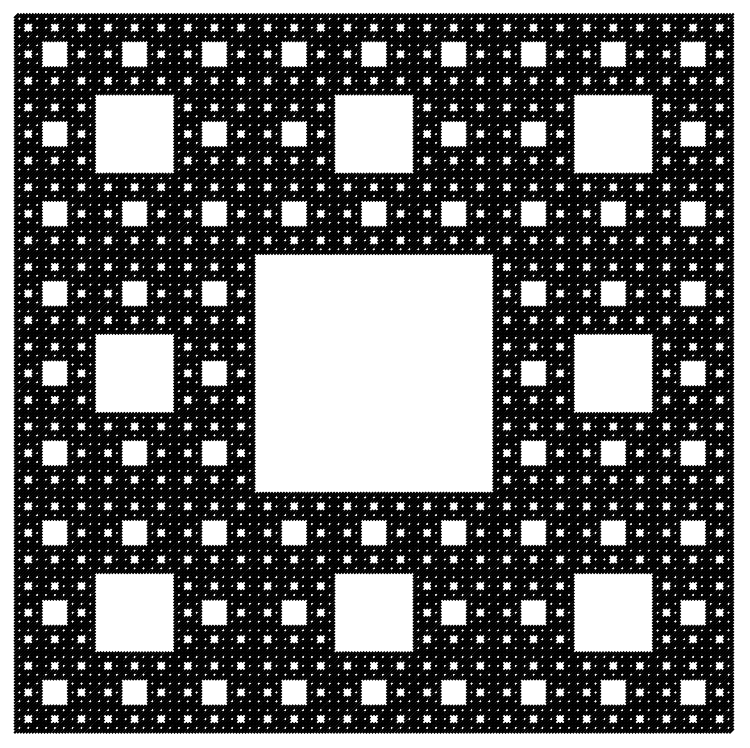}
		\caption{The standard SC}\label{fig3}
	\end{minipage}%
	\begin{minipage}{.5\textwidth}
		\centering
		\includegraphics[width=4cm]{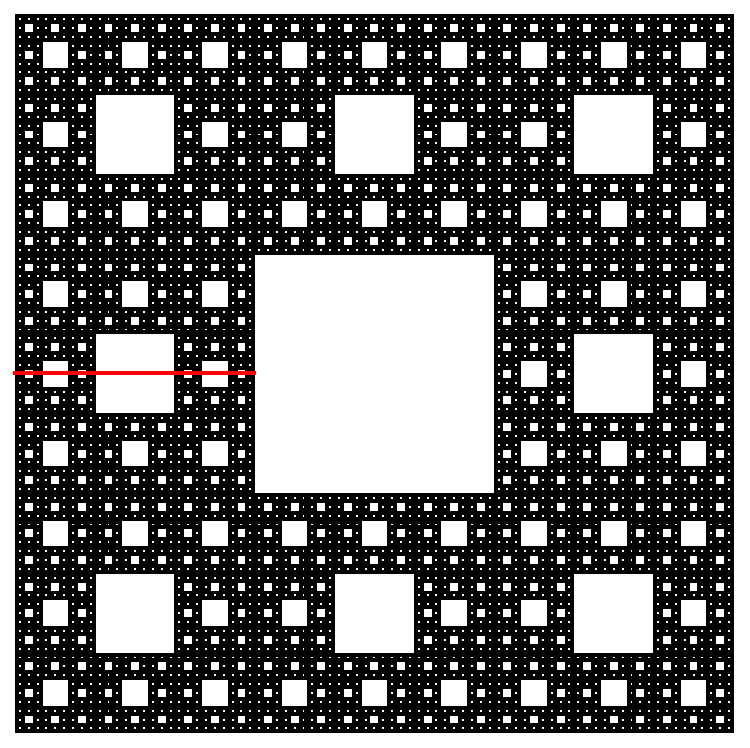}
		\caption{Domain 2}\label{fig4}
	\end{minipage}
\end{figure}

Let $m$ be the Hausdorff measure on $SC$. By the construction of Barlow and Bass \cite{BB1,BB2,BB3,BB4,BB5}, there is a strongly local regular Dirichlet form on $(\sE,\sF)$ on $L^2(SC;m)$ that satisfies HK($r^{d_W}$) for some $d_W\geq 2$. \medskip 
	
\noindent\textbf{Domain 1}.  By \cite[Proposition 4.4]{L2} or \cite[Proposition 2.4]{CQ2}, 
 $D=SC\cap(0,1)^2$ is a uniform domain in $(SC,d)$, where $d$ is Eculidean metric. Thus, $\tD=SC$ and $\partial D=SC\setminus D$. Moreover, HK($r^{d_W}$) holds for $(\tD,d,m_0,\bar\sE,\bar\sF)$ by Theorem \cite[Theorem 2.8]{Mathav}. By Lemma \cite[Lemma 3.7]{CC}, the Hausdorff measure $\sigma$ on $SC$ satisfies {\rm (LS)}. Clearly, $\sigma$ satisfies {\rm (VD)}.  Hence, all the results of Sections 3-7 applies.  \medskip 

\noindent\textbf{Domain 2}. We can check that $D=SC\setminus ([0,\frac13]\times\{\frac12\})$ is not a uniform domain in $(SC,d)$. On the other hand, $D$ is an inner uniform domain in $(SC,d)$: in other words, 
$D$ is a uniform domain under the geodesic distance metric $\rho_D $ in $D$. Let $\tD$ be the  completion of $D$ under metric $ \rho_D$, 
under which each point on  $([0,\frac13]\times\{\frac12\}) \cap SC$ is split into two: the upper and lower points. See Figure \ref{fig4} for an illustration.

 The boundary  $\partial D=\tD\setminus D$ consists of two copies of $SC\cap ([0,\frac13]\times\{\frac12\})$, and is homeomorphic to a Cantor set. By an argument similar to the previous section, we can show that $(\tD, \rho_D,m_0,\bar\sE,\bar\sF)$ satisfies HK$(r^{d_W})$, and all the theorems of Sections 3-7 applies. 

\begin{remark}\rm
In \cite{BB1,BB2,BB3,BB4,BB5}, a more general class of fractals named generalized Sierpinski carpets are introduced.
 We can consider domains similar to Domain 1 and Domain 2 in that setting. 

For example, we can consider $D=SP\setminus [0,\frac13]^2\times\{\frac12\}$, where $SP$ is the Sierpinski sponge generated by deleting small cubes from $[0,1]^3$. See Figure \ref{fig5} below for a picture of the Sierpinski sponge. As before, $D$ is an inner uniform domain, and we can check that HK$(r^{d_W})$ holds for $(\tD, \rho_D,m_0,\bar\sE,\bar\sF)$ for some $d_W>0$. For this, we need to check Poincar\'e inequalities and cutoff Sobolev inequalities.  We only need to take care of balls that intersect $\partial D$, which intersects only the upper half or the lower upper half SP (which are uniform domains of $(SP,d)$).  Hence the desired inequalities hold on these ball by  \cite[Theorem 2.8]{Mathav}. We also check that the Hausdorff measure $\sigma$ on $\partial D$ satisfies (LS) and (VD) by an argument similar to that of \cite[Lemma 3.7]{CC}. Hence, all theorems of Sections 3--7 holds.
\qed
\end{remark}

\begin{figure}
	\centering
	\begin{minipage}{.5\textwidth}
		\centering
		\includegraphics[width=4cm]{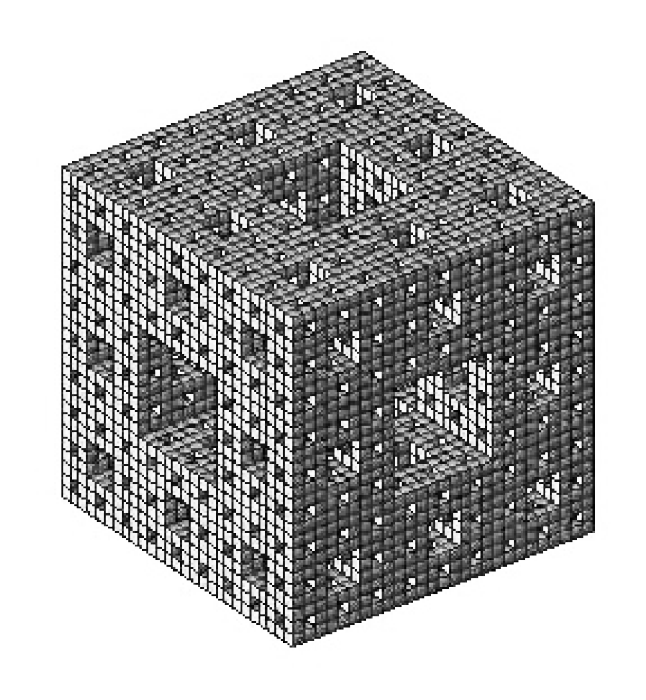}
		\caption{The Sierpinski sponge}\label{fig5}
	\end{minipage}%
	\begin{minipage}{.5\textwidth}
		\centering
		\includegraphics[width=4cm]{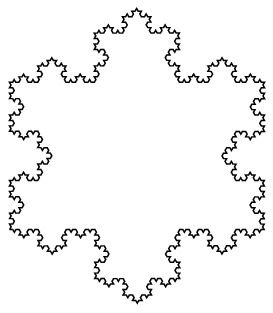}
		\caption{The snowflake domain}\label{fig6}
	\end{minipage}
\end{figure}

\subsection{Snowflake domain  in $\R^2$.} \label{S:9.3} 
Let $D$ be the Koch snowflake domain, which is a bounded uniform domain of $\R^2$.
See Figure \ref{fig6} below for an illustration. 
Consider the Dirichlet form  $(\bar\sE,\bar\sF)=(\sE,W^{1,2}(D))$, with
\[
\sE(f,g):=\frac12\int_D\nabla f(x)  \cdot \nabla g(x)m_0(dx)\quad\hbox{ for }f,g\in W^{1,2}(D),
\]
on $L^2(\tD,m_0)$, where $\tD$ is the closure of $D$ and $m_0$ is the restriction of the Lebesgue measure on $\tD$. 
Then, $(\sE,W^{1,2}(D))$ corresponds to the reflected Brownian motion on $\tD$, and satisfies the Gaussian-heat kernel estimates HK$(r^2)$. The boundary $\partial D$ of $E$ has Hausdorff dimension $d:=\frac{\log4}{\log3}$.
Let $\sigma$ be the $d$-dimensional Hausdorff measure on $\partial D$. 
  It is well known that $\sigma$ is  Ahlfors $d$-regular with $d:=\frac{\log4}{\log3}$ in the sense that 
$\sigma (B(x, r)\cap \partial D) \asymp r^d$ for any $x\in \partial D$ and $0<r<\diam (\partial D)$. Hence  we have
 $\Theta_{\Psi, \sigma} (x,r)\asymp r^{d}$,
 which satisfies condition {\rm (LS)}. Clearly, $\sigma$ is (VD) and $\partial D$ is connected. All the theorems in Sections \ref{S:3}--\ref{S:8} hold. 
 In particular, we have by Proposition \ref{P:3.15} that $\sigma$ is an $\bar \sE$-smooth measure having full $\bar \sE$-quasi-support on $\partial D$.
 The corresponding trace Dirichlet space $(\check \sE, \check \sF)$ on $L^2(\partial D; \sigma)$ have the following characterization by 
 Theorem \ref{T:3.16} as $ \Lambda_{\Psi, \sigma} \cap C (\partial D)$ is a core of $\check \sF$ and $D$ is bounded:
 \begin{align}
 \check \sF &= \Lambda_{\Psi, \sigma} = \left\{f \in L^2(\partial D; \sigma):  \int_{\partial D\times \partial D} \frac{ (f(x)-f(y))^2}{|x-y|^{2d}}
  \sigma (dx) \sigma (dy) <\infty\right\}  , \label{e:9.4}   \\
  \check \sE (f, f) &\asymp\int_{\partial D\times \partial D} \frac{(f(x)-f(y))^2}{|x-y|^{2d}} \sigma (dx) \sigma (dy)
 \quad \hbox{ for } f\in \check \sF_e .    \label{e:9.5} 
   \end{align}
 Recall that  $\check \sF_e:=\sF_e |_{\partial D}$ and $\check \sF = \check \sF_e \cap L^2(\partial D; \sigma)$. 
     By \cite[Theorem 5.2.15]{CF},   $\check \sF_e$  coincides with   the extended Dirichlet space  $(\check \sF)_e$   
     of $(\check \sE, \check \sF)$ on $L^2(\partial D; \sigma)$.      Clearly,
 $$
(\check \sF)_e   \subset   \dot \Lambda_{\Psi, \sigma}  = 
 \left\{ f  \in \sB (\partial D):  \int_{\partial D\times \partial D} \frac{(f(x)-f(y))^2}{|x-y|^{2d}}
  \sigma (dx) \sigma (dy) <\infty \right\} . 
 $$
 Conversely, for each $f\in \dot \Lambda_{\Psi, \sigma} $, $f_n:= ((-n)\vee f )\wedge n \in \check\sF$ converges to $f$ $\sigma$-a.e. on $\partial D$
 and $\{f_n; n\geq 1 \}$ is $\check \sE$-Cauchy by \eqref{e:9.4}-\eqref{e:9.5}. Hence $f\in (\check \sF)_e$. Thus we have
 \begin{equation}\label{e:9.6}
 \check \sF_e= (\check \sF)_e   = 
 \left\{ f  \in \sB (\partial D) :  \int_{\partial D\times \partial D} \frac{(f(x)-f(y))^2}{|x-y|^{2d}}
  \sigma (dx) \sigma (dy) <\infty \right\} . 
\end{equation} 
 Note that each $f\in \check \sF_e$ has an $\check \sE$-quasi-continuous version on $\partial D$.
 We always represent members in  $ \check \sF_e$ by their $\check \sE$-quasi-continuous versions.

 Since $D$ is bounded,  as we see from the first part of the  proof for Theorem \ref{thm5+7}, 
  renormalized harmonic measure $\omega$ can be taken to be a harmonic measure 
 $\omega_x (dz):=  \bar \bP_x (\bar X_{\sigma_{\partial D}} \in dz)$   of $D$ with pole at  $x\in D$. 
  It is known that  the support of  $\omega_x$   has Hausdorff dimension one so it   
 is singular with respect to the Hausdorff measure $\sigma$ on $\partial D$.  While the harmonic measure $\omega_x$ is doubling and 
 so satisfying the (LS) condition by Theorem \ref{T:6.1}, it is less explicit than  $\sigma$ on $\partial D$.
 It seems to be difficult to get an explicit description of the trace Dirichlet form $(\check \sE^\omega, \check \sF^\omega)$  
 directly if we use $\omega$  as the time-change measure. 
 However with the help of   \eqref{e:9.6} deduced using the Hausdorff measure $\sigma$ on $\partial D$, 
  we have by \eqref{e:trace1}-\eqref{e:trace2} that 
 \begin{align*}
 \check \sF^\omega&=\check \sF_e \cap L^2(\partial D; \omega)
  = \left\{f \in L^2(\partial D; \omega):  \int_{\partial D\times \partial D} \frac{ (f(x)-f(y))^2}{|x-y|^{2d}}
  \sigma (dx) \sigma (dy) <\infty\right\}  ,     \\
  \check \sE^\omega  (f, f)&=\check \sE (f, f) \asymp  \int_{\partial D\times \partial D} \frac{(f(x)-f(y))^2}{|x-y|^{2d}} \sigma (dx) \sigma (dy)
 \quad \hbox{ for } f\in \check \sF^\omega .    
   \end{align*}

\subsection{Inner uniform domains in metric measure spaces}\label{S:9.4}

Let $(\sX, d)$ be a locally compact separable  length  (or  geodesic) metric space,
and $m$ a $\sigma$-finite Radon measure with full support
on $\sX$. Let $(\sE, \sF)$ be a strongly local regular Dirichlet form
on $L^2(\sX; m)$  so that  it admits Gaussian-heat kernel estimates ${\bf HK}(r^2)$.
It is well known that this is equivalent to that (VD)  and ${\rm PI}(r^2)$ hold for the strongly local metric measure Dirichlet 
space $(\sX, d, m, \sE, \sF)$. 
Such a space  is called a {\it Harnack-type Dirichlet space} in \cite[Chapter 2]{GSC}.
It is known that there is a conservative Feller process $X$ having strong Feller property associated with
$(\sX, d, m, \sE, \sF)$.

For a domain (i.e.\ connected open subset) $D$   of  the length metric space $(\sX, d)$, define for $x, y\in D$,
\begin{equation}\label{e:rhoD}
\rho_D (x, y)=\inf\{\hbox{length}(\gamma): \hbox{a continuous curve }
\gamma \hbox{ in } D \hbox{ with } \gamma (0)=x \hbox{ and }
\gamma (1)=y\}.
\end{equation}
Note that $(D, d)$ and $(D, \rho_D)$ generate the same topology on $D$. 
Denote by $\tD $ the completion of $D$ under the metric $\rho_D$.
Note that
  $(\tD,\rho_D)$ is a length metric space but may not be
  locally compact in general.
  For example, see \cite[Remark 2.16]{GSC}.
   We extend the definition of $m|_D$ to a measure $m_0$ ${\bar D}$ by setting
$m_0  (A)= m(A\cap D)$ for $A\subset \tD$.

Following \cite[Definition 3.6]{GSC},
we say that an open subset $D\subset \sX$ is  {\it inner uniform}
if there are constants
 $C_1, C_2 \in (0,\infty)$ such
that, for any $x, y \in D$, there exists a continuous curve
$\gamma_{x, y}: [0, 1] \to D$ with
$\gamma_{x, y}(0) = x$, $\gamma _{x, y}(1)=y$
 and satisfying the following two properties:
\begin{enumerate} [\rm (i)]  
\item  The length of
$\gamma_{x,y}$ is at most $ C_1 \rho_D (x, y)$;

\item   For any $z \in \gamma_{x,y}([0, 1])$,
$$
\rho_D  (z, \partial D  ):=\inf_{w \in   \partial D} \rho_D (z, w)   \geq C_2\min\{ \rho_D (x, z), \rho_D (z, y)\}.
$$
\end{enumerate}
In the above definition, (i) can be replaced by ``$\diam (\gamma_{x, y}) \leq   C_1 \rho_D (x, y)$";
see \cite[Proposition 3.3]{GSC} and \cite[Lemma 2.7]{MS}.
Let $D$ be  an inner uniform domain $D$ in $(\sX, d)$ so that $\sX\setminus D$ has positive $\sE$-capacity.
Note that it  is a uniform domain in the sense of 
Definition \ref{D:2.1}  under metric $\rho_D$.  Moreover, $(\tD, \rho_D)$ is a locally compact separable metric space; see \cite[Lemma 3.9]{GSC}.
 Denote by $( \sE^0, \sF^0)$ the Dirichlet form on $L^2(D; m)$ of the subprocess $X^0$ of $X$ killed upon leaving 
 $D$.  Since $D$ is pathwise connected, $(\sE^0, \sF^0)$  is irreducible, transient and strongly local. 
Let $(\bar \sE, \bar \sF)$ be the active reflected Dirichlet form on $L^2 (\tD, m_0)$ generated by  $(\sE^0, \sF^0)$.
It is shown in   \cite[Theorems 3.10 and   3.13 and  Corollary 3.31]{GSC} 
that (VD), ${\rm PI} (r^2)$ and ${\rm HK}(r^2)$ hold for the strongly local
 MMD space   $(\tD, \rho_D, m_0, \bar \sE, \bar \sF)$. 
So all the theorems in Sections \ref{S:3}--\ref{S:8} are applicable for the boundary trace process of 
the reflected diffusion $\bar X$ associated with $(\tD, \rho_D, m_0, \bar \sE, \bar \sF)$.

\subsection{Inner uniform domains in Euclidean spaces and slit disc.} \label{S:9.5} 

Euclidean spaces $(\R^d, d)$ are  length spaces.  Denote by $m$ the Lebesgue measure on $\R^d$.
Let $(\sE, \sF)$ be the Dirichlet form on $L^2(\R^d; m)$ associated with a divergence form elliptic operator 
$\sL={\rm div} (A(x) \nabla)$ with measurable coefficients that is uniformly elliptic and bounded. 
By a celebrated result of Aronson,  $(\R^d, d, m, \sE, \sF)$ is a   Harnack-type Dirichlet space.
So by \S \ref{S:9.4}, all the theorems in Sections \ref{S:3}--\ref{S:8}
are applicable for the boundary trace process of 
the reflected diffusion $\bar X$ on any inner uniform subdomain $D\subset \R^d$ whose  complement is non-polar. 
 In particular, they are applicable to the  planar slit disc   $D=B\setminus \Gamma$,  
where $B$ is the unit ball centered at $(0,0)$ in $\R^2$ and $\Gamma$ is the line segment connecting $(0,0)$ and $(1,0)$.  
See Figure \ref{fig7} below for an illustration. 
\begin{figure}[htp]
\includegraphics[width=4cm]{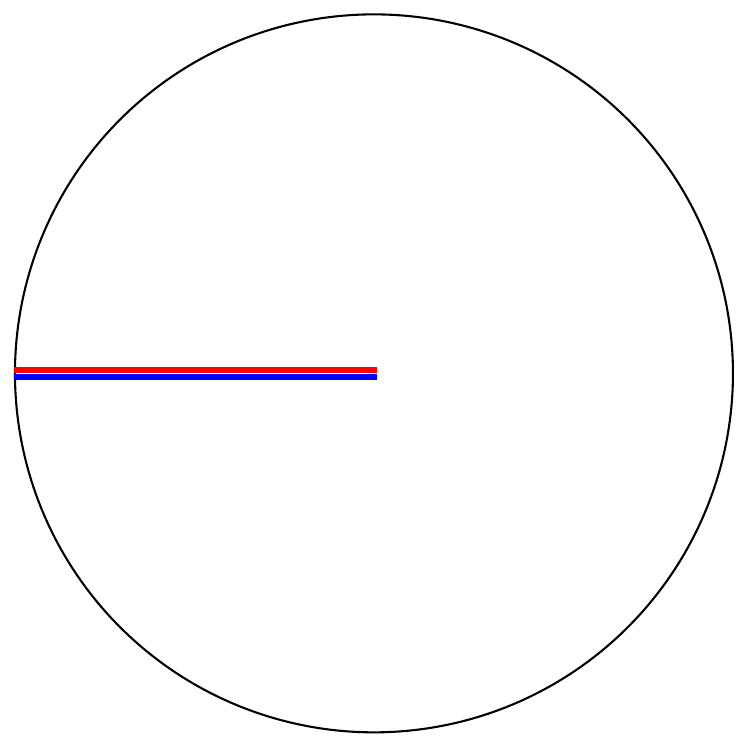}
\caption{Slit disc}\label{fig7}
\end{figure}

The slit domain $D$ is an inner uniform domain in $\R^2$ but not a uniform domain under the standard Euclidean metric $d$. 
Let $\rho_D$ be the geodesic metric in $D$ defined by \eqref{e:rhoD}. 
Under $\rho_D$, $D$ is a uniform domain  and $(D, \rho_D )$ has the same topology as the Euclidean topology in $D$. Denote by  $\tD$  the completion of $D$ with respect to the metric $\rho_D$, for which  the slit $\Gamma$ is splitted into upper and lower line segments. Let $m_0$  be the Radon measure on $\tD$ so that $m_0|_D$ is the Lebesgue measure on $D$ and $m_0(\tD \setminus D )=0$. 

Note that in this case, the active reflected Dirichlet form $(\bar \sE, \bar \sF)=(\bar \sE, W^{1,2}(D))$, where  
\begin{eqnarray*}
W^{1,2}(D)&=& \{ f\in L^2(D; dx): \nabla f\in L^2(D; dx)\}, \\
\bar \sE(f,g)&=& \int_D\nabla f(x) \cdot A(x)\nabla g(x)m_0(dx)\quad\hbox{ for }f,g\in W^{1,2}(D). 
\end{eqnarray*}
It  is a strongly local regular Dirichlet form on $L^2(\tD; m_0)$.
Its associated Hunt process is the symmetric reflecting diffusion $\bar X$ on $\tD$, cf. \cite{C2}. 
As already noted in \S \ref{S:9.4},  $\bar X$ enjoys the two-sided Gaussian heat kernel estimates HK$(r^2)$  with respect to the path metric $\rho_D$.
 Note that $\partial D=\tD \setminus D $  has Hausdorff  dimension $1$ and the Hausdorff measure $\sigma$  on $\partial D:=\tD \setminus D$  satisfies (VD). Consequently $\Theta_{\Psi, \sigma } (x,r)  \asymp  r$ for $r\in (0, \diam (\partial D))$ and $x\in\partial D$,   which satisfies (LS). 
 So 
 all the theorems of Sections  \ref{S:3}--\ref{S:8} apply in this case. In particular, the boundary trace process $\check X$ of
 $\bar X$ on $\partial D$ has heat kernel estimate
$$
\check{p}(t,x,y)\asymp 
   (t\wedge 1)^{-1} 
\wedge \frac{t}{\rho_D  (x,y)^2} \quad \hbox{for all } t>0 \hbox{ and } x, y\in \partial D  .
$$

More generally, let $D$ be an  inner uniform domain $D$ in $\R^n$ with $\partial D:=\tD \setminus D$ being an Ahlfors $d$-regular set for
 $d\in [n-1, n)$.  Here $\tD$ is the completion of $D$ under the metric $\rho_D$ defined by \eqref{e:rhoD}.
 As we already noted in the above, the reflected Dirichlet space $(\tD, \rho_D, m_0, \bar \sE, \bar \sF)$ has (VD) and ${\bf HK}(r^2)$ property.  
  Denote by $\sigma$ the $d$-dimensional Hausdorff measure on $\partial D$ which has the property that 
$V_\sigma (x, r) \asymp r^{d}$  on  $\partial D \times  (0, \diam (\partial D))$. 
Thus  we have  $\Psi (r)= r^2$ and $\Theta_{\Psi, \sigma} (x,r)  \asymp  r^{2+d-n}$ for $r\in (0, \diam (\partial D))$  and $x\in\partial D$,  which satisfies (LS). 
 So  all the theorems of Sections  \ref{S:3}--\ref{S:8} apply in this case. In particular, we have as in \S \ref{S:9.3} that 
 \begin{equation}\label{e:9.8}
 \check \sF_e= (\check \sF)_e   = \dot \Lambda_{\Psi,  \sigma} = 
 \left\{ f  \in \sB (\partial D) :  \int_{\partial D\times \partial D} \frac{(f(x)-f(y))^2}{|x-y|^{2d+2-n}}
  \sigma (dx) \sigma (dy) <\infty \right\} ,
\end{equation} 
and the trace theorems hold. In particular, we have 
and $\check \sE (u, u)\asymp \lb u \rb_{\Lambda_\sigma}^2+\|u\|_{L^2(\partial D;\sigma)}^2$
when $\partial D$ is bounded and $(\bar \sE ,\bar \sF )$ is transient, and 
$\check \sE (u, u)\asymp \lb u \rb_{\Lambda_\sigma}^2$ otherwise. 
This extends the corresponding result  of Jonsson-Wallin \cite{JW}
from uniform domains to inner uniform domains. 

\medskip

\noindent  {\bf Acknowledgements.}
While the authors were working on this project,  we learned Kajino and Murugan \cite{KM} were 
studying heat kernel estimates for the trace of symmetric reflected diffusions on uniform domains. 
Their work is closely related to the topics in the second part of this paper, especially  those in Sections \ref{S:7} and \ref{S:8}.  
Both groups obtained the results in these parts  regarding jump kernel estimates, independently. 
But there are some differences in the setting,  viewpoints and scopes.  
The scopes of this paper are slightly more general. The approaches are also different.  In \cite{KM}, the expression for the jump kernel $\check J(dx, dy)$ of the boundary trace process is derived through showing the existence of Na\"im kernel and establishing the 
Doob-Naim formula for the trace Dirichlet form $(\check \sE, \check \sF)$,
under the (VD) and {\rm HK$(\Psi)$} condition for the ambient complete strongly local MMD spaces of $(D, d, m, \sE^0, \sF^0)$.
We start directly from the definition of the trace Dirichlet form $(\check \sE, \check \sF)$ in \eqref{e:trace1}-\eqref{e:trace2}
and its Beurling-Deny decomposition \eqref{e:2.7} to derive two-sided bounds on $\check J(dx, dy)$
as outlined in the Introduction.
See Remark \ref{R:8.4} for more information. 
The main results of this paper were reported in the joint Probability-Analysis seminar   at the University of Cincinnati on March 20, 2024,
and at the 11th Bielefeld-SNU Joint Workshop in Mathematics held at the Seoul National University from April 8-10, 2024.  
We thank N. Kajino,  M. Murugan, and the anonymous referee for comments on the manuscript. 

The research of Shiping Cao is partially supported by a grant from the Simons Foundation Targeted Grant (917524) to the Pacific Institute for the Mathematical Sciences. 
The research of Zhen-Qing Chen is partially supported by  a Simons Foundation fund 962037.

 \hskip 0.2truein
 

\begin{thebibliography}{10}
  	
\bibitem{A}
R. Adams.
{\it Sobolev Spaces}. 
Academic Press, 1978.

\bibitem{AdH}
D. R. Adams and L. I. Hedberg.
{\it Function Spaces and Potential Theory}. 
Springer, 1996.

\bibitem{Ancona} 
A. Ancona. On strong barriers and an inequality of Hardy for domains in ${\bf R}^n$. {\it J. London Math. Soc. (2) \bf34} (1986), no. 2, 274--290. 

  \bibitem{AB} S. Andres and M.T. Barlow.  
Energy inequalities for cutoff functions and some applications. 
{\it J. Reine Angew. Math.}, {\bf 699} (2015), 183--215.


\bibitem{AH}
H.~Aikawa and K.~Hirata.
 Doubling conditions for harmonic measure in John domains.
 {\it  Ann. Inst. Fourier (Grenoble) \bf 58} (2008),   429--445.

\bibitem{BB1}
M.~T. Barlow and R.~F. Bass.
  The construction of {B}rownian motion on the {S}ierpi\'{n}ski carpet.  
  {\it Ann. Inst. H. Poincar\'{e} Probab. Statist. \bf 25} (1989),   225--257.

\bibitem{BB2}
M.~T. Barlow and R.~F. Bass.
  Local times for {B}rownian motion on the {S}ierpi\'{n}ski carpet.  
  {\it Probab. Theory Related Fields  \bf   85} (1990),  91--104.

\bibitem{BB3}
M.~T. Barlow and R.~F. Bass.
 On the resistance of the {S}ierpi\'{n}ski carpet.
 {\it  Proc. Roy. Soc. London Ser. A \bf  431} (1990),   345--360.

\bibitem{BB4}
M.~T. Barlow and R.~F. Bass.
  Transition densities for {B}rownian motion on the 	{S}ierpi\'{n}ski carpet.
	{\it  Probab. Theory Related Fields \bf  91} (1992),
no.~3-4, 307--330.

\bibitem{BB5}
M.~T. Barlow and R.~F. Bass.
  Brownian motion and harmonic analysis on {S}ierpinski carpets. 
{\it Canad. J. Math. \bf 51} (1999),    673--744.

\bibitem{BB6}
M.~T. Barlow and R.~F. Bass.
 Stability of parabolic Harnack inequalites.
{\it Trans. Amer. Math. Soc.  \bf 356} (2004), 1501-1533. 

\bibitem{BBK}
M.~T. Barlow, R.~F. Bass and T. Kumagai.
Stability of parabolic Harnack inequalite on metric measure space.
{\it J. Math. Soc. Japan  \bf 58} (2006), 485-519.


\bibitem{BCM}
M.~T. Barlow, Z.-Q. Chen and M. Murugan.
Stability of EHI and regularity of MMD space. 
 arXiv:2008.05152.

\bibitem{BGK}
M.~T. Barlow, A. Grigor'yan and T. Kumagai. On the equivalence of parabolic Harnack inequalities and heat kernel estimates. {\it J. Math. Soc. Japan \bf64} (2012), no.4, 1091--1146.


\bibitem{BM}
M. ~T. Barlow and M. Murugan.
Boundary Harnack principle and elliptic Harnack inequality. 
{\it  J. Math. Soc. Japan  \bf 71} (2019),  383--412.

 \bibitem{BM2}
M.~T. Barlow and M. Murugan. Stability of the elliptic Harnack inequality. {\it Ann. of Math. (2) \bf187} (2018), no. 3, 777--823.
	
 \bibitem{BS}
J. Bj\"orn and  N. Shanmugalingam. 
Poincar\'e inequalities, uniform domains and extension
properties for Newton-Sobolev functions in metric spaces. 
{\it J. Math. Anal. Appl. \bf 332} (2007), 190--208.
  
\bibitem{CS} L. Caffarelli and L. Silvestre.
An extension problem related to the fractional Laplacian.
 {\it  Comm. Partial Differential Equations  \bf 32} (2007),  1245--1260. 


 

\bibitem{CC}
S. Cao and Z.-Q. Chen.  Convergence of resistances on generalized {S}ierpi\'{n}skii carpets, arXiv:2402.01949.


\bibitem{CQ}
S. Cao and H. Qiu. 
Boundary value problems for harmonic functions on domains in Sierpinski gaskets. 
{\it Commun. Pure Appl. Anal.,}{\bf 19} (2020), no.2, 1147--1179.

\bibitem{CQ2}
S. Cao and H. Qiu. 
Uniqueness and convergence of resistance forms on unconstrained Sierpinski carpets.  arXiv:2403.17311.

\bibitem{AC} A. Chen.
Boundary Harnack principle on uniform domains.
{\it Potential Anal. \bf62} (2025), no. 4, 739--759.



\bibitem{C1} Z.-Q. Chen. 
On reflected Dirichlet spaces.   {\it Probab. Theory Related Fields} {\bf 94} (1992), 135--162.

 \bibitem{C2} Z.-Q. Chen.
  On reflecting diffusion processes and Skorokhod
decompositions.
{\it Probab. Theory Related Fields} {\bf 94} (1993), 281-316.


\bibitem{CF}
Z.-Q. Chen and M.~Fukushima. 
 \emph{Symmetric {M}arkov {P}rocesses, {T}ime {C}hange, and {B}oundary {T}heory}.  
 Princeton University Press, Princeton, NJ, 2012.

\bibitem{CF2}
Z.-Q. Chen and M.~Fukushima.  
A localization formula in Dirichlet form theory.
  {\it Proc. Amer. Math. Soc. \bf 140} (2012), 1815-1822.
  
 \bibitem{CFY}
 Z.-Q. Chen,  Fukushima and J. Ying.
Traces of symmetric Markov processes and their characterizations.
{\it Ann. Probab. \bf 34} (2006), 1052-1102. 

 


\bibitem{CKW}
Z.-Q. Chen, T. Kumagai and J. Wang. 
Stability of heat kernel estimates for symmetric non-local Dirichlet forms. 
 {\it Mem. Amer. Math. Soc. \bf271} (2021), no.1330, v+89 pp.


\bibitem{CKW2}
Z.-Q. Chen, T. Kumagai and J. Wang. 
 Stability of parabolic Harnack inequalities for symmetric non-local Dirichlet forms.
     {\it  J. Euro. Math. Soc.   \bf 22}  (2020), 3747-3803. 


\bibitem{CW}
Z.-Q. Chen  and L. Wang. 
Inverse local time of one-dimensional diffusions and its comparison theorem. 
{\it Sci.  China Math. \bf 67}  (2024). https://doi.org/10.1007/s11425-024-2269-x


 \bibitem{Doob}
 J. L. Doob, Boundary properties of functions with finite Dirichlet integrals. {\it Annales
 Inst. Fourier \bf 12} (1962), 573-621.
 

\bibitem{FHK}
P. J. Fitzsimmons, B. M. Hambly and T. Kumagai.
 Transition density estimates for Brownian motion on affine nested fractals. 
 {\it  Comm. Math. Phys. \bf 165} (1994), no. 3, 595--620.


\bibitem{Fu}
M. Fukushima.  On Feller's kernel and the Dirichlet norm. 
{\it Nagoya Math. J. \bf  24} (1964), 167-175


\bibitem{FOT}
M.~Fukushima, Y.~Oshima and M.~Takeda.
 \emph{Dirichlet {F}orms and {S}ymmetric {M}arkov {P}rocesses}, extended ed. 
 Walter de Gruyter \& Co., Berlin, 2011.


 \bibitem{GS}
R.~Gibara and N.~Shanmugalingam. 
Trace and extension theorems for homogeneous Sobolev and Besov Spaces for unbounded uniform domains in metric measure spaces.
{\it Proc. Steklov Inst. Math. \bf323} (2023), 101-119. 

\bibitem{GH}
A. Grigor'yan and J. Hu.
 Heat kernels and Green functions on metric measure spaces. 
 {\it  Canad. J. Math.  \bf 66} (2014), 641--699.

\bibitem{GHL} 
A.~Grigor'yan, J.~Hu, and K.~Lau.
 Generalized capacity, Harnack inequality and heat kernels of Dirichlet forms on metric measure spaces. 
 {\it J. Math. Soc. Japan \bf 67} (2015),   1485--1549.
 
 
\bibitem{GT} 
A.~Grigor'yan and A. Telcs. 
Two-sided estimates of heat kernels on metric measure spaces.
{\it Ann. Probab. \bf 40} (2012), 1212-1284. 


\bibitem{GSC} 
P. Gyrya and  L.  Saloff-Coste,  
Neumann and Dirichlet heat kernels in inner uniform domains.
{\it Ast\'erisque \bf 336} (2011),   viii+144 pp.


\bibitem{HaK}
B. M. Hambly and T. Kumagai.
 Transition density estimates for diffusion processes on post critically finite self-similar fractals.
 {\it Proc. London Math. Soc.  \bf 78} (1999),   431--458.
 

\bibitem{HaK2}
B. M. Hambly and T. Kumagai.
 Diffusion processes on fractal fields: heat kernel estimates and large deviations.
 {\it  Probab. Theory Related Fields \bf 127}  (2003),  305--352.
 
  \bibitem{Hei}
 J. Heinonen, \emph{Lectures on analysis on metric spaces}. Universitext Springer-Verlag, New York, 2001, x+140 pp. 

\bibitem{HK}
M.~Hino and T.~Kumagai.
 A trace theorem for {D}irichlet forms on fractals.
 {\it J. Funct. Anal.  \bf 238} (2006),  578--611.

   
 \bibitem{Jo}
A. Jonsson.
  A trace theorem for the Dirichlet form on the Sierpinski gasket.
  {\it  Math. Z.  \bf 250} (2005) 599--609.


\bibitem{JW}
A. Jonsson and H.  Wallin.
 {\it Function Spaces on Subsets of $\R^n$.}
  Mathematical Reports, Vol. 2, Part 1.
Academic Publishers, Harwood, 1984. 
 
\bibitem{KM}
N. Kajino and M. Murugan.
 Heat kernel estimates for boundary traces of reflected diffusions on uniform domains.
  arXiv:2312.08546v3.



\bibitem{KT}
C. E. Kenig and T. Toro. Free boundary regularity for harmonic measures and Poisson kernels. {\it Ann. of Math. (2) \bf150} (1999), no. 2, 369--454.


  


\bibitem{Ki}
J. Kigami. 
\emph{Analysis on Fractals.} 
 Cambridge University Press, Cambridge, 2001.

\bibitem{Ki2}
J. Kigami.
Resistance forms, quasisymmetric maps and heat kernel estimates. 
{\it Mem. Amer. Math. Soc.} 216 (2012), no. 1015, vi+132 pp.


\bibitem{KTa}
J. Kigami and K. Takahashi.
``The Sierpinski gasket minus its bottom line" as a tree of Sierpinski gaskets.
{\it Math. Z. \bf 306} (2024), article number 28.



\bibitem{KS}
R. Korte and N.  Shanmugalingam. 
 Equivalence and self-improvement of  p -fatness and Hardy's inequality, and association with uniform perfectness
 {\it Math. Z. \bf 264}  (2010),  99-110.


\bibitem{Ku}
T. Kumagai.
 Brownian motion penetrating fractals: an application of the trace theorem of Besov spaces. 
 {\it  J. Funct. Anal. \bf 170} (2000),   69--92.

\bibitem{KZ}
S.~Kusuoka and X.~Zhou.
  Dirichlet forms on fractals: {P}oincar\'{e} constant and resistance.
  {\it  Probab. Theory Related Fields \bf 93} (1992),   169--196.


\bibitem{KMu} M. Kwa\'snicki and J. Mucha.
  Extension technique for complete Bernstein functions of the Laplace operator.
{\it J. Evol. Equ. \bf 18} (2018), 1341--1379.

 
\bibitem{L}
 J. Lierl.
  Scale-invariant boundary Harnack principle on inner uniform domains in fractal-type spaces.
{\it  Potential Anal. \bf 43} (2015),   717--747.

\bibitem{L2}
 J. Lierl.  
 The Dirichlet heat kernel in inner uniform domains in fractal-type spaces.
{\it Potential Anal. \bf 57} (2022), 521--543.


\bibitem{MS} O. Martio and J. Sarvas, Injectivity theorems in plane and space. 
{\it Ann. Acad. Sci. Fenn. Ser. A I Math. \bf 4} (1979), 383-401.



\bibitem{MO} M. A. Molchanov and E. Ostrovskii.
 Symmetric stable processes as traces of degenerate diffusion processes.
 {\it Theor. Prob. App.} {\bf 14} (1969), 128--131.



\bibitem{Mathav}
M.~Murugan. 
Heat kernel for reflected diffusion and extension property on uniform domains.  
{\it  Probab. Theory Related Field \bf 190} (2024), 543-599.
  
  

\bibitem{Naim}
L. Na\"im.
Sur Ie r\^ole de la frontiere d\`e R. S. Martin dans la th\'eorie
du potentiel. {\it Annales Inst. Fourier \bf 7} (1957), 183-281.



\bibitem{S1}
 M. L. Silverstein.
 Dirichlet spaces and random time change.
{\it Illinois J. Math. \bf 17} (1973), 1-72.

\bibitem{S2}
 M. L. Silverstein.
 {\it  Symmetric Markov Processes.} 
 Lecture Notes in Math., {\bf 426}
Springer-Verlag,  1974. 

\bibitem{St}
E. M. Stein.
{\it Singular Integrals and Differentiability Properties of Functions}.
Princeton Univ. Press, 1970.


\bibitem{Vai}
 J. V\"ais\"al\"a: Uniform domains. Tohoku Math. J. 40 (1988), 101–118. 

\bibitem{W}
H. Wallin.
The trace to the boundary of Sobolev spaces on a snowflake.
{\it Manuscripta Math. \bf 73} (1991), 117-125. 


\bibitem{Wh}
H. Whitney.
 Analytic extensions of differentiable functions defined in closed sets. 
{\it Trans. Amer. Math. Soc. \bf 36} (1934),  63--89.


 \bibitem{YZ}
D. Yang and Y. Zhou.
New properties of Besov and Triebel-Lizorkin spaces on RD-spaces.
{\it Manuscripta Math. \bf 134} (2011), 59--90.
  
   
\end{thebibliography}
\end{document}